\newcommand{\wt}{\widetilde}
\newcommand{\wh}{\widehat}
\newcommand{\End}{\mathrm{End}}
\newcommand{\Hom}{\mathrm{Hom}}
\newcommand{\Mod}{\mathrm{-Mod}}
\newcommand{\Gr}{\mathrm{Gr}}
\newcommand{\lp}{\left(}
\newcommand{\rp}{\right)}
\newcommand{\Ind}{\mathrm{Ind}}
\newcommand{\Coh}{\mathrm{Coh}}
\newcommand{\IndCoh}{\mathrm{IndCoh}}
\newcommand{\lpp}{(\!(}
\newcommand{\rpp}{)\!)}
\newcommand{\lbb}{[\![}
\newcommand{\rbb}{]\!]}
\newcommand{\match}{\rhd\!\!\!\lhd }
\newcommand{\pd}{{\partial}}
\newcommand{\lag}{\langle}
\newcommand{\rag}{\rangle}
\newcommand{\C}{\mathbb C}
\newcommand{\R}{\mathbb R}
\newcommand{\Z}{\mathbb Z}
\newcommand{\N}{\mathbb N}
\newcommand{\fd}{\mathfrak{d}}
\newcommand{\fg}{\mathfrak{g}}
\newcommand{\fh}{\mathfrak{h}}
\newcommand{\fp}{\mathfrak{p}}
\newcommand{\CA}{{\mathcal A}}
\newcommand{\CC}{{\mathcal C}}
\newcommand{\CE}{{\mathcal E}}
\newcommand{\CK}{{\mathcal K}}
\newcommand{\CM}{{\mathcal M}}
\newcommand{\CN}{{\mathcal N}}
\newcommand{\CO}{{\mathcal O}}
\newcommand{\CR}{{\mathcal R}}
\newcommand{\CV}{{\mathcal V}}
\newcommand{\CY}{{\mathcal Y}}
\newcommand{\norm}[1]{{{:\!{#1}\!:}}}
\newcommand{\be}{\begin{equation}}
\newcommand{\ee}{\end{equation}}
\newcommand{\btik}{\begin{tikzcd}}
\newcommand{\etik}{\end{tikzcd}}
\newtheorem{Def}{Definition}[section]
\newtheorem{Thm}[Def]{Theorem}
\newtheorem{Prop}[Def]{Proposition}
\newtheorem{Cor}[Def]{Corollary}
\newtheorem{Lem}[Def]{Lemma}
\newtheorem{Rem}[Def]{Remark}
\newtheorem{Exp}[Def]{Example}
\newtheorem{Conj}[Def]{Conjecture}
\title{Yangian for cotangent Lie algebras and spectral $R$-matrices}
\author[1]{Raschid Abedin}
\author[2]{Wenjun Niu}
\affil[1]{Department of Mathematics, ETH Zürich, 8092 Zürich, Schweiz,  raschid.abedin@math.ethz.ch}
\affil[2]{Perimeter Institute for Theoretical Physics, 31 Caroline St N, Waterloo, ON N2L 2Y5, Canada, wjniu950925@gmail.com}
\date{\today}
\begin{document}

\maketitle

\abstract{In this paper, we present a canonical quantization of Lie bialgebra structures on the formal power series \(\fd[\![t]\!]\) with coefficients in the cotangent Lie algebra \(\fd = T^*\fg = \fg \ltimes \fg^*\) to a simple complex Lie algebra \(\fg\). We prove that these quantizations produce twists to the natural analog of the Yangian for \(\fd\). Moreover, we construct spectral \(R\)-matrices for these twisted Yangians as compositions of twisting matrices. 

The motivation for the construction of these twisted Yangians over \(\fd\) comes from a certain 4d holomorphic-topological gauge theory. More precisely, we show that pertubative line operators in this theory can be realized as representations of these Yangians. Moreover, the comultiplications of these Yangians correspond to the monodial structure of the category of line operators.}

\tableofcontents

\numberwithin{equation}{section}

\newpage

\section{Introduction}

\subsection{Main results}

Let $\fg$ be a finite-dimensional complex simple Lie algebra and 
\be\label{eq:intro_takiff}
\fd:=T^*\fg=\fg\ltimes \fg^* \cong \fg[\epsilon]/\epsilon^2\fg[\epsilon],
\ee
which we call the \textit{cotangent Lie algebra} of $\fg$. One can identify $\fd$ with the double $D(\fg)$ of $\fg$ with the trivial Lie coalgebra structure. This paper is concerned with the quantization of Lie bialgebra structures on the loop algebra $\fd(\CO) \coloneqq \fd \otimes \CO$, where $\CO:=\C\lbb t\rbb$ is the ring of Taylor power series. 

More precisely, following \cite{drinfeld1986quantum}, we consider the (topological) Lie bialgebra structure of $\fd(\CO)$ induced by Yang's $r$-matrix:
\be
\gamma:=\frac{C}{t_1-t_2},\qquad C \in \fd \otimes \fd \text{ is the quadratic Casimir of } \fd. 
\ee
It is well-known that the double of $\fd(\CO)$ with respect to this Lie bialgebra structure is simply $\fd(\CK)$, with Manin triple given by $(\fd(\CK), \fd(\CO), t^{-1}\fd[t^{-1}])$. Here, $\CK:=\C\lpp t\rpp$ is the field of Laurent series. More generally, we will comment on how given a splitting of $\fg(\CK)$ into Lie subalgebras $\fg(\CK)=\fg(\CO)\oplus W$, one obtains a Lie bialgebra structure on $\fd(\CO)$ by considering the Manin triple:
\be
\fd(\CK)=\fd(\CO)\oplus \lp W\oplus W^\perp\rp.
\ee
Here, $W^\perp\subseteq \fg^*(\CK)$ is the subspace which pairs trivially with \(W\) with respect to the pairing induced by the residue map on \(\CK\). Since $\fg$ is simple, such a $W$ corresponds to a not necessarily skew-symmetric classical spectral $r$-matrix \(r\) valued in $\fg$, which gives rise to a skew-symmetric classical spectral $r$-matrix $\rho$ valued in $\fd$. Moreover, every \(r\)-matrix with values in \(\fd\) that respects the \(\epsilon\)-grading induced by the last identification in \eqref{eq:intro_takiff} is of this form. We write $W=\fg(r)$ and denote the cobracket induced by \(\rho\) on $\fd(\CO)$ as $\delta_\rho$. In particular, $\delta_\gamma$ is the one defined by Yang's $r$-matrix. 

In this paper, we give a new construction of a quantization of $(\fd(\CO),\delta_\rho)$ over $\C\lbb\hbar\rbb$. More precisely, we prove the following statement:

\begin{Thm}[Theorem \ref{Thm:quantizerho}]\label{Thm:quantization_intro}
    For each \(\epsilon\)-graded \(r\)-matrix $\rho$ with values in \(\fd\), we construct an \(\epsilon\)-graded quantization $\CA_\hbar (\fd, \rho)$ of $(\fd(\CO),\delta_\rho)$, i.e.\ a Hopf algebra $\CA_\hbar (\fd, \rho)$ over $\C\lbb\hbar\rbb$ such that:
    \begin{enumerate}
        \item $\CA_\hbar(\fd, \rho)/\hbar\CA_\hbar(\fd, \rho)=U(\fd(\CO))$ as Hopf algebras;

        \item $\Delta_{\rho,\hbar}-\Delta_{\rho,\hbar}^{\textnormal{op}}=\hbar \delta_\rho + \CO(\hbar^2)$;

        \item \(\CA_\hbar(\fd, \rho)\) is graded with respect to the grading on $\fd$ under which $\fg$ is in degree $0$, $\fg^*$ is in degree $2$, and $\hbar$ has degree $-2$.
        
    \end{enumerate}
Moreover, for two different \(\epsilon\)-graded \(r\)-matrices $\rho_1,\rho_2$ with values in \(\fd\), there exists a canonical algebra identification $\CA_\hbar(\fd, \rho_1)\cong \CA_\hbar(\fd, \rho_2)=\CA_\hbar(\fd)$ and an element $F\in \CA_\hbar(\fd)\otimes_{\C[\![\hbar]\!]}\CA_\hbar(\fd)$ such that:
        \be
F\Delta_{\rho_1,\hbar}F^{-1}=\Delta_{\rho_2,\hbar},\qquad (\Delta_{\rho_2,\hbar}\otimes 1)(F)F^{12}=(1\otimes \Delta_{\rho_2,\hbar})(F) F^{23}. 
        \ee
    Both the topological Hopf algebra structure as well as the twisting are well-defined after evaluating at $\hbar=\xi$ for any $\xi\in \C$. 
    
\end{Thm}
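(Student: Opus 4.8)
The statement has three parts: (i) construction of the graded Hopf algebra $\CA_\hbar(\fd,\rho)$ with properties 1–3; (ii) the canonical algebra identification $\CA_\hbar(\fd,\rho_1)\cong\CA_\hbar(\fd,\rho_2)$ together with a twist $F$ relating the comultiplications and satisfying the cocycle identity; (iii) specialization at $\hbar=\xi$. My approach is to realize all the $\CA_\hbar(\fd,\rho)$ as subalgebras of a single ambient object built from a quantized universal enveloping algebra, so that the algebra identification in (ii) is literally the identity, and the twist $F$ arises geometrically from the change of Manin triple.

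\textbf{Step 1: Construct the ambient algebra.} I would start from the Yangian-type algebra attached to $\fd$. Since $\fd=\fg\ltimes\fg^*$ is a semidirect product with abelian nilpotent part, one expects $\CA_\hbar(\fd)$ to be a ``degenerate double Yangian'' whose classical limit is $U(\fd(\CO))$. The cleanest construction: take the Yangian $Y_\hbar(\fg)$ (or rather its formal/Drinfeld-double completion $DY_\hbar(\fg)$ whose classical double is $\fg(\CK)$), and define $\CA_\hbar(\fd)$ as a Hopf algebra whose underlying coalgebra is a ``cotangent'' or ``square-zero'' extension — concretely, adjoin to $Y_\hbar(\fg)$ a copy of $\fg^*(\CO)$ transforming in the coadjoint representation, with the relations dictated by $\fd\cong\fg[\epsilon]/\epsilon^2$. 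The grading in property 3 (with $\fg$ in degree $0$, $\fg^*$ in degree $2$, $\hbar$ in degree $-2$) is exactly the $\epsilon$-grading, so one should build $\CA_\hbar(\fd)$ as a graded deformation; the PBW-type basis giving the flat deformation $\CA_\hbar/\hbar\CA_\hbar=U(\fd(\CO))$ should follow from the PBW theorem for $Y_\hbar(\fg)$ plus freeness in the $\fg^*$-variables. Properties 1 and 2 are then checked on generators: 1 by the PBW argument, 2 by computing $\Delta_{\rho,\hbar}-\Delta^{\mathrm{op}}_{\rho,\hbar}$ to first order and matching $\delta_\rho$ from the Manin triple $\fd(\CK)=\fd(\CO)\oplus(W\oplus W^\perp)$.

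\textbf{Step 2: Dependence on $\rho$ and the twist.} This is where I expect the real work and the main obstacle. The key point is that all the Manin triples share the same Lagrangian $\fd(\CO)$ and differ only in the complementary isotropic subalgebra $W\oplus W^\perp$, determined by the $r$-matrix $r$ (equivalently the splitting $\fg(\CK)=\fg(\CO)\oplus W$). On the classical level, two such splittings are related by an element of the ``loop group'' acting on the complement, and the induced cobrackets on $\fd(\CO)$ differ by a coboundary — a classical twist. I would quantize this: the underlying algebra of $\CA_\hbar(\fd,\rho)$ does not see $W$ (it only sees $\fd(\CO)$ and $\hbar$), which gives the canonical algebra identification $\CA_\hbar(\fd,\rho_1)\cong\CA_\hbar(\fd,\rho_2)=\CA_\hbar(\fd)$ for free; what changes is the coproduct. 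Then I would produce $F\in\CA_\hbar(\fd)^{\otimes 2}$ order-by-order in $\hbar$ (or, better, in closed form) whose leading term is the classical twist interpolating the two $r$-matrices. The paper's phrasing — ``spectral $R$-matrices as compositions of twisting matrices'' — strongly suggests that $F$ is built from the factorization data of $W$ relative to a reference (e.g.\ the Yang case $\gamma$), so the natural strategy is: first build $F_\rho$ twisting $\Delta_{\gamma,\hbar}\rightsquigarrow\Delta_{\rho,\hbar}$, then set $F=F_{\rho_2}F_{\rho_1}^{-1}$. The cocycle identity $(\Delta_{\rho_2,\hbar}\otimes 1)(F)F^{12}=(1\otimes\Delta_{\rho_2,\hbar})(F)F^{23}$ then reduces to the statement that each $F_{\rho_i}$ is a Drinfeld twist with respect to $\Delta_{\gamma,\hbar}$ (the shifted cocycle condition), which I would verify either by a direct cohomological computation or by identifying $F_\rho$ with a product of $R$-matrix-type factors whose cocycle property is standard (e.g.\ using the ABRR-type or Gauss-decomposition of the relevant $R$-matrix). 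The hardest technical point is ensuring that these $F_\rho$, defined via infinite factorizations over $W$, actually converge in the appropriate $\hbar$-adic and $t$-adic topology on $\CA_\hbar(\fd)^{\otimes 2}$ — this is where the special structure of $\fd$ (the square-zero condition killing higher $\fg^*$-contributions) should be used to get termination/convergence.

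\textbf{Step 3: Specialization at $\hbar=\xi$.} Because the grading in property 3 pairs $\hbar$ (degree $-2$) against $\fg^*$ (degree $2$), rescaling $\hbar\mapsto\xi\hbar$ for $\xi\neq 0$ is an isomorphism of graded Hopf algebras, reducing $\hbar=\xi\neq 0$ to $\hbar=1$; the value $\hbar=0$ is property 1. The only thing to check is that the generators and relations of $\CA_\hbar(\fd)$, of $\Delta_{\rho,\hbar}$, and the series $F$ all have coefficients that are \emph{polynomial} (not merely formal-power-series) in $\hbar$ — again a consequence of the grading, since in each fixed degree only finitely many powers of $\hbar$ can appear — so evaluation at any $\xi\in\C$ is well-defined. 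I would state this as a corollary of the graded structure rather than reproving convergence.

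\textbf{Main obstacle.} The crux is Step 2: constructing $F$ explicitly and proving the cocycle identity. I expect to handle it by reducing to a reference $r$-matrix and exhibiting $F$ as a (convergent) product of elementary twists coming from the factorization $\fg(\CK)=\fg(\CO)\oplus W$, then invoking the standard fact that such factorization twists satisfy the shifted 2-cocycle equation; verifying convergence in the $\hbar$-adic topology using the $\epsilon^2=0$ nilpotency is the delicate part.
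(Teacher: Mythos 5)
There is a genuine gap, and it sits exactly where you yourself locate the "real work": the construction itself. Your Step 1 proposes to build $\CA_\hbar(\fd)$ by adjoining $\fg^*(\CO)$ to the Yangian $Y_\hbar(\fg)$ (or a double thereof), but this starting point is incompatible with the statement you are proving. Property 3 forces the quantization to be graded with $\deg_\epsilon(\fg)=0$, $\deg_\epsilon(\fg^*)=2$, $\deg_\epsilon(\hbar)=-2$; any $\hbar$-correction to relations purely among the $\fg$-generators has strictly negative degree, so gradedness forces the $\fg(\CO)$-sector to remain \emph{undeformed} as an algebra — whereas $Y_\hbar(\fg)$ deforms it. Moreover, for $x\in\fg(\CO)$ the cobracket $\delta_\rho(x)=[\rho,x\otimes 1+1\otimes x]$ lands in the mixed components $\fg\otimes\fg^*\oplus\fg^*\otimes\fg$, not in $\fg(\CO)\otimes\fg(\CO)$, so $\fg(\CO)$ is not a sub-bialgebra and the Yangian coproduct of $\fg$ (whose semiclassical limit is Yang's cobracket valued in $\fg(\CO)^{\otimes 2}$) cannot be the restriction of $\Delta_{\rho,\hbar}$; property 2 would fail. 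The paper instead takes the undeformed smash product $U(\fg(\CO))[\![\hbar]\!]\#S(\fg^*(\CO))[\![\hbar]\!]$, identifies $S(\hbar\fg^*(\CO))[\![\hbar]\!]$ with the continuous dual of $U(\fg(r))[\![\hbar]\!]$ for the complement $\fg(\CK)=\fg(\CO)\oplus\fg(r)$, and defines $\Delta_{\rho,\hbar}$ on $S(\fg^*(\CO))$ as the dual of multiplication in $U(\fg(r))$ and on $U(\fg(\CO))$ as conjugation by the canonical element $\CE_r=e^{-\hbar(\epsilon\otimes1)\tau(r)}$; all of properties 1–3, and the $\rho$-dependence of the coproduct, come from this mechanism, which is absent from your proposal ("checked on generators" cannot be carried out without a definition of $\Delta_{\rho,\hbar}$). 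Your Step 2 has the right shape — the algebra is $\rho$-independent, and $F$ should be a ratio of two reference twists (the paper takes $F=\CE_{r_2}^{-1}\CE_{r_1}$ and proves the cocycle identity from $(\Delta_{\rho,\hbar}\otimes1)(\CE_r)=\CE_r^{13}\CE_r^{23}$) — but without the elements $\CE_{r_i}$ you have no candidate $F$ and nothing to verify; an order-by-order or ABRR/Gauss-decomposition argument is not set up here and is not needed.

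Your Step 3 also contains an incorrect claim: the bigrading does \emph{not} force coefficients to be polynomial in $\hbar$. A term $\hbar^k$ multiplied by $k{+}1$ factors from $\fg^*$ has $\epsilon$-degree $-2k+2(k{+}1)=2$, independent of $k$, so the dual-BCH expansion of $\Delta_{\rho,\hbar}(f)$ for $f\in\fg^*(\CO)$ is an honest infinite series in fixed $\epsilon$-degree. The rescaling reduction of $\hbar=\xi\neq0$ to $\hbar=1$ is fine, but well-definedness at $\hbar=1$ requires the topological argument the paper gives: identify $U(\fg(r))^*$ with the completed algebra $\varprojlim_i\overline S(\fg^*[t]/t^i\fg^*[t])$ and rerun the proofs pairing against powers $x^k$ instead of $e^{\hbar x}$ (finiteness in $\hbar$ only holds, via the \emph{loop} grading, for rational $\rho$ on the subalgebra generated by $\fd[t]$). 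Likewise, convergence of the twist is not a consequence of $\epsilon^2=0$ nilpotency, but of the $(t)$-adic completion and the exact group-like factorization $e^{\hbar x}=e^{\hbar x_-}e^{\hbar x_+}$ relative to the two splittings.
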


The inspiration of this quantization comes from considering the geometry of the moduli space of $G$-bundles on a formal bubble, a.k.a.\ the \textit{equivariant affine Grassmannian}:
\be
[G(\CO)\!\setminus\!\Gr_G]=G(\CO)\!\setminus\!G(\CK)/G(\CO),
\ee
together with its convolution monoidal structure. More precisely, let $\wh\Gr_G$ be the formal completion of the affine Grassmannian at the identity coset $[e]$ and consider the category of $G(\CO)$-equivariant coherent sheaves on $\wh \Gr_G$. This category has the structure of a monoidal category via a convolution diagram, which we will recall in Section \ref{sec:RavGrG}. Given a splitting $\fg(r)$ of $\fg(\CO)\to \fg(\CK)$, one can identify $\wh\Gr_G$ with $\wh\fg(r)$, which is the formal completion of the ind-vector space $\fg(r)$ at $0$. Therefore one can identify sheaves on $\wh\Gr_G$ with smooth modules of the topological algebra $\C[\wh\fg(r)]$. As an algebra, $\CA_\hbar(\fd,\rho)$ evaluated at $\hbar=1$ is simply the smashed product $U(\fg(\CO))\# \C[\wh\fg(r)]$. We construct the coproduct on $\CA_\hbar(\fd, \rho)$ that mimics the convolution product on the double quotient, which provides the canonical quantization of $(\fd(\CO),\delta_\rho)$. We will in fact show the following statement.

\begin{Prop}[Proposition \ref{Prop:abequiv}, Proposition \ref{Prop:E1equiv}]\label{Prop:E1equivintro}
There is an equivalence of monoidal categories
\be
\Coh_{G(\CO)}(\wh \Gr_G)^{\heartsuit}\simeq \CA_1(\fd, \rho)\Mod_{G},
\ee
where the right-hand side is the category of finite-dimensional smooth modules of $\CA_1(\fd, \rho)$ (the algebra evaluated at $\hbar=1$), such that the action of $\fg$ integrates to an algebraic action of $G$. 

\end{Prop}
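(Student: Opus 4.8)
I would prove the two assertions in sequence: first the equivalence of abelian categories (Proposition~\ref{Prop:abequiv}), then its promotion to a monoidal equivalence (Proposition~\ref{Prop:E1equiv}). The common dictionary is the following. A choice of splitting $\fg(\CK)=\fg(\CO)\oplus\fg(r)$ identifies $\wh\Gr_G$ with the formal completion $\wh{\fg(r)}$ of the ind-vector space $\fg(r)$ at the origin, via $x\mapsto\exp(x)G(\CO)$; coherent sheaves on this formal ind-scheme are the same as finite-dimensional, topologically discrete (``smooth'') modules over the topological algebra $\C[\wh{\fg(r)}]$; and a residual $G(\CO)$-equivariant structure on such a sheaf is exactly the extra datum needed to promote the module to one over the smashed product $U(\fg(\CO))\#\C[\wh{\fg(r)}]=\CA_1(\fd,\rho)$, subject to the $\fg$-action integrating to $G$.

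\textbf{The abelian equivalence.} First I would make the identification $\wh\Gr_G\cong\wh{\fg(r)}$ precise as an isomorphism of $G(\CO)$-equivariant formal ind-schemes, where $G(\CO)$ acts by left translation on $G(\CK)/G(\CO)$; differentiating the action produces a Lie algebra homomorphism $\fg(\CO)\to\mathrm{Vect}(\wh{\fg(r)})$ whose induced cross relations with $\C[\wh{\fg(r)}]$ are, by construction, those defining $\CA_1(\fd,\rho)$ as a smashed product (this is where Theorem~\ref{Thm:quantization_intro} and the canonical algebra identification $\CA_\hbar(\fd,\rho_1)\cong\CA_\hbar(\fd,\rho_2)$ enter). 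Next, factor $G(\CO)=G\ltimes G^{(1)}$ where $G^{(1)}=\ker(\mathrm{ev}_0\colon G(\CO)\to G)$ is pro-unipotent with Lie algebra $t\fg\lbb t\rbb$. A $G(\CO)$-equivariant structure on a coherent sheaf is then equivalent to a $G$-equivariant structure together with a compatible $t\fg\lbb t\rbb$-action: for the pro-unipotent factor the passage from group to Lie algebra is an equivalence because the sheaf is finite-dimensional, forcing the $t\fg\lbb t\rbb$-action to be locally nilpotent and hence to integrate uniquely. Taking global sections and combining these structures sends a $G(\CO)$-equivariant coherent sheaf to a finite-dimensional smooth $\CA_1(\fd,\rho)$-module whose $\fg$-action integrates to $G$; the associated-sheaf construction is an inverse, and functoriality gives the equivalence.

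\textbf{The monoidal upgrade.} I would then recall the convolution diagram of Section~\ref{sec:RavGrG},
\[
\wh\Gr_G\ \xleftarrow{\ \pi\ }\ \wh\Gr_G\,\widetilde{\times}\,\wh\Gr_G\ \xrightarrow{\ m\ }\ \wh\Gr_G,
\]
with $\CF_1\star\CF_2:=m_*(\CF_1\,\widetilde{\boxtimes}\,\CF_2)$, and identify $\wh\Gr_G\,\widetilde{\times}\,\wh\Gr_G\cong\wh{\fg(r)}\times\wh{\fg(r)}$ so that $m$ becomes the formal ``multiplication'' $\mu$ characterized by $\exp(x_1)\exp(x_2)=\exp(\mu(x_1,x_2))\,k(x_1,x_2)$ with $k(x_1,x_2)\in G(\CO)$ the ``defect'' cocycle coming from the formal decomposition $G(\CK)=\exp(\fg(r))\cdot G(\CO)$ near the identity. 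Transporting $\star$ through the abelian equivalence, the underlying vector space of $\CF_1\star\CF_2$ is $M_1\otimes M_2$, the $\C[\wh{\fg(r)}]$-action is pullback along $\mu$, and the $\fg(\CO)$-action is the naive one corrected by $k$; the content is to check that the resulting coalgebra structure on $\CA_1(\fd,\rho)$ is exactly $\Delta_{\rho,1}$, which is precisely how $\Delta_{\rho,\hbar}$ was defined in the proof of Theorem~\ref{Thm:quantization_intro} and then specialized at $\hbar=1$ (the specialization being legitimate by the last sentence of that theorem). Coassociativity of $\Delta_{\rho,1}$ corresponds to associativity of multiplication in $G(\CK)$ and yields the associativity constraint of $\star$; the skyscraper at $[e]$ corresponds to the trivial module via the counit and is the monoidal unit; and the subcategory condition ``$\fg$ integrates to $G$'' is stable under $\Delta_{\rho,1}$ because the diagonal $\fg$-action on a tensor product of integrable modules integrates to the diagonal $G$-action. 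Hence the abelian equivalence upgrades to a monoidal one.

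\textbf{Expected main obstacle.} The crux is the identification of the transported convolution with $\Delta_{\rho,1}$: this requires making the geometry on the left explicit enough to extract formulas — in particular the formal Iwasawa-type decomposition $G(\CK)=\exp(\fg(r))\cdot G(\CO)$ and the cocycle $k(x_1,x_2)$ it produces — and then matching the $k$-correction to the $\fg(\CO)$-action with the non-primitive part of $\Delta_{\rho,1}$ with no spurious twist, all carried out in the topological (completed-tensor-product) setting. A subsidiary point is checking that $m_*$ is exact on the relevant sheaves so that $\star$ preserves the heart; after the identification, $m$ is a formally smooth morphism with pro-affine-space fibers and the sheaves have finite support, so higher direct images vanish and $\star$ lands in $\Coh^{\heartsuit}$.
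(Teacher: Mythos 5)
Your proposal is correct and follows essentially the same route as the paper: identify $\wh\Gr_G$ with the formal completion attached to the complementary subalgebra (the paper uses the formal group $\wh G_{<0}$ and the formal-group/Lie-algebra correspondence), identify $G(\CO)$-equivariant coherent sheaves with finite-dimensional smooth modules over the smash product $\CA_1(\fd,\rho)=U(\fg(\CO))\#\,\overline S(\fg^*(\CO))$, and match the convolution pushforward and pullback with the two halves of $\Delta_{\rho,1}$, which was defined precisely to mimic this convolution. The only differences are organizational: the paper reduces the monoidal statement to $\rho=\gamma$ via the quantum twist and then devotes a separate argument (base-change diagrams for classical ind-affine schemes) to trivializing the associativity constraint, a point your sketch compresses into "coassociativity corresponds to associativity in $G(\CK)$," while your explicit factorization $G(\CO)=G\ltimes G^{(1)}$ for the equivariance/integrability step is handled implicitly in the paper.
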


In the special case when $\rho$ depends only on $t_1-t_2$, the Hopf algebra $\CA_\hbar(\fd,\rho)$ admits a Hopf derivation $T$, which is the time-translation $\pd_t$ action inherited from $\fd(\CK)$. Moreover, when $\rho=\gamma$ is Yang's $r$-matrix, then the quantization $\CA_\hbar(\fd, \gamma)$ is not only $\epsilon$-graded, but also graded by loop rotation $t^n\to c^nt^n$. In this specific case, we have the following uniqueness result.

\begin{Thm}[Theorem \ref{Thm:yangunique}]\label{Thm:yanguniqueintro}
    There is, up to isomorphism, a unique quantization of $(\fd(\CO),\delta_\gamma)$ which is both \(\epsilon\)-graded and loop graded.    
\end{Thm}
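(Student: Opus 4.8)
The plan is to show that any $\epsilon$-graded and loop-graded quantization of $(\fd(\CO),\delta_\gamma)$ is isomorphic to $\CA_\hbar(\fd,\gamma)$ via a deformation-theoretic rigidity argument. First I would set up the relevant deformation complex: quantizations of a given Lie bialgebra $(\fd(\CO),\delta_\gamma)$ as a topological Hopf algebra over $\C\lbb\hbar\rbb$ are controlled, in the usual Gerstenhaber–Schack manner, by the bialgebra cohomology of $U(\fd(\CO))$ with coefficients in itself, and the obstruction to uniqueness at each order $\hbar^n$ lives in $H^3_{\mathrm{GS}}$ while the ambiguity (choice of equivalence) lives in $H^2_{\mathrm{GS}}$. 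The key point is that both gradings — the $\epsilon$-grading (with $\hbar$ in $\epsilon$-degree $-2$, equivalently the grading in item (3) of Theorem \ref{Thm:quantization_intro}) and the loop grading $t^n \mapsto c^n t^n$ — are preserved, so we only need the graded pieces of these cohomology groups in the relevant degrees. I would isolate the bidegree in which obstructions and automorphism ambiguities can occur: because $\hbar$ carries strictly negative degree under the $\epsilon$-grading and nonnegative-degree shifts under loop rotation do not compensate, the space of graded Hochschild/bialgebra cochains of the correct total degree to contribute at order $\hbar^n$ is finite-dimensional (it involves only finitely many loop modes and finitely many $\fg^*$ factors), which makes the vanishing statements tractable.

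The main steps, in order, are: (1) reduce the uniqueness claim to the statement that the graded bialgebra cohomology $H^i_{\mathrm{GS}}(U(\fd(\CO)))$ in the bidegrees compatible with $\hbar$-degree $n$ vanishes for $i=2,3$ and all $n\ge 1$ (with $i=2$ giving uniqueness of the quantization up to twist and $i=3$ giving existence/obstruction control, though existence is already provided by $\CA_\hbar(\fd,\gamma)$); (2) compute these cohomology groups. For step (2) I would use that $\fd(\CO)=\fg(\CO)\ltimes \fg^*(\CO)$ and that $\delta_\gamma$ is the standard rational cobracket, and exploit the fact that $\fg$ is simple: the Lie algebra cohomology $H^*(\fg(\CO),\cdot)$ with polynomial coefficients is Whitehead-rigid in low degrees, and the abelian factor $\fg^*(\CO)$ contributes via a Koszul-type complex whose invariant part is controlled again by the vanishing of $H^1(\fg,\fg)=H^2(\fg,\fg)=0$. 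I would organize this via a Hochschild–Serre type spectral sequence for the extension $\fg^*(\CO)\hookrightarrow \fd(\CO)\twoheadrightarrow \fg(\CO)$, reducing everything to $\fg(\CO)$-invariants of explicit finite-dimensional $\fg$-modules built from symmetric powers of $\fg$ and $\fg^*$.

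An alternative, more hands-on route — which may actually be the cleanest to write — is to argue directly by induction on the order of $\hbar$: given two such quantizations agreeing mod $\hbar^n$, the difference of their coproducts at order $\hbar^n$ is a Hochschild 2-cocycle for $U(\fd(\CO))$ that is also a coalgebra cochain, it is homogeneous of the forced bidegree, and I would show by the graded vanishing above that it is a coboundary, hence can be killed by a graded Hopf algebra automorphism congruent to the identity mod $\hbar^n$; since $\CA_\hbar(\fd,\gamma)$ evaluated at $\hbar=\xi$ is already known to be well-defined, this automorphism also converges. The compatibility with the Hopf derivation $T=\pd_t$ is automatic once both gradings are imposed, since $T$ is recovered from the loop grading and the algebra structure.

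The hard part will be the cohomology vanishing in step (2): computing $H^2$ and $H^3$ of Gerstenhaber–Schack bialgebra cohomology for the infinite-dimensional Lie algebra $\fd(\CO)$ in the specific graded pieces. The subtlety is that $\fd(\CO)$ is not reductive — it has the large abelian radical $\fg^*(\CO)$ — so the standard "Whitehead lemma for the simple part" does not immediately apply, and one must carefully track the contribution of $\fg^*$-directions and of the many loop modes. I expect the $\epsilon$-grading to do most of the work: it forces any order-$\hbar^n$ cochain to have a fixed large $\fg^*$-weight, and the only way to build $\fg(\CO)$-invariants of that weight from finitely many loop modes is to use Casimir-type elements, whose cohomological behavior is then pinned down by Whitehead's lemma applied mode-by-mode. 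Making this "$\epsilon$-grading forces a Casimir" heuristic into a rigorous finiteness-and-vanishing argument is the crux.
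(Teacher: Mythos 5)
Your overall strategy is the same as the paper's: following Drinfeld, quantizations are controlled order by order in \(\hbar\) by (Lie) bialgebra cohomology, one normalizes so that the coproducts agree on primitives, and the order-\(\hbar^k\) discrepancy is killed by a graded automorphism congruent to the identity, then one inducts. Up to repackaging (you phrase it via Gerstenhaber--Schack cohomology of \(U(\fd(\CO))\), the paper works directly with \(\widetilde\mu\), \(\widetilde\xi\), \(\zeta\) and assembles them into a continuous \(\C\)-valued 3-cocycle \(c\) on the double \(\fd(\CK)\)), the skeleton matches, including the use of the antipode to arrange \(\zeta(\fd(\CO))=0\) and the extension of the correcting cochain \(\chi\) from \(\fd(\CO)\) to \(U(\fd(\CO))\).

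The genuine gap is exactly where you say the crux is, and your proposed resolution would not go through as stated. The needed statement is not a vanishing of a graded \(H^2\) or \(H^3\): the relevant continuous cohomology of \(\fd(\CK)\) (equivalently, the bialgebra cohomology you invoke) does \emph{not} vanish outright -- there are nontrivial 3-cocycles of the shape \(\kappa([x_1,x_2],x_3)\varphi(\lambda_1,\lambda_2,\lambda_3)\) built from residues on \(\CK[\epsilon]/(\epsilon^2)\) -- so "Whitehead rigidity mode-by-mode" cannot be the whole argument. What one must prove is a \emph{relative} coboundary statement: the specific 3-cocycle \(c\) arising from the two quantizations, which is bihomogeneous and vanishes on \(\wedge^3\fd(\CO)\) and \(\wedge^3\fd(\gamma)\) (and, by the two gradings, on \(\wedge^3\fg(\CK)\) and \(\wedge^2\fg^*(\CK)\wedge\fd(\CK)\), with loop degrees summing to \(-k\)), equals \(d\chi\) for a 2-cochain \(\chi\) that itself vanishes on \(\wedge^2\fd(\CO)\) and \(\wedge^2\fd(\gamma)\). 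In the paper this requires, beyond two applications of Whitehead-type lemmas: (i) the classification of \(\fg\)-invariant trilinear forms on simple \(\fg\), including ruling out the extra symmetric invariant \(\mathrm{tr}(x_1x_2x_3)\) when \(\fg=\mathfrak{sl}_n\); (ii) a reduction of the remaining data to maps \(\varphi_\pm\colon \CO^\pm\otimes\CO^\pm\to\CO^\pm\) which are 2-cocycles of the commutative algebras \(\CO\) and \(\C[t^{-1}]\), and the fact that these are coboundaries because those algebras are smooth; and (iii) gluing the resulting \(\phi_\pm\) into a single \(\phi\colon\CK\to\CK\) so that \(\chi\) has the required support properties. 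None of this is supplied by, or reducible to, \(H^1(\fg,\fg)=H^2(\fg,\fg)=0\) plus a Koszul complex for the abelian radical, so as written your step (2) is a conjecture rather than a proof, and it is precisely the content of the paper's key lemma.
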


 Consequently, $\CA_\hbar(\fd, \gamma)$ is isomorphic to the Yangian $Y_\hbar(\fd)$ (as defined by \cite{etingof1996quantization, etingof1998quantization}) as a bi-graded Hopf algebra, since the completion of the latter is a bi-graded quantization of $(\fd(\CO),\delta_\gamma)$. Just like the Yangian for the simple Lie algebra $\fg$, it is expected that $Y_\hbar(\fd)$ should possess a spectral $R$-matrix. Our next main result is the construction of this spectral $R$-matrix. However, due to convergence issues this $R$-matrix is constructed in the dense Hopf subalgebra $ Y^\circ_\hbar(\fd)$ generated by $\fd[t]$. 
 
\begin{Thm}[Theorem \ref{Thm:fullR}]\label{Thm:fullRintro}
 Denote by $\tau_z$ the action of $e^{zT}$. There exists an element
    \be
R(z)\in (Y^\circ_\hbar(\fd) \otimes_{\C[\![\hbar]\!]} Y_\hbar^\circ(\fd))(\!( z^{-1})\!)
    \ee
   such that
   \be
(\tau_z\otimes 1)\Delta_{\hbar}^{\textnormal{op}}(a)=R(z) (\tau_z\otimes 1) \Delta_{\hbar}(a) R(z)^{-1},
   \ee
for any element $ a\in Y_\hbar^\circ(\fd)$. Here, the equality is in $(Y_\hbar(\fd)\otimes_{\C\lbb\hbar\rbb} Y_\hbar(\fd))\lpp z^{-1}\rpp$. This $R$ satisfies the quantum Yang-Baxter equation:
   \be
R^{12}(z_1)R^{13}(z_1+z_2)R^{23}(z_2)=R^{23}(z_2)R^{13}(z_1+z_2)R^{12}(z_1).
   \ee
   
\end{Thm}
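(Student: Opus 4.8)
The plan is to build $R(z)$ as a composition of twisting matrices, exactly in the spirit of Theorem \ref{Thm:quantization_intro}, and to deduce the Yang--Baxter equation from the cocycle property of those twists. First I would exploit the translation action: define $\Delta_{\hbar,z} := (\tau_z\otimes 1)\circ\Delta_\hbar\circ\tau_{-z}$, a ``spectral'' coproduct on $Y^\circ_\hbar(\fd)$, and note that the $r$-matrix $\gamma = C/(t_1-t_2)$ picks up a spectral shift $\gamma_z = C/(t_1-t_2+z)$ under this action. The idea is that $\Delta_{\hbar,z}$ should be identified, via the uniqueness/twist-equivalence machinery of Theorem \ref{Thm:quantization_intro}, with a coproduct $\Delta_{\rho_z,\hbar}$ coming from the shifted $r$-matrix (or, more precisely, from the shifted splitting $W_z = \fg(\gamma_z)$ of $\fg(\CK)$); then the theorem produces a twist $F(z)$ intertwining $\Delta_{\rho_0,\hbar}=\Delta_\hbar$ with $\Delta_{\rho_z,\hbar}$. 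I would set $R(z) := F^{\textnormal{op}}(z) F(z)^{-1}$, or rather construct $R(z)$ directly as the product of the elementary twisting matrices whose existence is asserted in the moreover-part of Theorem \ref{Thm:quantization_intro}, so that by construction $R(z)$ conjugates $(\tau_z\otimes 1)\Delta_\hbar$ into $(\tau_z\otimes 1)\Delta_\hbar^{\textnormal{op}}$. The intertwining identity
\[
(\tau_z\otimes 1)\Delta_\hbar^{\textnormal{op}}(a) = R(z)\,(\tau_z\otimes 1)\Delta_\hbar(a)\,R(z)^{-1}
\]
then holds for all $a\in Y^\circ_\hbar(\fd)$ essentially tautologically.

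Second, I would address the analytic meaning of $R(z)$. Because the shifted splitting $W_z$ degenerates as $|z|\to 0$ and the twist $F(z)$ involves geometric-series expansions in $t_1-t_2$, the natural home of $R(z)$ is $(Y^\circ_\hbar(\fd)\otimes Y^\circ_\hbar(\fd))(\!(z^{-1})\!)$; I would verify that each elementary twisting matrix, when expanded at $z=\infty$, lies in this completion and that the (infinite) product converges there degreewise, using the grading in item (3) of Theorem \ref{Thm:quantization_intro} ($\fg$ in degree $0$, $\fg^*$ in degree $2$, $\hbar$ in degree $-2$) to control each homogeneous component — only finitely many factors contribute in each degree. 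This is where restricting to the dense subalgebra $Y^\circ_\hbar(\fd)$ generated by $\fd[t]$ rather than $Y_\hbar(\fd)$ is forced: the action of $\tau_z = e^{zT}$ is only locally finite, hence well-defined after $z^{-1}$-completion, on the polynomial-loop part.

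Third, for the quantum Yang--Baxter equation I would use the cocycle identity for $F$ from Theorem \ref{Thm:quantization_intro}, namely $(\Delta_{\rho_2,\hbar}\otimes 1)(F)F^{12} = (1\otimes\Delta_{\rho_2,\hbar})(F)F^{23}$, applied to the three pairwise-shifted copies with spectral parameters $z_1$, $z_1+z_2$, $z_2$. Concretely, writing $R^{ij}(w)$ for the image of $R(w)$ in the $i$-th and $j$-th tensor factors, both sides of the QYBE should be computed as the result of re-expressing $\Delta_\hbar^{\textnormal{op},(3)}$ (the fully opposite triple coproduct, appropriately shifted) in terms of $\Delta_\hbar^{(3)}$ along two different ``paths'' in the symmetric group $S_3$ — top to bottom via $(12),(13),(23)$ versus via $(23),(13),(12)$ — and the associativity (cocycle) and coassociativity of the coproduct force the two expressions to agree. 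This is the standard derivation of QYBE from a ``universal $R$-matrix'' presented as a product of twists; the additivity $R^{13}(z_1+z_2)$ reflects the group law $\tau_{z_1}\tau_{z_2}=\tau_{z_1+z_2}$ for the translation action. I would also record that, since the whole construction is built from the $\hbar$-adically and $z^{-1}$-adically convergent twists and the Hopf structure specializes at $\hbar=\xi$, the relations descend to $(Y_\hbar(\fd)\otimes Y_\hbar(\fd))(\!(z^{-1})\!)$ as stated.

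The main obstacle I anticipate is \emph{not} the formal algebra — the intertwining and QYBE are formal consequences of the twist framework — but the \emph{convergence and well-definedness} of $R(z)$ as an infinite composition of twisting matrices in the correct completion. Specifically, one must show that the spectral family of splittings $W_z$ is genuinely realized inside the twist-equivalence class of Theorem \ref{Thm:quantization_intro} with twists depending \emph{rationally} (hence $z^{-1}$-expandably) on $z$, and that when these are multiplied together the total degree-by-degree sum is finite; controlling the interplay between the loop grading, the $\epsilon$-grading, and the $z$-expansion — and checking that no pole accumulation at finite $z$ spoils the Laurent expansion at $\infty$ — is the technical heart of the argument. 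A secondary subtlety is checking that $\tau_z$ genuinely acts on $Y^\circ_\hbar(\fd)$ by \emph{Hopf} algebra maps after completion, so that conjugating the coproduct by $R(z)$ lands back in the same algebra; this uses that $T=\pd_t$ is a Hopf derivation, noted in the text preceding Theorem \ref{Thm:yanguniqueintro}.
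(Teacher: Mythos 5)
There is a genuine gap at the heart of your plan: the step where you declare the intertwining identity ``essentially tautological.'' If $F(z)$ twists $\Delta_\hbar$ into some intermediate coproduct $\Delta'$, then $F^{\textnormal{op}}(z)F(z)^{-1}$ conjugates $(\tau_z\otimes 1)\Delta_\hbar$ into $(\tau_z\otimes 1)\Delta_\hbar^{\textnormal{op}}$ only if $\Delta'$ is (weakly) cocommutative, and nothing in your construction provides that. Worse, the intermediate coproduct you propose does not exist within the framework of Theorem \ref{Thm:quantization_intro}: the shifted Yang $r$-matrix $C/(t_1-t_2+z)$, expanded at $z=\infty$ (or at any fixed $z\neq 0$ in the formal neighbourhood of $t_1=t_2=0$), has \emph{no} singular part along $t_1=t_2$, so it is not of the normal form $\gamma+g$ and its coefficients do not span a Lagrangian complement $W_z$ to $\fg(\CO)$ in $\fg(\CK)$; hence there is no ``$\Delta_{\rho_z,\hbar}$'' and no twist $F(z)$ supplied by the quantization/twisting theorem. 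Your QYBE argument inherits the same problem: the cocycle identity of Theorem \ref{Thm:quantization_intro} is a statement about a single parameter-free twist, and inserting spectral parameters into it requires precisely the weakly coassociative meromorphic coproduct that your construction never produces. Finally, the convergence worry you flag (an infinite composition of twists) is not where the difficulty lies in the correct argument.

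What the paper does instead is to manufacture exactly the missing weakly cocommutative object from the vertex algebra $V_0(\fg)$: dualizing the intertwining operator $\CY$ gives a second coproduct $\Delta_z$ on $Y^\circ_\hbar(\fd)$, which is an algebra map, weakly coassociative (vertex locality) and weakly cocommutative in the twisted sense $\Delta_z^{\textnormal{op}}=(\tau_z\otimes\tau_z)\Delta_{-z}$ (vertex skew-symmetry). The twisting matrix is the explicit single exponential $R_s(z)=e^{r_{\textnormal{sing}}(z)}$, where $r_{\textnormal{sing}}(z)$ is the expansion at $z=\infty$ of $\hbar\,I^a\otimes I_a/(t_2-z-t_1)$; the factorization $\CY=e^{r_{\textnormal{reg}}(z)}e^{r_{\textnormal{sing}}(z)}$ shows that $R_s(z)$ intertwines $(\tau_z\otimes 1)\Delta_\hbar$ with $\Delta_z$, and a normally-ordered-product computation proves the cocycle condition for $R_s$. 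Then $R(z)=R_s^{21}(-z)R_s(z)^{-1}$, the intertwining identity follows from weak cocommutativity of $\Delta_z$ together with $(T\otimes 1+1\otimes T)R_s(z)=0$, and the QYBE follows from the cocycle condition by the argument of Gautam--Toledano Laredo. So while your slogan (``$R$ as a composition of twisting matrices, QYBE from a cocycle condition'') matches the paper's, the concrete route you propose breaks down at the construction of the intermediate coproduct, and the vertex-algebra input is the essential ingredient you would need to add.
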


In fact, an analog of Theorem \ref{Thm:fullRintro} exists for any $\CA_\hbar(\fd, \rho)$, where $\rho$ is an \(\epsilon\)-graded \(r\)-matrix that depends only on $t_1-t_2$; see Section \ref{sec:twisting_R_matrices}.
The construction of $R$ in Theorem \ref{Thm:fullRintro} again uses the relation to the equivariant affine Grassmannian, but this time taking inspiration from its factorization structure \cite{beilinson2004chiral}. This construction also gains inspiration from \cite{gautam2021meromorphic}, where the full $R$-matrix is constructed from meromorphic $R$-matrices of the Yangian. 

More precisely, we show that $ Y^\circ_\hbar(\fd)$ admits another coproduct, this time with meromorphic dependence:
\be
\Delta_z \colon  Y^\circ_\hbar(\fd)\to ( Y^\circ_\hbar(\fd)\otimes_{\C\lbb\hbar\rbb}  Y^\circ_\hbar(\fd))\lpp z^{-1}\rpp.
\ee
This coproduct is constructed from the intertwining map of the affine Kac-Moody vertex algebra $V_0(\fg)$. We then construct a twisting similar to \cite{gautam2021meromorphic}, which is an element
\be
R_s(z)\in  (Y^\circ_\hbar(\fd)\otimes_{\C\lbb\hbar\rbb}  Y^\circ_\hbar(\fd))\lbb z^{-1}\rbb,
\ee
such that:
\begin{itemize}
    \item \((\tau_z\otimes 1)\Delta_\hbar=R_s(z)\Delta_zR_s(z)^{-1}\);
    \item \((\Delta_{z_1}\otimes 1)(R_s(z_2)^{-1})R_{s}^{12}(z_1)^{-1}=(1\otimes \Delta_{z_2})(R_s(z_1+z_2)^{-1}) R_{s}^{23}(z_2)^{-1}\). 
\end{itemize}
The full spectral $R$-matrix $R(z)$ is constructed from $R_s$ by:
\be
R(z)=R_{s}^{21}(-z)R_s(z)^{-1}.
\ee
The quantum Yang-Baxter equation of $R(z)$ follows from the cocycle condition satisfied by $R_s(z)$. 
Since \(\CA_\hbar(\fd,\rho)\) is obtained from \(Y_\hbar(\fd)\) by twisting according to Theorem \ref{Thm:quantization_intro}, we obtain the aforementioned analog of Theorem \ref{Thm:fullRintro} in Section \ref{sec:twisting_R_matrices} for \(\epsilon\)-graded \(r\)-matrices \(\rho\) with coefficients in \(\fd\) other than \(\gamma\).

The construction of $\Delta_z$ makes use of the intertwining operator $\CY$ of the vertex algebra $V_0(\fg)$. In fact, we comment in Section \ref{sec:MeroR} how $Y_\hbar(\fd)$ can be identified with the continuous dual of $S(t^{-1}\fg^*[t^{-1}])\otimes V_0(\fg)$, under which $\Delta_z$ is the dual of the intertwining operator on this vertex algebra (the vertex algebra structure on $S(t^{-1}\fg^*[t^{-1}])$ is commutative). Although we are not able to directly compare this coproduct with the factorization structure of $\Coh_{G(\CO)}(\wh\Gr_G)$, we show in Section \ref{sec:RavGrG} that one can naturally obtain this vertex algebra from the equivariant affine Grassmannian.

\begin{Prop}[Proposition \ref{Prop:dualVOA}]\label{Prop:dualVOAintro}
     Let $\pi: \wh{\Gr}_G\to [\wh{G}(\CO)\!\setminus\!\wh{\Gr}_G]$ be the natural projection, where $\wh{G}(\CO)$ is the formal completion of $G(\CO)$ at identity, and let $\omega$ be the dualizing sheaf of $\wh \Gr_G$. Then one can identify
    \be
\Gamma(\wh{\Gr}_G, \pi^*\pi_*(\omega))\cong \C[\wh{G}(\CO)]\otimes V_0(\fg),
    \ee
    as vertex algebras, where the vertex algebra $\C[\wh{G}(\CO)]$ is commutative. 
\end{Prop}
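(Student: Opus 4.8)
The plan is to split the statement into an identification of underlying (topological) vector spaces and an identification of vertex algebra structures; conceptually, this is the assertion that the affine Kac--Moody vertex algebra $V_0(\fg)$ at level $0$ is the factorization algebra of distributions at the unit of $\Gr_G$ (cf.\ \cite{beilinson2004chiral}), the extra tensor factor $\C[\wh{G}(\CO)]$ being produced by the operation $\pi^*\pi_*$.

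\textbf{The underlying vector space.} Since $\pi\colon\wh\Gr_G\to[\wh{G}(\CO)\setminus\wh\Gr_G]$ is a $\wh{G}(\CO)$-torsor over the quotient formal stack, pull-push around it amounts to tensoring with functions on the group: $\pi^*\pi_*\CF\cong\CF\otimes_\C\C[\wh{G}(\CO)]$ for any sheaf $\CF$ on $\wh\Gr_G$, with the natural twisted action. (This is base change along the Cartesian square whose two maps $\wh{G}(\CO)\times\wh\Gr_G\rightrightarrows\wh\Gr_G$ are the action and the projection, the shearing automorphism $(g,x)\mapsto(g,gx)$ intertwining one with the other.) Applying this to $\CF=\omega$ gives $\Gamma(\wh\Gr_G,\pi^*\pi_*\omega)\cong\C[\wh{G}(\CO)]\otimes\Gamma(\wh\Gr_G,\omega)$. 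For the remaining factor I would use that $[e]$ lies in the open cell of $\Gr_G$, so $\wh\Gr_G$ is the formal completion at $0$ of the ind-vector space $T_{[e]}\Gr_G=\fg(\CK)/\fg(\CO)\cong t^{-1}\fg[t^{-1}]$ and is formally smooth; the global sections of the dualizing sheaf of such a completion are the distributions supported at $[e]$, i.e.\ $\Sym(T_{[e]}\Gr_G)\cong\Sym(t^{-1}\fg[t^{-1}])$, equivalently the continuous dual of the topological ring $\C[\wh\Gr_G]$. By PBW this is precisely the underlying vector space of $V_0(\fg)=U(t^{-1}\fg[t^{-1}])$, whence a vector-space isomorphism $\Gamma(\wh\Gr_G,\pi^*\pi_*\omega)\cong\C[\wh{G}(\CO)]\otimes V_0(\fg)$.

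\textbf{The vertex algebra structure.} With the factorization structure of the Beilinson--Drinfeld Grassmannian set up as in Section \ref{sec:RavGrG}, the dualizing sheaf is a factorization sheaf, $\pi^*\pi_*\omega$ is the associated unital factorization sheaf along the unit section, and $\Gamma(\wh\Gr_G,\pi^*\pi_*\omega)$ thereby becomes a vertex algebra with vacuum the canonical delta-section $\delta_{[e]}$ and translation operator induced by the vector field $\pd_t$ on the disc. I would then match the two tensor factors separately. The factor $\C[\wh{G}(\CO)]$ consists of the functions pulled back from the $G(\CO)$-directions, i.e.\ functions on the formal arc group of $G$; functions on an arc/jet scheme carry the commutative vertex algebra structure with translation operator $\pd_t$ on $\fg\lbb t\rbb$, which identifies this factor with the commutative vertex algebra $\C[\wh{G}(\CO)]$. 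On the complementary factor the residual left $G(\CK)$-action on $\wh\Gr_G$ acts, infinitesimally by vector fields, on $\omega$ and hence on its distributions at $[e]$; computing the operator product of the resulting currents $x(z)$, $x\in\fg$, near the diagonal of the factorization space gives the affine Kac--Moody OPE. The crucial point is that the central term vanishes, so the level is $0$: the determinant line bundle of $\Gr_G$, which would shift the level, restricts to the trivial bundle on $\wh\Gr_G$, and its trivialization --- equivalently the vacuum $\delta_{[e]}$ --- is unique up to a scalar, so the $\fg(\CK)$-action here is the genuine (hence level $0$) action by vector fields rather than a projective lift to a nontrivial line bundle. Since $V_0(\fg)$ is freely generated as a vertex algebra by these level-$0$ currents, the reconstruction theorem identifies the complementary factor with $V_0(\fg)$; and because $\pi^*\pi_*\omega$ is, along the diagonal, a box product of factorization sheaves on the arc directions and the transverse directions, the two vertex subalgebras have trivial mutual operator product, giving the stated tensor product.

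\textbf{Main obstacle.} I expect the real work to be in the second step: making the coherent-sheaf factorization structure on $\pi^*\pi_*\omega$ precise for these ind-pro-finite-type formal objects, extracting the resulting state--field correspondence, and in particular verifying that the Kac--Moody level which appears is exactly $0$ and not a critical shift. The vector-space bookkeeping of the first step --- the precise meaning of the dualizing sheaf and of its global sections on an infinite-dimensional formal scheme, the $\QCoh$-versus-$\IndCoh$ conventions, and the convergence of the pull-push and K\"unneth identifications --- also needs care, but should be routine once the conventions of Section \ref{sec:RavGrG} are fixed.
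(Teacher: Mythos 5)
Your proposal is correct and follows essentially the same route as the paper: the paper's entire proof is the base-change argument around the Cartesian square formed by the action and projection maps $\wh{G}(\CO)\times\wh{\Gr}_G\rightrightarrows\wh{\Gr}_G$, giving $\pi^*\pi_*(\omega)\cong m_*(\CO_{\wh{G}(\CO)}\boxtimes\omega)$ and hence $\Gamma(\wh{\Gr}_G,\pi^*\pi_*(\omega))\cong\C[\wh{G}(\CO)]\otimes\Gamma(\wh{\Gr}_G,\omega)$, exactly as in your first step. The identification $\Gamma(\wh{\Gr}_G,\omega)\cong V_0(\fg)$ as vertex algebras, which you propose to re-derive and whose level-$0$ verification you flag as the main obstacle, is not re-proved in the paper but simply quoted as the standard fact recorded just before the proposition (\cite[Proposition 20.4.3]{frenkel2004vertex}), so your second step can be replaced by that citation.
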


Note that $\C[\wh{G}(\CO)]$ is a completion of $S(t^{-1}\fg^*[t^{-1}])$. In \cite{cautis2023canonical}, the authors obtained a renormalized $r$-matrix using the monoidal factorization structure of $G(\CO)\!\setminus \!\Gr_G$. Although we are not able to prove this, we present the following conjecture:

\begin{Conj}[Conjecture \ref{Conj:merorfac}]\label{Conj:merofacintro}
Under the equivalence of Proposition \ref{Prop:E1equivintro}, the renormalized $r$-matrix of \cite{cautis2019cluster} corresponds to the lowest non-trivial loop degree part of the quantum R-matrix $R(z)$, acting on a tensor product of smooth modules. 
\end{Conj}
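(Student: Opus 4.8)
\emph{Strategy toward Conjecture~\ref{Conj:merofacintro}.} The plan is to route the comparison through the geometric model used throughout the paper and reduce the conjecture to a single statement: that the meromorphic coproduct $\Delta_z$ on $Y^\circ_\hbar(\fd)$, built in Section~\ref{sec:MeroR} from the intertwining operator of the vertex algebra $\C[\wh G(\CO)]\otimes V_0(\fg)$, is dual to the factorization coproduct on $\Coh_{G(\CO)}(\wh\Gr_G)$ arising from the factorization structure of the affine Grassmannian in the sense of \cite{beilinson2004chiral}. Granting this, one first transports the renormalized $r$-matrix of \cite{cautis2019cluster} across the equivalence of Proposition~\ref{Prop:E1equivintro}; since it is by construction the leading term of the factorization braiding in an appropriate filtration, and since the same braiding transports to the operator $R(z)=R_s^{21}(-z)R_s(z)^{-1}$ of Theorem~\ref{Thm:fullRintro} acting on tensor products of smooth modules, the claim reduces to comparing leading terms with respect to the loop grading of Theorem~\ref{Thm:quantization_intro}.

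The heart of the argument, and the step I expect to be the main obstacle, is promoting Proposition~\ref{Prop:dualVOAintro} from an isomorphism of vertex algebras to an isomorphism of the full factorization data attached to $\omega$ and $\pi_*\omega$. Concretely, the collision of two marked points with relative parameter $z$ produces on the geometric side a $\C\lpp z^{-1}\rpp$-family of fusion maps $\Gamma(\wh\Gr_G,\pi^*\pi_*\omega)\to\Gamma(\wh\Gr_G,\pi^*\pi_*\omega)^{\otimes 2}\lpp z^{-1}\rpp$, and one must identify this family with the state--field map $\CY(\,\cdot\,,z)$ of the vertex algebra of Proposition~\ref{Prop:dualVOAintro}, carefully tracking the $\Gm$-equivariance coming from loop rotation and the $z$-adic completions on both sides. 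Dualizing, and using the identification of $Y_\hbar(\fd)$ with the continuous dual of $S(t^{-1}\fg^*[t^{-1}])\otimes V_0(\fg)$ from Section~\ref{sec:MeroR}, then matches the factorization coproduct with $\Delta_z$. This is precisely the comparison the authors flag as currently out of reach, and the difficulty appears intrinsic: it requires translating the Beilinson--Drinfeld factorization category into vertex-algebra language at the level of fusion of non-compact, dualizing-sheaf-type objects, where neither side is literally finite-dimensional and the relevant completions must be reconciled.

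Assuming this comparison, the remaining steps should be largely formal. The braided structure on $\Coh_{G(\CO)}(\wh\Gr_G)^{\heartsuit}$ given by the two orders of fusion transports under Proposition~\ref{Prop:E1equivintro} to the action of $R(z)=R_s^{21}(-z)R_s(z)^{-1}$, since $R(z)$ is assembled from the twist $R_s$ relating $\Delta_z$ and $(\tau_z\otimes1)\Delta_\hbar$ exactly as the geometric braiding is assembled from the two fusion orderings. One then expands $R(z)=1+(\text{lowest nonzero loop-degree term})+\cdots$ using the bi-grading of Theorem~\ref{Thm:quantization_intro} — the loop grading is bounded below on $R(z)-1$, so this lowest term is well defined — and writes $R_s(z)=1+\hbar\,r_s(z)+O(\hbar^2)$, whence $R(z)=1+\hbar(r_s^{21}(-z)-r_s(z))+O(\hbar^2)$. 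Matching this term on a pair of smooth modules against the explicit renormalized $r$-matrix of \cite{cautis2019cluster} should then come down to identifying the renormalization there (clearing the $z=0$ pole of the geometric fusion) with clearing the pole of $r_s(z)$ and descending to the dense subalgebra $Y^\circ_\hbar(\fd)$; one must also verify that the relevant loop-degree truncation of $R(z)$, a priori only an element of a $z^{-1}$-completed tensor product, acts as an honest morphism of smooth modules, which follows from local finiteness of the action of $t^{-1}\fg^*[t^{-1}]$ on such modules.
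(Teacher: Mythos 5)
The statement you are trying to establish is not proved in the paper at all: it is stated as a conjecture (Conjecture \ref{Conj:merorfac}), and the authors explicitly flag the missing ingredients in Section \ref{subsec:monoidal} --- there is currently no notion of a meromorphic tensor product $\CC_z\boxtimes\CC_w\to\CC_w\lpp(z-w)^{-1}\rpp$ refining the factorization structure of $\Coh_{G(\CO)}(\wh\Gr_G)$, and Proposition \ref{Prop:dualVOA} only identifies vertex algebras, not factorization data. Your proposal does not close this gap: the two pivotal steps are introduced with ``Granting this'' and ``Assuming this comparison,'' and what is being granted (that the fusion/factorization coproduct on the geometric side is dual to $\Delta_z$, compatibly with completions and loop rotation) is precisely the content of the conjecture, not a reduction of it. So what you have written is a reasonable restatement of the strategy the authors themselves sketch, together with an accurate identification of where the difficulty lies, but it is not a proof and does not go beyond the paper's own discussion.

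Beyond this, the final matching step in your sketch is conceptually off. You expand $R_s(z)=1+\hbar\,r_s(z)+O(\hbar^2)$ and propose to compare $\hbar\bigl(r_s^{21}(-z)-r_s(z)\bigr)$ with the renormalized $r$-matrix of \cite{cautis2019cluster}. But the conjecture (see Conjecture \ref{Conj:merorfac}) concerns the lowest non-trivial \emph{loop-degree}, equivalently lowest $z^{-1}$-degree, part of $R(z)$ acting on smooth modules, and the equivalence of Proposition \ref{Prop:E1equiv} lives at $\hbar=1$, where a semiclassical $\hbar$-expansion is not the relevant filtration. The renormalized $r$-matrix of \eqref{eq:renormr} is an honest morphism $M\otimes N\to N\otimes M$ obtained by clearing the pole in $z-w$ and restricting to the diagonal; on the algebraic side it should match the leading coefficient of the $z^{-1}$-expansion of $R(z)|_{M\otimes N}$ (which is a polynomial in $z^{-1}$ by smoothness), not the $O(\hbar)$ term of $\log R$. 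Conflating the $\hbar$-filtration with the loop filtration would make the ``largely formal'' remaining steps fail even if the hard comparison of factorization structures were available.
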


\begin{Rem}
    We must remark that in our considerations, all algebraic structures are taken to be continuous with respect to the topology coming from loop grading. For instance, tensor products \(\otimes\) are completed with respect to this topology and \((\cdot)^*\) denotes the continuous dual. In particular, the cobracket $\delta_\rho$ is \textbf{not} a cobracket for Lie algebras, but rather for topological Lie algebras, since $\fd(\CO)\otimes \fd(\CO) \cong (\fd \otimes \fd)[\![t_1,t_2]\!]$ is the completed tensor product. The constructions of Hopf algebra structures we present in this paper naturally respect this topology, and we will sweep this subtlety under the rug to ensure the cleanliness of the presentation. 
    
\end{Rem}

\begin{Rem}
    All the above statements made, except the uniqueness statement in Theorem \ref{Thm:yangunique}, are true for any finite-dimensional DG Lie algebra $\fg$ and its cotangent Lie algebra. In particular, one can construct spectral $R$-matrix for $Y_\hbar^\circ(\fd)$ for any DG Lie algebra $\fg$ and $\fd=T^*\fg$. As we will comment in the next section, these algebras are closely related to a class of quantum field theories called 4d $\CN=2$ gauge theories. 
    
\end{Rem}

\subsection{Physical motivation: Kapustin twist of 4d $\CN=2$ gauge theories}

Let $G$ be a complex reductive Lie group with compact form $G_c$, and $V_c$ a finite-dimensional representation of $G_c$ whose complexification is $V$. For such a pair, physicists have constructed a 4-dimensional quantum field theory with $\CN=2$ supersymmetries. Such a theory admits a variety of twists, labelled by nilpotent elements in the supersymmetry algebra, which are amenable to mathematical studies. For instance, in \cite{witten1988topological}, the author used a topological twist of the theory associated to $G=SL(2)$ and $V=0$ to reproduce the Donaldson invariants of 4-manifolds. 

The twisted theory that motivated the study of this paper is the Kapustin twist, or holomorphic-topological twist (HT twist in short) considered in \cite{kapustin2006holomorphic, kapustin2006wilson}. The name HT twist is given because the twisted theory is a holomorphic-topological theory, which requires the space-time to be locally of the form $\R^2\times \C$. The theory is \textit{holomorphic} along $\C$ and \textit{topological} along $\R^2$. This twisted theory has received much attention from mathematical communities, due to its relation to chiral algebras \cite{beem2015infinite, jeong2019scft,oh2019chiral, oh2020poisson, butson2021equivariant, dedushenko20234d}, integrable systems and quantum affine algebras \cite{bezrukavnikov2005equivariant, finkelberg2018comultiplication, finkelberg2019multiplicative}, wall crossing and cluster structures \cite{kontsevich2008stability, gaiotto2013framed, dimofte2014gauge, cordova2016infrared, cautis2019cluster, cautis2023canonical}, as well as its relation to geometric Langlands correspondence \cite{kapustin2006holomorphic,kapustin2006wilson, nekrasov2006seiberg, jeong2024di}.  

Associated to this holomorphic topological field theory is a category, physically the category of line operators of the theory. By this we mean the category whose objects are line observables stretched in $\R^2$ and at a point in $\C$, and morphisms are local observables at junctions of lines. The holomorphic-topological nature of the theory guarantees that this category has the structure of a \textit{monoidal category} coming from collision in the $\R^2$ plane. Moreover, it has the structure of a \textit{chiral category} coming from operator product expansion (OPE) in the $\C$ plane. The understanding of these structures and their interplay should help tremendously in understanding the algebraic and geometric properties of the theory. 

Based on the proposal of \cite{kapustin2006holomorphic, kapustin2006wilson}, as well as inspirations from 3d gauge theories \cite{nakajima2016towards, braverman2018towards}, this category is given a geometric definition in \cite{cautis2019cluster, cautis2023canonical}. The idea of \cite{kapustin2006holomorphic, kapustin2006wilson} is that the dimensional reduction of the theory along $\C$ (or any complex curve $\Sigma$) gives a 2d B-model whose target is a generalization of Hitchin moduli space. This generalized Hitchin moduli space is:
\be
\CM_{G,V}(\Sigma):=\mathrm{Maps}(\Sigma, V/G),
\ee
namely the moduli space of holomorphic $G$-bundles on $\Sigma$, together with a section of the associated $V$ bundle. Upon reduction, line operators are identified with coherent sheaves on the moduli space $\CM_{G,V}(S^2)$ for a 2-sphere and acts on the above space via Hecke modifications. In algebraic formulation (as in \cite{nakajima2016towards, braverman2018towards}), one replace $S^2$ with the so-called formal bubble $\mathbb{B}:=\mathbb{D}\cup_{\mathbb{D}^\times}\mathbb{D}$, where $\mathbb{D}:=\mathrm{Spec}(\CO)$ and $\mathbb{D}^\times:=\mathrm{Spec}(\CK)$. Using this definition, one defines the space $\CM_{G, V}(\mathbb{B})=[G(\CO)\!\setminus\!\CR_{G,V}]$, where $\CR_{G, V}$ is defined via the following Cartesian diagram (known as the BFN space, after Braverman-Finkelberg-Nakajima):
\be
\btik
\mathcal{R}_{G,V} \rar \arrow[d] &  V(\mathcal{O})\arrow[d] \\  G(\mathcal{K})\times_{G(\mathcal{O})}V(\mathcal{O}) \rar &  V(\mathcal{K})
\etik
\ee
Note that when $V=0$, $\CR_{G,V}=\Gr_G$ and the moduli space $\CM_{G, 0}(\mathbb{B})$ is precisely the equivariant affine Grassmannian. 

The mathematical proposal of the category of line operators is therefore $\Coh_{G(\CO)}(\CR_{G,V})$. Beautifully, this category has the structures needed for the holomorphic-topological nature of the theory. First of all, it is a chiral category, since both $\CR_{G,V}$ and $G(\CO)$ are built out of formal disks and formal punctured disks, and therefore admit factorization structure. In the case when $V=0$, $\CR_{G,V}=\Gr_G$ and the factorization structure here was constructed from the famous Beilinson-Drinfeld Grassmannian \cite{beilinson2004chiral}. Secondly, it is a monoidal category, defined via a convolution diagram in \cite{braverman2018towards}. Moreover, they are compatible in the sense that the monoidal multiplication and chiral OPE are distributive with respect to each other. In \cite{cautis2019cluster, cautis2023canonical}, it is shown that in special cases, this category also has a cluster structure, matching the physical expectation. We also note that in \cite{niu2022local}, this category was used to reproduce the Poisson vertex algebra of \cite{oh2020poisson}, as well as Schur indices of \cite{cordova2016infrared}. 

However, much is still unknown about this category. For example, in \cite{cautis2019cluster, cautis2023canonical}, many tensor products were computed, and a renormalized $r$-matrix was constructed, but the full interplay between the two structures is still lacking. The main difficulty is due to the complicated geometry of the affine Grassmannian $\Gr_G$. 

On the other hand, it is known to experts that the HT twist of a 4d $\CN=2$ gauge theory can be viewed as some version of a 4d Chern-Simons theory (more precisely, Chern-Simons theory for the supergroup $T^*(G\ltimes V[-1])$). The first hint at this appears in the work of \cite{costello2013supersymmetric, costello2014integrable}. In \cite{costello2018gauge, costello2018gauge2, costello2019gauge}, it was argued, based on physical grounds, that at least perturbatively (and for simple Lie algebras $\fg$), line operators in such theories can be identified with modules of the Yangian. 

From this perspective, the interplay between chiral OPE and monoidal multiplication is purely encoded in the spectral $R$-matrix of Drinfeld \cite{drinfel1990hopf}. Moreover, in \cite{gautam2021meromorphic}, this $R$-matrix is given a decomposition into two meromorphic $R$-matrices, making this interplay even more explicit.  

The constructions of this paper are inspired by such Chern-Simons theory considerations. Geometrically, we don't consider the full affine Grassmannian (or the BFN space), but rather its formal completion along the identity coset:
\be
\wh \CM_{G,V}(\mathbb{B}):=[\wh{G}(\CO)\!\setminus\!\wh{\CR}_{G, V}].
\ee
Physically, this should correspond to ignoring non-perturbative line operators. Choosing a splitting of $\fg(\CK)$ (or generally $\fg\ltimes V[-1](\CK)$) and identifying $\Coh(\wh\CM_{G,V}(\mathbb B))$ with modules of $\CA_\hbar(\fd, \rho)$ corresponds to choosing a fiber functor for the Chern-Simons theory by picking a vacuum at infinity. 

From this perspective, Theorem \ref{Thm:fullRintro}, especially its construction which resembles so much the work of \cite{gautam2021meromorphic}, gives an explicit interplay between chiral OPE and monoidal structure of line operators, at least perturbatively. For simple $\fg$, Theorem \ref{Thm:yanguniqueintro} also confirms the relation between 4d $\CN=2$ pure gauge theory of $\fg$ and Chern-Simons theory of $\fd=T^*\fg$. 

For the case when $V=0$, namely for pure gauge theory with gauge group $G$, Proposition \ref{Prop:E1equivintro} shows that the coproduct of $Y_\hbar(\fd)$ indeed match the monoidal structure of the equivariant affine Grassmannian, and moreover Proposition \ref{Prop:dualVOAintro} indicates that the meromorphic coproduct $\Delta_z$ of $Y_\hbar(\fd)$ matches the factorization structure of this space. All these lead to our formulation of Conjecture \ref{Conj:merofacintro}. 

Based on all these physical considerations, as well as Theorem \ref{Thm:yanguniqueintro}, it is natural to post the following conjecture, which is also hinted at in \cite{costello2014integrable}.

\begin{Conj}
    For any finite-dimensional DG Lie algebra $\fg$, let $\fd=T^*\fg$. The Hopf algebra $\CA_\hbar(\fd, \gamma)$ is isomorphic to $Y_\hbar(\fd)$ as a Hopf algebra. 
    
\end{Conj}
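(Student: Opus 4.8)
The plan is to establish this the same way as its special case Theorem~\ref{Thm:yanguniqueintro}: show that $\CA_\hbar(\fd,\gamma)$ and $Y_\hbar(\fd)$ are both quantizations of the topological Lie bialgebra $(\fd(\CO),\delta_\gamma)$ that are homogeneous for every grading present in the DG setting, and then prove that such a quantization is unique up to isomorphism. For simple $\fg$ the uniqueness is Theorem~\ref{Thm:yanguniqueintro}; for arbitrary finite-dimensional DG $\fg$ it has to be re-proved, and that is where the difficulty lies.

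First I would record the gradings. On $\CA_\hbar(\fd,\gamma)$ there is the $\epsilon$-grading of Theorem~\ref{Thm:quantization_intro}(3) ($\fg$ in degree $0$, $\fg^*$ in degree $2$, $\hbar$ in degree $-2$), the loop grading $t^n\mapsto c^nt^n$ — present precisely because $\gamma$ depends only on $t_1-t_2$ — and, since the entire geometric construction (the smash product $U(\fg(\CO))\#\C[\wh\fg(\gamma)]$ and the convolution coproduct) is functorial in $\fg$ over DG Lie algebras, the internal cohomological grading of the DG structure. On $Y_\hbar(\fd)$ the same three gradings appear: $Y_\hbar(\fd)$ is the Etingof--Kazhdan quantization of the polynomial Lie bialgebra $\fd[t]$ underlying the Manin triple $(\fd(\CK),\fd(\CO),t^{-1}\fd[t^{-1}])$, and the EK functor is natural in the Lie bialgebra, hence turns this triply graded Manin triple into a triply graded Hopf algebra whose loop completion is a triply graded quantization of $(\fd(\CO),\delta_\gamma)$, exactly as in the simple case.

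The core step is uniqueness. Given two such triply graded quantizations $\CB_1,\CB_2$ with $\CB_i/\hbar\CB_i\cong U(\fd(\CO))$, one constructs a graded Hopf algebra isomorphism by successive approximation in $\hbar$: an isomorphism modulo $\hbar^{n+1}$ always lifts modulo $\hbar^{n+2}$ unless obstructed, the obstruction and the indeterminacy of the lift both living in the Gerstenhaber--Schack (bialgebra) deformation cohomology of $U(\fd(\CO))$ — equivalently, via the standard comparison, in the Chevalley--Eilenberg cohomology $H^{\bullet}(\fd(\CO),\fd(\CO))$ together with its co-Hochschild companion — but only in the \emph{single} multigraded summand cut out by cohomological degree $2$ (resp.\ $1$), loop degree $0$, and $\epsilon$-degree $-2(n+1)$. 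The $\epsilon$-grading is what makes everything finite: because $\hbar$ has weight $-2\neq 0$, these summands sit in a bounded range of $\hbar$-powers, the approximation converges $\hbar$-adically within each multidegree, and only finitely many low-order summands need to be shown to vanish. In the simple case they do, by Whitehead's lemmas for the semisimple part, a Hochschild--Serre argument for the abelian ideal $\fg^*\otimes\CO\subset\fd(\CO)$, and a Künneth reduction in the variable $t$.

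The main obstacle is exactly that for general DG $\fg$ the Whitehead vanishing is unavailable: $H^1(\fg,\fg)$, $H^2(\fg,\fg)$ and $H^\bullet(\fg,\fg^*)$ may all be nonzero, so the relevant summand of $H^\bullet(\fd(\CO),\fd(\CO))$ is not manifestly zero. To close the argument one must show that imposing loop degree $0$ and $\epsilon$-degree $-2(n+1)$, together with the cohomological grading of the DG complex — which, $\fg$ being finite-dimensional, is bounded — forces the offending classes to die; the natural tools are an explicit contracting homotopy on the $\epsilon$-weighted, loop-degree-zero part of the Chevalley--Eilenberg complex of $\fd(\CO)$, or an identification of that part with a complex computing deformations of the commutative factor $\C[\wh\fg(\gamma)]$ of the smash product, whose rigidity is more transparent (compare Proposition~\ref{Prop:dualVOAintro}). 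Granting this vanishing, one obtains a graded Hopf isomorphism of loop completions, which restricts on the dense subalgebras topologically generated by $\fd[t]$ to the asserted isomorphism $\CA_\hbar(\fd,\gamma)\cong Y_\hbar(\fd)$. A purely algebraic alternative — matching a Drinfeld-type presentation of $Y_\hbar(\fd)$ with the presentation of $\CA_\hbar(\fd,\gamma)$ coming from $U(\fg(\CO))\#\C[\wh\fg(\gamma)]$ and checking that the two coproducts agree on generators — feeds back into the same cohomological input and so does not obviously bypass the obstacle.
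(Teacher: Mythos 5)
This statement is left as an open conjecture in the paper: the authors explicitly remark that every result except the uniqueness statement of Theorem \ref{Thm:yangunique} extends to finite-dimensional DG Lie algebras, and they pose the isomorphism $\CA_\hbar(\fd,\gamma)\cong Y_\hbar(\fd)$ precisely because that uniqueness argument does not carry over. So there is no proof in the paper to compare against, and your proposal should be judged on whether it closes the gap on its own. It does not: what you have written is a programme, not a proof, and you say so yourself. The entire content of the conjecture is concentrated in the step you leave open, namely the vanishing (or at least the triviality as obstructions) of the relevant multigraded summands of the deformation cohomology of $U(\fd(\CO))$ for a general finite-dimensional DG Lie algebra $\fg$. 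In the simple case the paper's proof of Theorem \ref{Thm:yangunique} uses simplicity in an essential, not incidental, way: the reduction of the 2-cocycle $c'\colon\wedge^2\fg\to\fg$ to a coboundary is Whitehead's second lemma, the 1-cocycles $c_i'$ are killed by Whitehead's first lemma, and the final step classifies invariant trilinear forms on $\fg$ (unique up to scalar, plus the one extra symmetric form for $\mathfrak{sl}_n$). None of these inputs exist for a general DG Lie algebra: $H^1(\fg,\fg)$, $H^2(\fg,\fg)$, $H^\bullet(\fg,\fg^*)$ and the space of invariant trilinear forms can all be large, and you offer only candidate tools (a contracting homotopy on the $\epsilon$-weighted, loop-degree-constrained Chevalley--Eilenberg complex, or a rigidity transfer to $\C[\wh\fg(\gamma)]$) without constructing either. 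The extra cohomological grading of the DG structure does not obviously help, since the offending classes already occur in cohomological degree zero (e.g.\ for an ordinary non-semisimple $\fg$ placed in degree $0$), so the conjecture cannot be reduced to a boundedness argument in that grading.

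There is also a secondary soft spot: you take for granted that the Etingof--Kazhdan Yangian $Y_\hbar(\fd)$ of a DG Lie algebra carries the loop, $\epsilon$-, and cohomological gradings and that its completion is a quantization of $(\fd(\CO),\delta_\gamma)$ in the paper's topological sense. For simple $\fg$ the paper invokes exactly this to deduce $\CA_\hbar(\fd,\gamma)\cong Y_\hbar(\fd)$ from uniqueness, but in the DG setting the compatibility of the EK functor with the DG/graded structure and with the topological completion is itself something to be checked, not just cited. Finally, a minor bookkeeping point: in the paper's uniqueness argument the order-$\hbar^k$ discrepancies are supported in total loop degree $-k$ (since $\hbar$ carries loop degree), not loop degree $0$ as you state; this does not affect the shape of your strategy, but it matters if one actually tries to run the contracting-homotopy argument degree by degree. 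In summary, the approach is the natural one and mirrors the paper's Section on uniqueness, but the decisive cohomological vanishing for general DG $\fg$ is missing, and with it the proof.
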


\subsection{Outlook: quantized Hitchin systems and the geometric Langlands correspondence}

In \cite{kapustin2006wilson, kapustin2006holomorphic}, the author considered the HT theory on $\Sigma\times C$ where $\Sigma$ is a smooth complex curve and $C$ a Riemann surface. The dimensional reduction on $\Sigma$ gives rise to a 2d B-model (on $C$) whose target is a natural generalization of the famous Hitchin moduli space:
\be
\CM_{G,V}(\Sigma)=\mathrm{Maps} (\Sigma, V/G),
\ee
namely now the moduli space of $G$-bundles on $\Sigma$ with a section of the associated $V$-bundle. The category of boundary conditions of this 2d B-model is $\Coh (\CM_{G,V}(\Sigma))$.  Choosing a point $z\in \Sigma$ and let $\Sigma^o$ denote the complement, then loop-group uniformization gives the following presentation of $\CM_{G,V}$:
\be
\CM_{G,V}(\Sigma)=\mathrm{Maps} (\mathbb{D}_z, V/G)\times_{\mathrm{Maps}(\mathbb{D}_z^\times, V/G)} \mathrm{Maps}(\Sigma^o, V/G).
\ee
Here $\mathbb{D}_z$ is the formal neighborhood of $z$ and $\mathbb{D}_z^\times$ the formal punctured neighborhood. Given such $z$, there is a natural action of $\Coh (\CM_{G,V}(\mathbb{B}))$ on $\Coh (\CM_{G,V}(\Sigma))$, via the so-called Hecke modifications. This action, especially its eigenspaces, is central to the study of geometric Langlands correspondence. 

A particularly important case is when \(V = \fg\) is the adjoint representation. In this case \(\CM_{G,\fg}(\Sigma)\) is the usual moduli space of Higgs bundles and the previous discussion is related to the Beilinson-Drinfeld quantization of Hitchin's integrable system and its application to the geometric Langlands correspondence from \cite{beilinson_drinfeld_quantization}. In the language of two-dimensional conformal field theory, the \(D\)-bundles on the moduli space of \(G\)-bundles on \(\Sigma\) in the geometric Langlands correspondence are given by the bundles of conformal blocks. 
In the punctured case, Felder \cite{felder_kzb} expressed the connections on these bundles using a classical dynamical \(r\)-matrix, which is shown in \cite{abedin2024r} to be the \(r\)-matrix of the punctured Hitchin system in the sense of the \(r\)-matrix approach in the theory of integrable models. The quadratic hamiltonians of the punctured Hitchin 
system as well as their quantizations to differential operators on the bundle of conformal blocks can consequently be expressed explicitly using this classical dynamical \(r\)-matrix.

Let us consider again the simplification to the formal completion at the identity coset. In particular, let us replace $\CM_{G,V}(\Sigma)$ with the double coset $\wh \CM_{G,V}(\Sigma)=\wh{\fh}(\CO)\!\setminus\!\wh{\fh}(\CK)/\wh{\fh}(\Sigma^o)$, where $\fh=\fg\ltimes V[-1]$. We hope that our construction can be adjusted to this setting in order to represent $\Coh (\wh \CM_{G,V}(\Sigma))$ using an explicit Hopf algebroid $\CA_\hbar(\Sigma)$ such that one can understand the action of Hecke modification in this simplified case as pull-back via an explicit algebra homomorphism
\be
\Delta_\hbar^\Sigma: \CA_\hbar(\Sigma)\longrightarrow  \CA_\hbar(\fd, \gamma)\otimes_{\C\lbb\hbar\rbb}\CA_\hbar(\Sigma)
\ee
satisfying the natural associativity condition.
In the case of \(V = \fg\), we hope that this approach might also lead to a quantization of the classical dynamical \(r\)-matrix from \cite{felder_kzb} to a quantum dynamical \(R\)-matrix. The action \(\Delta_\hbar^\Sigma\) could then be used to understand the relation between the $R$-matrix of $ \CA_\hbar(\fd, \gamma)$ and this quantization. We hope this approach could lead to applications in the geometric Langlands correspondence.

\subsection{Structure of the paper}

The paper is structured as follows. In Section \ref{sec:Takiff}, we introduce the topological Lie algebra $\fd(\CO)$, and its various coalgebra structures coming from a splitting of $\fg(\CO)\to\fg(\CK)$ supplemented by an $r$-matrix. In Section \ref{sec:quantakiff}, we construct a quantization of $U(\fd(\CO))$, and prove the uniqueness statement in the case of Yang's $r$-matrix $\gamma = \gamma_\fd$, from which we justify calling the quantization the Yangian $Y_\hbar(\fd)$. In Section \ref{sec:MeroR}, we construct the spectral $R$-matrix as a product of two meromorphic twisting matrices. In Section \ref{sec:RavGrG}, we comment on the relation between $Y_\hbar(\fd)$ and the geometry of equivariant affine Grassmannian.

\subsection{Acknowledgements}

R.A. would like to thank the Perimeter Institute for Theoretical Physics for its hospitality during the initial discussion of this project. The work of R.A. was supported by the DFG grant AB 940/1--1 and as part of the NCCR SwissMAP, a National
Centre of Competence in Research, funded by the Swiss
National Science Foundation (grant number 205607).

W.N. would like to express his sincere gratitude to Kevin Costello for suggesting this research project, and to Thomas Creutzig, Tudor Dimofte, Davide Gaiotto, Justin Hilburn and Harold Williams for stimulating discussions related to this topic. W.N. would also like to thank his friend Don Manuel for many encouragements. WN's research is supported by Perimeter Institute for Theoretical Physics. Research at Perimeter Institute is supported in part by the Government of Canada through the
Department of Innovation, Science and Economic Development Canada and by the Province
of Ontario through the Ministry of Colleges and Universities.

\section{Cotangent Lie algebras}\label{sec:Takiff}
In this section, we set up the classical objects of our examination: the loop algebra \(\fd(\CO)\) with coefficients in the cotangent Lie algebra \(\fd:=\fg \ltimes \fg^*\) of a finite-dimensional simple complex Lie algebra. We show that every generalized \(r\)-matrix \(r\) with coefficients in \(\fg\) defines a topological Lie bialgebra structure \(\delta\) on \(\fd(\CO)\). This paper will be concerned with the quantization of these Lie bialgebra structures, as well as a meromorphic analog of it. More precisely, we seek a Hopf algebra \(\CA_\hbar\) over \(\C[\![\hbar]\!]\) such that:
\begin{itemize}
    \item \(\CA_\hbar/\hbar\CA_\hbar\) is isomorphic to \(U(\fd(\CO))\) as a Hopf algebra;
    
    \item The coproduct \(\Delta_\hbar\) of \(\CA_\hbar\) satisfies \(\Delta_\hbar - \Delta_\hbar^{\textnormal{op}} = \hbar \delta + \CO(\hbar^2)\);
    
\end{itemize}
Moreover, when $r$ is a function of $t_1-t_2$, $\CA_\hbar$ will carry an action of time translation $T$, with which we can define the shifted coproduct $\Delta_{\hbar, z}=(\tau_z\otimes 1)\Delta_\hbar$ where $\tau_z=e^{zT}$. In this case, we seek to construct a quantum $R$ matrix $R(z)$ which is an element in  $(\CA_\hbar\otimes_{\C\lbb\hbar\rbb}\CA_\hbar)\lbb z^{-1}\rbb$ such that:
\begin{itemize}
    \item $(\tau_z\otimes 1) \Delta_{\hbar}^{\textnormal{op}}(a)=R(z) (\tau_z\otimes 1) \Delta_{\hbar}(a) R(z)^{-1}$.

    \item $R(z)$ satisfies quantum Yang-Baxter equation. 
    
\end{itemize}

A particularly important case is the Lie algebra associated to Yang's \(r\)-matrix \(\gamma\). In this case, we show that this quantization is the unique graded quantization of the associated Lie bialgebra, with respect to both the loop grading and an extra grading we call $\epsilon$-grading. Therefore, we refer to the resulting Hopf algebra as the Yangian of $\fd$. 

\subsection{Cotangent Lie algebras}
Let \(\fg\) be a finite-dimensional simple complex Lie algebra and consider the dual space \(\fg^*\) as \(\fg\)-module with respect to the coadjoint action \(\textnormal{ad}^*\), i.e.\ for every \(a \in \fg\) and \(f \in \fg^*\) the linear form \(a\cdot f \in \fg^*\) is defined by
\be
    (\textnormal{ad}^*(a) f)(b) = -f([a,b]) \textnormal{ for all }b \in \fg. 
\ee
The \emph{cotangent Lie algebra} \(\fd \coloneqq \fg \ltimes \fg^*\) is the semidirect product of \(\fg\) and \(\fg^*\). In particular, \(\fd\) is the direct sum of \(\fg\) and \(\fg^*\) as a vector space and is equipped with the Lie bracket 
\be
    [a_1 + f_1, a_2 + f_2] = [a_1,a_2] + \textnormal{ad}^*(a_1)f_2 - \textnormal{ad}^*(a_2)f_1
\ee
for all \(a_1,a_2 \in \fg\) and \(f_1,f_2 \in \fg^*\). Observe that \(\fg \subseteq \fd\) is a subalgebra and \(\fg^* \subseteq \fd\) is an abelian ideal. To keep track of the latter fact, we make use of the \(\epsilon\)-grading defined by
\be
    \textnormal{deg}_\epsilon(\fg) = 0\,,\qquad \textnormal{deg}_\epsilon(\fg^*) = 2.
\ee
Let us note that the cotangent Lie algebra $\fd$ is simply the classical double of the Lie bialgebra \(\fg\) equipped with the trivial cobracket \(0 \colon \fg \to \fg \otimes \fg\).

Let us once and for all chose a basis \(\{I_a\}_{a = 1}^{d} \subseteq \fg\) of \(\fg\) and denote by $\{I^a\}$ the dual basis. Let \(f_{ab}^c\) be the structural constant of $\fg$ under the basis \(\{I_a\}_{a = 1}^d\), then the commutation relation of the  algebra \(\fd\) is given by:
\begin{equation}
    [I_a,I_b] = \sum_{c = 1}^d f_{ab}^c I_c\,,\, [I_a,I^b] = -\sum_{c = 1}^df_{ac}^bI^c \textnormal{ and }[I^a,I^b] = 0.
\end{equation}

The Lie algebra $\fd$ comes equipped with a natural non-degenerate invariant symmetric bilinear form \(\kappa \colon \fd \times \fd \to \C\) defined by
\be
    \kappa(a_1 + f_1,a_2 + f_2) = f_1(a_2) + f_2(a_1)
\ee
for all \(a_1,a_2 \in \fg\) and \(f_1,f_2 \in \fg^*\).

Let us now fix some invariant bilinear form \(\kappa_0\) of \(\fg\), e.g.\ its Killing form, then 
\be
    \fd \cong \fg[\epsilon]/\epsilon^2 \fg[\epsilon]
\ee
as Lie algebras. Under this identification, 
\be
    \kappa(a_1 + \epsilon b_1,a_2 + \epsilon b_2) = \kappa_0(a_1,b_2) + \kappa_0(a_2,b_1)
\ee
for all \(a_1,a_2,b_1,b_2 \in \fg\). The \(\epsilon\)-grading mentioned above is now defined by \(\deg_\epsilon(1) = 0\) and \(\deg_\epsilon(\epsilon) = 2\), hence the name. In the following, we will treat \(\fg \ltimes \fg^*\) and \(\fg[\epsilon]/\epsilon^2\fg[\epsilon]\) as equal. In particular, we identify \(\fg^*\) and \(\epsilon \fg\). Moreover, we assume that \(\{I_a\}_{a = 1}^n\) is orthonormal with respect to \(\kappa_0\) and identify \(I^a\) with \(\epsilon I_a\).

In this work, we consider the topological Lie algebras \(\fd(\CO) \coloneqq \fd \otimes \CO\) with coefficients in \(\fd\) for \(\CO \coloneqq \C[\![t]\!]\). Here, topological simply refers to the fact that \(\fd(\CO)\) comes equipped with the \((t)\)-adic topology and the Lie bracket is continuous with respect to this topology.
An element in \(\fd(\CO)\) is of the form:
\be
\sum_{i \geq 0} J_i t^i, \textnormal{ with } J_i \in \fd
\ee
and the commutation relation is given by:
\be
\left[\sum_{i \geq 0} J_i t^i, \sum_{i \geq 0} J_i' t^i\right]=\sum_{i\geq 0}\sum_{j+k=i}[J_j, J_k'] t^i.
\ee
For each $J\in \fd$, we introduce generating series $J(u)=\sum_{n = 0}^\infty J_n u^{-n-1} \in \fd(\CO)[\![u^{-1}]\!]$, where $J_n = Jt^n \in \fd(\CO)$. In other words, we write:
\be
J(u)=\sum_{n = 0}^\infty J t^nu^{-n-1}=\frac{1}{u}\frac{J}{1-\frac{t}{u}}=\frac{J}{u-t}.
\ee
We can deduce that
\be
\begin{split}
\left[\frac{J}{u-t}, \frac{J'}{u-t}\right]&= \sum_{n,m = 0}^\infty [J_n, J'_m]u^{-m-n-2}=\sum_{k = 0}^\infty \sum_{m+n=k} [J,J']_{k} u^{-k-2}\\&=\sum_{k = 0}^\infty (k+1)[J,J']t^k u^{-k-2} =
\frac{[J,J']}{(u-t)^2}.    
\end{split}
\ee
The coefficients of \(I_a(u)\) and \(I^a(u)\) form a topological basis of \(\fd(\CO)\) and we can write
\be
    [I_a(u),I_b(u)] = \sum_{c = 1}^n\frac{f_{ab}^c I_c(u)}{u-t}, [I_a(u),I^b(u)] = -\sum_{c = 1}^n\frac{f_{ac}^b I^c(u)}{u-t} \textnormal{ and }[I^a(u),I^b(u)] = 0.
\ee

The Lie algebra \(\fd(\CO)\) is topologically graded by \(\Z \times \Z\) via
\be
\fd(\CO)_{(k,\ell)} = \begin{cases}
    z^k\fg & k \ge 0,\ell = 0;\\
    \epsilon z^k\fg & k \ge 0,\ell = 2;\\
    0 & \textnormal{otherwise}.
\end{cases}
\ee
In particular, the following holds:
\be
\begin{split}
    \fd(\CO) = \prod_{k,\ell \in \Z} \fd(\CO)_{(k,\ell)} \textnormal{ and } [\fd(\CO)_{(k_1,\ell_1)},\fd(\CO)_{(k_2,\ell_2)}] = \fd(\CO)_{(k_1+k_2,\ell_1+\ell_2)}.
\end{split}
\ee

The completed universal enveloping algebra 
\be
    U(\fd(\CO)) \coloneqq \varprojlim_k U(\fd(\CO)/z^k\fd(\CO))
\ee 
of the topological Lie algebra \(\fd(\CO)\) inherits the topological \(\Z \times \Z\)-grading from \(\fd(\CO)\). Moreover, it can be written as a completed smashed product 
\be
    U(\fd(O)) \cong U(\fg(\CO)) \# S(\fg^*(\CO)) \coloneqq \varprojlim_k \left(U(\fg(\CO)/z^k\fg(\CO)) \# S(\fg^*(\CO)/z^k\fg^*(\CO))\right). 
\ee 
Indeed, since \(\fg^*(\CO)\) is abelian \(U(\fg^*(\CO)) = S(\fg^*(\CO)) \coloneqq \varprojlim_k S(\fg^*(\CO)/z^k\fg^*(\CO))\) is the completed symmetric algebra and since \(\fd(\CO) = \fg(\CO) \oplus \fg^*(\CO)\) we have 
\be
    U(\fd(\CO)) = U(\fg(\CO)) \otimes S(\fg^*(\CO)) \coloneqq \varprojlim_k \left(U(\fg(\CO)/z^k\fg(\CO)) \otimes S(\fg^*(\CO)/z^k\fg^*(\CO))\right)
\ee
as \(\C\)-algebras due to the PBW theorem. Clearly, \(U(\fg(\CO))\) is a cocommutative Hopf subalgebra of \(U(\fd(\CO))\) and \(S(\fg^*(\CO))\) is a commutative and cocommutative Hopf ideal of \(U(\fd(\CO))\). Furthermore, 
\be
    (x \# f)(y \# g) = x y \# f g -  x \# (\textnormal{ad}^*(y)f) g = \sum_{(y)} xy_{(1)} \# (f \cdot y_{(2)}) g 
\ee
is easily verified for \(x \in U(\fg(\CO)), f,g \in S(\fg^*(\CO))\) and \(y \in \fg(\CO)\). 

Let us note that the derivation \(T \coloneqq \partial_t\) of \(\fd(O)\) extends to a Hopf algebra derivation of \(U(\fd(O))\) which will become relevant later.

\subsection{Classical $r$-matrices and Lie bialgebra structures on \(\fd(\CO)\)}\label{sec:lie_bialgebra_structures}
A topological Lie bialgebra structure on \(\fd(\CO)\) is by definition a linear map 
\be
    \delta \colon \fd(\CO) \to \fd(\CO) \otimes \fd(\CO),
\ee
where \(\otimes\) denotes the completed tensor product, satisfying the following: 
\begin{enumerate}
    \item \(\delta\) is a 1-cocycle:
\be\label{eq:delta_cocycle}
    \delta([x,y]) = [x \otimes 1 + 1 \otimes x,\delta(y)]-[y \otimes 1 + 1 \otimes y,\delta(x)]\,,\qquad \forall x,y \in \fd(\CO);
\ee
\item \(\delta\) satisfies the co-Jacobi identity:
\be
    (\delta \otimes 1)\delta -(1 \otimes \delta)\delta +  (\tau \otimes 1)(1 \otimes \delta)\delta = 0.
\ee
Here, \(\tau\) is the tensor flip.
\end{enumerate}
Let us note that the cocycle condition \eqref{eq:delta_cocycle} combined with \(\fg = [\fg,\fg]\) implies that \(\delta\) is continuous in the \((t)\)-adic topology. In particular, \(\delta\) is completely determined by its values on the dense subset \(\fd[t]\) of \(\fd(\CO) = \fd[\![t]\!]\).

The simplest non-trivial topological Lie bialgebra structure on \(\fd(O)\) is given by
\begin{equation}
    \delta_\gamma(x) = [x(t_1) \otimes 1 + 1 \otimes x(t_2),\gamma(t_1,t_2)],
\end{equation}
where \(\gamma\) is Yang's \(r\)-matrix for \(\fd\). More precisely, the canonical invariant bilinear form \(\kappa\) of \(\fd\) defines an invariant symmetric tensor \(C = C_\fd =\sum_{a = 1}^n (I_a \otimes I^a + I^a \otimes I_a) \in \fd \otimes \fd\) and 
\begin{equation}\label{eq:yang_rmat_d}
    \gamma(t_1,t_2) = \frac{C}{t_1-t_2}.
\end{equation}
The topological classical double of \((\fd(\CO),\delta)\) is simply \(\fd(\CK) = \fd \otimes \CK\), where \(\CK = \CO[t^{-1}] = 
\C(\!(t)\!)\). Indeed, this follows by the fact that the Manin triple (i.e.\ Lagrangian Lie algebra splitting) \(\fd(\CK) = \fd(\CO) \oplus t^{-1}\fd[t^{-1}]\) determines \(\delta_\gamma\) in the sense that
\be
    \textnormal{res}_{t = 0}\kappa(\delta(x),w_1 \otimes w_2) = \textnormal{res}_{t = 0}\kappa(x,[w_1,w_2])
\ee
for all \(x \in \fd(\CO), w_1,w_2 \in t^{-1}\fd[t^{-1}]\).
Observe that \(\delta_\gamma\) is \(\epsilon\)-graded in the sense that \(\fd(\CO)' = t^{-1}\fd[t^{-1}]\) is \(\epsilon\)-graded.

We can also consider more general topological Lie bialgebra structures defined by \(r\)-matrices. To do so, let us recall the well-known correspondence of \(r\)-matrices with coefficients in a finite-dimensional complex Lie algebra \(\fp\) equipped with a non-degenerate invariant bilinear form \(\beta\) and subalgebras of \(\fp(\CK)\); see e.g.\ \cite{cherednik_becklund_darboux,etingof_schiffmann,skrypnyk_infinite_dimensional_Lie_algebras}. The following statements are proven exactly as their analogs in \cite[Section 1]{abedin_universal_geometrization} for semisimple \(\fp\):

\begin{itemize}
    \item For any solution \be\label{eq:normal_form}
            r(t_1,t_2) = \gamma_\fp + g(t_1,t_2) = \sum_{k=0}^\infty \sum_{i = 1}^{d}r_{k,i}(t_1) \otimes b_it_2^k\,,\qquad g \in \fp(\CO)\otimes \fp(\CO),
        \ee
        of the generalized classical Yang-Baxter equation 
        \be\label{eq:GCYBE}
            [r^{12}(t_1,t_2),r^{13}(t_1,t_3)] + [r^{12}(t_1,t_2),r^{23}(t_2,t_3)] + [r^{32}(t_3,t_2),r^{13}(t_1,t_3)] = 0
        \ee
        the subspace \(\fp(r) \coloneqq \bigoplus_{n = 0}^\infty \bigoplus_{a = 1}^d \C r_{a,n} \subseteq \fp(\CK)\) is a subalgebra complementary to \(\fp(\CO)\). Here, \(\{b_i\}_{a = 1}^d \subseteq \fp\) is an orthonormal basis with respect to the pairing \(\beta\) of \(\fp\), \(C_\fp = \sum_{i = 1}^db_i \otimes b_i\in \fp \otimes \fp\) is the quadratic Casimir element of \(\fp\), and \(\gamma_\fp = \frac{C_\fp}{t_1-t_2}\) is Yang's \(r\)-matrix for \(\fp\). Solutions of \eqref{eq:GCYBE} of the form \eqref{eq:normal_form} will be called generalized \(r\)-matrices with coefficients in \(\fp\).

        \item The equality \(\fp(r) = \fp(r)^\bot\), where the orthogonal complement is taken with respect to the bilinear form \((x,y) \mapsto \textnormal{res}_{t = 0}\beta(x(t),y(t))\), holds if and only if \(r\) is skew-symmetric, i.e.\ \(r(t_1,t_2) = -\tau(r(t_2,t_1))\). In this case, \(r\) solves the 
        classical Yang-Baxter equation 
        \be\label{eq:CYBE}
            [r^{12}(t_1,t_2),r^{13}(t_1,t_3)] + [r^{12}(t_1,t_2),r^{23}(t_2,t_3)] + [r^{13}(t_1,t_3),r^{23}(t_2,t_3)] = 0
        \ee
        and \(r\) is simply called \(r\)-matrix with coefficients in \(\fp\). 

        \item The subalgebra \(\fp(r)\) is stable under the derivation \(\partial_t\) if and only if \(r\) depends on the difference \(t_1-t_2\) of its variables, i.e.\ \(r(t_1,t_2) = \Tilde{r}(t_1-t_2)\) for some \(\Tilde{r} \in (\fp \otimes \fp)(\!(t)\!)\). By abuse of notation, we simply write \(r(t_1,t_2) = r(t_1-t_2)\) in this case.
\end{itemize}
For example, Yang's \(r\)-matrix \(\gamma_\fp\) with coefficients in \(\fp\) is indeed a solution to the classical Yang-Baxter equation for any \(\fp\) and \(\fp(\gamma) = t^{-1}\fp[t^{-1}] \eqqcolon \fp_{<0}\). 
Using the facts above for \(\fp \in \{\fg,\fd\}\), we obtain the following general result about topological Lie bialgebra structures on \(\fd(\CO)\).

\begin{Prop}
    \label{lem:lie_bialgebra_structures}
    There is a bijection between:
    \begin{enumerate}
        \item \(\epsilon\)-graded topological Lie bialgebra structures \(\delta\) on \(\fd(\CO)\) with topological double \(\fd(\CK)\);

        \item Generalized \(r\)-matrices with coefficients in \(\fg\).
    \end{enumerate}
    Explicitly, for every generalized \(r\)-matrix \(r\) with coefficients in \(\fg\) the series
    \begin{equation}\label{eq:rhoandr}
        \rho = (1 \otimes \epsilon)r(t_1,t_2) - (\epsilon \otimes 1)\tau(r(t_2,t_1)) \in \fd(\CK) \otimes \fd(\CK)
    \end{equation}  
    is an \(r\)-matrix with coefficients in \(\fd\) and 
    \be\label{eq:delta_r}
        \delta_\rho(x) = [\rho(t_1,t_2),x(t_1)\otimes 1 + 1 \otimes x(t_2)]
    \ee
    is the associated topological Lie bialgebra structure on \(\fd(\CO)\).
\end{Prop}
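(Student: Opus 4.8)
\noindent\emph{Proof proposal.} The plan is to obtain the bijection as a composition of two standard dictionaries, joined by one elementary reduction, and then to check that the formula \eqref{eq:rhoandr} realises this composition. The only genuinely delicate point will be the computation of $\fd(\rho)$ at the very end.

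\textbf{Reduction to subalgebras of $\fg(\CK)$.} First I would invoke the Manin-triple formalism: an $\epsilon$-graded topological Lie bialgebra structure $\delta$ on $\fd(\CO)$ with topological double $\fd(\CK)$ is the same datum as an $\epsilon$-graded Lie subalgebra $W\subseteq\fd(\CK)$ which is complementary to $\fd(\CO)$ and Lagrangian for $(x,y)\mapsto\mathrm{res}_{t=0}\kappa(x(t),y(t))$, via $\mathrm{res}_{t=0}\kappa(\delta(x),w_1\otimes w_2)=\mathrm{res}_{t=0}\kappa(x,[w_1,w_2])$ for $x\in\fd(\CO)$, $w_1,w_2\in W$ --- exactly as recalled above for $\delta_\gamma$, and where ``$\epsilon$-graded $\delta$'' means, by definition, that $W$ is an $\epsilon$-graded subspace. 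Writing such a $W$ as $W=W_0\oplus\epsilon W_2$ with $W_0,W_2\subseteq\fg(\CK)$, I would then check that $W$ is an $\epsilon$-graded Lagrangian Lie subalgebra complementary to $\fd(\CO)$ precisely when $W_0$ is a Lie subalgebra of $\fg(\CK)$ complementary to $\fg(\CO)$ and $W_2=W_0^{\perp}$ (orthogonal complement for the residue pairing of $\kappa_0$): complementarity of $W$ to $\fd(\CO)=\fg(\CO)\oplus\epsilon\fg(\CO)$ splits into that of $W_0$ and $W_2$; the relations $[\fg,\fg]\subseteq\fg$, $[\fg,\fg^*]\subseteq\fg^*$, $[\fg^*,\fg^*]=0$ turn the subalgebra condition into ``$W_0$ a subalgebra, $W_2$ a $W_0$-submodule''; since $\kappa$ pairs $\epsilon$-degree $0$ with $\epsilon$-degree $2$, isotropy of $W$ is the inclusion $W_2\subseteq W_0^{\perp}$, and an isotropic complement of the Lagrangian $\fd(\CO)$ is automatically Lagrangian; and once $W_0$ is a subalgebra complementary to the Lagrangian $\fg(\CO)$, invariance of $\kappa_0$ gives $[W_0,W_0^{\perp}]\subseteq W_0^{\perp}$ and $W_0^{\perp}$ is again complementary to $\fg(\CO)$, so the inclusion $W_2\subseteq W_0^{\perp}$ together with complementarity of both to $\fg(\CO)$ upgrades to equality. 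Composing this with the dictionary recalled above between subalgebras of $\fg(\CK)$ complementary to $\fg(\CO)$ and generalized $r$-matrices with coefficients in $\fg$ --- under which $W_0=\fg(r)$ --- gives the asserted bijection.

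\textbf{The explicit formula.} Next I would verify that \eqref{eq:rhoandr} implements this chain. Skew-symmetry of $\rho$ is immediate from the defining formula. Writing $r=\gamma_\fg+g$ with $g\in\fg(\CO)\otimes\fg(\CO)$, the polar parts of $(1\otimes\epsilon)r(t_1,t_2)$ and of $-(\epsilon\otimes 1)\tau(r(t_2,t_1))$ combine to $C_\fd/(t_1-t_2)=\gamma_\fd$, with regular remainder in $\fd(\CO)\otimes\fd(\CO)$, so $\rho$ is of normal form. To read off $\fd(\rho)$ I would first put $\rho$ genuinely into normal form --- this requires re-expanding the Yang part of $-(\epsilon\otimes 1)\tau(r(t_2,t_1))$, which initially appears expanded in the region $|t_2|>|t_1|$, into the region $|t_1|>|t_2|$ --- and then extract coefficients: those attached to the basis vectors of $\fg^*(\CO)$ reproduce the generators $r_{k,i}$ of $\fg(r)$, while those attached to the basis vectors of $\fg(\CO)$ are $\epsilon$ times elements which a short $\kappa_0$-residue computation shows pair trivially with all $r_{l,p}$, hence lie in $\epsilon\fg(r)^{\perp}$. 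Thus $\fd(\rho)\subseteq\fg(r)\oplus\epsilon\fg(r)^{\perp}$, and since both sides are complements of $\fd(\CO)$ in $\fd(\CK)$ they coincide. Being of normal form with $\fd(\rho)$ a complementary subalgebra, $\rho$ solves \eqref{eq:GCYBE} by the recalled dictionary, and being skew it solves \eqref{eq:CYBE}, i.e.\ is an $r$-matrix with coefficients in $\fd$; finally the standard invariance-of-$\kappa$ manipulation identifies $\delta_\rho(x)=[\rho,x(t_1)\otimes 1+1\otimes x(t_2)]$ with the Manin-triple cobracket of $(\fd(\CK),\fd(\CO),\fd(\rho))$, hence with the structure attached to $r$ in the reduction step.

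\textbf{Main obstacle.} I expect the only real difficulty to be the identification $\fd(\rho)=\fg(r)\oplus\epsilon\fg(r)^{\perp}$: reading coefficients off the naive ``rational'' expression for $\rho$ produces the wrong subalgebra, and one must carefully re-expand the spurious pole before extracting $\fd(\rho)$. Everything else is bookkeeping, given the standard inputs --- the Manin-triple correspondence, the $\fg(\CK)$-subalgebra/$r$-matrix dictionary, and the Lagrangianity of $\fg(\CO)$ and $\fd(\CO)$ for the relevant residue pairings --- that the text already recalls.
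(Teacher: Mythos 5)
Your proposal is correct and follows essentially the same route as the paper's own proof: the Manin-triple dictionary reduces \(\epsilon\)-graded bialgebra structures to \(\epsilon\)-graded Lagrangian complements \(V = W \oplus \epsilon W^{\perp}\), the recalled subalgebra/\(r\)-matrix correspondence gives \(W = \fg(r)\), and the identification \(V = \fd(\rho)\) yields the explicit formula. You fill in details the paper only asserts (the splitting \(V = W\oplus\epsilon W^{\perp}\) and the identification \(\fd(\rho)=\fg(r)\oplus\epsilon\fg(r)^{\perp}\), including the expansion subtlety), while at the final step you appeal to the standard Manin-triple coboundary identity where the paper instead checks duality of \(\delta_\rho\) by an explicit rewriting of the classical Yang--Baxter equation; both finishes are sound.
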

\begin{proof}
    By definition, an \(\epsilon\)-graded topological Lie bialgebra structure on \(\fd(\CO)\) with topological double \(\fd(\CK)\) is determined by an Lagrangian \(\epsilon\)-graded subalgebra \(V \subseteq \fd(\CK)\) complementary to \(\fd(\CO)\). In particular, \(V = W \oplus \epsilon W^\bot\) for a Lie subalgebra \(W \subseteq \fg(\CK)\) complementary to \(\fg(\CO)\). In particular, there exists a unique generalized \(r\)-matrix \(r\) with coefficients in \(\fg\) such that \(W = \fg(r)\). It is easy to see that \(V = \fd(\rho)\). Since \(V\) is a Lagrangian subalgebra, we deduce that \(\rho\) is an \(r\)-matrix with coefficients in \(\fd\).

    In order to see that \eqref{eq:delta_r} is indeed the Lie cobracket dual to the Lie bracket of \(\fg(r) \oplus \epsilon \fg(r)^\bot\), let us rewrite \eqref{eq:CYBE} as
    \begin{equation}
        \begin{split}
            &\sum_{i,j = 0}^\infty \sum_{a,b = 1}^d \left([r_{a,i},r_{b,j}] \otimes I^a_{i} \otimes I^b_j + [r_{a,i},\epsilon \overline{r}_{b,j}] \otimes I_i^a \otimes I_{b,j} + [\epsilon \overline{r}_{a,i}, r_{b,j}] \otimes I_{a,i} \otimes I^j_b\right)
            \\&= -\sum_{n = 0}^\infty \sum_{a = 1}^d \left(r_{a,n} \otimes [I^a_n \otimes 1 + 1 \otimes I^a_n,\rho] + \epsilon \overline{r}_{a,n} \otimes [I_{a,n} \otimes 1 + 1 \otimes I_{a,n}, \rho]\right)
            \\& = \sum_{n = 0}^\infty \left(r_{a,n} \otimes \delta_\rho(I^a_n) + \epsilon \overline{r}_{a,n} \otimes \delta_{\rho}(I_{a,n})\right),
        \end{split}
    \end{equation}
    where we wrote \(\tau(r) = \sum_{n = 0}^\infty \sum_{a = 1}^d \overline{r}_{a,n} \otimes I_{a,n}\).
\end{proof}

In the following, we refer to \(r\)-matrices with coefficients in \(\fd\) of the form \eqref{eq:rhoandr} as \(\epsilon\)-graded.

\subsection{Rational \(r\)-matrices and Lie bialgebra structures on \(\fd[t]\)}
It is easy to see that the Lie bialgebra structures \(\delta_\rho\) on \(\fd(\CO)\) for an \(\epsilon\)-graded \(r\)-matrix \(\rho\) with coefficients in \(\fd\) restricts to a (usual non-topological) Lie bialgebra structure on \(\fd[t] \subseteq \fd(\CO) = \fd[\![t]\!]\) if and only if \(\rho\) is a rational function of its variables. Equivalently, the underlying generalized \(r\)-matrix \(r\) with coefficients in \(\fg\) is of the form \(r \in \gamma_\fg + (\fg \otimes \fg)[t_1,t_2]\). The (generalized) \(r\)-matrices \(\rho\) and \(r\) are referred to as rational under this circumstance. On the level of subalgebras, this is equivalent to the fact that \(\fg(r)\) is bounded: \(t^{-N}\fg_{<0} \subseteq \fg(r) \subseteq t^{N}\fg_{<0}\) for some \(N \in \N\). Proposition \ref{lem:lie_bialgebra_structures} has the following adjustment to this setting.

\begin{Cor}
    \label{cor:lie_bialgebra_structures}
    The bijection in Proposition \ref{lem:lie_bialgebra_structures} gives rise to a bijection between:
    \begin{enumerate}
        \item \(\epsilon\)-graded Lie bialgebra structures \(\delta\) on \(\fd[t]\) with (non-topological) double \(\fd(\!(t^{-1})\!)\);

        \item Rational generalized \(r\)-matrices with coefficients in \(\fg\).
    \end{enumerate}
    \end{Cor}

\subsection{Classical \(r\)-matrices and quasi-classical vertex algebras}\label{sec:quasi_classical_vertex}
Let \(r\) be a generalized \(r\)-matrix with coefficients in \(\fg\) and \(\rho\) its associated \(r\)-matrix with coefficients in \(\fd\). As mentioned in Section \ref{sec:lie_bialgebra_structures}, the subalgebra \(\fd(\rho) \subseteq \fd(\CK)\) is stable under the application of the differential operator \(T = \partial_t\) if and only if \(\rho\), and consequently \(r\), depends on the difference of its variables, i.e.\ \(\rho(t_1,t_2) = \rho(t_1-t_2)\).
In this case, \(S(\fd(\rho))\) becomes a commutative vertex algebra if equipped with the natural extension of the derivation \(T \colon \fd(\rho) \to \fd(\rho)\). The vertex operation is simply given by \(Y(A,z)B \coloneqq Y(z)(A \otimes B) \coloneqq e^{zT}(A)B\) for any \(A,B \in S(\fd(\rho))\).

The \(r\)-matrix \(\rho\), or more specifically 
\begin{equation}\label{eq:def_rho}
    \varrho(z) \coloneqq \rho(t_1 + z-t_2) \in (\fd \otimes \fd)[t_1,t_2][\![z,z^{-1}]\!], 
\end{equation}
defines a quasi-classical structure on the vertex algebra \(S(\fd(\rho))\) in the sense of \cite{Etingof2000quantumvertex}. In particular, for any \(a \in \fd(\CO)\), we can consider \(\tau_z(a) \coloneqq a(t+z)\) as a pseudoderivative of \(S(\fd(\rho))\) and therefore \(\varrho\) defines a linear map \(S(\fd(\rho)) \otimes S(\fd(\rho)) \to (S(\fd(\rho)) \otimes S(\fd(\rho)))(\!(z)\!)\) such that:
\begin{enumerate}
    \item \([\varrho^{12}(z_1-z_2),\varrho^{13}(z_1-z_3)] + [\varrho^{12}(z_1-z_2),\varrho^{23}(z_2-z_3)] + [\varrho^{13}(z_1-z_3),\varrho^{23}(z_2-z_3)] = 0\);
    \item \(\varrho^{21}(z) = -\varrho(-z)\);
    \item \([T \otimes 1,\varrho(z)] = -\partial_z \varrho(z)\);
    \item \(\varrho(z_1)(Y(z_2) \otimes 1) = Y(z_2)(\varrho^{23}(z_1) + \varrho^{13}(z_1+z_2))\).
\end{enumerate}
Observe that the definition of the pseudoderivatives associated to \(\varrho\) combined with \eqref{eq:def_rho} implies that \(\varrho\) really takes values in \((S(\fd(\rho)) \otimes S(\fd(\rho))(\!(z)\!)\) and not \((S(\fd(\rho)) \otimes S(\fd(\rho)))[\![z,z^{-1}]\!]\).

Another point of view is that \(\fd(\CK)\) is also equipped with a meromorphic Lie bialgebra structure, in the sense that there is a co-bracket $\delta_{\rho,z}\colon \fd(\CK) \to (\fd(\CK)\otimes \fd(\CK))[\![z,z^{-1}]\!]$, by setting:
\be
\delta_{\rho,z} (x) =[\rho(t_1+z-t_2),x(t_1+z)\otimes 1+1\otimes x(t_2)] = (\tau_z \otimes 1)\delta_\rho(x).
\ee
Let us note that \(\delta_{\rho,z}(\fd(\CO)) \subseteq (\fd(\CO)\otimes \fd(\CO))[\![z]\!]\). More precisely, we have the following result.

\begin{Lem}
    The map \(\delta_{\rho,z}\) satisfies the following translated analogs of the Lie bialgebra properties:
\begin{enumerate}
    \item \(\delta_{\rho,z}([x,y])=[\tau_z (x)\otimes 1+1\otimes x,\delta_{\rho,z} (y)]-[\tau_z(y)\otimes 1+1\otimes y,\delta_{\rho,z}(x)]\) for all \(x,y\in\fd(\CO)\);
    \item \((\delta_{\rho,z_1-z_2}\otimes 1)\delta_{\rho,z_2}-(1\otimes \delta_{z_2})\delta_{\rho,z_1}+(\tau \otimes 1)(1\otimes \delta_{\rho,z_1})\delta_{\rho,z_2} = 0.\)
\end{enumerate}
\end{Lem}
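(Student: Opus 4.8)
The plan is to derive both identities directly from the corresponding (untranslated) Lie bialgebra properties of $\delta_\rho$ established in Proposition \ref{lem:lie_bialgebra_structures}, by carefully tracking how the operator $\tau_z = e^{zT}$ interacts with the cobracket. The key structural input is the relation $\delta_{\rho,z} = (\tau_z \otimes 1)\delta_\rho$ together with the compatibility $[T\otimes 1 + 1\otimes T,\, \varrho(z)] = -\partial_z\varrho(z)$ from item (3) of Section \ref{sec:quasi_classical_vertex}, equivalently the fact that $T$ acts as a derivation of $\fd(\CO)$ and $\tau_z$ is the associated formal flow. Concretely, since $\delta_\rho(x) = [\rho(t_1-t_2), x(t_1)\otimes 1 + 1\otimes x(t_2)]$, applying $\tau_z\otimes 1$ shifts the first tensor slot's spectral parameter by $z$, which is exactly the stated formula $\delta_{\rho,z}(x) = [\rho(t_1+z-t_2),\, x(t_1+z)\otimes 1 + 1\otimes x(t_2)]$; this presentation makes both properties amenable to a substitution argument.

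For the translated cocycle identity (1), I would start from the ordinary 1-cocycle condition \eqref{eq:delta_cocycle} for $\delta_\rho$ and apply $\tau_z \otimes 1$ to both sides. On the left this yields $\delta_{\rho,z}([x,y])$ directly. On the right, one must commute $\tau_z\otimes 1$ past the adjoint actions $[x\otimes 1 + 1\otimes x, -]$ and $[y\otimes 1 + 1\otimes y,-]$; since $\tau_z$ is an algebra automorphism of $U(\fd(\CO))$ in the first slot and the identity in the second, $(\tau_z\otimes 1)[x\otimes 1 + 1\otimes x, \delta_\rho(y)] = [\tau_z(x)\otimes 1 + 1\otimes x,\, (\tau_z\otimes 1)\delta_\rho(y)] = [\tau_z(x)\otimes 1 + 1\otimes x,\, \delta_{\rho,z}(y)]$, and symmetrically for the $x \leftrightarrow y$ term. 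Assembling these gives exactly the claimed identity. The only subtlety is checking that $\tau_z\otimes 1$ is well-defined and continuous on the relevant completed tensor products landing in $(\fd(\CO)\otimes\fd(\CO))[\![z]\!]$, which follows from the remark that $\delta_{\rho,z}(\fd(\CO)) \subseteq (\fd(\CO)\otimes\fd(\CO))[\![z]\!]$.

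For the translated co-Jacobi identity (2), the argument is more delicate because the two tensor factors carry different spectral shifts, so one cannot simply apply a single $\tau_z\otimes 1\otimes 1$. Instead I would work at the level of the Manin triple: the dual statement to co-Jacobi is the ordinary Jacobi identity in the Lagrangian complement $\fd(\rho) = \fg(r)\oplus\epsilon\fg(r)^\perp \subseteq \fd(\CK)$, and $T = \partial_t$ acts on $\fd(\rho)$ since $\rho$ depends only on $t_1 - t_2$. The translated cobrackets $\delta_{\rho,z_i}$ are then dual to the Lie bracket on $\fd(\rho)$ transported by $\tau_{z_i}$, and the identity in (2) is precisely the associativity/Jacobi identity for these transported brackets — equivalently, it is the coassociativity constraint for the meromorphic vertex-algebra-type coproduct, which is item (4) of Section \ref{sec:quasi_classical_vertex} applied to $\varrho$ combined with the classical Yang-Baxter equation, item (1). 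Alternatively, and more self-containedly, one expands both $\delta_{\rho,z_1-z_2}\otimes 1$ and $1\otimes\delta_{\rho,z_1}$ using the explicit formula, uses $T$-invariance of $\fd(\rho)$ to rewrite $\rho(t_1 + (z_1-z_2) - t_2)$ as a shift of $\rho(t_1 - t_2)$, and reduces the whole identity to the CYBE \eqref{eq:CYBE} with shifted arguments, i.e.\ $[\varrho^{12}(z_1-z_2),\varrho^{13}(z_1-z_3)] + [\varrho^{12}(z_1-z_2),\varrho^{23}(z_2-z_3)] + [\varrho^{13}(z_1-z_3),\varrho^{23}(z_2-z_3)] = 0$, which is exactly property (1) of the quasi-classical structure.

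The main obstacle is the bookkeeping in step (2): matching the spectral shifts $z_1 - z_2$, $z_1$, $z_2$ appearing in the three terms against the three-variable CYBE requires identifying which tensor leg plays the role of which variable $t_1, t_2, t_3$, and keeping careful track of the tensor flip $\tau$ in the last term. I expect that once one fixes the dictionary $z_i \leftrightarrow$ (shift in the $i$-th leg) this becomes a mechanical rewriting, but the sign conventions and the flip make it the place where an error is most likely. I would therefore present step (1) in full and for step (2) give the reduction to property (1) of the quasi-classical structure of Section \ref{sec:quasi_classical_vertex}, citing that property rather than re-deriving the shifted CYBE from scratch.
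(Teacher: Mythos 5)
Your treatment of item 1 coincides with the paper's: apply \(\tau_z \otimes 1\) to the cocycle condition for \(\delta_\rho\) and use that \(\tau_z\) acts as an automorphism in the first tensor leg only; there is nothing to add there.

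For item 2 you diverge from the paper, and your plan, while viable in principle, is left incomplete at exactly its hard step. The paper never returns to the Yang--Baxter equation: it first records the intertwining identity \((\tau_{z_1}\otimes\tau_{z_2})\delta_\rho(x) = \delta_{\rho,z_1-z_2}(\tau_{z_2}(x))\) (this is where the hypothesis that \(\rho\) depends only on \(t_1-t_2\) enters), and then simply applies \(\tau_{z_1}\otimes\tau_{z_2}\otimes 1\) to the three terms of the already-established co-Jacobi identity for \(\delta_\rho\); each term turns into the corresponding term of the claimed identity, e.g. \((\tau_{z_1}\otimes\tau_{z_2}\otimes 1)(\delta_\rho\otimes 1)\delta_\rho = (\delta_{\rho,z_1-z_2}\otimes 1)\delta_{\rho,z_2}\), and analogously for the other two, including the flip term via \((\tau_{z_1}\otimes\tau_{z_2})\tau = \tau(\tau_{z_2}\otimes\tau_{z_1})\). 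Your remark that a single \(\tau_z\otimes 1\otimes 1\) does not suffice is correct, but the conclusion you draw from it --- that one must re-derive the identity from the shifted CYBE --- is not forced: the two-parameter translation does the job in three lines, with no new computation. Your alternative route (expand the coboundary formula and reduce the co-Jacobiator to the shifted CYBE, i.e.\ property 1 of the quasi-classical structure with the third parameter set to zero, so the tensor \([\varrho^{12}(z_1-z_2),\varrho^{13}(z_1)]+[\varrho^{12}(z_1-z_2),\varrho^{23}(z_2)]+[\varrho^{13}(z_1),\varrho^{23}(z_2)]\)) would succeed if carried out, since it is the standard spectral coboundary computation; but as written it is only a plan: the substantive step is precisely that reduction, and you defer it as ``mechanical rewriting'' while merely citing the shifted CYBE, which by itself does not prove the statement. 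Likewise the Manin-triple/duality paragraph is heuristic --- it does not identify which transported brackets produce the particular shifts \(z_1-z_2\), \(z_2\), \(z_1\) appearing in the three terms. So: item 1 is fine and matches the paper; item 2 needs either the explicit coboundary computation you postponed, or (better) the paper's two-leg translation trick.
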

\begin{proof}
    The identity 1.\ follows immediately from applying \(\tau_z \otimes 1\) to the cocycle condition of \(\delta\). Observe that 
    \begin{equation}
        (\tau_{z_1} \otimes \tau_{z_2})\delta_\rho(x) = [\varrho(t_1-t_2 + z_1-z_2), x(t_1+z_1)\otimes 1 + 1 \otimes x(t_2 + z_2)] = \delta_{\rho,z_1-z_2}(\tau_{z_2}(x))
    \end{equation}
    Therefore, we can calculate
    \begin{equation*}
        \begin{split}
            &(\tau_{z_1} \otimes \tau_{z_2} \otimes 1)(\delta_\rho \otimes 1)\delta = (\delta_{\rho,z_1-z_2} \otimes 1)(\tau_{z_2} \otimes 1)\delta = (\delta_{\rho,z_1 - z_2} \otimes 1)\delta_{\rho,z_2};\\
            &(\tau_{z_1} \otimes \tau_{z_2} \otimes 1)(1 \otimes \delta_\rho)\delta_\rho= (1 \otimes (\tau_{z_2} \otimes 1)\delta_\rho)(\tau_{z_1} \otimes 1)\delta_\rho = (1 \otimes\delta_{\rho,z_2})\delta_{\rho,z_1};\\
            & (\tau_{z_1} \otimes \tau_{z_2} \otimes 1)(\tau \otimes 1)(1 \otimes \delta_\rho)\delta_\rho = (\tau \otimes 1)(\tau_{z_2}\otimes \tau_{z_1}\otimes1)(1\otimes \delta_\rho)\delta_\rho = (\tau \otimes 1)(1 \otimes \delta_{\rho,z_1})\delta_{\rho,z_2}.
        \end{split}
    \end{equation*}
    proving 2.
\end{proof}
Let us note that if \(\rho\) is rational
\be
\delta_{\rho,z}(\fd[t]) \subseteq (\fd \otimes \fd)[t_1,t_2,z] \textnormal{ and }\delta(\fd(\!(t^{-1})\!)) \subseteq (\fd(\!(t^{-1})\!) \otimes \fd(\!(t^{-1})\!))(\!(z^{-1})\!).
\ee

\subsection{Generalization and associated Poisson-Lie groups}\label{sec:generalization_PoissonLie}
Actually, the construction in Section \ref{sec:lie_bialgebra_structures} is a special case of the following construction of Lie bialgebra structures. Let \(\fp\) be a Lie algebra and \(\fp = \fp_+ \oplus \fp_-\) be a decomposition into closed subalgebras. Then \(\fp \ltimes \fp^* = (\fp_- \ltimes \fp_+^*) \oplus (\fp_+ \ltimes \fp_-^*)\) defines a Manin triple. In particular, if \(\fp_-\) is finite-dimensional, then \(\fp_- \oplus \fp_+^*\) becomes a Lie bialgebra if equipped with the dual of the Lie bracket of \(\fp_+ \ltimes \fp_-^*\). For infinite-dimensional \(\fp\), the dual of the Lie bracket of \(\fp_+ \ltimes \fp_-^*\) does not necessarily take values in the double tensor product of \(\fp_- \oplus \fp_+^*\). However, if we consider appropriate topological Lie algebras \(\fp\) and let \(\fp^*\) denote the continuous dual of \(\fp\), it will take values in a completion of this tensor product and we obtain a topological Lie bialgebra structure on \(\fp_- \oplus \fp_+^*\) in this way. For example, if we put \(\fp = \fg(\CK)\) and \(\fp_+ = \fg(\CO)\) we obtain precisely the Lie bialgebra structures described in Lemma \ref{lem:lie_bialgebra_structures} in this way. We will discuss the quantization of Lie bialgebras defined by Lie algebra decompositions \(\fp = \fp_+ \oplus \fp_-\) in Section \ref{subsubsec:crossprod}.

Lie bialgebras are precisely infinitesimal Poisson-Lie groups. 
Although not of immediate importance to this work, it is interesting to note what the Poisson-Lie groups associated to the Lie bialgebras described in Lemma \ref{lem:lie_bialgebra_structures} are. Therefore, let us assume that \(\fp\) has a meaningful integration to a group \(P\), e.g.\ if \(\fp\) is finite-dimensional, in which case we can consider the simply connected Lie group \(P\) of \(\fp\), or if \(\fp = \fg(\CK)\), in which case \(P = G(\CK)\) is the loop group.

In these cases, the Lie algebra of \(T^*P\) is \(\fp \ltimes \fp^*\), so Poisson brackets on \(T^*P\) compatible with the multiplication of this group correspond to Lie bialgebra structures on \(\fp \ltimes \fp^*\). Therefore, the Lie bialgebra structures on \(\fp \ltimes \fp^*\) defined by Lie algebra decompositions \(\fp = \fp_+ \oplus \fp_-\) correspond to Poisson-Lie structures on \(T^*P\).

\section{Quantization of cotangent Lie algebras}\label{sec:quantakiff}

In this section, we construct a quantization of the Lie bialgebra structures described in Lemma \ref{lem:lie_bialgebra_structures}. The main results of this section can be summarized as follows.

\begin{Thm}\label{Thm:quantizerho}
    Let \(\rho\) be an \(\epsilon\)-graded \(r\)-matrix with coefficients in \(\fd\) and \(\delta_\rho\) be the associated Lie bialgebra structure on \(\fd(\CO)\) given by \eqref{eq:delta_r}. 
    
    There exists a canonical $\epsilon$-graded quantization \(\CA_\hbar(\fd,\rho)\) of \(\delta_\rho\). More precisely, \(\CA_\hbar(\fd,\rho)\) is a topological \(\epsilon\)-graded Hopf algebra over \(\C[\![\hbar]\!]\) such that:
    \begin{enumerate}

        \item Its coproduct \(\Delta_{\rho,\hbar}\colon  \CA_\hbar(\fd,\rho)\to \CA_\hbar(\fd,\rho)\otimes_{\C\lbb\hbar\rbb} \CA_\hbar(\fd,\rho)\) satisfies \(\Delta_{\rho,\hbar} - \Delta_{\rho,\hbar}^{\textnormal{op}} = \hbar \delta_\rho + \CO(\hbar^2)\).
        
    \item If \(r\) depends on the \(t_1-t_2\), \(\CA_\hbar(\fd,\rho)\) also quantizes \(\delta_{\rho,z}\). In particular, there exists a nilpotent Hopf algebra differential $T$ of \(\CA_\hbar(\fd,\rho)\) and another holomorphic coproduct
    \be
        \Delta_{\rho,\hbar, z}\coloneqq (\tau_z\otimes 1)\Delta_{\rho,\hbar}\colon \CA_\hbar(\fd,\rho)\to \CA_\hbar(\fd,\rho)\otimes_{\C\lbb\hbar\rbb} \CA_\hbar(\fd,\rho)
    \ee
    satisfying \(\Delta_{\rho,\hbar,z} - \Delta_{\rho,\hbar,z}^{\textnormal{op}} = \hbar \delta_{\rho,z} + \CO(\hbar^2)\). 

    \item For another \(\epsilon\)-graded \(r\)-matrix \(\rho'\) with coefficients in \(\fd\), the Hopf algebras \(\CA_\hbar(\fd,\rho)\) and \(\CA_\hbar(\fd,\rho')\) are related by a topolgical quantum twist.

    \item The Hopf algebra $\CA_\hbar(\fd, \rho)$ and the quantum twist $F$ have well-defined evaluation at $\hbar=\xi$ for any $\xi \in \C$. 
    
    \item Let $ \CA^\circ_\xi(\fd,\rho)$ be the dense subalgebra of the topological Hopf algebra $\CA_\hbar(\fd,\rho)$ generated over $\C\lbb\hbar\rbb$ by $\fd[t]$. The holomorphic coproduct $\Delta_{\rho,\hbar, z}$ has well-defined evaluation on products of finite-dimensional modules for any $\hbar=\xi\in \C$ and $z=s\in \C^\times$. 

    \item If \(\rho\) is rational, \(\CA^\circ_\hbar(\fd,\rho)\) is a Hopf subalgebra of \(\CA_\hbar(\fd,\rho)\) quantizing the Lie bialgebra \((\fd[t],\delta_\rho)\).
    \end{enumerate}
\end{Thm}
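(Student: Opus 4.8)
The plan is to construct $\CA_\hbar(\fd,\rho)$ as an $\hbar$-deformation of the smashed product $U(\fg(\CO)) \# S(\fg^*(\CO))$, using the geometric model sketched in the introduction: since a splitting $\fg(\CK) = \fg(\CO) \oplus \fg(r)$ identifies $\wh\Gr_G$ with the formal completion $\wh{\fg(r)}$, the algebra underlying $\CA_\hbar(\fd,\rho)$ should be $U(\fg(\CO)) \# \C[\wh{\fg(r)}]$, with a suitable $\hbar$-rescaling so that the $\epsilon$-grading of point (3) holds. First I would fix the algebra structure: set $\CA_\hbar(\fd,\rho)$ to be (a completion of) $U(\fg(\CO))[\![\hbar]\!] \# S(\fg^*(\CO))[\![\hbar]\!]$, where the cross relations between $x \in \fg(\CO)$ and $f \in \fg^*(\CO)$ are deformed using $\rho$ — concretely, $\fg^*(\CO)$ is realized inside $\fg(\CK)/\fg(r) \cong \fg(\CO)$ via the splitting, and the commutator $[x,f]$ picks up the $\fg(r)$-component determined by $r$. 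Because $\rho$ and $\rho'$ differ only by a choice of splitting of $\fg(\CK)$, the underlying vector spaces (and in fact the algebras, after the canonical identification $\C[\wh\Gr_G] \cong \C[\wh{\fg(r)}]$) agree, which gives the "canonical algebra identification $\CA_\hbar(\fd,\rho_1) \cong \CA_\hbar(\fd,\rho_2) = \CA_\hbar(\fd)$" claimed in the introduction.

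Next I would define the coproduct $\Delta_{\rho,\hbar}$. The key idea is to let $\Delta_{\rho,\hbar}$ mimic the convolution product on $[G(\CO)\backslash\Gr_G]$: on $U(\fg(\CO))$ it is the standard cocommutative coproduct $x \mapsto x \otimes 1 + 1 \otimes x$, while on $\fg^*(\CO)$ it is a deformation of the primitive coproduct by a term linear in $\rho$, engineered so that $\Delta_{\rho,\hbar} - \Delta_{\rho,\hbar}^{\mathrm{op}} = \hbar\delta_\rho + \CO(\hbar^2)$. The most economical route to proving coassociativity and the Hopf axioms is probably to first treat Yang's $r$-matrix $\gamma$ — where $\fg(r) = t^{-1}\fg[t^{-1}]$ and one can match against the known Yangian $Y_\hbar(\fd)$ of \cite{etingof1996quantization, etingof1998quantization} (whose completion is a bigraded quantization of $(\fd(\CO),\delta_\gamma)$) — and then transport the structure to general $\rho$ by a twist. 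For the twist (point (3) and the displayed cocycle equation in Theorem \ref{Thm:quantization_intro}): given two $\epsilon$-graded $r$-matrices $\rho_1,\rho_2$, the difference of the two splittings $\fg(r_1), \fg(r_2)$ of $\fg(\CK)$ is encoded by an element $g \in \fg(\CO)\otimes\fg(\CO)$ with $r_2 - r_1 = g$; I expect $F \in \CA_\hbar(\fd)\otimes\CA_\hbar(\fd)$ to be a canonical group-like-up-to-$\hbar$ element built as $\exp$ of an $\hbar$-series whose leading term is (the image of) $\epsilon g$, and the cocycle identity $(\Delta_{\rho_2,\hbar}\otimes 1)(F)F^{12} = (1\otimes\Delta_{\rho_2,\hbar})(F)F^{23}$ should follow from the fact that both $\rho_1,\rho_2$ solve the same classical (and quantum) Yang-Baxter equation — this is the strategy of \cite{gautam2021meromorphic} and of \cite{etingof1998quantization} for twisting quantizations. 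For point (2), when $r$ depends only on $t_1-t_2$, the derivation $T = \partial_t$ of $\fd(\CO)$ lifts to $\CA_\hbar(\fd,\rho)$ exactly as it does to $U(\fd(\CO))$, and compatibility of $T$ with $\Delta_{\rho,\hbar}$ plus the already-established $\Delta_{\rho,\hbar} - \Delta_{\rho,\hbar}^{\mathrm{op}} = \hbar\delta_\rho + \CO(\hbar^2)$ immediately yields the analogous statement for $\Delta_{\rho,\hbar,z} = (\tau_z\otimes 1)\Delta_{\rho,\hbar}$, using the compatibility of $\tau_z$ with the Lie bialgebra identities proved in the Lemma preceding this section.

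For point (4), well-definedness at $\hbar = \xi$ is a matter of checking that all structure constants appearing in the algebra, coalgebra, antipode, and twist $F$ are polynomial (not merely power series) in $\hbar$ — this is transparent from the construction, since each generator from $\fd$ contributes a bounded $\hbar$-degree dictated by the grading in (3) ($\fg$ in degree $0$, $\fg^*$ in degree $2$, $\hbar$ in degree $-2$), so on each graded piece only finitely many powers of $\hbar$ occur. Point (5) is the subtlest: $\Delta_{\rho,\hbar,z}$ lands in a completion with respect to loop grading, and evaluating at a numerical $z = s \in \C^\times$ requires the relevant $z$-series to converge when applied to finite-dimensional smooth modules; I would argue that on such modules the loop-grading filtration is eventually constant in each fixed degree, so $(\tau_s \otimes 1)$ acts by a locally finite (hence convergent) operator — this is where restricting to $\CA^\circ$, generated by $\fd[t]$, is essential, since there the $z$-dependence is genuinely polynomial before completion. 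Point (6) then follows formally: if $\rho$ is rational, $\fg(r) \subseteq \fg(\CK)$ is bounded, $\delta_\rho$ restricts to an honest (non-topological) Lie bialgebra structure on $\fd[t]$ by the discussion preceding Corollary \ref{cor:lie_bialgebra_structures}, and one checks directly that $\Delta_{\rho,\hbar}$ preserves the subalgebra $\CA^\circ_\hbar(\fd,\rho)$ generated by $\fd[t]$, so it is a Hopf subalgebra quantizing $(\fd[t],\delta_\rho)$.

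The main obstacle I anticipate is proving coassociativity of $\Delta_{\rho,\hbar}$ together with the correct semiclassical limit \emph{simultaneously for all $\epsilon$-graded $\rho$} in a uniform way — verifying the Hopf axioms by brute force on generators $I_a t^n$, $I^a t^n$ is possible but unenlightening, so the cleaner path is to establish the $\gamma$ case rigorously (via comparison with the literature Yangian, or via the geometric convolution model) and then show the twist $F$ relating different $\rho$'s is itself well-defined and satisfies the cocycle condition; the technical heart is thus the construction and cocycle property of $F$, which requires carefully tracking the $\hbar$-adic and loop-adic convergence of the exponential series defining it.
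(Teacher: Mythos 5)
Your algebra-level setup (the smash product \(U(\fg(\CO))[\![\hbar]\!]\# S(\fg^*(\CO))[\![\hbar]\!]\), independence of \(\rho\), and the treatment of points (4)--(6) via \(\epsilon\)- and loop-grading, nilpotency of \(T\) on \(\fd[t]\), and restriction to \(\CA^\circ\)) is in line with what the paper does. But the core of your strategy has a genuine gap: you never actually define the coproduct, and you defer exactly the step that carries the technical weight. You propose to "engineer" a deformation of the primitive coproduct on \(\fg^*(\CO)\), establish the Hopf axioms only for \(\rho=\gamma\) by matching with the Etingof--Kazhdan Yangian, and then transport to general \(\rho\) by a twist \(F\) whose existence and cocycle property "should follow" from the classical and quantum Yang--Baxter equations. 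This is circular and incomplete on two counts. First, the identification of the EK quantization with the smash product, with an explicit enough coproduct to then verify points (2), (4), (5), (6), is not available a priori; in the paper that identification is a \emph{consequence} of the uniqueness theorem (Theorem \ref{Thm:yangunique}), which itself presupposes having a bi-graded quantization in hand. Second, a classical twist \(t=\rho_2-\rho_1\) does not automatically admit a quantization satisfying the cocycle identity, and "both \(\rho_i\) solve CYBE" gives you no quantum object to work with before the Hopf structures are constructed; the hard part is precisely producing \(F\) and proving \((\Delta_{\rho_2,\hbar}\otimes 1)(F)F^{12}=(1\otimes\Delta_{\rho_2,\hbar})(F)F^{23}\), which your proposal leaves as a hope.

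The paper's route avoids both issues by a uniform, explicit construction valid for every \(\epsilon\)-graded \(\rho\) simultaneously: the coproduct on \(S(\fg^*(\CO))[\![\hbar]\!]\) is the \emph{dual of the multiplication of \(U(\fg(r))[\![\hbar]\!]\)} (so coassociativity there is just associativity of \(U(\fg(r))\), encoded dually by the BCH series), and on \(\fg(\CO)\) it is conjugation of the primitive coproduct by the canonical element \(\CE_r=e^{-\hbar(\epsilon\otimes 1)\tau(r)}\) of Lemma \ref{Lem:group}; all Hopf axioms, the compatibility with the cross relations, and the antipode are then verified by evaluating against group-like elements \(e^{\hbar y}\), \(y\in\fg(r)\) (Lemma \ref{Lem:twistcoprod}, Theorem \ref{Prop:qHopf}). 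The twist is then not an abstract existence statement but the explicit element \(F=\CE_{r_2}^{-1}\CE_{r_1}\), and its cocycle identity follows from the identities \((\Delta_{\rho,\hbar}\otimes 1)(\CE_r)=\CE_r^{13}\CE_r^{23}\) and \((1\otimes\Delta)(\CE_r)=\CE_r^{12}\CE_r^{13}\) (Proposition \ref{Prop:twistingF}); its leading term is \(\hbar(\epsilon\otimes 1)\tau(r_2-r_1)\), giving \(F-F^{21}=\hbar(\rho_2-\rho_1)+\CO(\hbar^2)\). If you want to salvage your outline, you would need to supply a concrete definition of \(\Delta_{\rho,\hbar}\) on \(\fg^*(\CO)\) and \(\fg(\CO)\) and an explicit candidate for \(F\) together with a proof of the cocycle condition; at that point you would essentially be reproducing the paper's argument, so the "EK plus twist" detour buys nothing and, as stated, does not constitute a proof.
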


Now for the Lie bialgebra structure defined by Yang's \(r\)-matrix $\gamma$, the results above admits important refinements. Namely, in this case the constructed quantization of $U(\fd(\CO))$ is bi-graded in the sense outlined at the beginning of Section \ref{sec:Takiff} and we show that this is the unique graded quantization respecting both loop grading and $\epsilon$-grading. This motivates us to call the arising Hopf algebra Yangian of \(\fd\) and denoting it by $\CA_\hbar(\fd, \gamma) \coloneqq Y_\hbar (\fd)$ in this case. The following summarizes the refined features of this special case. 

\begin{Thm}\label{Thm:quantizegamma}
    There exists a bi-graded algebra $Y_\hbar(\fd)$ over $\C\lbb\hbar\rbb$ with the following structures, all preserving the bi-grading and all continuous. 
    \begin{enumerate}

        \item It admits a coproduct:
    \be
        \Delta_\hbar: Y_\hbar(\fd)\to Y_\hbar(\fd)\otimes_{\C\lbb\hbar\rbb} Y_\hbar(\fd),
    \ee
    which is a quantization of \(\delta_\gamma\).

  \item There exists a Hopf algebra differential $T$ that acts nilpotently on the generators $\fd[t]$. It defines another holomorphic coproduct:
  \be
\Delta_{\hbar, z}:=(e^{zT}\otimes 1)\Delta_\hbar: Y_\hbar(\fd)\to (Y_\hbar(\fd)\otimes_{\C\lbb\hbar\rbb} Y_\hbar(\fd))\lbb z\rbb,
  \ee
which is a quantization of $\delta_{\gamma,z}$. 

\item Such a bi-graded Hopf algebra quantization is unique,  justifying calling $Y_\hbar(\fd)$ the ``Yangian of $\fd$". 

\item For every \(\epsilon\)-graded \(r\)-matrix \(\rho\) with coefficients in \(\fd\), the Hopf algebra \(\CA_\hbar(\fd,\rho)\) is a quantum twist of the Yangian $Y_\hbar(\fd)$.

\item Let $ Y^\circ_\hbar(\fd)$ be the dense subalgebra of the topological Hopf algebra $Y_\hbar(\fd)$ generated over $\C\lbb\hbar\rbb$ by $\fd[t]$. This is a Hopf subalgebra and the coproduct has well-defined evaluation at any $\hbar=\xi\in \C$, whereas the holomorphic coproduct $\Delta_{\hbar, z}$ has well-defined evaluation on the product of finite-dimensional modules for any $\hbar=\xi\in \C$ and $z=s\in \C^\times$. 

    \end{enumerate}
    
\end{Thm}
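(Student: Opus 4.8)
The plan is to construct $Y_\hbar(\fd)$ directly as a smashed product and then upgrade it to a Hopf algebra, deriving all the asserted structures from explicit formulas, and finally prove uniqueness by a deformation-theoretic rigidity argument. First, as an algebra I would set $Y_\hbar(\fd) \coloneqq U(\fg(\CO)) \# \widehat{S}_\hbar(\fg^*(\CO))$, where $\widehat{S}_\hbar$ means the completed symmetric algebra on $\fg^*(\CO) = \epsilon\fg(\CO)$ but with $\hbar$ assigned $\epsilon$-degree $-2$ and loop-degree $0$; concretely one can realize this by rescaling $I^a_n \mapsto \hbar^{-1} I^a_n$ inside the completed enveloping algebra $U(\fd(\CO))[\![\hbar]\!]$, so that $Y_\hbar(\fd)/\hbar Y_\hbar(\fd) \cong U(\fd(\CO))$ automatically. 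The bi-grading (loop grading plus $\epsilon$-grading with $\deg_\epsilon(\hbar) = -2$) is then visibly preserved by the multiplication. Next, to produce the coproduct $\Delta_\hbar$: on the cocommutative Hopf subalgebra $U(\fg(\CO))$ one keeps the standard coproduct, while on the generators $I^a_n \in \fg^*(\CO)$ one must define $\Delta_\hbar(I^a_n)$ as a group-like-plus-primitive expression deformed by the $\fg$-factor. The guiding principle (as the introduction makes explicit) is the convolution product on $\widehat{\Gr}_G$ identified with $\widehat{\fg(\gamma)} = \widehat{t^{-1}\fg[t^{-1}]}$; dualizing the multiplication map on this formal group, together with the $\fg(\CO)$-action, yields the formula. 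I would write the ansatz $\Delta_\hbar(I^a(u)) = I^a(u)\otimes 1 + 1\otimes I^a(u) + \hbar(\text{terms bilinear in }I_b(u)\otimes I^c(u))$ and fix the correction by imposing coassociativity; the structure constants $f^a_{bc}$ will appear exactly as dictated by the Casimir $C_\fd$.

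The second step is to verify the Hopf axioms and properties (1)--(2) of Theorem \ref{Thm:quantizegamma}. Coassociativity is a finite check on generators once the ansatz is fixed; compatibility with multiplication uses the smashed-product relation $(x\# f)(y\# g) = \sum_{(y)} xy_{(1)}\#(f\cdot y_{(2)})g$ recalled in Section \ref{sec:Takiff}. The antipode is then forced and one checks $S$ is $\hbar$-linear and bi-graded. For the classical limit one computes $\Delta_\hbar - \Delta_\hbar^{\mathrm{op}} = \hbar\,\delta_\gamma + O(\hbar^2)$ directly from the ansatz; the $O(\hbar)$ term is $[\gamma(t_1,t_2), x(t_1)\otimes 1 + 1\otimes x(t_2)]$ by construction, matching \eqref{eq:delta_r} with $\rho = \gamma$. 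The differential $T = \partial_t$ extends from $\fd(\CO)$ to a derivation of $Y_\hbar(\fd)$ commuting with the rescaling, and nilpotency on $\fd[t]$ is clear since $T$ lowers loop degree by one. The holomorphic coproduct $\Delta_{\hbar,z} \coloneqq (e^{zT}\otimes 1)\Delta_\hbar$ lands in $(Y_\hbar(\fd)\widehat{\otimes} Y_\hbar(\fd))[\![z]\!]$ because $e^{zT}$ acts locally nilpotently on each loop-graded piece; coassociativity in the translated sense and the classical limit $\delta_{\gamma,z}$ follow from the Lemma on $\delta_{\rho,z}$ in Section \ref{sec:quasi_classical_vertex} by applying $\tau_z\otimes 1$. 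For (4), the quantum twist relating $\CA_\hbar(\fd,\rho)$ to $Y_\hbar(\fd)$ is exactly the content of Theorem \ref{Thm:quantizerho}(3) together with the observation that for $\rho = \gamma$ the Manin triple is the standard one; one invokes that theorem. Statement (5) is a matter of noting that $\fd[t]$ generates a graded Hopf subalgebra (the coproduct formula keeps one inside polynomials), that $\hbar$ may be specialized since no negative powers of $\hbar$ appear in $\Delta_\hbar$ on these generators, and that $e^{zT}$ acts by a polynomial in $z$ on each graded component of a finite-dimensional module, making evaluation at $z = s$ well-defined.

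The genuinely hard part is (3), the uniqueness of the bi-graded Hopf algebra quantization. The strategy is standard deformation theory: any two bi-graded quantizations $\CA_\hbar, \CA'_\hbar$ of $(\fd(\CO),\delta_\gamma)$ are, order by order in $\hbar$, related by an isomorphism provided the relevant Hochschild/Gerstenhaber and co-Hochschild cohomology groups of $U(\fd(\CO))$ controlling obstructions and the freedom in the iso vanish in the appropriate bi-degree. The key leverage is the grading: $\hbar$ carries $\epsilon$-degree $-2$ and loop-degree $0$, so at order $\hbar^n$ only cochains of $\epsilon$-degree $2n$ and loop-degree $0$ contribute, and one must show the corresponding graded pieces of $H^2$ and $H^3$ (equivalently, the cyclic/Lie-algebra cohomology of $\fd(\CO)$ with the relevant coefficients) vanish. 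Here one uses that $\fg$ is simple — so $H^1(\fg) = H^2(\fg) = 0$ and $\fg = [\fg,\fg]$ — combined with the explicit structure $\fd = \fg \ltimes \fg^*$ and the loop variable. I expect this cohomology vanishing to be the crux; one likely reduces it, via a spectral sequence or a direct cocycle-killing argument exploiting the $\mathfrak{sl}_2$ (or $\Gm$) action implicit in the bi-grading together with invariant theory for $\fg$, to the classical statement that Yang's $r$-matrix is the unique $\mathfrak{g}$-invariant rational solution of CYBE with a simple pole, i.e.\ the rigidity already visible at the classical level in Proposition \ref{lem:lie_bialgebra_structures}. Granting this vanishing, a routine induction on $\hbar$-order produces the Hopf algebra isomorphism $\CA_\hbar \cong Y_\hbar(\fd)$, completing the proof.
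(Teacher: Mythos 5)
Your construction step contains a genuine error: you keep the standard cocommutative coproduct on \(U(\fg(\CO))\) and only deform \(\Delta_\hbar\) on the generators \(I^a_n\in\fg^*(\CO)\). This cannot work. The cobracket \(\delta_\gamma\) does \emph{not} vanish on \(\fg(\CO)\) (for \(x\in\fg(\CO)\) one has \(\delta_\gamma(x)=[x(t_1)\otimes 1+1\otimes x(t_2),C_\fd/(t_1-t_2)]\), which has nontrivial components in \(\fg\otimes\fg^*\oplus\fg^*\otimes\fg\)), so a quantization must have \(\Delta_\hbar-\Delta_\hbar^{\mathrm{op}}=\hbar\delta_\gamma+\CO(\hbar^2)\) already on \(\fg(\CO)\); with the undeformed coproduct this difference is zero. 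Worse, with the standard coproduct on \(U(\fg(\CO))\) the map \(\Delta_\hbar\) fails to be an algebra homomorphism for the smash-product relations: checking \([\Delta_\hbar(x),\Delta_\hbar(f)]=\Delta_\hbar([x,f])\) against group-likes \(e^{\hbar y}\otimes e^{\hbar z}\) requires \(x\rhd(e^{\hbar y}e^{\hbar z})\) to split as a derivation, which is false in the bicrossed product — the correction term \((x\lhd e^{\hbar y})\rhd e^{\hbar z}\) is exactly what the paper's deformation \(\Delta_\hbar(x)=\CE_\gamma^{-1}(x\otimes 1+1\otimes x)\CE_\gamma=x\otimes 1+\phi(x)\) supplies, where \(\phi\) is the tensor encoding the right action \(\lhd\) of \(U(\fg(\gamma_\fg))\) on \(U(\fg(\CO))\). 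So the coproduct must be twisted on \emph{both} tensor factors, not just on \(S(\fg^*(\CO))\); imposing coassociativity on your \(I^a(u)\)-ansatz alone will not recover this.

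Your treatment of uniqueness (item 3) also has a gap: you defer the whole difficulty to an unproven vanishing of \(H^2\) and \(H^3\) in a suitable bidegree and suggest it reduces to the classical uniqueness of Yang's \(r\)-matrix. Neither is what is actually needed. The relevant obstruction theory (Drinfeld's Lie bialgebra cohomology, i.e.\ cohomology of the double \(\fd(\CK)\) with trivial coefficients) does not simply vanish; the paper instead shows that the specific continuous 3-cocycle \(c\) on \(\fd(\CK)\) produced by the order-\(\hbar^k\) discrepancies — which satisfies \(c(\wedge^3\fd(\CO))=0=c(\wedge^3\fd(\gamma))\) plus the \(\epsilon\)- and loop-grading constraints (note the loop constraint is \(k_1+k_2+k_3=-k\), not loop degree \(0\) as you assert) — is a coboundary \(d\chi\) with \(\chi\) vanishing on \(\wedge^2\) of both halves of the Manin triple. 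Establishing this requires simplicity of \(\fg\) to kill the \(\fg\)-valued 2- and 1-cocycle pieces, a separate argument excluding the extra symmetric invariant for \(\fg=\mathfrak{sl}_n\), and an analysis of the induced 2-cocycles \(\varphi_\pm\) on \(\CO\) and \(\C[t^{-1}]\) using smoothness of these rings. None of this is implied by, or reducible to, the classification of rational solutions of the CYBE, so as written your uniqueness argument is a placeholder rather than a proof.
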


We comment that the above structures will endow the category $Y^\circ_\xi(\fd)\Mod$, the category of finite-dimensional modules of $Y^\circ_\xi(\fd)$ the structure of a tensor category, as well as a meromorphic tensor category in the sense of \cite{soibelman1999meromorphic}.  


\subsection{Graded quantization of $U(\fd(\CO))$}\label{subsec:quantHopf}

\subsubsection{Idea of the quantization}
The idea of the quantization is related to the geometry of the double quotient, and can be applied to more general situations. Suppose we have an affine group $G$ which can be represented by a cross-product of subgroups:
\be
G=H\match K.
\ee
The stack $[K\!\setminus \! G/K]$ has the following correspondence, allowing one to define a monoidal structure on $\Coh_K(G/K)$:
\be\label{eq:corrGK}
\btik
& (G\times_K G)/K \arrow[dr]\arrow[dl]&\\
G/K\times G/K & & G/K
\etik
\ee
Here the first map is given by $[g_1, g_2]\to [g_1]\times [g_2]$ and the second given by multiplication $[g_1,g_2]\to [g_1g_2]$. In the case when $G=H\match K$, the quotient $G/K$ is isomorphic to $H$ and the left-action \((k,h) \mapsto k \cdot h\) of $K$ on this quotient is given by the unique decomposition \(kh = (k \cdot h) k^h\) into unique elements \(k \cdot h \in H\) and \(k^h \in K\). We can then describe $\Coh_K(G/K)$ as the category of \(K\)-equivariant \(\C[H]\)-modules, and hope to obtain the monoidal structure by using the coproduct of $\C[H]$ coming from the group multiplication and the action of $K$. However, the action of $K$ on $(G\times_K G)/K\cong H\times H$ is given by a twisted action:
\be
k\cdot  (h_1, h_2)=(k\cdot h_1,k^{h_1}\cdot h_2),
\ee
suggesting that the coproduct on $K$ must be modified. 

Let us consider the case when $H$ and $K$ are finite groups. In this case, denote by $\C[H]$ functions on $H$ and $\C\cdot K$ the group algebra of $K$, and the category $\Coh_K(G/K)$ is simply the category of modules of $A\coloneqq\C\cdot K\# \C[H]$, where $kfk^{-1}=f\circ k^{-1}$.

It is not difficult to show that the monoidal structure of the above correspondence can be re-written in terms of the above coproduct on $A$:
\be
\Delta (\delta_h)=\sum_{h_ih_j=h} \delta_{h_i}\otimes \delta_{h_j},\qquad \Delta (k)=\sum_{h\in H}k\delta_h \otimes k^h.
\ee
Here, \(k \in K\) and for \(h \in H\) the map \(\delta_h \colon H \to \C\) is defined by \(\delta_h(h') = 0\) for \(h' \neq h\) and \(\delta_h(h) = 1\).
Note that this indeed defines a well-defined algebra homomorphism, since for example:
\be
\Delta (k_1)\Delta(k_2)=\sum_{h,h' \in H} k_1\delta_hk_2\delta_{h'}\otimes k_1^h k_2^{h'}=\sum_{h' \in H} k_1k_2\delta_{h'} \otimes k_1^{k_2\cdot h'}k_2^{h'}=\Delta (k_1k_2).
\ee
Here we used the fact that $\delta_{h_1}\delta_{h_2}=\delta_{h_1,h_2}\delta_{h_2}$, and that $(k_1k_2)^h=k_1^{k_2\cdot h}k_2^h$. 

In the following sections, we apply this to Lie algebra splittings $\fg(\CK) = \fg(\CO)\oplus \fg(r)$ of the loop algebra.

\subsubsection{Cross product of Lie algebras and formal groups}\label{subsubsec:crossprod}
Let us return to the setting of Section \ref{sec:Takiff}. Let \(r\) be a generalized \(r\)-matrix with coefficients in \(\fg\) and \(\rho\) be the associated \(\epsilon\)-graded \(r\)-matrix with coefficients in \(\fd\) given by \eqref{eq:rhoandr}.
Consider the Lie algebra decomposition $\fg(\CK) = \fg(\CO) \oplus \fg(r)$ associated to \(r\). Let us recall that the tensor coefficients \(\{r_{a,n}|1\le a\le d,n \ge 0\} \subseteq \fg(r)\) of \(r\) is precisely the dual basis to the topological basis \(\{I^a_n = \epsilon I_at^n\mid 1\le a \le d,n\ge 0\}\) of \(\fg^*(\CO) = \epsilon\fg(\CO) = \fg(r)^*\), where \(\{I_a\}_{a = 1}^d\) was a fixed orthonormal basis of \(\fg\).

The Lie algebra bracket of $\fg(\CK)$ provides:
\begin{itemize}
    \item The left action $\phi_+ \coloneqq p_-\textnormal{ad}|_{\fg(\CO)} \colon \fg(\CO) \to \textnormal{End}(\fg(r))$, where \(p_- \colon \fg(\CK) \to \fg(\CK)\) is the canonical projection onto \(\fg(r)\). 

    \item The right action $\phi_- \coloneqq p_+\textnormal{ad}|_{\fg(r)} \colon \fg(r) \to \textnormal{End}(\fg(\CO))$, where \(p_+ \colon \fg(\CK) \to \fg(\CK)\) is the canonical projection onto \(\fg(\CO)\).

    \item The compatibility $[x,y]=\phi_+(x)y+x\phi_-(y)$, where $x\in \fg(\CO)$ and $y\in \fg(r)$ and we write \(x\phi_-(y) \coloneqq -\phi_-(y)x\). 
    
\end{itemize}
The compatibility condition is to ensure that the Lie bracket satisfies Jacobi identity. For example, for $x_1, x_2\in \fg(\CO)$ and $y\in \fg(r)$, the Jacobi identity:
\be
[[x_1,x_2],y]=[[x_1, y], x_2]+[x_1, [x_2,y]]
\ee
translates to the following condition for \(\phi_-\):
\be
    [x_1,x_2]\phi_-(y)=[x_1\phi_-(y), x_2]+[x_1, x_2\phi_-(y)]-x_2\phi_-(\phi_+(x_1)y)+x_1\phi_-(\phi_+(x_2)y).
\ee
Similarly, the condition for $\phi_+$ reads
\be
    \phi_+(x)[y_1,y_2] = [\phi_+(x)y_1,y_2] + [y_1,\phi_+(x)y_2] + \phi_+(x\phi_-(y_1))y_2  - \phi_+(x\phi_-(y_2))y_1
\ee
for all \(x \in \fg(\CO)\) and \(y_1,y_2 \in \fg(r)\).

The decomposition of the Lie algebra induces a decomposition of the algebra
\be
    U(\fg(\CK))=U(\fg(r)) \otimes U(\fg(\CO)).
\ee
Here, we recall that we complete the tensor product and the universal enveloping algebras using the \((t)\)-adic topology.
This induces a matched pair between $U(\fg(\CO))$ and $U(\fg(r))$ as Hopf algebras, making the algebra $U(\fg(\CK))$ into a bicrossed product of Hopf algebras \cite{majid1990physics}. This means, in particular that there is a left action of $U(\fg(\CO))$ on $U(\fg(r))$ which we denote by $\rhd$, a right action of $U(\fg(r))$ on $U(\fg(\CO))$, which we denote by $\lhd$
both being coalgebra homomorphisms, satisfying:
\be\label{eq:crossedUg}
\begin{aligned}
    &(hg)\lhd a=(h\lhd (g_{(1)}\rhd a_{(1)}))\cdot (g_{(2)}\lhd a_{(2)}), 1\lhd a=\epsilon(a)\\
    & h\rhd (ab)=(h_{(1)}\rhd a_{(1)})\cdot ((h_{(2)}\lhd a_{(2)})\rhd b), h\rhd 1=\epsilon (h)\\
    & h_{(1)}\lhd a_{(1)}\otimes h_{(2)}\rhd a_{(2)}=h_{(2)}\lhd a_{(2)}\otimes h_{(1)}\rhd a_{(1)}.
\end{aligned}
\ee
for \(h,g \in U(\fg(\CO))\) and \(a,b\in U(\fg(r))\). The multiplication on the product can be defined by:
\be\label{eq:bicrossproduct}
(a\otimes h)(b\otimes g)=a(h^{(1)}\rhd b^{(1)})\otimes (h^{(2)}\lhd b^{(2)})g.
\ee
For a detailed description of these formulas, see for example \cite{agore2014classifying}. Let us point out that 
\begin{equation}\label{eq:hopf_as_lie_action}
    h \rhd a = \phi_+(h)a \quad \textnormal{ and } \quad h \lhd a = h \phi_-(a)
\end{equation}
for \(h \in \fg(\CO)\) and \(a \in \fg(r)\).

Let us consider now putting all the algebras over $\C\lbb\hbar\rbb$. There, we are able to use formal expressions of the form \(e^{\hbar x}\) for \(x \in \fg(\CK)[\![\hbar]\!]\) which will turn out to be very useful. Let us collect some result about these. 

\begin{Lem}\label{Lem:expaction}
    The following results are true:

    \begin{enumerate}
        \item The group-like elements of \(U(\fg(\CK))[\![\hbar]\!]\) are precisely of the form $e^x$ for some \(x \in \hbar\fg(\CK)[\![\hbar]\!]\).

        \item For all \(x \in \hbar \fp[\![\hbar]\!]\) exists unique \(x_+ \in \hbar \fg(\CO)[\![\hbar]\!], x_- \in \hbar \fg(r)[\![\hbar]\!]\) such that \(e^x = e^{x_-}e^{x_+}\).

        \item For any \(x,y \in \fg(\CK)\) we have \(e^{\hbar x}e^{\hbar y}=e^{H(\hbar x,\hbar y)}\), where $H(\hbar x,\hbar y)$ is given by the Baker-Campbell-Hausdorff (BCH) formula:
        \be
        H(\hbar x,\hbar y)=\hbar x+\hbar y+\frac{\hbar^2}{2}[x,y]+\frac{\hbar^3}{12}([x,[x,y]]+[y, [y, x]])+\cdots \in \hbar \fp[\![\hbar]\!].
        \ee

        \item For any $x \in \fg(r)$ and $y,z\in \fg(\CO)$, we have:
        \be
        [y,z]\lhd e^{\hbar x}=[y\lhd e^{\hbar x}, z\lhd e^{\hbar x}]+(y\lhd (z\rhd e^{\hbar x}))-(z\lhd (y\rhd e^{\hbar x})). 
        \ee
    \end{enumerate}
\end{Lem}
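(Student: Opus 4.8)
The plan is to prove the four statements in order, each essentially by reduction to a formal/associated-graded version of a classical fact. For item 1, I would argue that $U(\fg(\CK))[\![\hbar]\!]$ is a complete topological Hopf algebra with respect to the combined $(t)$-adic and $\hbar$-adic topology, and the coradical filtration makes it conilpotent in an appropriate sense; then the logarithm $\log(g) = \sum_{n\ge 1}\frac{(-1)^{n+1}}{n}(g-1)^n$ of a group-like element $g$ with $g\equiv 1 \bmod \hbar$ is a well-defined primitive element, hence lies in $\fg(\CK)[\![\hbar]\!]$ by the Milnor–Moore / Cartier–Kostant argument applied level by level in the $(t)$-adic filtration. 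The constraint $x\in\hbar\fg(\CK)[\![\hbar]\!]$ (rather than allowing an $\hbar^0$ term) comes from requiring $g\equiv 1\bmod\hbar$, which is the only way $\log g$ converges $\hbar$-adically; I would spell this out. For item 3, the BCH formula is a purely formal identity in the completed free Lie algebra, and since all brackets carry a factor of $\hbar$, convergence in $\hbar\fg(\CK)[\![\hbar]\!]$ (with the $(t)$-adic topology) is automatic — this step is essentially bookkeeping.

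For item 2, the existence and uniqueness of the decomposition $e^x = e^{x_-}e^{x_+}$ with $x_\pm\in\hbar\fg(\CO)[\![\hbar]\!]$, $x_-\in\hbar\fg(r)[\![\hbar]\!]$: here I would use the Lie algebra splitting $\fg(\CK) = \fg(\CO)\oplus\fg(r)$ together with the BCH formula from item 3 to set up a fixed-point iteration in powers of $\hbar$. Writing $x = x^{(1)}\hbar + x^{(2)}\hbar^2 + \cdots$, one solves for $x_\pm$ order by order: at order $\hbar^n$ the equation $e^{x_-}e^{x_+} = e^x$ becomes $x_-^{(n)} + x_+^{(n)} = x^{(n)} + (\text{polynomial in lower-order }x_\pm^{(<n)}\text{ via BCH})$, and since the right-hand side is already determined, projecting onto the two summands of $\fg(\CK) = \fg(\CO)\oplus\fg(r)$ gives $x_+^{(n)}$ and $x_-^{(n)}$ uniquely. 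Convergence in the $(t)$-adic topology needs a small check that the projections $p_\pm$ are continuous, which follows from the fact that both $\fg(\CO)$ and $\fg(r)$ are $(t)$-adically closed (using that $\fg(r)$ is bounded in the rational case, or more generally graded).

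For item 4, I would first establish the $\hbar^0$-reduction — i.e.\ the cocycle-type identity for $\lhd$ acting on a group-like element — as a consequence of the matched-pair axioms \eqref{eq:crossedUg}. Concretely, the first identity in \eqref{eq:crossedUg}, $(hg)\lhd a = (h\lhd(g_{(1)}\rhd a_{(1)}))\cdot(g_{(2)}\lhd a_{(2)})$, specialized to a group-like $a = e^{\hbar x}$ (so $\Delta(a) = a\otimes a$, $\epsilon(a)=1$) and to $h,g\in\fg(\CO)$ primitive, unwinds directly: the left side is $[y,z]\lhd e^{\hbar x}$ after antisymmetrizing in $h,g$ (since $hg - gh = [h,g]$ inside $U(\fg(\CO))$ and $\lhd$ is an algebra-compatible right action in the sense encoded by \eqref{eq:crossedUg}), while the right side produces exactly $[y\lhd e^{\hbar x}, z\lhd e^{\hbar x}]$ plus the two "twist" terms $y\lhd(z\rhd e^{\hbar x})$ and $-z\lhd(y\rhd e^{\hbar x})$, once one uses that $\rhd$ restricted to primitives times a group-like stays controlled. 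The main obstacle — and the one step I would be most careful about — is making this last manipulation rigorous: the actions $\rhd,\lhd$ were defined on $\fg(\CO)$ and $\fg(r)$ via $\phi_\pm$ by \eqref{eq:hopf_as_lie_action}, but here $\rhd$ must be applied with a group-like second argument $e^{\hbar x}$ and a primitive first argument, landing in $U(\fg(r))[\![\hbar]\!]$, so I need the coalgebra-homomorphism property of the actions (stated after \eqref{eq:bicrossproduct}) to propagate the identity from generators to the relevant completed subalgebra; this is where the matched-pair formalism of \cite{majid1990physics, agore2014classifying} does the real work, and I would cite it for the extension from Lie-level to Hopf-level and then just verify the group-like specialization.
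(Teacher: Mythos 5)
Your proposal is correct, and for parts 1, 3 and 4 it follows essentially the same route as the paper: part 1 by taking \(\log\) of a group-like element and checking it is primitive, part 3 as the formal BCH identity, and part 4 by antisymmetrizing the first matched-pair axiom of \eqref{eq:crossedUg} in \(h,g\) and then specializing to the group-like element \(a = e^{\hbar x}\) (your worry about extending \(\rhd,\lhd\) beyond Lie level is already absorbed in the paper's standing assumption that \(U(\fg(\CO)),U(\fg(r))\) form a matched pair of Hopf algebras, exactly as you propose to cite). The genuine divergence is part 2: you solve \(e^{x_-}e^{x_+}=e^{x}\) by a BCH-driven induction in powers of \(\hbar\), using the vector-space splitting \(\fg(\CK)=\fg(r)\oplus\fg(\CO)\) to determine \(x_\pm^{(n)}\) order by order, whereas the paper observes that \(U(\fg(\CK))[\![\hbar]\!]\cong U(\fg(r))[\![\hbar]\!]\otimes U(\fg(\CO))[\![\hbar]\!]\) as coalgebras, so a group-like element factors as a product of group-like elements of the two factors, and then part 1 (applied to \(\fg(r)\) and \(\fg(\CO)\)) finishes the argument. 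Your induction is more hands-on and makes uniqueness very explicit; the paper's factorization argument is shorter and avoids any convergence discussion altogether — note in particular that your concern about \((t)\)-adic continuity of the projections is essentially moot, since at each fixed \(\hbar\)-order only finitely many BCH brackets contribute and one only needs the plain direct-sum decomposition of a single element of \(\fg(\CK)\). One small point to make precise in part 1: the condition \(a\equiv 1 \bmod \hbar\) should be derived, not imposed — it follows, as in the paper, from the fact that the only group-like element of \(U(\fg(\CK))\) itself is \(1\), so reducing mod \(\hbar\) forces \(a-1\in\hbar U(\fg(\CK))[\![\hbar]\!]\); with that spelled out, your argument is complete.
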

\begin{proof}
    Let \(a \in U(\fg(\CK))[\![\hbar]\!]\) be group-like, i.e.\ \(\Delta(a) = a \otimes a\). Since the only group-like element of \(U(\fg(\CK))\) is 1, we have \(a - 1 \in \hbar U(\fg(\CK))[\![\hbar]\!]\). Therefore, we can write
    \begin{equation}
      \begin{split}
        \Delta(\textnormal{ln}(a)) &= \sum_{k = 0}^\infty \frac{(-1)^{k+1}\Delta(a-1)^k}{k} = \sum_{k = 0}^\infty \frac{(-1)^{k}((a \otimes a)-1)^k}{k} \\&= \textnormal{ln}(a \otimes a) = \textnormal{ln}((a \otimes 1)(1 \otimes a)) =  
        \textnormal{ln}(a\otimes 1) + \textnormal{ln}(1 \otimes a) \\&= \textnormal{ln}(a) \otimes 1 + 1 \otimes \textnormal{ln}(a).
      \end{split}  
    \end{equation}
    Consequently, \(\textnormal{ln}(a)\) is a primitive element of \(U(\fg(\CK))[\![\hbar]\!] \cong U_{\C[\![\hbar]\!]}(\fg(\CK)[\![\hbar]\!])\), so \(\textnormal{ln}(a) \in \hbar\fg(\CK)[\![\hbar]\!]\) and \(a = e^{\textnormal{ln}(a)}\) is necessarily of the form claimed. On the other hand, \(\Delta (x^n)=\sum_{i+j=n}\binom{n}{i} x^i\otimes x^j\) holds for every \(n \in \N\) and \(x \in \fg(\CK)[\![\hbar]\!]\). Therefore, 
    \be
    \begin{split}
        \Delta(e^{\hbar x}) = \sum_{n = 0}^\infty \frac{\hbar^n}{n!}\Delta(x^n) = \sum_{n = 0}^\infty \sum_{i+j=n}\frac{\hbar^i\hbar^j}{i!j!} x^i\otimes x^j = e^{\hbar x} \otimes e^{\hbar x},
    \end{split}
    \ee
    so \(e^{\hbar x}\) is indeed always group-like, proving 1. 

    For 2., note that \(U(\fg(\CK))[\![\hbar]\!]=U(\fg(r))[\![\hbar]\!]\otimes_{\C[\![\hbar]\!]} U(\fg(\CO))[\![\hbar]\!]\) is an isomorphism of \(\C[\![\hbar]\!]\)-coalgebras, so group-like elements on the left are also group-like elements on the right. Therefore, \(e^{x} = a_- \otimes a_+\) for two group-like elements \(a_+ \in U(\fg(\CO))[\![\hbar]\!]\) and \(a_- \in U(\fg(r))[\![\hbar]\!]\). Repeating the arguments of 1.\ for \(\fg(\CK)\) replaced by \(\fg(\CO)\) and \(\fg(r)\), we can see that \(a_\pm = e^{x_\pm}\) for some \(x_+ \in \hbar\fg(\CO)[\![\hbar]\!]\) and \(x_-\in\hbar\fg(r)[\![\hbar]\!]\) respectively.
    
    The statement 3.\ is the well-known defining relation for the BCH series.

    Let us turn to the proof of 4. Using first equation in \eqref{eq:crossedUg}, we obtain for any $a\in U(\fg(r))[\![\hbar]\!]$:
    \be
        \begin{split}
            [y,z]\lhd a
            &=(y\lhd (z\rhd a))+(y\lhd a_{(1)})(z \lhd a_{(2)}) -(z\lhd (y\rhd a))-(z\lhd a_{(1)})(y \lhd a_{(2)})
            \\&=  \sum_{(a)}[y\lhd a_{(1)}, z\lhd a_{(2)}] + (y\lhd(z\rhd a)) - (z\lhd (y\rhd a)). 
        \end{split}
    \ee
    Putting \(a = e^{\hbar x}\) and using \(\Delta(e^{\hbar x}) = e^{\hbar x} \otimes e^{\hbar x}\), we deduce the desired result.
\end{proof}
Next, we examine the duality of Hopf algebras in the \(\C[\![\hbar]\!]\)-extended case. Therefore, recall that \(\fd(\CK)\) and consequently \(U(\fd(\CK))\) carries the \(\epsilon\)-grading defined by the fact that \(\fg^*\) has degree two. Let us put \(\textnormal{deg}_\epsilon(\hbar) = -2\) in order to extend the \(\epsilon\)-grading to \(U(\fd(\CK))[\![\hbar]\!]\).

\begin{Lem}\label{Lem:group}
The following results hold true:
\begin{enumerate}
    \item One can naturally identify $U(\fg(r))^*\lbb\hbar\rbb$ with $S(\hbar \fg^*(\CO))\lbb\hbar\rbb$. 

    \item For every $f\in S(\hbar \fg^*(\CO))$, $\lag f,e^{\hbar x}\rag=0$ for all $x\in \fg(r)$ implies $f=0$. 

    \item There is a bijection between \(\C[\![\hbar]\!]\)-linear maps \(U(\fg(r))[\![\hbar]\!] \to U(\fg(\CK))[\![\hbar]\!]\) and completed tensors \(( S(\hbar\fg^*(\CO)) \otimes U(\fg(\CK)))[\![\hbar]\!]\). If the map is unital, the associated tensor is invertible and this assignment transforms the point-wise multiplication of maps into the multiplication of tensors.

    \item The element $\CE_r = e^{-\hbar (\epsilon \otimes 1)\tau(r)} \in  S(\hbar\fg^*(\CO)) \otimes U(\fg(r))$ corresponds to the canonical embedding \(U(\fg(r))[\![\hbar]\!] \to U(\fg(\CK))[\![\hbar]\!]\) in the sense of 3.
\end{enumerate}
\end{Lem}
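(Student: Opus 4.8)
The plan is to prove the four parts in order of increasing concreteness. Part (1) is PBW-duality: the topological coalgebra $U(\fg(r))$ is cocommutative and, by the PBW theorem, the symmetrization map is an isomorphism of coalgebras $S(\fg(r)) \xrightarrow{\sim} U(\fg(r))$; since $\fg(r) \subseteq \fg(\CK)$ is topologically graded with finite-dimensional graded pieces for the loop grading, the continuous graded dual of $S(\fg(r))$ is the completed symmetric algebra on $\fg(r)^*$ with its canonical commutative product. Under the identification $\fg(r)^* = \fg^*(\CO)$ fixed in Section~\ref{subsubsec:crossprod} this gives $U(\fg(r))^* \cong S(\fg^*(\CO))$; extending $\C[\![\hbar]\!]$-linearly and rescaling the generators by $\hbar$ --- which places the whole algebra in $\epsilon$-degree $0$, matching $U(\fg(r))^*$ --- yields $U(\fg(r))^*[\![\hbar]\!] \cong S(\hbar\fg^*(\CO))[\![\hbar]\!]$. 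I would record the resulting continuous, non-degenerate pairing $\langle\cdot,\cdot\rangle$ together with the compatibility $\langle fg, a\rangle = \sum_{(a)}\langle f, a_{(1)}\rangle \langle g, a_{(2)}\rangle$ with the coproduct of $U(\fg(r))$, which is what drives the rest.

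For part (2), given $0 \neq f \in S(\hbar\fg^*(\CO))$, write $f = \sum_{j}\hbar^j g_j$ with $g_j \in S^j(\fg^*(\CO))$ and let $k$ be minimal with $g_k \neq 0$ (replacing $g_k$ by a non-zero loop-homogeneous component if needed). A non-zero element of the dual of $S^k(\fg(r))$ defines a non-zero polynomial function $x \mapsto \langle g_k, x^{\otimes k}\rangle$ on $\fg(r)$, so pick $x_0 \in \fg(r)$ with $\langle g_k, x_0^k\rangle \neq 0$. Since $e^{\hbar x_0}$ is grouplike (Lemma~\ref{Lem:expaction}(1)), $\langle\cdot, e^{\hbar x_0}\rangle$ is multiplicative on $S(\hbar\fg^*(\CO))$, and since a degree-$j$ element of $S$ pairs non-trivially only with the degree-$j$ part of the coalgebra, one computes $\langle f, e^{\hbar x_0}\rangle = \sum_j \tfrac{\hbar^{2j}}{j!}\langle g_j, x_0^j\rangle$, whose leading term $\tfrac{\hbar^{2k}}{k!}\langle g_k, x_0^k\rangle$ is non-zero.

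For part (3), a continuous $\C[\![\hbar]\!]$-linear map $\Phi \colon U(\fg(r))[\![\hbar]\!] \to U(\fg(\CK))[\![\hbar]\!]$ corresponds to an element $T_\Phi$ of the completed tensor product $(U(\fg(r))^* \otimes U(\fg(\CK)))[\![\hbar]\!]$, which equals $(S(\hbar\fg^*(\CO)) \otimes U(\fg(\CK)))[\![\hbar]\!]$ by (1), via $\Phi(a) = (\langle\cdot, a\rangle \otimes \mathrm{id})(T_\Phi)$. Writing the point-wise product of maps as the convolution $(\Phi \ast \Psi)(a) = \sum_{(a)}\Phi(a_{(1)})\Psi(a_{(2)})$ --- built from the coproduct of $U(\fg(r))$ and the product of $U(\fg(\CK))$ --- the compatibility from (1) gives $T_{\Phi \ast \Psi} = T_\Phi\, T_\Psi$ with the componentwise product. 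If $\Phi$ is unital then $1 = \Phi(1) = (\varepsilon_S \otimes \mathrm{id})(T_\Phi)$, with $\varepsilon_S$ the counit of $S(\hbar\fg^*(\CO))$, so $T_\Phi = 1 \otimes 1 + N$ with $N \in \mathfrak{m} \otimes U(\fg(\CK))[\![\hbar]\!]$, where $\mathfrak{m} \subseteq S(\hbar\fg^*(\CO))$ is the augmentation ideal; since $\mathfrak{m}$ is generated by $\hbar\fg^*(\CO) \subseteq \hbar S(\hbar\fg^*(\CO))$, $N$ is $\hbar$-adically topologically nilpotent, so $T_\Phi^{-1} = \sum_{j \geq 0}(-N)^j$ converges.

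Finally, for part (4), unwinding the conventions shows $\CE_r = e^{-\hbar(\epsilon \otimes 1)\tau(r)} = \exp\!\bigl(-\sum_{a,n}(\hbar I^a_n) \otimes r_{a,n}\bigr)$ lies in $(S(\hbar\fg^*(\CO)) \otimes U(\fg(r)))[\![\hbar]\!]$: the $n$-sum converges $t$-adically, and the argument being divisible by $\hbar$ the exponential converges $\hbar$-adically. Let $\iota$ be the canonical embedding and $T_\iota$ its tensor per (3). By (2), applied coefficientwise in $\hbar$ to the components of a tensor against a topological basis of $U(\fg(\CK))$, two such tensors agree once the maps they induce agree on all $e^{\hbar x}$, $x \in \fg(r)$; so it is enough to check $\Phi_{\CE_r}(e^{\hbar x}) = e^{\hbar x} = \iota(e^{\hbar x})$. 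As $e^{\hbar x}$ is grouplike, $\langle\cdot, e^{\hbar x}\rangle$ is an algebra homomorphism on $S(\hbar\fg^*(\CO))$, and since $\{r_{a,n}\}$ and $\{I^a_n\}$ are dual bases and a degree-one element of $S$ only sees the degree-one part $\hbar x$ of $e^{\hbar x}$, one gets $\langle \hbar I^a_n, e^{\hbar x}\rangle = \hbar\langle I^a_n, x\rangle$; feeding this into the exponential collapses $(\langle\cdot, e^{\hbar x}\rangle \otimes \mathrm{id})(\CE_r)$ to $e^{\hbar x}$, the sign in the exponent of $\CE_r$ being precisely what is needed. The conceptual content is light --- PBW duality, the convolution-algebra description of $\mathrm{Hom}$, and one exponential computation --- so I expect the main obstacle to be foundational: making the continuous-dual and completed-tensor formalism precise enough that these classical finite-dimensional facts transfer verbatim, and tracking which series converge $\hbar$-adically versus $t$-adically. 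Within (4), everything hinges on the reduction to grouplike elements, which is available only because of the separation statement (2).
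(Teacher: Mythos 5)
Your proposal follows essentially the same route as the paper's proof: PBW duality for (1), reduction to a homogeneous (leading) component and evaluation on the group-like elements $e^{\hbar x}$ for (2), the pairing/convolution correspondence plus the observation that a unital tensor is $1+\CO(\hbar)$ and hence invertible for (3), and for (4) the reduction to group-likes via (2) followed by the exponential computation. The only slip is the sign in your expansion of $\CE_r$: with the paper's convention $-(\epsilon\otimes 1)\tau(r)=\sum_{a,n}I^a_n\otimes r_{a,n}$, one has $\CE_r=\exp\bigl(+\sum_{a,n}(\hbar I^a_n)\otimes r_{a,n}\bigr)$, whereas your displayed exponent $-\sum_{a,n}(\hbar I^a_n)\otimes r_{a,n}$ would evaluate on $e^{\hbar x}$ to $e^{-\hbar x}$, contradicting the collapse to $e^{\hbar x}$ you then assert; fixing that sign makes the argument agree with the paper's.
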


\begin{proof}
    We naturally have:
    \be
        U(\fg(r))^*\lbb\hbar\rbb=S(\fg(r))^*\lbb\hbar\rbb=\overline{S}(\fg(r)^*)\lbb\hbar\rbb=S(\hbar\fg^*(\CO))\lbb\hbar\rbb. 
    \ee
    Here, $\overline{S}$ is the symmetric algebra completed with respect to symmetric degrees. This proves 1.

    For 2.,
    we may assume that $f$ is a homogeneous polynomial of degree $n$. In this case, we have \(
        \lag f, e^{\hbar x}\rag=\hbar^n/n! \lag f, x^n\rag\). Write
    \be
        x^n = \lp\sum_{p = 1}^k c^{a_p,n_p} r_{a_p,n_p}\rp^n=\sum_{p = 1}^k\sum_{k_1+\ldots +k_p=n} (c^{a_1,n_1})^{k_1}\cdots (c^{a_p,n_p})^{k_p}\sigma(r_{a_1,n_1}, \cdots, r_{a_p,n_p}),
    \ee
    where $\sigma(r_{a_1,n_1}, \cdots, r_{a_p,n_p})$ is a total symmetrization of $(r_{a_1,n_1})^{k_1}\cdots (r_{a_p,n_p})^{k_p}$. Since the elements of the form \(\sigma(r_{a_1,n_1}, \cdots, r_{a_p,n_p})\)
    form a basis of the homogeneous polynomials in \(U(\fg(r))\) of degree \(n\), this implies \(
        0 = \lag f, e^{\hbar x}\rag=\hbar^n/n! \lag f, x^n\rag\) implies \(f = 0\).  

    The assignment in 3.\ can be obtained by continuous extension from the map that assigns to an elementary tensor \( f \otimes a \), for \(f \in S(\hbar\fg^*(\CO))[\![\hbar]\!]\) and \(a \in U(\fg(\CK))[\![\hbar]\!]\), the map
    \begin{equation}
        b \mapsto \langle f,b\rangle a.
    \end{equation}
    The map associated to a tensor \(t\) is unital if and only if \(t = 1 + \CO(\hbar)\). This implies in particular that \(t\) is invertible. The fact that multiplications are respected follows
    from 2.\ and the calculation
    \begin{equation}\label{eq:product_tensor_endomorphism}
        \langle f_1f_2, e^{\hbar x} \rangle a_1a_2 = \langle f_1 \otimes f_2, \Delta(e^{\hbar x})\rangle a_1 a_2 = \langle f_1,e^{\hbar x}\rangle \langle f_2,e^{\hbar x} \rangle a_1a_2
    \end{equation}
    for all \(a_1,a_2 \in U(\fg(\CK))[\![\hbar]\!]\) and \(f_1,f_2 \in S(\hbar\fg^*(\CO))[\![\hbar]\!]\).

    For the proof of 4., let us observe that \(\langle -\hbar(\epsilon\otimes 1)\tau(r),e^{\hbar x} \rangle = \hbar x\) holds for all \(x \in \fg(r)\), since the embedding \(\fg(r) \to \fg(\CK)\) is defined by \(-(\epsilon \otimes 1)\tau(r) = \sum_{n = 0}^\infty \sum_{a = 1}^d I^a_n\otimes r_{a,n}\). Similar to \eqref{eq:product_tensor_endomorphism}, we can calculate \(\langle(-\hbar)^n (\epsilon \otimes 1)^n\tau(r)^n,e^{\hbar x} \rangle =\hbar^n x^n\). Therefore, we can calculate:
    \be
        \begin{split}
            \langle \CE_r, e^{\hbar x}\rangle = \sum_{n = 0}^\infty \frac{1}{n!}\langle(-\hbar)^n(\epsilon \otimes 1)^n\tau(r)^n, e^{\hbar x}\rangle = \sum_{n = 0}^\infty \frac{\hbar^n}{n!}x^n = e^{\hbar x}.
        \end{split}
    \ee
    This shows that the map associated to \(\CE_r\) is the embedding \(U(\fg(r))[\![\hbar]\!] \to U(\fg(\CK))[\![\hbar]\!]\) on group-like elements and hence on all elements by 2.
\end{proof}

From this, we can naturally give the algebra $S(\hbar\fg^*(\CO))\lbb\hbar\rbb$ a non-cocommutative Hopf algebra structure, by giving it a new coproduct defined by
\be
\lag \Delta_{\rho,\hbar} (f), a\otimes b\rag=\lag f, ab\rag,
\ee
for $f,g\in S(\hbar\fg^*(\CO))\lbb\hbar\rbb$ and $a,b\in U(\fg(r))\lbb\hbar\rbb$. Since the coproduct \(\Delta_{\rho,\hbar}\) always contains at least one $\hbar$ on $\hbar\fg^*(\CO)$, we can extend it to $S(\fg^*(\CO))[\![\hbar]\!]$ by putting
\be\label{eq:extending_Delta}
    \Delta_{\rho,\hbar}(f) = \hbar^{-1}\Delta_{\rho,\hbar}(\hbar f)
\ee
for \(f \in \fg^*(\CO)\). Using Lemma \ref{Lem:group}.2.,
\be
\lag \Delta_{\rho,\hbar} (I^a_n), e^{\hbar r_{b,i}}\otimes e^{\hbar r_{c,j}}\rag=\lag I^a_n, e^{H(\hbar r_{c,j},\hbar r_{b,i})}\rag, 
\ee
and \([r_{b,i},r_{c,j}] = \sum_{a = 1}^d\sum_{n = 0}^\infty g^{(a,n)}_{(b,i),(c,j)}r_{a,n}\), we have
\be\label{eq:delta_BCH}
\Delta (I^a_n)=I^a_n\otimes 1+1\otimes I^a_n+\frac{\hbar }{2}\sum_{b,c = 1}^d\sum_{i,j = 0}^\infty g^{(a,n)}_{(b,i),(c,j)} I^b_i\otimes I^c_j+\cdots,
\ee
where the right-hand side is dual to the BCH formula.

The left action of $U(\fg(\CO))$ on $U(\fg(r))$ defines a right action of $U(\fg(\CO))[\![\hbar]\!]$ on $S(\fg^*(\CO))\lbb\hbar\rbb$ via
\be\label{eq:defining_hopf_action}
\lag f \circ x, -\rag = \lag  f, x \rhd - \rag\,,\qquad f \in S(\fg^*(\CO))[\![\hbar]\!], x \in U(\fg(\CO))[\![\hbar]\!]. 
\ee
The action restricted to $x \in \fg(\CO)$ is given by algebra derivations, since:
\be
\lag f \cdot g, x\rhd a\rag =\lag f\otimes g,  \Delta (x\rhd a)\rag =\lag f\otimes g, \Delta (x)\rhd \Delta (a)\rag =\lag (f\circ x) \cdot g+f\cdot  (g\circ x), a\rag.
\ee

\begin{Thm}\label{Prop:qHopf}
    The smash-product $\CA_\hbar(\fd,\rho) \coloneqq U(\fg(\CO))[\![\hbar]\!] \#_{\C[\![\hbar]\!]}S(\fg^*(\CO))[\![\hbar]\!]\) does not depend on the choice of \(\rho\). Moreover, it can be made into a Hopf algebra, with coproduct \(\Delta_{\rho,\hbar}\) defined by:
    \be
\Delta_{\rho,\hbar} (x)\coloneqq \CE_r^{-1}(x \otimes 1 + 1 \otimes x)\CE_r,\qquad \lag \Delta_{\rho,\hbar} (f), a\otimes b\rag=\lag f, a\cdot b\rag,
    \ee
    for \(x \in \fg(\CO)\), \(f \in \hbar\fg^*(\CO)\) and \(a,b \in U(\fg(r))[\![\hbar]\!]\).
    The unit is \(1 \in U(\fg(\CO))\) and the counit is defined by $\epsilon(x)=0$ for all \(x \in \fd(\CO)\) respectively.
    
    Moreover, the following holds:
    \begin{itemize}
        \item \(\CA_\hbar(\fd,\rho)/\hbar \CA_\hbar(\fd,\rho) \cong U(\fd(\CO))\) as \(\epsilon\)-graded Hopf algebras.
        \item \(\Delta_{\rho,\hbar} - \Delta_{\rho,\hbar}^{\textnormal{op}} = \hbar\delta_\rho + \CO(\hbar^2).\)
    \end{itemize}
    In particular, \(\CA_\hbar(\fd,\rho)\) is an \(\epsilon\)-graded quantization of the Lie bialgebra structure \(\delta_\rho\) on \(\fd(\CO)\).
\end{Thm}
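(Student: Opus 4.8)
I would establish in turn: $\rho$-independence of the algebra; the bialgebra axioms for $\Delta_{\rho,\hbar}$; existence of the antipode; the reduction modulo $\hbar$; and the semiclassical identity. For the first, compute the right action $\circ$ of \eqref{eq:defining_hopf_action} on the generators $\fg^*(\CO)$: for $x\in\fg(\CO)$, $f\in\fg^*(\CO)$, $a\in\fg(r)$, one has $\lag f\circ x,a\rag=\lag f,\phi_+(x)a\rag=\lag f,[x,a]\rag$, since the $\fg(\CO)$-component $p_+[x,a]$ pairs to zero (under the residue of $\kappa$) with $\fg^*(\CO)=\epsilon\fg(\CO)$; as the residue pairing between $\fg^*(\CO)$ and $\fg(r)$ is perfect, this forces $f\circ x=-\textnormal{ad}^*(x)f$, an expression independent of $r$. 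Since $\circ$ acts on $S(\fg^*(\CO))$ by derivations it is determined by this restriction, hence $\rho$-independent, so the smash product $\CA_\hbar(\fd,\rho)$ is $U(\fd(\CO))[\![\hbar]\!]$ as an algebra; in particular $\CA_\hbar(\fd,\rho)/\hbar\CA_\hbar(\fd,\rho)\cong U(\fd(\CO))$ as algebras.

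\textbf{The coproduct, counit and antipode.} On the Hopf subalgebra $U(\fg(\CO))[\![\hbar]\!]$, $\Delta_{\rho,\hbar}$ is the conjugate $\CE_r^{-1}(-)\CE_r$ of the standard cocommutative coproduct; conjugation by a fixed invertible element preserves the algebra-homomorphism property, so here it suffices to check that this conjugation keeps $\fg(\CO)\otimes1+1\otimes\fg(\CO)$ inside $\CA_\hbar(\fd,\rho)\otimes\CA_\hbar(\fd,\rho)$, which I would do using the $\Z\times\Z$-homogeneous form $\CE_r=\exp(\sum_{a,n}\hbar\,I^a_n\otimes r_{a,n})$ of Lemma \ref{Lem:group}.4. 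On the commutative subalgebra $S(\fg^*(\CO))[\![\hbar]\!]$, $\Delta_{\rho,\hbar}$ is by construction dual to the multiplication of the Hopf algebra $U(\fg(r))[\![\hbar]\!]$; by Lemma \ref{Lem:group}.1, $S(\hbar\fg^*(\CO))[\![\hbar]\!]$ is its restricted dual and hence automatically a coassociative counital commutative bialgebra, and $\Delta_{\rho,\hbar}$ extends to $S(\fg^*(\CO))[\![\hbar]\!]$ by \eqref{eq:extending_Delta}. The one substantive point is compatibility with the smash-product cross-relations, i.e.\ $[\Delta_{\rho,\hbar}(x),\Delta_{\rho,\hbar}(f)]=\Delta_{\rho,\hbar}([x,f])$ for $x\in\fg(\CO)$, $f\in\fg^*(\CO)$; I expect this from the bicrossed-product identities \eqref{eq:crossedUg} and Lemma \ref{Lem:expaction}, parallelling the finite-group check of Section \ref{subsec:quantHopf}, the essential input being that $\CE_r$ implements the $\fg(\CO)$-equivariant duality between $\fg(r)$ and $\fg^*(\CO)=\fg(r)^*$. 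Once $\Delta_{\rho,\hbar}$ is known to be an algebra homomorphism, coassociativity and the counit axiom reduce to the generators: on $\fg^*(\CO)$ they are dual to associativity and unitality in $U(\fg(r))$; on $\fg(\CO)$ the counit follows from $(\epsilon\otimes1)\CE_r=1$ and coassociativity from a cocycle identity for $\CE_r$ encoding associativity in $U(\fg(r))$, verified via Lemma \ref{Lem:expaction}. Finally, since $\CA_\hbar(\fd,\rho)$ is flat and $\hbar$-adically complete over $\C[\![\hbar]\!]$ with Hopf quotient $\CA_\hbar(\fd,\rho)/\hbar\CA_\hbar(\fd,\rho)$, the antipode exists and is unique by the standard deformation argument; the $\epsilon$-grading is preserved because $\CE_r$ is $\epsilon$-homogeneous of degree $0$ (with $\deg_\epsilon(\hbar)=-2$).

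\textbf{Reduction and semiclassical limit.} Modulo $\hbar$ we have $\CE_r\equiv1$ and the higher-order terms of \eqref{eq:delta_BCH} vanish, so $\Delta_{\rho,\hbar}$ reduces to the primitive coproduct on $\fd(\CO)$; combined with the algebra identification this yields $\CA_\hbar(\fd,\rho)/\hbar\CA_\hbar(\fd,\rho)\cong U(\fd(\CO))$ as $\epsilon$-graded Hopf algebras. For the cobracket, write $\CE_r=e^{\hbar E}$ with $E=-(\epsilon\otimes1)\tau(r)$; on $\fg(\CO)$ one gets $\Delta_{\rho,\hbar}(x)=(x\otimes1+1\otimes x)+\hbar[x\otimes1+1\otimes x,E]+\CO(\hbar^2)$, hence $(\Delta_{\rho,\hbar}-\Delta_{\rho,\hbar}^{\textnormal{op}})(x)=\hbar[x\otimes1+1\otimes x,E-\tau(E)]+\CO(\hbar^2)=\hbar[\rho,x(t_1)\otimes1+1\otimes x(t_2)]+\CO(\hbar^2)=\hbar\delta_\rho(x)+\CO(\hbar^2)$, using $E-\tau(E)=-\rho$ from \eqref{eq:rhoandr} and the definition \eqref{eq:delta_r}. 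On $\fg^*(\CO)$, the first-order term of \eqref{eq:delta_BCH} antisymmetrizes to the structure constants of $\fg(r)$, which is $\delta_\rho(I^a_n)$ by the computation in the proof of Proposition \ref{lem:lie_bialgebra_structures}. As both $\delta_\rho$ and the $\hbar^1$-coefficient of $\Delta_{\rho,\hbar}-\Delta_{\rho,\hbar}^{\textnormal{op}}$ obey the co-Leibniz rule, agreement on generators propagates to all of $\CA_\hbar(\fd,\rho)$, completing the proof that $\CA_\hbar(\fd,\rho)$ is an $\epsilon$-graded quantization of $\delta_\rho$.

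\textbf{Main obstacle.} The delicate step is the cross-relation compatibility of $\Delta_{\rho,\hbar}$, together with the verification that the $\CE_r$-conjugation — a priori an operation in a completion of $U(\fg(\CK))[\![\hbar]\!]^{\otimes2}$ — really lands in $\CA_\hbar(\fd,\rho)\otimes\CA_\hbar(\fd,\rho)$: this forces one to combine Lemmas \ref{Lem:expaction} and \ref{Lem:group}, the relations \eqref{eq:crossedUg}, and the $\Z\times\Z$-grading, with all tensor products completed in the ambient topology.
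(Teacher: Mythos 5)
The decisive problem is in your first step. The element \(f\circ x\) lives in the completed symmetric algebra \(S(\hbar\fg^*(\CO))[\![\hbar]\!]\) and is determined by its pairings with \emph{all} of \(U(\fg(r))[\![\hbar]\!]\) (equivalently, by Lemma \ref{Lem:group}.2, with the group-likes \(e^{\hbar y}\)), not merely with the degree-one elements \(a\in\fg(r)\). Your computation \(\lag f\circ x,a\rag=\lag f,[x,a]\rag\) only pins down the symmetric-degree-one component. Pairing against higher PBW-degree elements, the matched-pair formula \eqref{eq:crossedUg} gives \(x\rhd(ab)=(x\rhd a)b+\bigl((x\lhd a)\rhd b\bigr)+a(x\rhd b)\) for \(a,b\in\fg(r)\), and the middle term produces nonzero contributions of higher symmetric degree (carrying positive powers of \(\hbar\)). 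Hence \(f\circ x\neq-\mathrm{ad}^*(x)f\) in general, the cross-relations of the smash product are genuinely \(\hbar\)-deformed, and your conclusion that \(\CA_\hbar(\fd,\rho)\) is the undeformed \(U(\fd(\CO))[\![\hbar]\!]\) is false: the paper's own example in Section \ref{subsubsec:topology}, where \([E,F^\vee]=\hbar(F^\vee)^2\) although \(\mathrm{ad}^*(E)F^\vee=0\), is a direct counterexample to your formula (the paper only shows that \(\circ\) agrees with the coadjoint action \emph{modulo} \(\hbar\), which is what yields \(\CA_\hbar/\hbar\CA_\hbar\cong U(\fd(\CO))\)). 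With the formula gone, your argument for \(\rho\)-independence collapses as well; the correct reason, as in the paper's proof, is that the action \(\circ\) is defined without reference to the complement — it is the dual of the canonical left \(U(\fg(\CO))\)-action on the coalgebra \(U(\fg(r))\cong U(\fg(\CK))/U(\fg(\CK))\,\fg(\CO)\) — so the algebra, while \(\rho\)-independent, is a nontrivial deformation.

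Apart from this, your architecture follows the paper's, but the one computation you yourself single out as substantive — \([\Delta_{\rho,\hbar}(x),\Delta_{\rho,\hbar}(f)]=\Delta_{\rho,\hbar}([x,f])\) for \(x\in\fg(\CO)\), \(f\in\fg^*(\CO)\) — is only asserted to be "expected" from \eqref{eq:crossedUg}; in the paper this is the crux, carried out by evaluating both sides on \(e^{\hbar y}\otimes e^{\hbar z}\) using Lemma \ref{Lem:twistcoprod}.1, and it is precisely the place where the deformed cross-relations and the deformed coproduct must be matched, so it cannot be deferred. Two points in your favour: invoking the standard fact that a topologically free bialgebra deformation of a Hopf algebra automatically admits an antipode is a legitimate shortcut compared with the paper's explicit construction of \(S\) in Lemma \ref{Lem:twistcoprod}.3, and your semiclassical computation of \(\Delta_{\rho,\hbar}-\Delta_{\rho,\hbar}^{\textnormal{op}}\) on \(\fg(\CO)\) and on \(\fg^*(\CO)\) (via \eqref{eq:delta_BCH}) coincides, up to a sign bookkeeping in \(E-\tau(E)\), with the paper's argument.
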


The proof of the theorem will take advantage of the following useful lemma.

\begin{Lem}\label{Lem:twistcoprod}
    The following holds true:
    \begin{enumerate}
        \item The action of $U(\fg(r))$ on $U(\fg(\CO))$ gives rise to a \(\C[\![\hbar]\!]\)-linear map
        \be
            \phi\colon U(\fg(\CO))[\![\hbar]\!]\to (S(\fg^*(\CO)) \otimes U(\fg(\CO)))\lbb\hbar\rbb,
        \ee
        uniquely determined by $\lag \phi(x), e^{\hbar y}\rag=x \lhd e^{\hbar y}$ for \(x \in U(\fg(\CO)),y\in\fg(r)\). Moreover, we have \(\Delta_{\rho,\hbar}(x) = x\otimes 1 + \phi(x)\) for all \(x \in \fg(\CO)[\![\hbar]\!]\) and
        \begin{equation}
            \Delta_{\rho,\hbar}\colon U(\fg(\CO))[\![\hbar]\!] \to (S(\fg^*(\CO)) \otimes U(\fg(\CO)))[\![\hbar]\!]    
        \end{equation}
        is a well-defined algebra homomorphism.

        \item \((\Delta_{\rho,\hbar}\otimes 1)(\CE_r) = \CE_r^{13}\CE_r^{23}\), \((1 \otimes \Delta)(\CE_r) = \CE_r^{12}\CE_r^{13}\) and \(
            (1 \otimes \Delta_{\rho,\hbar})(\CE_r) = \CE_r^{-1,23}(1 \otimes   \Delta)(\CE_r)\CE_r^{23}\).

        \item There is a unique continuous \(\C[\![\hbar]\!]\)-algebra anti-endomorphism \(S\) of \(\CA_\hbar(\fd,\rho)\) such that \(S(f) = -f\) and \(S(x) = -\nabla((S \otimes 1)\phi(x))\) for all \(f \in \fg^*(\CO)\) and \(x \in \fg(\CO)\). Here, \(\nabla \colon \CA_\hbar(\fd,\rho) \otimes_{\C[\![\hbar]\!]} \CA_\hbar(\fd,\rho) \to \CA_\hbar(\fd,\rho)\) is the multiplication map.  
    \end{enumerate}
    
\end{Lem}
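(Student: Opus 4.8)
The plan is to prove the three parts in sequence, since part~3 relies on the identity in part~1 and part~2 supplies the coassociativity bookkeeping needed along the way. For part~1, I would first produce $\phi$ by the duality of Lemma~\ref{Lem:group}. For fixed $x \in U(\fg(\CO))$ the assignment $a \mapsto x \lhd a$ is a $\C[\![\hbar]\!]$-linear map $U(\fg(r))[\![\hbar]\!] \to U(\fg(\CO))[\![\hbar]\!]$ which is a coalgebra homomorphism in $a$ by the matched-pair axioms \eqref{eq:crossedUg}, hence determined by its values $x \lhd e^{\hbar y}$ on the group-like elements, $y \in \fg(r)$; by Lemma~\ref{Lem:group}.3 (applied to maps with image in $U(\fg(\CO))$) it corresponds to a unique $\phi(x) \in (S(\hbar\fg^*(\CO)) \otimes U(\fg(\CO)))[\![\hbar]\!]$ with $\lag \phi(x), e^{\hbar y}\rag = x \lhd e^{\hbar y}$, uniqueness being Lemma~\ref{Lem:group}.2. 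To obtain $\Delta_{\rho,\hbar}(x) = x \otimes 1 + \phi(x)$ for $x \in \fg(\CO)$, I would unwind the conjugation $\CE_r^{-1}(x \otimes 1 + 1 \otimes x)\CE_r$: pairing the $S(\hbar\fg^*(\CO))$-legs of $\CE_r^{\pm 1}$ against $e^{\hbar y}$ converts the conjugation into the adjoint action $\textnormal{Ad}_{e^{\hbar \iota(y)}}$ in $U(\fg(\CK))$, where $\iota\colon U(\fg(r)) \to U(\fg(\CK))$ is the embedding that $\CE_r$ represents by Lemma~\ref{Lem:group}.4; the $x \otimes 1$ summand returns $x \otimes 1$ once the $\fg^*(\CO)$-legs are commuted past $x$ and the resulting $\fg(r)$-valued corrections are reabsorbed into $S(\hbar\fg^*(\CO))$ via $\CE_r$, while the $1 \otimes x$ summand gives $\textnormal{Ad}_{e^{\hbar \iota(y)}}(x)$, whose $\fg(\CK) = \fg(\CO) \oplus \fg(r)$ decomposition is described by $\phi_\pm$ and \eqref{eq:hopf_as_lie_action}: its $\fg(\CO)$-part is exactly $x \lhd e^{\hbar y}$ and its $\fg(r)$-part is again reabsorbed into $S(\hbar\fg^*(\CO))$. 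Lemma~\ref{Lem:expaction}.4 is the identity ensuring this reabsorption is compatible with the Lie bracket. That $\Delta_{\rho,\hbar}$ is an algebra homomorphism on $U(\fg(\CO))[\![\hbar]\!]$ valued in $(S(\fg^*(\CO)) \otimes U(\fg(\CO)))[\![\hbar]\!]$ then follows since a conjugation is multiplicative and the claimed codomain is verified on the generators.

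For part~2, all three identities I would prove uniformly by pairing the $S(\hbar\fg^*(\CO))$-legs against group-likes $e^{\hbar y_i}$ of $U(\fg(r))$ and invoking non-degeneracy (Lemma~\ref{Lem:group}.2). Under such a pairing $\CE_r$ becomes $\iota(e^{\hbar y}) = e^{\hbar y}$, the map $\Delta_{\rho,\hbar}$ on $S(\hbar\fg^*(\CO))$ becomes transposed multiplication in $U(\fg(r))$ (its defining property), and $\Delta$ becomes transposed comultiplication; so $(\Delta_{\rho,\hbar}\otimes 1)(\CE_r)$ paired against $e^{\hbar y_1}\otimes e^{\hbar y_2}$ gives $e^{\hbar y_1}e^{\hbar y_2} = \iota(e^{\hbar y_1})\iota(e^{\hbar y_2})$, matching $\CE_r^{13}\CE_r^{23}$ by multiplicativity of the pairing \eqref{eq:product_tensor_endomorphism}, and the remaining two identities are the analogous statements once one tracks which legs are paired and uses that $\iota$ is an algebra homomorphism. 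Equivalently, the first two drop out of $\CE_r = e^{-\hbar(\epsilon \otimes 1)\tau(r)}$, since the exponent $-(\epsilon \otimes 1)\tau(r) = \sum_{a,n} I^a_n \otimes r_{a,n}$ is primitive-like with commuting legs under $1 \otimes \Delta$, and the third is a formal consequence of the first two.

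For part~3, I would define $S(f) = -f$ for $f \in \fg^*(\CO)$, $S(x) = -\nabla((S \otimes 1)\phi(x))$ for $x \in \fg(\CO)$, and extend to the unique continuous $\C[\![\hbar]\!]$-algebra anti-endomorphism of $\CA_\hbar(\fd,\rho)$; well-definedness amounts to checking $S$ respects the defining relations of the smash product (the Lie relations of $\fg(\CO)$, commutativity of $\fg^*(\CO)$, and the cross relations), which follows from the matched-pair compatibilities Lemma~\ref{Lem:expaction}.4 and \eqref{eq:crossedUg}, and uniqueness is clear as the generators are prescribed. The right antipode identity $\nabla(S \otimes 1)\Delta_{\rho,\hbar} = \eta\epsilon$ holds on $\fg(\CO)$ by construction (using $\Delta_{\rho,\hbar}(x) = x \otimes 1 + \phi(x)$) and on $\fg^*(\CO)$ because there $S$ is the antipode of the Hopf subalgebra $(S(\hbar\fg^*(\CO)), \Delta_{\rho,\hbar}) \cong U(\fg(r))^*$, dual to the antipode of $U(\fg(r))$; since the elements satisfying this identity form a subalgebra when $S$ is an anti-endomorphism, it holds on all of $\CA_\hbar(\fd,\rho)$. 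The left antipode identity is then automatic: as $\CA_\hbar(\fd,\rho)/\hbar = U(\fd(\CO))$ is a Hopf algebra, $\textnormal{id}$ has a convolution inverse modulo $\hbar$, hence in the $\hbar$-complete convolution algebra $\textnormal{id}$ is invertible and its right inverse $S$ is two-sided.

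I expect the main obstacle to be the identity $\Delta_{\rho,\hbar}(x) = x \otimes 1 + \phi(x)$ in part~1 --- more precisely, making the ``reabsorption'' of the $\fg(r)$-valued terms rigorous: one has to show that, with the identifications set up around Lemma~\ref{Lem:group}, the conjugation $\CE_r^{-1}(\fg(\CO) \otimes 1 + 1 \otimes \fg(\CO))\CE_r$ really takes values in $\CA_\hbar(\fd,\rho) \otimes_{\C[\![\hbar]\!]} \CA_\hbar(\fd,\rho)$ and equals the asserted sum. After that, part~2 is bookkeeping with pairings against group-likes plus non-degeneracy, and part~3 is a standard smash-product and antipode argument.
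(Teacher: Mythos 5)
Your plan follows essentially the same route as the paper: represent $\phi$ via the duality of Lemma \ref{Lem:group}, verify all identities by pairing the $S(\hbar\fg^*(\CO))$-legs against group-likes $e^{\hbar y}$ and invoking non-degeneracy, and define $S$ on generators and check the smash-product relations via the matched-pair identities. The one step you flag as the obstacle --- making the ``reabsorption'' rigorous --- is closed in the paper by a small rearrangement: instead of pairing the conjugation $\CE_r^{-1}(x\otimes 1+1\otimes x)\CE_r$ directly (whose first tensor legs are not in $S(\fg^*(\CO))$, so the pairing is not literally defined), one rewrites the claim as the equivalent commutator identity $[x\otimes 1,\CE_r]=\CE_r\phi(x)-(1\otimes x)\CE_r$, in which every term has first leg in $S(\fg^*(\CO))[\![\hbar]\!]$; pairing with $e^{\hbar y}$ then reduces it, via Lemma \ref{Lem:group}.3, to the bicross-product relation $xe^{\hbar y}=(x\rhd e^{\hbar y})+e^{\hbar y}(x\lhd e^{\hbar y})$ from \eqref{eq:bicrossproduct}, exactly the $\phi_\pm$-decomposition you had in mind. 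For part 3 the paper likewise reduces the relations $S([x_1,x_2])=[S(x_2),S(x_1)]$ and $S([x,f])=[S(f),S(x)]$, after evaluation on $e^{\hbar y}$, to the single identity $S\lp(x\lhd e^{-\hbar y})\rhd e^{\hbar y}\rp=x\rhd e^{-\hbar y}$, proved from \eqref{eq:crossedUg} and Lemma \ref{Lem:expaction}.4 --- the same ingredients you cite; your additional remarks on the antipode axioms belong to the proof of Theorem \ref{Prop:qHopf} rather than to this lemma, but are consistent with it.
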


\begin{proof}
    Let $x\in \fg(\CO)$. For 1.\ it suffices to show \(\CE_r^{-1}(x\otimes 1+1\otimes x)\CE_r=x \otimes 1+\phi(x)\)
    or equivalently,
    \be
        [x \otimes 1,\CE_r]=\CE_r\phi(x)-(1\otimes x)\CE_r.
    \ee
    As usual, let us evaluate both sides on $e^{\hbar y}$ for some $y\in \fg(r)$. We have:
    \be
        \lag [x \otimes 1,\CE_r], e^{\hbar y}\rag=-\CE_r(x \rhd e^{\hbar y}) = -x \rhd e^{\hbar y}.
    \ee
   since \(e^{\hbar (\epsilon \otimes 1)\tau(r)}\) defines the embedding \(U(\fg(r))[\![\hbar]\!] \to U(\fg(\CK))[\![\hbar]\!]\). On the other hand, we have
    \be
        \lag \CE_r\phi(x), e^{\hbar y}\rag=e^{\hbar y}(x \lhd e^{\hbar y}),
    \ee
    by Lemma \ref{Lem:expaction}.3. Similarly:
    \be
        \lag  (1\otimes x)\CE_r, e^{\hbar y}\rag=xe^{\hbar y}.
    \ee
    We therefore need to show \(
        -x \rhd e^{\hbar y} =e^{\hbar y}(x \lhd e^{\hbar y})-xe^{\hbar y}\),
    which follows from the definition of bi-crossed product in equation \eqref{eq:bicrossproduct}. This completes the proof of 1.

    The first identity in 2.\ follows from
    \begin{equation}
        \begin{split}
            \langle (\Delta_{\rho,\hbar} \otimes 1)(\CE_r), e^{\hbar x} \otimes e^{\hbar y}\rangle &= \langle \CE_r, e^{\hbar x}e^{\hbar y}\rangle \\&= \langle \CE_r^{12}\CE_r^{13}, e^{\hbar x} \otimes e^{\hbar y}\rangle        
        \end{split}
    \end{equation}
    for all \(x,y \in \fg(r)\). The second identity follows similar to Lemma \ref{Lem:expaction}.1.\ and it implies the last identity by definition of \(\Delta_{\rho,\hbar}\) on \(U(\fg(\CO))\).

    In order to prove 3., it suffices to show the following relations:
    \begin{enumerate}
        \item[(i)] For any $x_1, x_2\in \fg(\CO)$, we have $S([x_1,x_2])=[S(x_2), S(x_1)]$.

        \item[(ii)] For any $x\in \fg(\CO), f\in \fg^*(\CO)$ we have $S([x, f])=[S(f), S(x)]$.
    \end{enumerate}
Let us start with proving (i). Write $\phi(x_i)=\sum f_i^{(1)}\otimes x_i^{(2)}$ for \(i \in \{1,2\}\). Then we have:
\be
\begin{split}
    &[S(x_2), S(x_1)]=\sum [S(f_2^{(1)})x_2^{(2)}, S(f_1^{(1)})x_1^{(2)}]
    \\&=\sum \left(S(f_2^{(1)})[x_2^{(2)}, S(f_1^{(1)})] x_1^{(2)}-S(f_1^{(1)})[x_1^{(2)}, S(f_2^{(1)})]x_2^{(2)}+S(f_2^{(1)}) S(f_1^{(1)})[x_2^{(2)},  x_1^{(2)}]\right)
\end{split}
\ee
Evaluating this element of \((S(\fg^*(\CO)) \otimes U(\fg(\CO))[\![\hbar]\!]\) on  $e^{\hbar y}$ for any $y \in \fg(r)$ in the sense of Lemma \ref{Lem:group} yields:
\be\label{eq:rhs_evaluated_antipode}
-x_1\lhd S((x_2 \lhd e^{-\hbar y})\rhd e^{\hbar y})+x_2\lhd S((x_1 \lhd e^{-\hbar y})\rhd e^{\hbar y})+[x_2\lhd e^{-\hbar y}, x_1\lhd e^{-\hbar y}].
\ee
According to Lemma \ref{Lem:expaction}.4.\ and Lemma \ref{Lem:group}.2., it suffices to show that 
\be\label{eq:remaining_identity_antipode}
    S((x \lhd e^{-\hbar y})\rhd e^{\hbar y})=x\rhd e^{-\hbar y}
\ee
holds, since \eqref{eq:rhs_evaluated_antipode} then coincides with the evaluation of \(S([x_1,x_2])\) in \(e^{\hbar y}\). 

Using equation \eqref{eq:crossedUg}, we have:
\be
(x\lhd e^{-\hbar y}) e^{\hbar y}=(x\lhd e^{-\hbar y})\rhd e^{\hbar y}+e^{\hbar y}x.
\ee
Applying the map \(S\) and using
\be
S(e^{\hbar y}x)=-xe^{-\hbar y}=-(x\rhd e^{-\hbar y})-e^{-\hbar y}(x\lhd e^{-\hbar y}).
\ee
we deduce that
\be
-(x\rhd e^{-\hbar y})-e^{-\hbar y}(x\lhd e^{-\hbar Y})= e^{-\hbar y}S(x\lhd e^{-\hbar y})-S\lp (x\lhd e^{-\hbar y})\rhd e^{\hbar y}\rp
\ee
holds. Since both $e^{-\hbar y}(x\lhd e^{-\hbar y})$ and $e^{-\hbar y}S(x\lhd e^{-\hbar y})$ belong to $(U(\fg(r))\otimes \fg(\CO))[\![\hbar]\!]$, the above equalities imply the desired
\be
x\rhd e^{-\hbar y}=S\lp (x\lhd e^{-\hbar y})\rhd e^{\hbar y}\rp.
\ee

To prove the identity (ii), we again write $\phi(x)=\sum f^{(1)}\otimes x^{(2)}$. For any $f\in \fg^*(\CO)$, we have:
\be
[Sf, Sx]=-\sum [S(f), S(f^{(1)})x^{(2)}]=-\sum S(f^{(1)})[S(f), x^{(2)}].
\ee
Evaluating again on an element $e^{\hbar y}$ for $y\in \fg(r)$, the last expression becomes:
\be
-\lag S(f),  (x\lhd e^{-\hbar y})\rhd e^{\hbar y}\rag=-\lag f, S\lp (x\lhd e^{-\hbar y})\rhd e^{\hbar y}\rp\rag.
\ee
Now we use again that $S\lp (x\lhd e^{-\hbar y})\rhd e^{\hbar y}\rp=x\rhd e^{-\hbar y}$ to deduce
\be
-\lag f, x\rhd e^{-\hbar Y}\rag=\lag S[x, f], e^{\hbar Y}\rag.
\ee
which completes the proof. 

\end{proof}

\begin{proof}[Proof of Theorem \ref{Prop:qHopf}]
    Everything is \(\epsilon\)-graded and continuous by construction.
    Furthermore, the action \(\circ\) coincides with the usual action of \(U(\fg(\CO))\) on \(S(\fg^*(\CO))\) modulo \(\hbar\) since,
    \begin{equation}
        \langle f\circ x, e^{\hbar y} \rangle = f(\phi_+(x)y) + \CO(\hbar)  = \langle f\textnormal{ad}^*(x),y\rangle + \CO(\hbar)
    \end{equation}
    holds for all \(x \in \fg(\CO), f\in \fg^*(\CO)\). Therefore, we can see that
    \be
    \CA_\hbar(\fd,\rho)/\hbar \CA_\hbar(\fd,\rho) \cong U(\fg(\CO)) \# S(\fg^*(\CO)) \cong U(\fd(\CO))
    \ee
    holds as \(\epsilon\)-graded topological Hopf algebras. Moreover, the definition of the action \(\circ\) does not depend on the choice of complementary subalgebra \(\fg(r)\), so, as an \(\C[\![\hbar]\!]\)-algebra, \(\CA_\hbar(\fd,\rho)\) is independent of \(\rho\). 
    
    Since \(\delta_\rho\) is, by definition, dual to the bracket of \(\fg(r) \oplus \epsilon\fg(r)\), \eqref{eq:delta_BCH} implies
    \be
        \Delta(f) - \Delta^{\textnormal{op}}(f) = \hbar \delta_\rho(f) + \CO(\hbar^2)
    \ee
    holds for all \(f \in \fg^*(\CO)\).
    The identities \(\CE_r^{\pm 1} = e^{\pm \hbar (\epsilon \otimes 1)\tau(r)}\) and \(\CE_r^{\pm 1,21} = e^{\pm\hbar(\epsilon \otimes 1)r}\) imply:
    \be
    \begin{split}
        \Delta_{\rho,\hbar}(x)-\Delta_{\rho,\hbar}^{\textnormal{op}} (x) &= \hbar (-[(\epsilon\otimes 1)\tau(r),x \otimes 1 + 1 \otimes x] + [(1 \otimes \epsilon)r,x \otimes 1 + 1 \otimes x]) + \CO(\hbar^2) \\&= \hbar \delta_\rho(x) + \CO(\hbar^2).    
    \end{split}
    \ee
    These are the quantization equations.

    We now turn to prove that $\Delta_{\rho,\hbar}$ defines a bialgebra structure on \(\CA_\hbar(\fd,\rho)\). First of all, \(\Delta_{\rho,\hbar}\) is well-defined by Lemma \ref{Lem:twistcoprod}.1.\ and \eqref{eq:extending_Delta}. Since everything is continuous and \(\epsilon\)-graded, this means that we we need to show that $\Delta_{\rho,\hbar}$ is a coassociative algebra homomorphism. The fact that \(\Delta_{\rho,\hbar}(ab) = \Delta_{\rho,\hbar}(a)\Delta_{\rho,\hbar}(b)\) holds for \(a,b \in U(\fg(\CO))[\![\hbar]\!]\) or \(a,b \in S(\fg^*(\CO))[\![\hbar]\!]\) is clear by definition. Since \(\CA_\hbar(\fd,\rho)\) is defined as a smash product with \(U(\fg(\CO))[\![\hbar]\!]\) over \(\C[\![\hbar]\!]\)-algebra, it remains to show that\be
        [\Delta_{\rho,\hbar}(x), \Delta_{\rho,\hbar} (t)]=\Delta_{\rho,\hbar} ([x,t]) = -\Delta_{\rho,\hbar} (t \circ x)
    \ee
holds for all \(x\in\fg(\CO), t\in \fg^*(\CO)\). We evaluate on elements $e^{\hbar y}, e^{\hbar z}$ for \(y,z \in \fg(r)\):
\be
\langle \Delta_{\rho,\hbar}([x,t]), e^{\hbar y}\otimes e^{\hbar z}\rangle=\lag [x,t], e^{\hbar y}e^{\hbar z}\rag=\lag t, x \rhd (e^{\hbar y}e^{\hbar z})\rag.
\ee
On the other hand, Lemma \ref{Lem:twistcoprod}.1. and \eqref{eq:crossedUg} implies:
\be
\begin{split}
    \lag [\Delta_{\rho,\hbar}(x), \Delta_{\rho,\hbar} (t)], e^{\hbar y} \otimes e^{\hbar z}\rag& = 
    \lag t, (x \rhd e^{\hbar y})e^{\hbar z}\rag+\lag [\phi(x), \Delta_{\rho,\hbar}(t)], e^{\hbar y}\otimes e^{\hbar z}\rag.  
    \\&=\lag t, (x \rhd e^{\hbar y})e^{\hbar z}\rag + \sum_i \lag f_i, e^{\hbar y}\rag\cdot  \lag t, e^{\hbar y}(x_i\rhd e^{\hbar z})\rag\\&=
    \lag t, (x\lhd e^{\hbar y})e^{\hbar z}\rag+ \lag t, e^{\hbar y}((x \lhd e^{\hbar y}) \rhd e^{\hbar z})\rag \\&= \lag t, x \rhd (e^{\hbar y}e^{\hbar z})\rag,
\end{split}
\ee
where we wrote $\phi(x)=\sum f_i \otimes x_i$ and used $\langle \phi(x), e^{\hbar y}\rangle=\sum_i \langle f_i,e^{\hbar y}\rangle x_i = x \lhd e^{\hbar y}$ as well as $\Delta (e^{\hbar y})=e^{\hbar y}\otimes e^{\hbar y}$. Therefore, $[\Delta_{\rho,\hbar}(x), \Delta_{\rho,\hbar} (t)]=\Delta_{\rho,\hbar}([x,t])$ holds. 

To show coassociativity of \(\Delta_{\hbar,\rho}\), note that for elements of \(S(\fg^*(\CO))[\![\hbar]\!]\) it follows from associativity of $U(\fg(r))[\![\hbar]\!]$. We need to show $\Delta_{\rho,\hbar}$ is coassociative for an elements \(x\) of \(U(\fg(\CO))[\![\hbar]\!]\). Using Lemma \ref{Lem:twistcoprod}.3.\ this boils down to
\be
    \begin{split}
        (\Delta_{\rho,\hbar} \otimes 1)\Delta_{\rho,\hbar}(x) &= (\CE_r^{12}\CE_r^{13}\CE_r^{23})^{-1}(\Delta \otimes 1)\Delta(x)\CE_r^{12}\CE_r^{13}\CE_r^{23}\\& = (1 \otimes \Delta_{\rho,\hbar})\Delta_{\rho,\hbar}(x).
    \end{split}
\ee

Finally, we show that $S$ from Lemma \ref{Lem:twistcoprod} is the an antipode of \(\CA_\hbar(\fd,\rho)\), i.e.\ that it satisfies the following two equations:
\be
\nabla (S\otimes 1)\Delta_{\rho,\hbar} =\nabla (1\otimes S)\Delta_{\rho,\hbar}=\epsilon.
\ee
Since the algebra is generated by $\fg(\CO)$ and $\fg^*(\CO)$, both of which are zero under $\epsilon$, it suffices to check that the above are equal to zero on these generators. Let $x\in \fg(\CO), f\in \fg^*(\CO)$. The equation $\nabla (S\otimes 1)\Delta_{\rho,\hbar}(x)=0$ is by definition. For $f$, we again evaluate on $e^{\hbar y}$ for \(y \in \fg(r)\):
\be
\lag \nabla (S\otimes 1)\Delta_{\rho,\hbar}(f), e^{\hbar y}\rag=\lag (S\otimes 1)\Delta_{\rho,\hbar}(f), e^{\hbar y}\otimes e^{\hbar y}\rag=\lag f, e^{-\hbar y}e^{\hbar y}\rag=\lag f, 1\rag=0.
\ee
The equation $\nabla (1\otimes S)\Delta_{\rho,\hbar}(f)=0$ is proven similarly, we are left to show that 
\be
    \nabla (1\otimes S)\Delta_{\rho,\hbar}(x)=0.
\ee
Writing $\phi(x)=x\otimes 1+\sum f^{(1)}\otimes x^{(2)}$ and evaluating
\be
\nabla (1\otimes S)\Delta_{\rho,\hbar}(x)=x+\sum f^{(1)} S(x^{(2)})\in (S(\fg^*(\CO))\otimes U(\fg(\CO)))\lbb\hbar\rbb
\ee
on $e^{\hbar y}$ for any \(y \in \fg(r)\) gives \(
x- (x\lhd e^{-\hbar Y})\lhd e^{\hbar Y}=x-x=0\).
This completes the proof. 
\end{proof}

\subsubsection{Twisting and splitting independence of the monoidal category}\label{sec:twisting}

Since the construction of the algebra $\CA_\hbar(\fd,\rho)$ mimics the monoidal structure of the double quotient $\wh{\fg(\CO)}\,\!\!\setminus \wh{\fg(\CK)}/\wh{\fg(\CO)}$, for different choice of \(\epsilon\)-graded \(r\)-matrices \(\rho\), the resulting monoidal category should be equivalent. We show in this section that there is an twisting between the Hopf algebras for different choices of the splitting. This twisting then induces the equivalences on the level of monoidal categories.  

Let \(r_1,r_2\) be two generalized \(r\)-matrices with coefficients in \(\fg\) and \(\rho_1,\rho_2\) be the associated \(\epsilon\)-graded \(r\)-matrices with coefficients in \(\fd\). Clearly, the difference \(\delta_{\rho_1} - \delta_{\rho_2}\) of the associated Lie bialgebra structures is a 1-coboundary \(a \mapsto [t, x \otimes 1 + 1 \otimes x]\) for the holomorphic tensor 
\begin{equation}
    t \coloneqq (1 \otimes \epsilon)s - (\epsilon \otimes 1)\tau(s) = \rho_2 - \rho_1 \in \fd(\CO) \otimes \fd(\CO)    
\end{equation}
for \(s \coloneqq r_2 - r_1 \in \fg(\CO) \otimes \fg(\CO)\). In the language of Lie bialgebras, \(t\) is a (topological) classical twist that transforms \(\delta_{\rho_1}\) in \(\delta_{\rho_2}\). 
The goal of this section is to see that \(t\) can be quantized to a quantum twist between the two comultiplications \(\Delta_{\rho_1,\hbar}\) and \(\Delta_{\rho_2,\hbar}\) of the Hopf algebras \(\CA_\hbar(\fd,\rho_1)\) and \(\CA_\hbar(\fd,\rho_2)\), where we recall that these are canonically isomorphic as \(\C[\![\hbar]\!]\)-algebras. For convenience, we will denote this algebra as \(\CA_\hbar\) in the following.

\begin{Prop}\label{Prop:twistingF}
    The tensor series \(F \coloneqq \CE_{r_2}^{-1}\CE_{r_1}\) is a quantum twist of \(\Delta_{\rho_1,\hbar}\) to \(\Delta_{\rho_2,\hbar}\) that quantizes \(t = \rho_2 - \rho_1\). More precisely, \(F \in (S(\fg^*(\CO))\otimes U(\fg(\CO)))[\![\hbar]\!]\) and: 
    \begin{enumerate}
        \item 
        \(\Delta_{\rho_2,\hbar}=F\Delta_{\rho_1,\hbar}F^{-1}\);
        \item \((\Delta_{\rho_2,\hbar}\otimes 1)(F)F^{12}=(1\otimes \Delta_{\rho_2,\hbar})(F) F^{23}\);
        \item \(F - F^{21} = \hbar t + \CO(\hbar^2).\)
    \end{enumerate}
    If both \(r_1\) and \(r_2\) are rational, \(F \in (S(\hbar\fg^*[t])\otimes U(\fg[t]))[\![\hbar]\!]\) holds.
\end{Prop}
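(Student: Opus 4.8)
The plan is to derive the rational refinement from the already-established membership \(F \in (S(\fg^*(\CO))\otimes U(\fg(\CO)))[\![\hbar]\!]\) by controlling the loop-degree support of \(F\), after first reducing to the case where one of the two \(r\)-matrices is Yang's. Since the \(\epsilon\)-graded \(r\)-matrix associated to \(\gamma_\fg\) is exactly \(\gamma\), the construction of Theorem~\ref{Prop:qHopf} also produces twists \(F_i \coloneqq \CE_{r_i}^{-1}\CE_{\gamma_\fg}\) from the Yangian coproduct of \(Y_\hbar(\fd)=\CA_\hbar(\fd,\gamma)\) to \(\Delta_{\rho_i,\hbar}\), and under the canonical algebra identification one has \(F = F_2 F_1^{-1}\); hence it suffices to prove \(F_i \in (S(\hbar\fg^*[t])\otimes U(\fg[t]))[\![\hbar]\!]\) for rational \(r_i\). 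In that case \(r_i \in \gamma_\fg + (\fg\otimes\fg)[t_1,t_2]\), equivalently \(\fg(r_i)\) is bounded, \(t^{-N}\fg_{<0}\subseteq\fg(r_i)\subseteq t^{N}\fg_{<0}\), and its tensor coefficients satisfy \((r_i)_{a,n} = I_a t^{-n-1}\) for all \(n > K\), where \(\{I_a t^{-n-1}\}_{n\ge 0}\) are precisely the tensor coefficients of \(\gamma_\fg\) so that \(\fg(\gamma_\fg)=\fg_{<0}\); moreover, by Theorem~\ref{Thm:quantizerho} the polynomial subalgebra \(\CA_\hbar^\circ = U(\fg[t])\#_{\C[\![\hbar]\!]}S(\fg^*[t])\) is a Hopf subalgebra of \(\CA_\hbar(\fd,\rho_i)\), and the claim asserts exactly that \(F_i\) already lives there.

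The decisive point is that \(\CE_{r_i}\) and \(\CE_{\gamma_\fg}\) agree in the deep tail. Because \((r_i)_{a,n} = I_a t^{-n-1}\) for \(n>K\), the two embeddings \(\fg(r_i)\hookrightarrow\fg(\CK)\) and \(\fg_{<0}\hookrightarrow\fg(\CK)\) of Lemma~\ref{Lem:group}.4 restrict to the same inclusion on the common subspace \(\frt \coloneqq \mathrm{span}\{I_a t^{-n-1}\mid n>K\} \supseteq t^{-M}\fg_{<0}\) for suitable \(M\). Through the tensor--map dictionary of Lemma~\ref{Lem:group}.3, this forces the first tensor leg of \(F_i = \CE_{r_i}^{-1}\CE_{\gamma_\fg}\), regarded as a continuous functional on \(U(\fg(r_i))\), to be annihilated by the ideal generated by \(\frt\) — the \(\frt\)-contributions of \(\CE_{r_i}\) and of \(\CE_{\gamma_\fg}\) cancel — so it factors through the finite-type quotient \(U(\fg(r_i)/\frt)\); by the identification of Lemma~\ref{Lem:group}.1 such a functional is precisely an element of \(S(\hbar\fg^*[t])\). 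For the second leg one evaluates \(F_i\) on group-likes \(e^{\hbar w}\), \(w\in\fg(r_i)\subseteq\fg[t,t^{-1}]\), and expresses the result through the bicrossed-product actions \(\rhd,\lhd\) of Section~\ref{subsubsec:crossprod}; boundedness of \(\fg(r_i)\) keeps all arising loop degrees bounded above, and the known inclusion \(F_i \in S(\fg^*(\CO))\otimes U(\fg(\CO))\) keeps them \(\ge 0\), so the second leg lies in \(U(\fg[t])\). Combining the two legs gives \(F_i \in (S(\hbar\fg^*[t])\otimes U(\fg[t]))[\![\hbar]\!]\), hence the same for \(F = F_2F_1^{-1}\).

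The step I expect to be the main obstacle is making this deep-tail cancellation rigorous uniformly in \(\hbar\): taken individually, \(\CE_{r_i}\) and \(\CE_{\gamma_\fg}\) both encode the full singular part of \(\gamma_\fg\) and have first legs of unbounded loop support, so the factorization through \(U(\fg(r_i)/\frt)\) is a feature of the product \(\CE_{r_i}^{-1}\CE_{\gamma_\fg}\) that is invisible term by term in a naive Baker--Campbell--Hausdorff expansion, since iterated brackets of the singular tails keep producing ever lower loop degrees before the cancellation takes hold. I would therefore not expand the product but argue intrinsically: using the PBW factorization \(U(\fg(r_i)) = U(\fg(r_i)^{\mathrm{fin}})\cdot U(\frt)\) with \(\fg(r_i)^{\mathrm{fin}} = \mathrm{span}\{(r_i)_{a,n}\mid n\le K\}\), show that the value of \(F_i\) on a group-like of \(U(\fg(r_i))\) depends only on its image in \(U(\fg(r_i)/\frt)\) — equivalently that \(F_i\), as a map of the relevant completed modules, is pulled back along \(U(\fg(r_i))\to U(\fg(r_i)/\frt)\) — which reduces the entire computation to the finite-dimensional pair \((\fg(r_i)/\frt,\ \fg_{<0}/\frt)\) and renders polynomiality manifest. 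As a consistency check, the first-order term \(F = 1 + \hbar t + \CO(\hbar^2)\), with \(t = (1\otimes\epsilon)s - (\epsilon\otimes1)\tau(s)\) and \(s = r_2 - r_1 \in (\fg\otimes\fg)[t_1,t_2]\), is already of the asserted polynomial form.
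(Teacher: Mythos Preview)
Your approach to the rational refinement is essentially the same as the paper's: reduce to the case where one of the two \(r\)-matrices is Yang's, observe that \(\CE_r\) and \(\CE_{\gamma_\fg}\) coincide on the deep tail \(t^{-N}\fg_{<0}\subseteq\fg(r)\), and combine this with a PBW factorization to force polynomiality in both tensor legs. The paper streamlines the argument by first identifying the map associated to \(F\) as \(e^{\hbar x}\mapsto e^{\hbar x_+}\) via the decomposition \(e^{\hbar x}=e^{\hbar x_-}e^{\hbar x_+}\) with \(x_-\in\fg_{<0}\), \(x_+\in\fg(\CO)\); this makes both legs transparent at once (\(x\in t^N\fg[t^{-1}]\Rightarrow x_+\in\fg[t]\), and \(x\in t^{-N}\fg[t^{-1}]\Rightarrow x_+=0\) so \(F\) is constantly \(1\) there), whereas you reach the same conclusions via bicrossed-product actions and the quotient \(U(\fg(r_i)/\frt)\). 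Note that your proposal addresses only the final claim; items 1.--3.\ and the general membership \(F\in(S(\fg^*(\CO))\otimes U(\fg(\CO)))[\![\hbar]\!]\) still require the arguments of the first part of the paper's proof.
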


\begin{proof}
    Let us first observe that for every \(x \in \fg(r_1)\) there exist unique \(x_+ \in \fg(\CO)[\![\hbar]\!]\) and \(x_- \in \fg(r_2)[\![\hbar]\!]\) such that 
    \begin{equation}\label{eq:factorization_r12}
        e^{\hbar x} = e^{\hbar x_-}e^{\hbar x_+} 
    \end{equation}
    by virtue of Lemma \ref{Lem:expaction}.2. Moreover, the map \(U(\fg(r_1))[\![\hbar]\!] \to U(\fg(\CK))[\![\hbar]\!]\) associated to \(F\) is uniquely determined by \(e^{\hbar x} \mapsto e^{\hbar x_+}\), since \eqref{eq:factorization_r12} implies \(\CE_{r_1} = \CE_{r_2}F\). Here, we used Lemma \ref{Lem:group}.3 and evaluated at \(e^{\hbar x}\) while extending \(\CE_1\) and \(\CE_2\) to \(U(\fg(\CK))[\![\hbar]\!]\) using the counit of \(U(\fg(\CO))[\![\hbar]\!]\). In particular, observe that \(F\) takes values in \(U(\fg(\CO))[\![\hbar]\!]\) and therefore \(F \in (S(\fg^*(\CO))\otimes U(\fg(\CO)))[\![\hbar]\!]\).

    The statement 1.\ for $a\in U(\fg(\CO))$ follows by definition of \(\Delta_{\rho_1,\hbar}\) and \(\Delta_{\rho_2,\hbar}\) in Theorem \ref{Prop:qHopf}. Consider $f\in S(\fg^*(\CO))\lbb\hbar\rbb$ and any $x,y \in \fg(r_1)$. We have:
    \be
    e^{\hbar x}e^{\hbar y}=e^{\hbar x_-}e^{\hbar x_+}e^{\hbar y_-}e^{\hbar y_+}=e^{\hbar x_-}(e^{\hbar x_+}\rhd^{(2)} e^{\hbar y_-}) (e^{\hbar x_+}\lhd^{(2)} e^{\hbar y_-})e^{\hbar y_+},
    \ee
    where \(\rhd^{(i)},\lhd^{(i)}\) for \(i\in\{1,2\}\) are the left and right actions induced by the decompositions \(U(\fg(\CK))= U(\fg(r_i)) \otimes U(\fg(\CO))\).
    By the definition of $f$, we have:
    \be
    \begin{split}
        \lag \Delta_{\rho_1,\hbar}(f),e^{\hbar x} \otimes e^{\hbar y}\rag &= \lag f,e^{\hbar x}e^{\hbar y}\rag=\lag f, e^{\hbar x_-}(e^{\hbar x_+}\rhd^{(2)} e^{\hbar y_-})\rag
        \\&=\sum_{n = 0}^\infty\frac{1}{n!}\lag f,e^{\hbar x_-}(x_+^n\rhd^{(2)} e^{\hbar y_-})\rag
        \\&= \sum_{n = 0}^\infty\frac{1}{n!}\lag (1 \otimes \textnormal{ad}(x_+)^n)\Delta_{\rho_2,\hbar}(f), e^{\hbar x_-} \otimes e^{\hbar y_-})\rag
        \\& =\lag(1 \otimes e^{\hbar x_+})\Delta_{\rho_2,\hbar}(f)(1 \otimes e^{\hbar x_+}),e^{\hbar x_-} \otimes e^{\hbar y_-}\rag
        \\&=
        \lag(1 \otimes e^{\hbar x_+})\Delta_{\rho_2,\hbar}(f)(1 \otimes e^{\hbar x_+}),e^{\hbar x} \otimes e^{\hbar y}\rag
        \\&=
        \lag F^{-1}\Delta_{\rho_2,\hbar}(f)F,e^{\hbar x} \otimes e^{\hbar y}\rag.
    \end{split}
    \ee
    Here, we used Lemma \ref{Lem:expaction}.2.\ and the fact that \(g(e^{\hbar x}) = g(e^{\hbar x_-})\) and \(g(e^{\hbar y}) = g(e^{\hbar y_-})\) for all \(g \in S(\fg^*(\CO))[\![\hbar]\!]\). This completes the proof of 1.

   Using Lemma \ref{Lem:twistcoprod}.3.\ we see that
   \be  
    \begin{split}
        (1 \otimes \Delta_{\rho_2,\hbar})(F)F^{23} &= \CE_{r_2}^{-1,23}\CE_{r_2}^{-1,13}\CE_{r_2}^{-1,12}\CE^{23}F^{23}(1\otimes \Delta_{\rho_1,\hbar})(\CE_{r_1})
        \\& =\CE_{r_2}^{-1,23}\CE_{r_2}^{-1,13}\CE_{r_2}^{-1,12}\CE^{12}_{r_1}\CE^{13}_{r_1}\CE^{23}_{r_1}
    \end{split}
   \ee
   Similarly,
   \be
        \begin{split}
            &(\Delta_{\rho_2,\hbar} \otimes 1)(F)F_{12} =\CE_{r_2}^{-1,23}\CE_{r_2}^{-1,13}\CE_{r_2}^{-1,12}\CE^{12}_{r_1}\CE^{13}_{r_1}\CE^{23}_{r_1}
        \end{split}
   \ee
   proving 2. Finally,
   \(
        F - F^{21}  = \hbar (\rho_2-\rho_1) + \CO(\hbar)
   \)
   concludes the proof of 1.-3.

   In order to prove that last claim, its sufficient to assume that \(r_1 \coloneqq r\) is rational and \(r_2 = \gamma_\fg\) is Yang's \(r\)-matrix for \(\fg\). Since \(r\) is rational, we have \(t^{-N}\fg[t^{-1}] \subseteq \fg(r) \subseteq t^{N}\fg[t^{-1}]\). This immediately implies that \(F \in (S(\hbar \fg^*(\CO)) \otimes U(\fg[t]))[\![\hbar]\!]\). Moreover, this implies that \(F\) is simply constantly \(1\) on \(U(t^{-N}\fg[t^{-1}])[\![\hbar]\!]\), since \(\CE_\gamma\) and \(\CE_r\) coincide on this space. Therefore, \(F \in (S(\hbar \fg^*[t]) \otimes U(\fg[t]))[\![\hbar]\!]\) due to the PBW theorem and the fact that \(\CE_\gamma\) and \(\CE_r\) are algebra homomorphisms.
   \end{proof}

The existence of the twisting means that $F$ induces an isomorphism of modules
\be
M\otimes_{\Delta_{\rho_1,\hbar}}N\cong M\otimes_{\Delta_{\rho_2,\hbar}} N.
\ee
and Proposition \ref{Prop:twistingF}.2. implies that the ``monoidal structure axiom" 
\be
\btik
(M\otimes_{\Delta_{\rho_1,\hbar}} N)\otimes_{\Delta_{\rho_1,\hbar}} P\rar{=}\dar  & M\otimes_{\Delta_{\rho_1,\hbar}} (N\otimes_{\Delta_{\rho_1,\hbar}} P)\dar\\
(M_s\otimes_{\Delta_{\rho_2,\hbar}} N)\otimes_{\Delta_{\rho_1,\hbar}} P\dar & M\otimes_{\Delta_{\rho_1,\hbar}} (N\otimes_{\Delta_{\rho_2,\hbar}} P)\dar\\
(M_s\otimes_{\Delta_{\rho_2,\hbar}} N)\otimes_{\Delta_{\rho_2,\hbar}} P\rar{=} & M\otimes_{\Delta_{\rho_2,\hbar}}(N\otimes_{\Delta_{\rho_2,\hbar}} P)
\etik
\ee
from \cite[Definition 2.4.1]{etingof2016tensor} holds. Therefore, the representation categories of \(\CA_\hbar(\fd,\rho_1)\) and of \(\CA_\hbar(\fd,\rho_1)\) are equivalent as monoidal categories.

\subsubsection{Evaluation of $\hbar$}\label{sec:evaluation}

We very quickly comment on how the above constructions all have well-defined evaluation at $\hbar=\xi$ for any $\xi$. By rescaling $\fg^*\lbb t\rbb$ using $\epsilon$-grading, we just need to show that this is true for $\hbar=1$. 

The crucial part of the proof is the identification of the dual of $U(\fg(r))\lbb\hbar\rbb$ with the symmetric algebra $S(\hbar\fg^*\lbb t\rbb)\lbb\hbar\rbb$. We note that one can have a similar identification without $\hbar$, simply by noticing that:
\be
U(\fg(r))^*=\varprojlim_i\overline{S}(\fg^*[t]/t^i),
\ee
where $\overline{S}(\fg^*[t]/t^i\fg^*[t])$ is the completed symmetric algebra of the finite-dimensional vector space $\fg^*[t]/t^i\fg^*[t]$. Therefore the evaluation $\CA_1(\fd, \rho)$ is simply the smash product $U(\fg(\CO))\# U(\fg(r))^*$, which is a topological algebra. The proofs of the Hopf structure goes through for this algebra without problem, except that one has to use elements in $U(\fg(r))$ when pairing, since $e^{x}$ does not make sense anymore. However, for each $k\geq 0$, the element $x^k$ still makes sense and that is all one needs to repeat the proofs above. 

By definition, $\CA_1(\fd, \rho)\Mod$ is the category of smooth modules of this algebra. For such a module, the action of $\fg^*\lbb t\rbb$ will be nilpotent, and eventually zero on $t^N \fg^*\lbb t\rbb$. In the previous sections we choose to construct this quantization as a flat deformation over $\C\lbb\hbar\rbb$. The advantages of doing this are first of all it allows the use of $e^{\hbar x}$ which makes all the proofs much more simplified, and secondly it preserves the $\epsilon$-gradedness of the quantization, which is important for uniqueness. 

\subsubsection{Generalization of the quantization scheme}\label{subsubsec:topology}

The Lie bialgebra structures on $\fd(\CO)$ and their quantization scheme can be generalized to the setting from Section \ref{sec:generalization_PoissonLie}. Namely, we could have considered any topological Lie algebra $\fp$ with a given Lie algebra decomposition $\fp=\fp_+\oplus \fp_-$ and consider the associated Manin triple 
\begin{equation}
    \mathfrak{e} \coloneqq \fp \ltimes \fp^* = (\fp_- \ltimes \fp_+^*) \oplus (\fp_+ \ltimes \fp_-^*).    
\end{equation}
If 
the topology can be chosen appropriately, the Lie bracket of \(\fp_+ \ltimes \fp_-^*\) induces a Lie bialgebra structures on \(\fh \coloneqq \fp_- \ltimes \fp_+^*\) by duality. 

Adapting then the arguments from this section so far to \(\fp\) leads to a quantization of this Lie bialgebra structure. This quantization again resembles the monoidal structure on the double quotient \(\widehat\fh \setminus \widehat{\mathfrak{e}} \,/\,\widehat\fh\). Let us consider a finite-dimensional example, in which case the topology is chosen to be discrete.

\begin{Exp}
    Let us consider $\fp=\mathfrak{sl}(2,\C)$ and $\fp_-=\mathfrak{b}$, the Borel subalgebra of upper-triangular matrices. In this case $\fp_+$ is the algebra of lower triangular matrices. We use the standard generators $\{H,E,F\}$ of $\mathfrak{sl}(2,\C)$. The quantization scheme developed in the section so far produces a Hopf algebra \(\CA_\hbar\) that quantizes the Lie bialgebra structure on \(\mathfrak{b} \ltimes \C F^\vee\). The algebra $\CA_\hbar$ is generated by $H, E$ and $F^\vee$, such that:
    \be
[H,E]=2E,\qquad [H,F^\vee]=2F^\vee,\qquad [E, F^\vee]=\hbar (F^\vee)^2. 
    \ee
Note that the third commutation relation is from the following:
\be
[E, F^\vee](e^{\hbar a F})=F^\vee (E\rhd e^{\hbar aF})=F^\vee (a^2\hbar^2 H\rhd F)=a^2\hbar. 
\ee
The coproduct is given by:
\be
\Delta_\hbar (H)=H\otimes 1+1\otimes H,\qquad \Delta_\hbar (E)=E\otimes 1+1\otimes E+\hbar F^\vee\otimes H,\qquad \Delta_\hbar (F^\vee)=F^\vee\otimes 1+1\otimes F^\vee. 
\ee
One can check that this indeed gives a Hopf algebra structure to $\CA_\hbar$. 

\end{Exp}

\subsection{Specializations for loop Lie algebras}\label{subsec:loopapply}

Let us now turn to properties of the quantization \(\CA_\hbar(\fd,\rho)\) of the Lie bialgebra \((\fd(\CO),\delta_\rho)\) which do not admit generalizations from \(\fg(\CK)\) to other topological Lie algebras \(\fp\) in the way outlined in Section \ref{subsubsec:topology}.

First, note that as a topological \(\C[\![\hbar]\!]\)-algebra $\CA_\hbar(\fd, \rho)$ is topologically generated by $\fd[t]$. We will denote by $\CA^\circ_\hbar(\fd, \rho)$ the dense subalgebra generated by $\fd[t]$ over \(\C[\![\hbar]\!]\).

\begin{Prop}\label{Cor:finiteh}
    If \(\rho\) is rational, $\CA^\circ_\hbar(\fd, \rho)$ is a Hopf subalgebra of \(\CA_\hbar (\fd,\rho)\) that quantizes the Lie bialgebra \((\fd[t],\delta_\rho)\).
\end{Prop}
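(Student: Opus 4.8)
The plan is to first settle the case of Yang's $r$-matrix, $\rho = \gamma = \gamma_\fd$, where the relevant subalgebra is $Y^\circ_\hbar(\fd) = \CA^\circ_\hbar(\fd,\gamma)$, and then to transport the statement to a general rational $\rho$ via the quantum twist of Proposition \ref{Prop:twistingF}. For rational $r$, that proposition applied with $r_1 = r$ and $r_2 = \gamma_\fg$ produces a twist $F = \CE_{\gamma_\fg}^{-1}\CE_r \in (S(\hbar\fg^*[t])\otimes U(\fg[t]))\lbb\hbar\rbb\subseteq\CA^\circ_\hbar\otimes_{\C\lbb\hbar\rbb}\CA^\circ_\hbar$, invertible since $F = 1+\CO(\hbar)$, with $\Delta_{\rho,\hbar} = F^{-1}\Delta_{\gamma,\hbar}F$. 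As $F\equiv 1$ modulo $\hbar$, once $Y^\circ_\hbar(\fd)$ is known to be a Hopf subalgebra of $Y_\hbar(\fd)$ quantizing $(\fd[t],\delta_\gamma)$, conjugation by $F$ shows $\CA^\circ_\hbar(\fd,\rho)$ is a Hopf subalgebra of $\CA_\hbar(\fd,\rho)$ with the same reduction $U(\fd[t])$ modulo $\hbar$; the quantization equation for $\delta_\rho$ then follows from Proposition \ref{Prop:twistingF}.3 together with the fact that, $r$ being rational, $\delta_\rho$ restricts to a Lie bialgebra structure on $\fd[t]$ by Corollary \ref{cor:lie_bialgebra_structures}.

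For $\rho = \gamma$, the key point is that $\fg(\gamma) = t^{-1}\fg[t^{-1}]$ is positively graded, $r_{a,n} = I_at^{-n-1}$ having degree $n+1$ and $[\fg(\gamma)_i,\fg(\gamma)_j]\subseteq\fg(\gamma)_{i+j}$; dually, the right action $\lhd$ of $\fg(\gamma)$ on $\fg[t]$ strictly lowers the $t$-degree, hence is locally nilpotent. By Lemma \ref{Lem:twistcoprod}.1, $\Delta_{\gamma,\hbar}(x) = x\otimes 1 + \phi(x)$ with $\langle\phi(x),e^{\hbar y}\rangle = x\lhd e^{\hbar y}$; if $x\in\fg[t]$ has degree $d$ then $x\lhd y^k = 0$ for $k>d$, so $\phi(x) = \sum_{k=0}^d\tfrac{\hbar^k}{k!}\Phi_k$ is a finite sum with $\Phi_k$ a polynomial in the $I^a_n$, $n<d$, tensored with $\fg[t]$, hence in $\CA^\circ_\hbar\otimes_{\C\lbb\hbar\rbb}\CA^\circ_\hbar$. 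For $f = I^a_n\in\fg^*[t]$ one dualizes the multiplication of $U(\fg(\gamma))$: by \eqref{eq:delta_BCH} the order-$\hbar^{k-1}$ part of $\Delta_{\gamma,\hbar}(I^a_n)$ detects iterated brackets of $k$ generators $r_{b_1,m_1},\dots,r_{b_k,m_k}$ producing $r_{a,n}$, and for Yang's $r$-matrix each bracket raises the index by one, so $m_1+\dots+m_k+(k-1) = n$; this forces $k\le n+1$ and all $m_i\le n$, leaving finitely many terms, whence $\Delta_{\gamma,\hbar}(I^a_n)\in(S(\fg^*[t])\otimes S(\fg^*[t]))\lbb\hbar\rbb$. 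Since $\Delta_{\gamma,\hbar}$ is an algebra homomorphism and $\CA^\circ_\hbar$ is generated by $\fd[t]$, we get $\Delta_{\gamma,\hbar}(\CA^\circ_\hbar)\subseteq\CA^\circ_\hbar\otimes_{\C\lbb\hbar\rbb}\CA^\circ_\hbar$, and coassociativity and the counit are inherited from $\CA_\hbar(\fd,\gamma)$.

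For the antipode, Lemma \ref{Lem:twistcoprod}.3 gives $S(f) = -f$ for $f\in\fg^*[t]$ and $S(x) = -\nabla((S\otimes 1)\phi(x))\in\CA^\circ_\hbar$, since $\phi(x)$ has first leg in $S(\fg^*[t])$, on which $S$ is $-\mathrm{id}$. Thus $Y^\circ_\hbar(\fd)$ is a Hopf subalgebra. It is $\hbar$-torsion-free, being a $\C\lbb\hbar\rbb$-submodule of $\CA_\hbar(\fd,\gamma)$, and by the PBW property of $\CA_\hbar(\fd,\gamma)\cong U(\fg(\CO))\lbb\hbar\rbb\#S(\fg^*(\CO))\lbb\hbar\rbb$ it is spanned over $\C\lbb\hbar\rbb$ by the ordered monomials in a basis of $\fd[t]$, which form part of a $\C\lbb\hbar\rbb$-basis of $\CA_\hbar(\fd,\gamma)$; hence $\CA^\circ_\hbar\cap\hbar\CA_\hbar(\fd,\gamma) = \hbar\CA^\circ_\hbar$ and $Y^\circ_\hbar(\fd)/\hbar Y^\circ_\hbar(\fd)$ is the sub-Hopf-algebra of $\CA_\hbar(\fd,\gamma)/\hbar\CA_\hbar(\fd,\gamma) = U(\fd(\CO))$ generated by $\fd[t]$, namely $U(\fd[t])$. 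Finally $\Delta_{\gamma,\hbar}-\Delta_{\gamma,\hbar}^{\textnormal{op}} = \hbar\delta_\gamma+\CO(\hbar^2)$ of Theorem \ref{Prop:qHopf} restricts to $\CA^\circ_\hbar$ since both sides now land in $\CA^\circ_\hbar\otimes_{\C\lbb\hbar\rbb}\CA^\circ_\hbar$ and $\delta_\gamma(\fd[t])\subseteq\fd[t]\otimes\fd[t]$, so $Y^\circ_\hbar(\fd)$ quantizes $(\fd[t],\delta_\gamma)$; by the first paragraph the same follows for every rational $\rho$. The main obstacle is the finiteness in the second paragraph — turning the positive grading of $\fg(\gamma)$ (equivalently the degree-lowering of $\lhd$) into genuine termination of $\Delta_{\gamma,\hbar}$ on the generators — together with the bookkeeping making precise the $\hbar$-adic completions used in the twist step.
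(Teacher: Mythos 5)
Your proposal is correct and takes essentially the same route as the paper's proof: reduce general rational $\rho$ to Yang's $r$-matrix $\gamma$ via the twist $F$ of Proposition \ref{Prop:twistingF}, and for $\gamma$ use the loop grading to show that the coproduct (and antipode) of the generators $\fd[t]$ involves only finitely many orders of $\hbar$ and finitely many generators, exactly as the paper does by pairing against the BCH series. Your additional bookkeeping (PBW spanning, $\CA^\circ_\hbar\cap\hbar\CA_\hbar(\fd,\rho)=\hbar\CA^\circ_\hbar$, restricting the quantization equation), which the paper treats as clear, is fine; the only harmless slip is that $S$ equals $-\mathrm{id}$ on the generators $\fg^*[t]$ rather than on all of $S(\fg^*[t])$, which does not affect the argument.
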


\begin{proof}
    We first show that the algebra structure is well-defined on generators. The only non-trivial part is the commutator between $I_{a,n}$ and $I^b_m$. Let $y\in \fg(\gamma_\fg) = t^{-1}\fg[t^{-1}]$ have highest degree term \(\ell < 0\), then:
    \be
\lag [I_{a,n}, I^b_{m}], e^{\hbar y}\rag =-\lag I^b_m, I_{a,n}\rhd e^{\hbar y}\rag,
    \ee
    where the $\hbar^k$ coefficient is given by $
-\frac{1}{n!}\lag I^b_m, I_{a,n}\rhd y^k\rag. $
When $k$ satisfies \(m +  n +1 < -k\ell\), this coefficient vanishes. In particular, the expression $[I_{a,n}, I^b_{m}]$ is a finite polynomial in $\hbar$ (as well as $I^c_k$) and hence can be evaluated at any $\xi\in \C$. 

We now show that the Hopf algebra structure is well-defined. If this is the case, its clear that \(\CA_\hbar^\circ(\fd,\rho)\) will be a quantization of \((\fd[t],\rho)\). By virtue of Proposition \ref{Prop:twistingF}, we may assume that \(\rho = \gamma\). It is clear on counit and antipode, and we only need to prove it for coproduct. In this case, we again show that $\Delta_\hbar (I^a_n)$ and $\Delta_\hbar (I_{a,n})$ are finite polynomials in $\hbar$. Consider first $\Delta_\hbar (I^a_n)$, by BCH formula:
\be
\lag \Delta_\hbar (I^a_n), e^{\hbar x}\otimes e^{\hbar y}\rag=\lag I^a_n, H(\hbar x, \hbar y)\rag. 
\ee
The degree $\hbar^k$ co-efficient is equal to:
\be
\sum \lag I^a_n, f_k (\hbar x,\hbar y)\rag
\ee
where this $f_k (\hbar x,\hbar y)$ is a sum of $k$-th iterated Lie-bracket of $\hbar x$ with $\hbar y$. Since the loop grading of components of $\hbar x$ and $\hbar y$ are at least $-1$, the $k$-th iterated bracket has degree at least $-k$, and therefore for $k$ large enough the above is zero. Consequently $\Delta_\hbar (I^a_n)$ is a finite polynomial in $\hbar$ as well as $I^c_k$. An identical argument applies to $\Delta_\hbar (I_{a,n})$. This completes the proof. 
\end{proof}

We now turn to the case when $\rho$ depends only on $t_1-t_2$. In this case, the Lie algebra splitting $\fg(\CK)=\fg(\CO)\oplus \fg(r)$
is compatible with the differential $T = \partial_t$, i.e.\ $T(\fg(r))\subseteq \fg(r)$ holds. By dualizing, this $T$ defines a derivation on $U(\fg(r))^*[\![\hbar]\!]=S(\hbar\fg^*(\CO))[\![\hbar]\!]$ and extended to \(S(\fg^*(\CO))[\![\hbar]\!]\). Similarly we have a derivation on $U(\fg(\CO))[\![\hbar]\!]$. Then the \(\C[\![\hbar]\!]\)-linear extension of \(T\) defines an endomorphism of \(\CA_\hbar(\fd,\rho)\).

\begin{Lem}\label{Lem:nilT}
    This $T$ defines a Hopf algebra derivation of $\CA_\hbar(\fd,\rho)$ and acts nilpotently on generators $\fd[t]$. 
\end{Lem}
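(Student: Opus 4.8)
The plan is to exploit throughout that $T=\partial_t$ is compatible with the Lie algebra splitting $\fg(\CK)=\fg(\CO)\oplus\fg(r)$: it is a Lie algebra derivation of $\fg(\CK)$ preserving $\fg(\CO)$ and, since $\rho$ (hence the underlying $\fg$-valued $r$-matrix $r$) depends only on $t_1-t_2$, also preserving $\fg(r)$; see Section~\ref{sec:lie_bialgebra_structures}. First I would note that $T$ extends uniquely to $\C[\![\hbar]\!]$-linear \emph{Hopf algebra derivations} of $U(\fg(\CO))[\![\hbar]\!]$ and $U(\fg(r))[\![\hbar]\!]$ — derivations of the algebra that are simultaneously coderivations — since this is manifest on the primitive generators and both sides of $\Delta\circ T=(T\otimes 1+1\otimes T)\circ\Delta$ are derivations in each slot. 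Dualizing the coderivation $T$ on $U(\fg(r))[\![\hbar]\!]$ yields a derivation of $U(\fg(r))^*[\![\hbar]\!]=S(\hbar\fg^*(\CO))[\![\hbar]\!]$ which, being the transpose of a derivation, is at the same time a coderivation for the coproduct $\Delta_{\rho,\hbar}$ dual to the multiplication of $U(\fg(r))$; extended $\C[\![\hbar]\!]$-linearly via \eqref{eq:extending_Delta} this is the operator $T$ on $S(\fg^*(\CO))[\![\hbar]\!]$, and it coincides with the naive $\partial_t$, so $T(I^a_n)=n I^a_{n-1}$ and $T(I_{a,n})=n I_{a,n-1}$. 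In particular every homogeneous generator of loop degree $n$ is killed by $T^{n+1}$, which already gives the local nilpotency on $\fd[t]$, and the coderivation property is already established on the Hopf subalgebra $S(\fg^*(\CO))[\![\hbar]\!]$.

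The key input I would isolate is the single identity
\be
    (T\otimes 1+1\otimes T)\,\CE_r=0,\qquad\text{equivalently}\qquad (\tau_z\otimes\tau_z)\,\CE_r=\CE_r \ \text{ for } \tau_z=e^{zT},
\ee
which holds because $\CE_r=e^{-\hbar(\epsilon\otimes 1)\tau(r)}$ with $-(\epsilon\otimes 1)\tau(r)=\sum_{n,a} I^a_n\otimes r_{a,n}$ being, under $\fg^*(\CO)=\epsilon\fg(\CO)$, the $r$-matrix $r$ with one tensor leg made $\epsilon$-valued; since $r=r(t_1-t_2)$ it is invariant under the simultaneous shift $t_i\mapsto t_i+z$, and that shift acts on the two legs precisely as $\tau_z$. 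Granting this, the two remaining properties follow formally. For the algebra derivation property on all of $\CA_\hbar(\fd,\rho)$, it suffices — since $T$ is already a derivation on each tensor factor — to check that $T$ respects the cross relation $[x,f]=-(f\circ x)$ for $x\in\fg(\CO)$, $f\in\fg^*(\CO)$; the action $\circ$ is built from $\rhd=p_-\circ\mathrm{ad}$ and the duality pairing, both $T$-equivariant ($p_-$ because the decomposition is $T$-stable, the pairing because $\partial_t$ integrates by parts under $\mathrm{res}$), so $T(f\circ x)=(Tf)\circ x+f\circ(Tx)$ and $T$ extends to a $\C[\![\hbar]\!]$-linear algebra derivation. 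For the coderivation property on $\fg(\CO)$, I would use $\Delta_{\rho,\hbar}(x)=\CE_r^{-1}(x\otimes 1+1\otimes x)\CE_r$: since $T\otimes 1+1\otimes T$ is a derivation of $\CA_\hbar(\fd,\rho)\otimes_{\C[\![\hbar]\!]}\CA_\hbar(\fd,\rho)$ annihilating $\CE_r$ (hence $\CE_r^{-1}$, applying it to $\CE_r\CE_r^{-1}=1$) and sending $x\otimes 1+1\otimes x$ to $Tx\otimes 1+1\otimes Tx$, we get $(T\otimes 1+1\otimes T)\Delta_{\rho,\hbar}(x)=\CE_r^{-1}(Tx\otimes 1+1\otimes Tx)\CE_r=\Delta_{\rho,\hbar}(Tx)$, which together with the first paragraph gives $\Delta_{\rho,\hbar}\circ T=(T\otimes 1+1\otimes T)\circ\Delta_{\rho,\hbar}$ on all of $\CA_\hbar(\fd,\rho)$.

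Finally I would check compatibility with the counit (immediate, as $T$ lowers loop degree so $\epsilon\circ T=0$ on the generators $\fd[t]$ and hence everywhere) and with the antipode $S$ from Lemma~\ref{Lem:twistcoprod}.3: on $\fg^*(\CO)$ one has $S=-\mathrm{id}$, while on $\fg(\CO)$ the formula $S(x)=-\nabla((S\otimes 1)\phi(x))$ together with $\phi(x)=\Delta_{\rho,\hbar}(x)-x\otimes 1$ commuting with $T$ gives $S(Tx)=T(S(x))$ — alternatively this is automatic since $S$ is determined by the bialgebra structure that $T$ now differentiates. This completes the verification that $T$ is a Hopf algebra derivation acting nilpotently on $\fd[t]$.

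The main obstacle I anticipate is pinning down the displayed invariance $(T\otimes 1+1\otimes T)\CE_r=0$ with all identifications in place — in particular verifying that the copy of $T$ on the $\fg^*(\CO)$-leg produced by dualization is exactly the one under which the $\epsilon$-twisted $r$-matrix is shift-invariant, i.e.\ bookkeeping the sign relating $\partial_t$ on $\fg^*(\CO)$ to the transpose of $\partial_t$ on $\fg(r)$ under the residue pairing. Everything downstream of that identity is routine manipulation with derivations and the smash-product relations.
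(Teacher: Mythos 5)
Your proposal is correct, and it reaches the two required conclusions by a packaging that differs from the paper's in two places. For the algebra-derivation part you and the paper do essentially the same thing: the paper verifies $T[x,f]=[Tx,f]+[x,Tf]$ by pairing against $U(\fg(r))$ and using that $T$ is a Hopf derivation of $U(\fg(\CK))$ compatible with the $T$-stable splitting, which is exactly your ``$T$-equivariance of $p_\pm$, $\mathrm{ad}$, and the residue pairing''. The differences: (a) for the coderivation property on $\fg(\CO)$ the paper checks $\phi(Tx)=(T\otimes 1+1\otimes T)\phi(x)$ directly by evaluating on $e^{\hbar y}$, whereas you deduce it from the conjugation formula $\Delta_{\rho,\hbar}(x)=\CE_r^{-1}(x\otimes 1+1\otimes x)\CE_r$ together with the single identity $(T\otimes 1+1\otimes T)\CE_r=0$; that identity is true (it is the infinitesimal form of the relation the paper itself proves and uses later in Proposition \ref{Prop:TFT}), and it can even be obtained without computing the transpose explicitly, since $-\hbar(\epsilon\otimes 1)\tau(r)$ is the canonical element of $\fg(r)^*\otimes\fg(r)$ and the first-leg $T$ is by construction minus the transpose of $T|_{\fg(r)}$; (b) for nilpotency on $\fg^*[t]$ the paper decomposes $y=y_{\textnormal{sing}}+y_{\textnormal{reg}}$ and shows $\lag f,T^ny\rag=\lag f,T^ny_{\textnormal{sing}}\rag=0$ for large $n$, while you identify the dualized $T$ with the naive $\partial_t$ (so $T(I^a_n)=nI^a_{n-1}$, which the paper only states explicitly for $\rho=\gamma$) via integration by parts for the residue pairing; this identification is correct for any $\rho$ depending on $t_1-t_2$ and gives nilpotency immediately. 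Two small points to keep straight when writing this up: the conjugation argument in (a) takes place in a completed mixed algebra with first legs in $\CA_\hbar(\fd,\rho)$ and second legs in $U(\fg(\CK))[\![\hbar]\!]$, so the derivation property of $T\otimes 1+1\otimes T$ there presupposes that $T$ is already known to be an algebra derivation of $\CA_\hbar(\fd,\rho)$ — your ordering of steps respects this, but it should be said; and the sign bookkeeping you flag at the end is genuinely all that is needed, so it should be carried out (one line of $\textnormal{res}_{t=0}\partial_t(\cdot)=0$) rather than left as an anticipated obstacle.
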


\begin{proof}
    To show that $T$ defines an algebra derivation, we just need to show that it respects the commutator of $U(\fg(\CO))[\![\hbar]\!]$ and $S(\fg^*(\CO))[\![\hbar]\!]$. For any $x\in \fg(\CO)[\![\hbar]\!]$ and $y\in U(\fg(r))[\![\hbar]\!]$ and $f\in S(\fg^*(\CO))[\![\hbar]\!]$, we have:
    \be
\lag T[x,f], y\rag =-\lag [x,f], Ty\rag =\lag f, x\rhd Ty\rag.
    \ee
    Now since $T$ is a Hopf algebra derivation for $U(\fg(\CK))$, we have:
    \be
x\rhd Ty=T(x\rhd y)-(Tx)\rhd y,
    \ee
    which implies:
    \be
\lag f, x\rhd Ty\rag=\lag f, T(x\rhd y)\rag-\lag f, (Tx)\rhd y\rag.
    \ee
The second term is identified with $\lag [Tx, f], y\rag$ and the first with $\lag [x, Tf], y\rag$.

To show that $\Delta_{\rho,\hbar} T=(T\otimes 1+1\otimes T)\Delta_{\rho,\hbar}$, we first comment that it is clear on $S(\fg^*(\CO))[\![\hbar]\!]$ since it is the dual of an algebra derivation. For $x\in \fg(\CO)$, we need to show:
\be
\Delta_{\rho,\hbar}(Tx)=(Tx)\otimes 1+\phi(Tx)=(Tx)\otimes 1+ (T\otimes 1+1\otimes T)\phi(x).
\ee
Equivalently, we need to show $\phi(Tx)=(T\otimes 1+1\otimes T)\phi(x)$. Pairing both sides with an element of the form $e^{\hbar y}$ for $y\in \fg(r)$, using the fact that:
\be
T(x\lhd e^{\hbar y})=(Tx)\lhd e^{\hbar y}+x\lhd (T e^{\hbar y})
\ee
one can show that the two sides are equal. 

Finally, we show that this acts nilpotently on generators $\fd[t]$. It is clear for $\fg[t]$, and we show it for $\fg^*[t]$. Note that for any generator $y\in \fg(r)$ and any $f\in \fg^*[t]$, decompose $y=y_{\textnormal{sing}}+y_{\textnormal{reg}}$ where $y_{\textnormal{sing}}\in t^{-1}\fg[t^{-1}]$ and $y_{\textnormal{reg}}\in \fg(\CO)$, we find:
\be
\lag f, T^n y\rag=\lag f, T^n y_{\textnormal{sing}}\rag=0 \text{ for large enough } n. 
\ee
This shows that $T$ acts nilpotently on $\fg^*[t]$. 

\end{proof}

This lemma implies that we have a well-defined continuous Hopf algebra map:
\be
\tau_z:=e^{zT}: \CA_\hbar(\fd, \rho)\to \CA_\hbar(\fd,\rho)\lbb z\rbb,
\ee
where $z$ is a formal variable, and the topology on $\CA_\hbar(\fd,\rho)\lbb z\rbb$ is induced by the topologies of $\CA_\hbar(\fd,\rho)$ and $\C\lbb z\rbb$. Since $T$ is nilpotent on generators $\fd[z]$, this map defines an algebra morphism:
\be
\tau_z: \CA_\hbar^\circ(\fd, \rho)\to  \CA^\circ_\hbar (\fd,\rho)[z],
\ee
which can be evaluated at any $z=s\in \C$. Note that the same is \textbf{not true} for $\CA_\hbar(\fd,\rho)$, since in general one can not sum over infinitely many generators. 

Using $\tau_z$, we obtain another meromorphic (in fact holomorphic) coproduct:
\be\label{eq:holomorphic_coproduct}
\Delta_{\rho,\hbar, z}:= (\tau_z\otimes 1)\Delta_{\rho,\hbar}.
\ee
Clearly this is a quantization of $(\fd(\CO), \delta_{\rho,z})$ from Section \ref{sec:quasi_classical_vertex}. 

The coproduct \(\Delta_{\rho,\hbar,z}\) on $\CA_\hbar(\fd, \rho)$ induces a product on its \(\C[\![\hbar]\!]\)-linear dual, which, as a \(\C[\![\hbar]\!]\)-module, is isomorphic to \(S(\fd(\rho))[\![\hbar]\!]\):
\be\label{eq:quantum_vertex_opertation}
\CY_{\rho,\hbar}\colon S(\fd(\rho))[\![\hbar]\!]\otimes_{\C[\![\hbar]\!]} S(\fd(\rho))[\![\hbar]\!]\to S(\fd(\rho))[\![\hbar]\!]\lbb z\rbb.
\ee
This product satisfies the associativity (which is the linear dual of coassociativity of $\Delta_{\rho,\hbar, z}$):
\be
\CY_{\rho,\hbar}(A, z)\CY_{\rho,\hbar}(B, w)=\CY_{\rho,\hbar}( \CY_{\rho,\hbar}(A, z-w)B,w).
\ee
Moreover, it has a unit $\Omega_\rho$ given by the linear dual of the counit $\epsilon: \CA_\hbar(\fd, \rho)\to \C\lbb\hbar\rbb$. However, $\CY_{\rho,\hbar}$ is \textbf{not} commutative since $\Delta_{\rho,\hbar,z} = (\tau_z\otimes 1)\Delta_{\rho,\hbar}$ is not cocommutative. We will show in next section, using another meromorphic coproduct, that, for rational \(\rho\), $\Delta_{\rho,\hbar, z}$ has a spectral $R$-matrix that quantizes \(\varrho\) from \eqref{eq:def_rho}. This makes $S(\fd(\rho))[\![\hbar]\!]$ together with $\CY_{\rho,\hbar}$ into a quantum vertex algebra that quantizes the quasi-classical commutative vertex algebra \(S(\fd(\rho))\) (see Section \ref{sec:quasi_classical_vertex}) in the sense of \cite{Etingof2000quantumvertex}; see Proposition \ref{prop:naiv_quantum_vertex} below. 

\subsubsection{The case of Yang's $r$-matrix \(\rho = \gamma\)}

Let us consider in this section the special case of Yang's $r$-matrix $\rho = \gamma = \gamma_\fd$ given in \eqref{eq:yang_rmat_d}. In this case, $\fd(\gamma)=t^{-1}\fd[t^{-1}]$, which is a graded Lie subalgebra of $\fd(\CK)$ under loop grading as well as $\epsilon$-grading. In this special case, it is easy to see that \(\CA_\hbar(\fd,\gamma)\) is a bi-graded Hopf algebra, i.e.\ it is not only graded with respect to the \(\epsilon\)-grading but also graded with respect to the loop grading by powers of \(t\).

We will from now on denote this special algebra by $Y_\hbar(\fd) \coloneqq \CA_\hbar(\fd,\gamma)$, signifying that it is really the natural version of the Yangian of $\fd$. Furthermore, we will denote its coproducts with \(\Delta_\hbar \coloneqq \Delta_{\gamma,\hbar}\) and \(\Delta_{\hbar,z} = \Delta_{\gamma,\hbar,z}\). This is justified in Section \ref{subsec:unique}, where we show that such a bi-graded quantization of $U(\fd(\CO))$ is unique. In this case, we also denote by $ Y^\circ_\hbar(\fd) \coloneqq \CA_\hbar^\circ(\fd,\gamma)$ the dense subalgebra generated by $\fd[t]$. This is a Hopf subalgebra of \(Y_\hbar(\fd)\) quantizing the Lie bialgebra \((\fd[t],\rho_\gamma)\) by virtue of Proposition \ref{Cor:finiteh}.

The action of $T$ on $Y_\hbar(\fd)$ is very explicit, and is given by the following:
\be
T(I_{a,n})=n I_{a, n-1},\qquad T (I^a_n)=n I^a_{n-1}.
\ee
Recall the evaluation at \(\hbar = \xi \in \C^\times\) from \ref{sec:evaluation}.
Combining Lemma \ref{Lem:nilT} and Proposition \ref{Cor:finiteh}, the category $Y^\circ_\xi(\fd)\Mod$ has the structure of a meromorphic tensor category, via the meromorphic coproduct $\Delta_{\xi, s}$. 

We end this section by specifying Proposition \ref{Prop:twistingF} to \(\rho_1 = \gamma\) and consider its consequence when $\rho$ depends only on $t_1-t_2$.

\begin{Prop}\label{Prop:TFT}
    For every \(\epsilon\)-graded \(r\)-matrix \(\rho\) with coefficients in \(\fd\), the Hopf algebra \(\CA_\hbar (\fd,\rho)\) is obtained from the Yangian \(Y_\hbar(\fd)\) by twisting with an element 
    \be
        F \in (S(\fg^*(\CO)) \otimes U(\fg(\CO)))[\![\hbar]\!].
    \ee
    Furthermore, if $\rho$ depends on the difference of its variables, $F$ satisfies:
    \be
F(T\otimes 1+1\otimes T)=(T\otimes 1+1\otimes T) F.
    \ee
    
\end{Prop}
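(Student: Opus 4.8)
The plan is to deduce the proposition from Proposition~\ref{Prop:twistingF} applied to $r_1 = \gamma_\fg$ (so $\rho_1 = \gamma$, the Yang $r$-matrix of $\fd$) and $r_2 = r$, the $\fg$-valued $r$-matrix attached to $\rho$ via \eqref{eq:rhoandr}. This already gives the first assertion, with $F = \CE_{r}^{-1}\CE_{\gamma_\fg}\in (S(\fg^*(\CO))\otimes U(\fg(\CO)))[\![\hbar]\!]$, $\Delta_{\rho,\hbar}=F\Delta_{\gamma,\hbar}F^{-1}$ and $F-F^{21}=\hbar(\rho-\gamma)+\CO(\hbar^2)$. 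So the content is the last identity. Since $T$ is an algebra derivation of $\CA_\hbar(\fd,\rho)$ (Lemma~\ref{Lem:nilT}), the operator $T\otimes 1+1\otimes T$ is a derivation of $\CA_\hbar(\fd,\rho)\otimes_{\C\lbb\hbar\rbb}\CA_\hbar(\fd,\rho)$; hence the operator identity $F(T\otimes 1+1\otimes T)=(T\otimes 1+1\otimes T)F$ is equivalent to the single relation $(T\otimes 1+1\otimes T)(F)=0$. I would prove the stronger claim that each factor $\CE_{\gamma_\fg}$ and $\CE_{r}$ is $T$-invariant, now regarded inside the larger algebra $(S(\hbar\fg^*(\CO))\otimes U(\fg(\CK)))[\![\hbar]\!]$, on which $T\otimes 1+1\otimes T$ is again a derivation (with $T=\partial_t$ on $U(\fg(\CK))$, and $T=\partial_t$ on $\fg^*(\CO)=\epsilon\fg(\CO)$ extended as a derivation of the symmetric algebra). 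Granting this, a derivation kills $\CE_r^{-1}$ whenever it kills $\CE_r$, so $(T\otimes 1+1\otimes T)(\CE_r^{-1}\CE_{\gamma_\fg})=0$; restricting from $U(\fg(\CK))$ back to the $\partial_t$-stable subalgebra $U(\fg(\CO))$ then yields $(T\otimes 1+1\otimes T)(F)=0$.

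The key step is thus the $T$-invariance of $\CE_s$ for a $\fg$-valued $r$-matrix $s$ depending only on $t_1-t_2$, in particular for $s\in\{r,\gamma_\fg\}$. Writing $\CE_s=e^{-\hbar(\epsilon\otimes 1)\tau(s)}$, I would recall (cf.\ the proof of Lemma~\ref{Lem:group}) that the exponent $-(\epsilon\otimes 1)\tau(s)=\sum_{a,n}I^a_n\otimes s_{a,n}$ is the \emph{canonical element} of $\fg^*(\CO)\otimes\fg(s)$: under $\End(\fg(s))\cong\fg(s)^*\otimes\fg(s)=\fg^*(\CO)\otimes\fg(s)$ it is the image of $\mathrm{id}_{\fg(s)}$, using that the residue pairing restricts to a perfect pairing between $\fg^*(\CO)$ and the complement $\fg(s)$ of $\fg(\CO)$, with $\{I^a_n\}$ dual to $\{s_{a,n}\}$. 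Because $s$ depends on $t_1-t_2$, the subalgebra $\fg(s)$ is $\partial_t$-stable (Section~\ref{sec:lie_bialgebra_structures}), so $\partial_t$ restricts to an operator on $\fg(s)$; the induced action $\phi\mapsto[\partial_t,\phi]$ on $\End(\fg(s))$ annihilates the identity, and under the above identification this induced action is exactly the restriction of $T\otimes 1+1\otimes T$ — here one uses that $\partial_t$ is skew for the residue pairing (the residue of a total derivative vanishes), so that $-\partial_t^\vee$ on $\fg(s)^*$ agrees with $\partial_t$ on $\fg^*(\CO)$. Hence $(T\otimes 1+1\otimes T)(-(\epsilon\otimes 1)\tau(s))=0$ in $\fg^*(\CO)\otimes\fg(s)$, a fortiori in $\fg^*(\CO)\otimes\fg(\CK)$ since $\fg(s)\hookrightarrow\fg(\CK)$ is $\partial_t$-equivariant; and then $(T\otimes 1+1\otimes T)(\CE_s)=0$ because a derivation annihilates $e^{A}$ whenever it annihilates $A$.

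The step I expect to be the main obstacle is purely bookkeeping: keeping the four incarnations of $T$ — on $U(\fg(\CO))$, $U(\fg(\CK))$, $U(\fg(s))$, and on the dual factor $S(\hbar\fg^*(\CO))$ — consistently normalised, the only subtle point being that $T$ on $\fg^*(\CO)$ must be $\partial_t$ (equivalently, minus the naive transpose of $\partial_t|_{\fg(s)}$), which is forced by Lemma~\ref{Lem:nilT} and by $T$-invariance of the residue pairing. Once that is fixed, the two structural steps (``$T$ kills the canonical tensor'' and ``a derivation kills $e^A$ if it kills $A$'') are formal. As a cross-check I would also keep in reserve a more hands-on argument using the description of $F$ from the proof of Proposition~\ref{Prop:twistingF}: under Lemma~\ref{Lem:group}.3, $F$ corresponds to the map $e^{\hbar x}\mapsto e^{\hbar x_+}$ for $x\in t^{-1}\fg[t^{-1}]$, where $e^{\hbar x}=e^{\hbar x_-}e^{\hbar x_+}$ is the unique factorisation along $\fg(\CK)=\fg(r)\oplus\fg(\CO)$ (Lemma~\ref{Lem:expaction}.2); since $e^{z\partial_t}$ is an algebra automorphism and $t^{-1}\fg[t^{-1}]$, $\fg(r)$ and $\fg(\CO)$ are all $\partial_t$-stable, uniqueness of factorisation forces $(e^{z\partial_t}x)_\pm=e^{z\partial_t}x_\pm$, so the map is $\partial_t$-equivariant, which once more unwinds to $(T\otimes 1+1\otimes T)(F)=0$.
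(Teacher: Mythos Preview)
Your proposal is correct and follows essentially the same route as the paper: reduce to Proposition~\ref{Prop:twistingF} with $\rho_1=\gamma$, then show that each factor $\CE_{\gamma_\fg}$ and $\CE_r$ separately commutes with $T\otimes 1+1\otimes T$, so that $F=\CE_r^{-1}\CE_{\gamma_\fg}$ does too. The paper's proof states this in one line, invoking that the $\CE$'s correspond under Lemma~\ref{Lem:group} to the $T$-equivariant embeddings $U(\fg(s))\hookrightarrow U(\fg(\CK))$; your argument unpacks the same fact at the tensor level by identifying the exponent $-(\epsilon\otimes 1)\tau(s)$ with the canonical element of $\fg^*(\CO)\otimes\fg(s)\cong\End(\fg(s))$ and observing that the identity operator is annihilated by $[\partial_t,-]$. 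These are two phrasings of the same computation. Your backup argument via the factorisation $e^{\hbar x}=e^{\hbar x_-}e^{\hbar x_+}$ and uniqueness is a nice independent check that the paper does not give.
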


\begin{proof}
    Let $\CE_\gamma$ and $\CE_\rho$ be defined as in Lemma \ref{Lem:group}, and let $F=\CE_\gamma^{-1}\CE_\rho$ be the quantum twisting as in Proposition \ref{Prop:twistingF}. Since $\CE_\gamma$ and $\CE_\rho$ define algebra homomorphisms that respect the action of $T$, we find
    \be
\CE(T\otimes 1+1\otimes T)=(T\otimes 1+1\otimes T)\CE, \qquad \CE\in \{\CE_\gamma,\CE_\rho\},
    \ee
from which it follows that the same relation for $F$ must hold. 

\end{proof}

\subsubsection{Uniqueness of graded quantization}\label{subsec:unique}
In this section, we want to prove the following claim.

\begin{Thm}\label{Thm:yangunique}
    The universal envelope
     \(U(\fd(\CO))\) has an, up to isomorphism, unique bi-graded continuous quantization, given by the Yangian \(Y_\hbar(\fd)\).
\end{Thm}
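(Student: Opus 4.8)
The plan is to run a bi-graded deformation-theoretic argument. Let $\CA_\hbar$ be an arbitrary bi-graded continuous quantization of $(U(\fd(\CO)),\delta_\gamma)$, the bi-grading being by loop degree and $\epsilon$-degree, with $\hbar$ homogeneous of bidegree $(1,-2)$ as in the constructed $Y_\hbar(\fd)$. The first observation is that this bidegree of $\hbar$ makes every strict bidegree of $\CA_\hbar$ a finite-dimensional $\C$-vector space: an element of bidegree $(k,\ell)$ is a finite sum $\sum_j \hbar^j m_j$ with $m_j\in U(\fd(\CO))$ of bidegree $(k-j,\ell+2j)$, and since $U(\fd(\CO))$ is concentrated in non-negative loop and $\epsilon$-degrees, $j$ is confined to the finite range $\max(0,\lceil-\ell/2\rceil)\le j\le k$, while each $U(\fd(\CO))$-piece of fixed bidegree is finite-dimensional by PBW. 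Consequently $\hbar$-adic obstruction theory for bi-graded deformations of $U(\fd(\CO))$ runs bidegree by bidegree without convergence issues.

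Next I would reduce to a rigidity statement. Lift the topological basis $\{I_{a,n},I^a_n\}$ of $\fd(\CO)$ to \emph{homogeneous} elements of $\CA_\hbar$ of the same bidegree (possible since $\CA_\hbar/\hbar\CA_\hbar=U(\fd(\CO))$ respects the bi-grading); since $\fg=[\fg,\fg]$ forces continuity of all structure maps, these lifts topologically generate $\CA_\hbar$ over $\C[\![\hbar]\!]$. The bi-grading then severely constrains the $\hbar$-corrections: a term carrying $\hbar^j$ must carry exactly $2j$ extra $\epsilon$-degree, i.e.\ $2j$ additional $\fg^*$-factors, and lies in a single loop degree, so in each commutation relation, coproduct value, co-unit and antipode value only finitely many monomials can occur. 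This reduces ``$\CA_\hbar\cong Y_\hbar(\fd)$'' to the claim that bi-graded bialgebra deformations of $U(\fd(\CO))$ whose first-order antisymmetrization equals $\delta_\gamma$ are unique up to a bi-graded gauge transformation.

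The heart of the argument is the cohomological computation behind that rigidity. Bi-graded Hopf deformations of $U(\fd(\CO))$ are governed by the Gerstenhaber–Schack bialgebra cohomology $H^\bullet_b(U(\fd(\CO)))$, which for an enveloping algebra reduces to Chevalley–Eilenberg cohomology $H^\bullet_{\mathrm{CE}}(\fd(\CO);U(\fd(\CO))^{\otimes\bullet})$ on the algebra side and, dually via the Manin triple $\fd(\CK)=\fd(\CO)\oplus\fd(\gamma)$ with $\fd(\gamma)=t^{-1}\fd[t^{-1}]$, to Lie-coalgebra cohomology of $\fd(\gamma)$ on the coalgebra side. I would filter by the abelian ideal $\fg^*(\CO)\subseteq\fd(\CO)$ and use the resulting Hochschild–Serre spectral sequence together with ``current-algebra Whitehead lemmas'' for $\fg(\CO)$ and $t^{-1}\fg[t^{-1}]$ — available precisely because in each strict bidegree only finitely many loop degrees contribute, so the coefficient modules are finite-dimensional and the classical vanishing of $H^1,H^2$ of $\fg$ propagates. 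The upshot should be that $H^2_b(U(\fd(\CO)))$ in the bidegree $(n,-2n)$ of $\hbar^n$ is one-dimensional for $n=1$, spanned by the class of $\delta_\gamma$, and vanishes for $n\ge2$; since $Y_\hbar(\fd)$ already realizes one deformation one need not separately check vanishing of the obstruction spaces $H^3_b$. Feeding this into the order-by-order argument: the quantization axiom forces the $\hbar^1$-term of $\Delta_\hbar-\Delta_\hbar^{\textnormal{op}}$ to be $\delta_\gamma$; the vanishing of $H^2_b$ in higher bidegrees corrects $\CA_\hbar$ to $Y_\hbar(\fd)$ by successive bi-graded gauge transformations; the co-unit is the unique graded algebra map to $\C[\![\hbar]\!]$; and the antipode is then determined by the Hopf axioms — yielding a bi-graded Hopf algebra isomorphism $\CA_\hbar\cong Y_\hbar(\fd)$.

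The main obstacle is exactly this cohomological input: controlling $H^1$ and $H^2$ of $\fd(\CO)$ (and dually of $\fd(\gamma)$) with coefficients in the bi-graded modules that occur, and verifying carefully that only finitely many loop degrees contribute in each relevant strict bidegree so that the current-algebra Whitehead lemmas apply. Equivalently, the same content can be presented concretely in a Drinfeld-type $J$-presentation with generators lifting $\fd\oplus\fd t$: coassociativity, the normalization of $\Delta_\hbar-\Delta_\hbar^{\textnormal{op}}$, and the bi-grading determine, order by order in $\hbar$, the relations and the coproduct uniquely up to a redefinition of the generators — but the conceptual reason this induction closes is precisely the cohomology vanishing above. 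Everything else (continuity, finiteness in each bidegree, bookkeeping with the two gradings) is routine.
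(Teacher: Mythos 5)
Your overall strategy is the same as the paper's: compare an arbitrary bi-graded quantization with $Y_\hbar(\fd)$ order by order in $\hbar$, use the bidegree $(1,-2)$ of $\hbar$ to get finiteness in each bidegree, and correct by successive bi-graded gauge transformations, with the whole induction hinging on a cohomological rigidity statement in the bidegrees of $\hbar^n$. The paper (following Drinfeld's Section 9 framework) does exactly this, packaging the order-$\hbar^k$ discrepancy of the product, coproduct and co-Poisson structure into a continuous $3$-cocycle $c$ on the double $\fd(\CK)=\fd(\CO)\oplus\fd(\gamma)$ with values in $\C$ which vanishes on $\wedge^3\fd(\CO)$ and $\wedge^3\fd(\gamma)$ and is concentrated in total loop degree $-k$, and then proving $c=d\chi$ with $\chi$ vanishing on $\wedge^2$ of both halves.

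The genuine gap in your proposal is that this central vanishing is asserted rather than proved, and the mechanism you offer for it would not work. There is no ``current-algebra Whitehead lemma'': $H^1(\fg[t],\fg[t])$ contains the outer derivations $t^k\partial_t$, $H^2$ of loop algebras with trivial coefficients contains the Kac--Moody cocycle $\textnormal{res}\,\kappa_0(x,\partial_t y)$, and $H^3(\fg,\C)\neq 0$ is spanned by the Cartan form $\kappa_0([x,y],z)$; finite-dimensionality of each strict bidegree does not remove these classes, so the vanishing of $H^1,H^2$ of $\fg$ does not ``propagate'' through a Hochschild--Serre spectral sequence for the abelian ideal $\fg^*(\CO)$. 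What actually kills the obstruction in the paper is a hands-on computation that your sketch does not supply: using simplicity of $\fg$ to reduce $c$ to an invariant trilinear form on $\fg$ tensored with a functional $\varphi$ on $R^{\otimes 3}$, $R=\CK[\epsilon]/(\epsilon^2)$ (with a separate argument excluding the symmetric form $\mathrm{tr}(x_1x_2x_3)$ when $\fg=\mathfrak{sl}_n$), exploiting the bidegree constraint $k_1+k_2+k_3=-k$ and the vanishing on the two Lagrangian halves to reduce $\varphi$ to $2$-cocycles $\varphi_\pm$ on the commutative algebras $\CO$ and $\C[t^{-1}]$, and then using smoothness of these algebras (not Lie-algebra cohomology) to write $\varphi_\pm$ as coboundaries and assemble $\chi$. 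Until you carry out an argument of this kind (or an honest computation of the Gerstenhaber--Schack $H^2$ in bidegree $(n,-2n)$ for $n\ge 2$ that confronts the Kac--Moody and Cartan classes directly), the induction you describe does not close, so the proof is incomplete at its decisive step.
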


We prove this fact using Lie bialgebra cohomology, which controls the existence and uniqueness of quantizations according to \cite[Section 9]{drinfeld1986quantum}. Since we are not aware of a proof of this fact, we will outline one in the following.

Let us assume that we are given two Hopf algebra deformations \(H^{(1)}\) and \(H^{(2)}\) of \(U(\fd(\CO))\) and identify them with
\(U(\fd(\CO))[\![\hbar]\!]\) as \(\C[\![\hbar]\!]\)-modules. Let
\(\nabla_\hbar^{(i)},\Delta_\hbar^{(i)}\) and \(\delta_\hbar^{(i)} \coloneqq \Delta_\hbar^{(i)}-\Delta_\hbar^{(i),\textnormal{op}}\) be the multiplication, comultiplication and co-Poisson structure of \(H^{(i)}\) for \(i \in \{1,2\}\) assume that 
\begin{equation}
    \begin{split}
        &\nabla_\hbar^{(1)}-\nabla_\hbar^{(2)} = \hbar^k \mu + \CO(\hbar^{k+1});\\&
    \Delta_\hbar^{(1)}-\Delta_\hbar^{(2)} = \hbar^k \zeta + \CO(\hbar^{k+1});\\&
    \delta_\hbar^{(1)}-\delta_\hbar^{(2)} = \hbar^{k+1} \xi + \CO(\hbar^{k+2}).
    \end{split}
\end{equation}
We may assume that the units of \(H^{(1)}\) and \(H^{(2)}\) coincide up to \(\hbar^{k+1}\). Indeed, since the unit is unique, we have \(\eta^{(1)}_\hbar - \eta^{(2)}_\hbar = 1 + \hbar^ku + \CO(\hbar^{k+1})\) for the units \(\eta^{(1)}_\hbar\) and \(\eta^{(2)}_\hbar\) of \(H^{(1)}\) and \(H^{(2)}\) and some \(u \in U(\fd(\CO))\). After applying the left multiplication of \(1 - \hbar^ku\), considered as an invertible endomorphism of \(U(\fd(\CO))[\![\hbar]\!]\), we may assume that \(u = 0\). Consequently, \(\mu(1,-) = 0 = \mu(-,1)\) and \(\zeta(1) = 0\) holds.

For all \(a,b,c \in U(\fd(\CO))\)
\be\label{eq:mu_cocycle}
    \begin{split}
        &\nabla_\hbar^{(2)}(\nabla_\hbar^{(2)}(a,b),c) - \nabla_\hbar^{(2)}(a,\nabla_\hbar^{(2)}(b,c))  = 0 = \nabla_\hbar^{(1)}(\nabla_\hbar^{(1)}(a,b),c) - \nabla_\hbar^{(1)}(a,\nabla_\hbar^{(1)}(b,c)) \\&= (\nabla_\hbar^{(2)}+\hbar^k\mu)((\nabla_\hbar^{(2)}+\hbar^k\mu)(a,b),c) - (\nabla_\hbar^{(2)}+\hbar^k\mu)(a,(\nabla_\hbar^{(2)}+\hbar^k\mu)(b,c)) + \CO(\hbar^{k+1})
        \\& =\hbar^k(\mu(ab,c) + a\mu(b,c) - \mu(a,bc) - \mu(a,b)c) + \CO(\hbar^{k+1}).
    \end{split}
\ee
Similarly, the coassiociativity of \(\Delta^{(1)}_\hbar\) and \(\Delta^{(2)}_\hbar\) implies
\be\label{eq:zeta_condition_quantization}
    \begin{split}
        0 &= (\zeta \otimes 1)\Delta(a) + (\Delta \otimes 1)\zeta(a) - (1 \otimes \zeta)\Delta(a) - (1 \otimes \Delta)\zeta(a)
        \\& = \sum_i (a_i \otimes b_i \otimes 1 + \Delta(a_i) \otimes b_i - 1 \otimes a_i \otimes b_i - a_i \otimes \Delta(b_i)),
    \end{split}
\ee
where \(\zeta(a) = \sum_i a_i \otimes b_i\) (the sum is potentially infinite) and \(a \in \fd(\CO)\) was used.
Using the filtration of \(U(\fd(\CO))\), the identity \(\Delta(a_i) = a_i \otimes 1 + 1 \otimes a_i\) follows, which implies that \(a_i, b_i \in \fd(\CO)\). Since both \(H^{(1)}\) and \(H^{(2)}\) are Hopf algebras, they have antipodes \(S^{(1)}\) and \(S^{(2)}\). Since the antipode of a Hopf algebra is unique, we have \(S^{(1)} - S^{(2)} = \hbar^k \sigma + \CO(\hbar^{k+1})\). The antipodes are coalgebra anti-homomorphism, which implies that
\be
    \begin{split}
        &(S^{(2)} \otimes S^{(2)})\Delta_\hbar^{(2)} - \Delta_\hbar^{(2),\textnormal{op}} S^{(2)} = 0 = (S^{(1)} \otimes S^{(1)})\Delta_\hbar^{(1)} - \Delta_\hbar^{(1),\textnormal{op}} S^{(1)} 
        \\&= ((S^{(2)} + \hbar^{k}\sigma) \otimes (S^{(2)} + \hbar^{k}\sigma) (\Delta_\hbar^{(2)} + \hbar^k\zeta) - (\Delta_\hbar^{(2),\textnormal{op}}+\hbar^k\zeta)( S^{(2)} + \hbar^k\sigma) + \CO(\hbar^{k+1})
        \\&= \hbar^k((\sigma \otimes 1 + 1 \otimes \sigma)\Delta(a) + (S \otimes S)\zeta(a) - \Delta(\sigma(a)) - \zeta(S(a))) + \CO(\hbar^{k+1}).
    \end{split}
\ee
holds. For \(a \in \fd(\CO)\), we find that \(2\zeta(a)+(\sigma \otimes 1 + 1 \otimes \sigma)\Delta(a) - \Delta(\sigma(a)) = 0\) holds, since \((S\otimes S)\zeta(a) = \zeta(a) = - \zeta(S(a))\). Consider
\(\Delta^{(1),\prime}_\hbar \coloneqq (1 - \frac{1}{2}\hbar^k \sigma)^{\otimes 2} \Delta^{(1)}_{\hbar}(1 - \frac{1}{2}\hbar^k \sigma)^{-1}\), then
\be
    \Delta^{(1),\prime}_\hbar- \Delta_\hbar^{(2)} = \hbar^k\left(\zeta + \frac{1}{2}(\sigma \otimes 1 + 1 \otimes \sigma)\Delta - \frac{1}{2}\Delta\sigma\right) + \CO(\hbar^{k+1}).
\ee
Replacing \(H^{(1)}\) with the isomorphic Hopf algebra with coproduct \(\Delta^{(1),\prime}_\hbar\), we can make the following very important assumption:
\be
    \zeta(\fd(\CO)) = 0.
\ee

The fact that \(\delta^{(1)}_\hbar\) and \(\delta^{(2)}_\hbar\) are co-Poisson results in
\be
\begin{split}
    0 &= (\Delta \otimes 1)\xi(a) - (1 \otimes \xi)\Delta(a) - (1 \otimes \tau)(\xi \otimes 1)\Delta(a) \\&- (\delta \otimes 1)\zeta(a) + (1 \otimes \zeta)\delta(a) + (1 \otimes \tau)(\zeta \otimes 1)\delta(a) 
\end{split}
\ee
Observe that, since \(\zeta(\fd(\CO)) = 0\), we can see that \(\xi(\fd(\CO)) \subseteq \wedge^2\fd(\CO)\) by repeating the arguments in e.g.\ the proof of \cite[Proposition 6.2.3]{chari_pressley}. Furthermore, the co-Jacobi identity for \(\delta^{(1)}_\hbar\) and \(\delta^{(2)}_\hbar\) implies that \(\widetilde\xi \coloneqq  \xi|_{\fd(\CO)}\) defines a 2-cocycle of \(\fd(\CO)^* = \fd(\gamma) = t^{-1}\fd[t^{-1}]\) with values in \(\fd(\gamma)\).  

The fact that \(\Delta^{(1)}_\hbar\) and \(\Delta^{(2)}_\hbar\) are algebra homomorphisms provides
\be\label{eq:compatibility_mu_zeta}
\begin{split}
    0 = \Delta(\mu(a,b)) + \zeta(ab) - \mu(a,b) \otimes 1 - 1 \otimes \mu(a,b) - \zeta(a)\Delta(b) - \Delta(a)\zeta(b).
\end{split}
\ee
Again, since \(\zeta(\fd(\CO)) = 0\), we can see that \(\widetilde\mu \colon \wedge^2\fd(\CO) \to \fd(\CO)\) defined by \(\widetilde\mu(a,b) \coloneqq \mu(a,b)-\mu(b,a)\) takes values in \(\fd(\CO)\). Moreover, \eqref{eq:mu_cocycle} implies now that \(\widetilde\mu\) is a continuous 2-cocycle of \(\fd(\CO)\) with values in \(\fd(\CO)\).

Finally, since \(\delta_\hbar^{(1)}\) and \(\delta^{(2)}_\hbar\) are 1-cocycles of \(H^{(1)}\) and \(H^{(2)}\) as coalgebras, we get 
\be
\begin{split}
    0 &= \delta(\mu(a,b)) + \xi(ab) - \delta(a)\zeta(b) - \zeta(a)\delta(a) - \xi(a)\Delta(b) - \Delta(a)\xi(b) \\&- (\nabla\otimes \mu + \mu \otimes \nabla)(\delta^{(13)}(a)\Delta^{(24)}(b) + \Delta^{(13)}(a)\Delta^{(24)}(b)).
\end{split}
\ee
Combined again with the fact that \(\zeta(\fd(\CO)) = 0\), this implies that 
    \begin{equation}
        \begin{split}
            c(x_1 + f_1,x_2 + f_2,x_3 + f_3) &= f_1(\widetilde\mu(x_2,x_3))-f_2(\widetilde\mu(x_1,x_3))+ f_3(\widetilde\mu(x_1,x_2)) \\&- (f_1 \otimes f_2)\widetilde\xi(x_3) + (f_1 \otimes f_3)\widetilde\xi(x_2) - (f_2 \otimes f_3)\widetilde\xi(x_1)
        \end{split}
    \end{equation}
    for \(x_1,x_2,x_3 \in \fd(\CO)\) and \(f_1,f_2,f_3 \in \fd(\CO)^* = \fd_{<0}\). In other words, \(c\)
    is a continuous 3-cocycle of \(\fd(\CK) = \fd(\CO) \oplus \fd_{<0}\) with values in \(\C\) satisfying 
    \begin{equation}
        c(\wedge^3\fd(\CO)) = 0 = c(\wedge^3\fd(\gamma)).
    \end{equation}
    Assume now that \(H^{(1)}\) and \(H^{(2)}\) are additionally both \(\epsilon\) and loop graded as Hopf algebras. Then this implies that
    \begin{itemize}
        \item \(c(\wedge^3\fg(\CK)) = 0 = c(\wedge^2\fg^*(\CK) \wedge \fd(\CK))\);
        \item \(c(t^{k_1}\fd,t^{k_2}\fd,t^{k_3}\fd) = 0\) for \(k_1+k_2+k_3 \neq -k\).
    \end{itemize}
    holds.

\begin{Lem}
    The 3-cocycle \(c\) is a coboundary in the sense of \cite[Section 9]{drinfeld1986quantum}. More precisely, \(c = d\chi\) for a 2-chochain \(\chi \colon \wedge^2\fd(\CK) \to \C\) such that \(\chi(\wedge^2\fd(\CO)) = 0 = \chi(\wedge^2 \fd(\gamma))\). 
\end{Lem}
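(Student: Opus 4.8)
The plan is to use the cohomological control of quantizations from \cite[Section~9]{drinfeld1986quantum} and to reduce the statement to a vanishing result for the Lie algebra cohomology of the loop algebra \(\fg(\CK)\). \emph{Step 1 (Hochschild--Serre reduction).} Since \(\fg^*(\CK)\) is an abelian ideal of \(\fd(\CK) = \fg(\CK)\ltimes\fg^*(\CK)\), the Chevalley--Eilenberg complex (with the \((t)\)-adic topology) splits as
\[
    C^\bullet(\fd(\CK);\C)\;\cong\;\bigoplus_{b\ge 0}C^{\bullet-b}\big(\fg(\CK);\Lambda^b\fg(\CK)\big),
\]
where the differential on the right is that of \(\fg(\CK)\) with coefficients in the adjoint powers (using that \(\fg^*(\CK)\) is, as a \(\fg(\CK)\)-module, the dual of the adjoint representation). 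The hypotheses \(c(\wedge^3\fg(\CK)) = 0\) and \(c(\wedge^2\fg^*(\CK)\wedge\fd(\CK)) = 0\) say precisely that \(c\) lies in the \(b = 1\) summand, i.e.\ \(c\) is identified with a continuous \(2\)-cocycle \(\overline c\in Z^2(\fg(\CK);\fg(\CK))\). Likewise, a relative \(2\)-cochain \(\chi\) — one with \(\chi(\wedge^2\fd(\CO)) = \chi(\wedge^2\fd(\gamma)) = 0\) — may be taken in the \(b = 1\) summand, where it becomes a continuous \(\C\)-linear endomorphism \(\overline\chi\) of \(\fg(\CK)\); using the residue pairing to identify \(\fg^*(\CK)\) with the restricted dual of \(\fg(\CK)\), together with the self-orthogonalities \(\fg(\CO)^\perp = \fg(\CO)\) and \(\fg_{<0}^\perp = \fg_{<0}\), the two vanishing conditions on \(\chi\) translate into \(\overline\chi(\fg(\CO))\subseteq\fg(\CO)\) and \(\overline\chi(\fg_{<0})\subseteq\fg_{<0}\), while \(d\chi = c\) becomes \(\overline d\,\overline\chi = \overline c\). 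Thus it suffices to produce a block-diagonal \(\overline\chi\) with \(\overline d\,\overline\chi = \overline c\).

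\emph{Step 2 (the gradings).} A degree count against the \(\epsilon\)-grading (recall \(\deg_\epsilon\hbar = -2\)) forces the order \(k\) of the first discrepancy between \(H^{(1)}\) and \(H^{(2)}\) to be \(k = 1\) (unless \(c = 0\), when there is nothing to prove): \(\widetilde\mu\) is \(\epsilon\)-homogeneous of degree \(2k\) with target in \(\fd(\CO)\), whose \(\epsilon\)-degrees are \(\le 2\), and source \(\wedge^2\fd(\CO)\), which already forces \(k\le 1\), and the same bound comes from \(\widetilde\xi\). Feeding \(k=1\) into the loop-grading condition \(c(t^{k_1}\fd,t^{k_2}\fd,t^{k_3}\fd) = 0\) for \(k_1+k_2+k_3\ne -1\) shows that \(\overline c\) is loop-homogeneous of degree \(0\), i.e.\ \(\overline c(\fg t^m\wedge\fg t^n)\subseteq\fg t^{m+n}\). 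Since \(\overline d\) preserves the loop grading, once \(\overline c\) is known to be exact we may replace a primitive by its loop-degree-\(0\) component and obtain \(\overline c = \overline d\,\overline\chi_0\) with \(\overline\chi_0(\fg t^n)\subseteq\fg t^n\) for all \(n\); such a \(\overline\chi_0\) is automatically block-diagonal in the sense of Step~1, and transporting it back through the Hochschild--Serre splitting yields the required \(\chi\).

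\emph{Step 3 (exactness of \(\overline c\)).} This rests on Whitehead's lemmas \(H^1(\fg;\fg) = H^2(\fg;\fg) = 0\) for the simple Lie algebra \(\fg\), applied ``fibrewise'' over the loop coordinate. Writing \(\overline c(x t^m,y t^n) = c_{m,n}(x,y)\,t^{m+n}\), the cocycle identity is an infinite compatible system in the bilinear maps \(c_{m,n}\colon\fg\otimes\fg\to\fg\), which one solves as
\[
    c_{m,n}(x,y) = [x,\phi_n(y)] - [y,\phi_m(x)] - \phi_{m+n}([x,y]), \qquad \phi_m\in\End(\fg),
\]
by first trivializing \(\overline c|_{\fg t^0}\) (a \(2\)-cocycle of \(\fg\) valued in the completed adjoint module \(\fg(\CK)\), cohomologically trivial by Whitehead applied summand-wise) and then propagating the primitive up the loop-degree filtration, again using \(H^1(\fg;\fg) = H^2(\fg;\fg) = 0\). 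Equivalently, \(\fg\otimes\C\lpp t\rpp\) is a simple Lie algebra over the field \(\C\lpp t\rpp\), so \(H^2_{\C\lpp t\rpp}(\fg(\CK);\fg(\CK)) = 0\), and the loop-degree-\(0\) part of the \(\C\)-linear cohomology is controlled by the same input. This gives \(\overline c = \overline d\,\overline\chi\) and, combined with Step~2, completes the proof.

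The hard part is exactly the loop-degree-\(0\) piece of Step~3. In any \emph{nonzero} loop degree \(n\) the Cartan homotopy for the outer derivation \(t\partial_t\) — which acts on the Chevalley--Eilenberg complex of \(\fg(\CK)\) by multiplication by \(n\) — trivializes cocycles immediately; but our \(\overline c\) sits entirely in loop degree \(0\), where \(L_{t\partial_t} = 0\), so one genuinely has to solve the cocycle equation in the \(\fg\)-direction while keeping track of the degree filtration and of continuity, relying on simplicity of \(\fg\). The other point, routine but fiddly, is checking that the Hochschild--Serre identifications respect the topology and that the relative conditions on \(\chi\) correspond \emph{exactly} to the block-diagonal conditions on \(\overline\chi\); once that dictionary is fixed, the argument closes.
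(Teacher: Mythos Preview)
Your Steps~1 and~2 are correct and give a cleaner packaging than the paper's: the Hochschild--Serre identification of \(c\) with a continuous \(\overline c\in Z^2(\fg(\CK);\fg(\CK))\) is valid, the translation of the relative conditions into \(\overline\chi(\fg(\CO))\subseteq\fg(\CO)\), \(\overline\chi(\fg_{<0})\subseteq\fg_{<0}\) is right, and the \(\epsilon\)-grading argument forcing \(k=1\) (hence \(\overline c\) of loop degree~\(0\)) is a genuine simplification. The projection of a primitive to loop degree~\(0\) then automatically yields the block-diagonal form you need.

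The gap is Step~3. Your second justification is simply wrong: a loop-degree-\(0\) \(\C\)-linear cocycle \(\overline c(xt^m,yt^n)=c_{m,n}(x,y)t^{m+n}\) is \emph{not} \(\CK\)-bilinear, since \(c_{m,n}\) may depend on \(m,n\) separately rather than only on \(m+n\); so the vanishing of \(H^2_{\CK}(\fg(\CK);\fg(\CK))\) does not apply. Your first justification (``trivialize \(\overline c|_{\fg t^0}\), then propagate'') is in the right direction but is not a proof: after killing \(c_{0,0}\) via \(H^2(\fg;\fg)=0\) you still have to produce the \(\phi_m\) for \(m\neq 0\), and the cocycle identity mixes the \(c_{m,n}\) in a way that does not obviously ``propagate up a filtration.'' This is exactly what the paper does by hand: one further Whitehead step (\(H^1(\fg;\fg^*\otimes\fg)=0\)) kills all \(c_{0,m}\), after which the remaining \(c_{m,n}\) are \(\fg\)-invariant; then one must analyze \((\Hom(\fg\otimes\fg,\fg))^\fg\), reduce to a symmetric Hochschild \(2\)-cocycle of \(\CO\) and \(\C[t^{-1}]\), and --- for \(\fg=\mathfrak{sl}_n\) --- separately rule out the contribution of the extra invariant \(\mathrm{tr}(xyz)\). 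None of this is addressed in your sketch, and the \(\mathfrak{sl}_n\) case in particular does not fall out of a generic Whitehead argument.
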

\begin{proof}
    Since \(c\) is a \(3\)-cocycle, for all \(d_1,d_2,d_3,d_4 \in \fd(\CK)\)
    \begin{equation}\label{eq:3cocycle}
        \begin{split}
            0 = dc(d_1,d_2,d_3,d_4) &= c([d_1,d_2],d_3,d_4) - c([d_1,d_3],d_2,d_4) + c([d_1,d_4],d_2,d_3) \\&+ c([d_2,d_3],d_1,d_4)-c([d_2,d_4],d_1,d_3) + c([d_3,d_4],d_1,d_2)
        \end{split}
    \end{equation}
    holds. 

    The assignment \((x,y) \mapsto c(x,y,t^{-k}(-))\) defines a 2-cocycle \(c' \colon\wedge^2\fg \to \fg^{**} \cong \fg\). More precisely,
    \begin{align*}
        f(dc'(x_1,x_2,x_3)) &= f\left(c'([x_1,x_2],x_3)-c'([x_1,x_3],x_2)+c'([x_2,x_3],x_1)\right. \\&\left.- [x_1,c'(x_2,x_3)] + [x_2,c'(x_1,x_3)] - [x_3,c'(x_1,x_2)]\right)\\
        &= c([x_1,x_2],x_3,t^{-k} f)-c([x_1,x_3],x_2,t^{-k} f)+c([x_2,x_3],x_1,t^{-k} f)\\&+c(x_2,x_3,[x_1,t^{-k} f])-c(x_1,x_3,[x_2,t^{-k} f])+c(x_1,x_2,[x_3,t^{-k} f])\\&
        =dc(x_1,x_2,x_3,t^{-k} f) = 0
    \end{align*}
    holds for all \(x_1,x_2,x_3 \in \fg\) and \(f \in \fg^*\). 
    
    Since \(\fg\) is simple, \(c'\) is a 2-coboundary 
    \[c(x_1, x_2,t^{-k} (-)) = d\chi(x_1,x_2) = \chi([x_1,x_2])-[x_1,\chi(x_2)]+[x_2,\chi(x_1)]\] 
    for some \(\chi \colon \fg \to  \fg\).
    Therefore, we can consider an extension \(\chi \colon \wedge^2\fd(\CK) \to \C\) given by \(\chi(\wedge^2\fg(\CK)) = 0 = \chi(\wedge^2\fg^*(\CK))\), \(\chi(\wedge^2\fd(\CO)) = 0 = \chi(\wedge^2\fd(\gamma))\), and
    \begin{equation}
        \chi(t^i x,t^j f) \coloneqq \delta_{i,0}\delta_{j,-k}f(\chi(x)) \eqqcolon -\chi(t^j f,t^i x) \,,\qquad x \in \fg,f \in \fg^*.
    \end{equation}
    We obtain 
    \begin{align*}
        d\chi(x_1,x_2,t^jf) &= \delta_{k,-j}f\left(\chi([x_1,x_2]) + [x_2,\chi(x_1)] - [x_1,\chi(x_2)]\right) = c(x_1,x_2,t^j f).
    \end{align*}
    Replacing \(c\) by \(c-d\chi\) provides \(c(x_1,x_2,t^jd) = 0\) for all \(x_1,x_2 \in \fg\), \(d \in \fd\) and \(j \in \Z\). 
    
    The assignment \(x \mapsto c(x,t^{-i} (-),t^{i-k} (-))\) defines a 1-cocycles
    \(
    c_{i}' \colon \fg \to \fg^*\otimes \fg
    \)
    for every \(i \in \Z\):
     \begin{align*}
         dc'_{i}(x_1,x_2)(x_3,f) &= f(c'_{i}([x_1,x_2])x_3 - [x_1,c'_{i}(x_2)x_3] + c'_{i}(x_2)[x_1,x_3] \\&+ [x_2,c'_{i}(x_1)x_3] - c'_{i}(x_1)[x_2,x_3]) \\&=
         c([x_1,x_2],t^{-i}x_3,t^{i-k} f)+
         c(x_2,[x_1,t^{-i} x_3],t^{i-k} f)+
         c(x_2,t^{-i} x_3,[x_1,t^{i-k} f])\\&+c(x_1,[x_2,t^{-i} x_3],t^{i-k} f)+c(x_1,t^{-i} x_3,[x_2,t^{i-k} f])\\&=dc(x_1,x_2,t^{-i} x_3,t^{i-k} f). 
     \end{align*}
     Here, we used in the last equality that \(c(x_1,x_2,[t^{-i} x_3,t^{i-k} f]) = 0\). Therefore, \(c'_{i} = d\chi_{i}\) for some \(\chi_{i} \in \fg^*\otimes \fg\). Let us define \(\chi \colon \wedge^2 \fd(\CK) \to \C\) via \(\chi(\wedge^2\fd(\CO)) = 0 = \chi(\wedge^2(\fd(\gamma))\), \(\chi(\wedge^2\fg(\CK)) = 0 = \chi(\wedge^2(\fg^*(\CK)))\), and 
     \[\chi(t^{-i}x,t^{j-k} f) \eqqcolon \delta_{ij}f(\chi_{i}(x)) \eqqcolon -\chi(t^{j-k} f,t^{-i} x) \,,\qquad i,j \in \Z, x \in \fg, f \in \fg^*.\] 
     Then 
     \begin{align*}
         d\chi(x_1,t^{-i} x_2,t^{j-k} f) &= \chi([x_1,t^{-i}x_2],t^{j-k}f) - \chi([x_1,t^{j-k} f],t^{-i}x_2) + \chi([t^{i}x_2,t^{j-k} f],x_1) \\&= -\delta_{i,j}f((x_1\cdot \chi_i)(x_2)) = -\delta_{i,j}c_i'(x_1)(x_2,f) = -c(x_1,t^{-i}x_2,t^{j-k} f),
     \end{align*}
    where we used that \(\chi([t^{-i}x_2,t^{j-k}f],x_1)  = 0\) and \(c(x_1,t^{-i}x_2,t^{j-k}f) = 0\) if \(i \neq j\).
    Therefore, after replacing \(c\) with \(c - d\chi\), we have \(c(x,t^id_1,t^j d_2) = 0\) for all \(x \in \fg\), \(d_1,d_2 \in \fd\) and \(i,j \in \Z\).

    Identifying \(\fd(\CK) \cong \fg(\CK)[\epsilon]/(\epsilon^2)\) and plugging \(d_1 = x, d_2 = \lambda_1x_1, d_3 = \lambda_2x_2,d_4 = \lambda_3x_3\) into \eqref{eq:3cocycle} for any \(x, x_1,x_2,x_3 \in \fg\) and \(\lambda_1,\lambda_2,\lambda_3 \in R \coloneqq \CK[\epsilon]/(\epsilon^2)\), we can see that the map \(c_{\lambda_1,\lambda_2,\lambda_3}\colon \fg^{\otimes 3} \to \C\) defined by \((x_1,x_2,x_3) \mapsto c(\lambda_1x_1,\lambda_2x_2,\lambda_3x_3)\) is \(\fg\)-invariant. 
    For all simple Lie algebras there is up to scalar multiple only one antisymmetric map of this form, except for \(\fg = \mathfrak{sl}_n(\C)\) where there is an additional symmetric map \((x_1,x_2,x_3) \mapsto \textnormal{tr}(x_1x_2x_3)\) possible. 
    
    Let us assume that \(c_{\lambda_1,\lambda_2,\lambda_3}(x_1,x_2,x_3) = \kappa([x_1,x_2],x_3)\varphi(\lambda_1,\lambda_2,\lambda_3)\). Then
    \[\varphi \colon R^{\otimes 3} \to \C\] 
    has the following properties:
    \begin{enumerate}
        \item \(\varphi(a,\lambda_1, \lambda_2) = \varphi(\lambda_1, a, \lambda_2) = \varphi(\lambda_1, \lambda_2,a) = 0\) for all \(a \in \C\);
        \item \(\varphi(R^{+,\otimes 3}) = 0 = \varphi((t^{-1}R^{-})^{\otimes 3})\), where \(R^+ = \CO[\epsilon]/(\epsilon^2)\) and \(R^- = \C[t^{-1},\epsilon]/(\epsilon^2)\);
        \item \(\varphi(\CK^{\otimes 3}) = 0 = \varphi((\epsilon\CK)^{\otimes 2} \otimes R)\);
        \item \(\varphi(\lambda_1\lambda_2,\lambda_3,\lambda_4) - \varphi(\lambda_2\lambda_3,\lambda_1,\lambda_4) + \varphi(\lambda_3\lambda_4,\lambda_1,\lambda_2) - \varphi(\lambda_4\lambda_1,\lambda_2,\lambda_3) = 0\).
    \end{enumerate}
    Here, \(\lambda_1,\lambda_2,\lambda_3,\lambda_4 \in R\) and 4.\ follows from \(dc(\lambda_1x,\lambda_2y,\lambda_3x,\lambda_4y) = 0\).
    Let us observe that
    \(\varphi(\lambda_1,\lambda_2,\lambda_3) = \varphi(\lambda_{\sigma(1)},\lambda_{\sigma(2)},\lambda_{\sigma(3)})\) for all bijections \(\sigma \colon \{1,2,3\} \to \{1,2,3\}\) by setting \(\lambda_4 = 1\) in property 4. 
     
    The properties 2.\ and 3.\ combined with the symmetry of \(\varphi\) imply that \(\varphi\) is completely determined by two continuous linear maps \(\varphi_\pm \colon \CO^\pm \otimes \CO^\pm \to \CO^\pm\) given by 
    \begin{equation}
        \varphi(\lambda_1,\lambda_2,\epsilon\lambda_3) = \textnormal{res}_{t = 0}\varphi_\pm(\lambda_1,\lambda_2)\lambda_3\,,\qquad \lambda_1,\lambda_2 \in \CO^\pm, \lambda_3 \in \CO^\mp.
    \end{equation}
    using the fact that \(\CO^+ \coloneqq \CO\) and \(\CO^- \coloneqq \C[t^{-1}]\) are dual to \(t^{-1} \CO^\mp\) respectively. Property 4.\ now translates to the fact that \(\varphi_\pm\) are 2-cocycles. Since \(\CO\) and \(\C[t^{-1}]\) are smooth, \(\varphi_\pm\) are coboundaries, so
    \begin{equation}
    \varphi_\pm(\lambda_1,\lambda_2) = \lambda_1\phi_\pm(\lambda_2) - \phi_\pm(\lambda_1\lambda_2) + \phi_\pm(\lambda_1)\lambda_2,
    \end{equation}
    for all \(\lambda_1,\lambda_2 \in \CO^\pm\) and some \(\phi_\pm \colon \CO^\pm \to \CO^\pm\).

    The condition \(\varphi(\lambda_1,\lambda_2,1)=0\) implies
    \be
        \textnormal{res}_{t = 0}(\phi_\pm(\lambda_1\lambda_2)) = \textnormal{res}_{t = 0}(\phi_\pm(\lambda_1)\lambda_2+\lambda_1\phi_\pm(\lambda_2))
    \ee 
    for all \(\lambda_1,\lambda_2 \in \CO^\pm\). The additional condition \(\lambda_1 = 1\) provides \(\phi_\pm(\C) = 0\). Clearly, \(\phi_+(\CO^+)\subseteq \CO^+\) implies \(\textnormal{res}_{t = 0}\phi_+(\CO) = 0\). Moreover, since \(\varphi_-(\CO^-,\CO^-) \subseteq t^{-k-2}\CO^- \subseteq t^{-2}\CO^-\), we may assume that \(\phi_-(\CO^-) \subseteq t^{-2}\CO^-\) and therefore \(\textnormal{res}_{t = 0}\phi_-(\CO^-) = 0\) as well.
    
    Consider \(\phi \colon \CK \to \CK\) defined by \(\phi|_{\CO^\pm} = \phi_\pm\), which is well-defined, since \(\phi_+(\lambda) = 0 = \phi_-(\lambda)\) for \(\lambda \in \CO^+\cap \CO^- = \C\). Then \(0 = \textnormal{res}_{t = 0}\phi_\pm(\lambda_1\lambda_2) = \textnormal{res}_{t = 0}(\lambda_1\phi(\lambda_2)) + \textnormal{res}_{t = 0}(\lambda_2\phi(\lambda_1))\) for all \(\lambda_1,\lambda_2 \in \CK\) implies
    \be
        \varphi(\lambda_1,\lambda_2,\epsilon \lambda_3) = -\textnormal{res}_{t = 0}\phi(\lambda_1\lambda_2)\lambda_3 = -\textnormal{res}_{t = 0}\phi(\lambda_1\lambda_3)\lambda_2 = -\textnormal{res}_{t = 0}\phi(\lambda_2\lambda_3)\lambda_1.
    \ee
    It is easy to deduce that \(c = d\chi\) for \(\chi(\lambda_1x_1,\epsilon\lambda_2x_2) = \frac{1}{3}\kappa(x_1,x_2)\textnormal{res}_{t = 0}\phi(\lambda_1)\lambda_2\), so \(c\) is then a coboundary.
    Furthermore, \(c(\wedge^3 \fd(\CO)) = 0 = c(\wedge^3\fd(\gamma))\) combined with \(c(1,-,-) = 0\) implies \(\chi(\wedge^2\fd(\CO)) = 0 = \chi(\wedge^2\fd(\gamma))\).

    It remains to prove that if \(\fg = \mathfrak{sl}_n(\C)\) and
    \begin{equation}
        c(\lambda_1 x_1,\lambda_2 x_2,\lambda_3 x_3) = \textnormal{tr}([x_1,x_2]x_3)\varphi(\lambda_1,\lambda_2,\lambda_3) + \textnormal{tr}(x_1x_2x_3)\psi(\lambda_1,\lambda_2,\lambda_3)    
    \end{equation}
    then \(\psi = 0\) holds automatically. Here, we used that \(\kappa\) is proportional to the trace pairing for \(\fg = \mathfrak{sl}_n(\C)\).
    Observe that \(c(\lambda_1x,\lambda_2x,\lambda_3y) = \textnormal{tr}(x^2y)\psi(\lambda_1,\lambda_2,\lambda_3)\) for all \(x,y \in \fg\) and \(\lambda_1,\lambda_2,\lambda_3 \in R\) implies \(\psi(\lambda_2,\lambda_1,\lambda_3) = - \psi(\lambda_1,\lambda_2,\lambda_3)\) since \(c\) is skew-symmetric. But since the linear maps \((x,y) \mapsto \textnormal{tr}([x,y]^2)\) and \((x,y) \mapsto \textnormal{tr}([x,y]xy)\) are linearly independent, \(dc(\lambda_1x,\lambda_2y,\lambda_3x,\lambda_4y) = 0\) implies 
    \begin{equation}
        \psi(\lambda_1\lambda_2,\lambda_3,\lambda_4) - \psi(\lambda_2\lambda_3,\lambda_1,\lambda_4) + \psi(\lambda_3\lambda_4,\lambda_1,\lambda_2) - \psi(\lambda_4\lambda_1,\lambda_2,\lambda_3) = 0
    \end{equation}
    and therefore \(\psi(\lambda_2,\lambda_1,\lambda_3) = \psi(\lambda_1,\lambda_2,\lambda_3)\) for \(\lambda_4 = 1\). Combined, we deduce that \(\psi = 0\), concluding the proof.
\end{proof}
Identifying \(\chi\) with a linear map \(\chi \colon \fd(\CO) \to \fd(\CO)\), we have
    \begin{align*}
        & \widetilde\mu(x_1,x_2)= [\chi(x_1),x_2] + [x_1,\chi(x_2)] - \chi([x_1,x_2])
        \\& (f_1 \otimes f_2)\xi= (f_1\chi \otimes f_2)\delta + (f_1 \otimes f_2\chi)\delta-(f_1 \otimes f_2)\delta\chi.
    \end{align*}
    Using the same argument as in \cite[Corollary XVIII.1.3]{Kassel1995quantum}, we can see that there exists an extension \(\chi \colon U(\fd(\CO))\to U(\fd(\CO))\) such that \(\mu(a,b) = \chi(a)b + a\chi(b)- \chi(ab)\).
    Consequently, the deformations \(H^{(1)}\) and \(H^{(2)}\) are equivalent to order \(\hbar^{k+1}\) via the invertible endomorphism 
    \be
        (1 - \hbar^{k} \chi) \colon U(\fd(\CO))[\![\hbar]\!] \to U(\fd(\CO))[\![\hbar]\!],
    \ee
    since
    \begin{equation}
        \begin{split}
            &\nabla_\hbar^{(1)}((1 - \hbar^k\chi)a,(1-\hbar^k\chi)b) - (1+\hbar^k\chi)\nabla_\hbar^{(2)}(a,b) \\&=  \mu(a,b) + \chi(ab) -\chi(a)b - a\chi(b) ´+ \CO(\hbar^{k+1}) = \CO(\hbar^{k+1}); 
        \\&\delta^{(1)}_\hbar((1-\hbar^{k}\chi)a) - (1 - \hbar^k\chi) \otimes (1-\hbar^k\chi) \delta^{(2)}_\hbar(a)\\&= -\delta(\chi(a)) + \xi(a) + (\chi \otimes 1)\delta(a) + (1 \otimes \chi)\delta(a) + \CO(\hbar^{k+2})= \CO(\hbar^{k+2}).
        \end{split}
    \end{equation}
Moreover, since after this transformation \(\mu = 0\), we can deduce that \(\zeta = 0\) holds as well, by using \(\zeta(\fd(\CO)) = 0\) and \eqref{eq:compatibility_mu_zeta} inductively. In summary, \(H^{(1)}\) and \(H^{(2)}\) coincide to order \(\hbar^{k+1}\) after this transformation. 
Inductively on \(k\), we can deduce that \(H^{(1)}\) and \(H^{(2)}\) are isomorphic as bi-graded Hopf algebras over \(\C[\![\hbar]\!]\), proving Theorem \ref{Thm:yangunique}.

\section{Quantum $R$-matrix}\label{sec:MeroR}

In this section, we show that just like the usual Yangian for $\fg$, our Hopf algebra $Y^\circ_\hbar(\fd)$ admits a spectral $R$-matrix $R(z)$, and moreover, this $R$-matrix factorizes into a product of meromorphic twisting matrices, very much similar to Yangian of semisimple Lie algebras \cite{gautam2021meromorphic}. In fact, this will turn out to be an extremely convenient way to compute this $R$-matrix. Note that we must use the algebra $ Y^\circ_\hbar(\fd)$ instead of $Y_\hbar(\fd)$ because in general $\Delta_{\hbar, z}(a)$ for $a\in Y_\hbar(\fd)$ is a formal power series in \(z\) and \(z^{-1}\), and algebra multiplication is usually not meaningful in $Y_\hbar(\fd)\lbb z,z^{-1}\rbb$ (as $R(z)$ will have infinite poles).

The idea of the construction will be as follows. We first use the affine Kac-Moody vacuum vertex algebra $V_0(\fg)$ to show that $ Y^\circ_\hbar(\fd)$ admits another meromorphic coproduct $\Delta_z$:
\be
\Delta_z\colon  Y^\circ_\hbar(\fd)\to ( Y^\circ_\hbar(\fd)\otimes_{\C\lbb\hbar\rbb}  Y^\circ_\hbar(\fd))\lpp z^{-1}\rpp.
\ee
The main statement about this is the following proposition.

\begin{Prop}
    The  $\Delta_z$ satisfies the following conditions. 

    \begin{enumerate}
        \item It is weakly coassociative and weakly cocommutative in the following sense:
        \be
(\Delta_{z_1}\otimes 1)\Delta_{z_2}=(1\otimes \Delta_{z_2}) \Delta_{z_1+z_2},\qquad \Delta_{z}^{\textnormal{op}}=(\tau_z\otimes \tau_z) \Delta_{-z}.
        \ee

        \item The action of $T$ is a coderivation for $\Delta_z$:
        \be
\Delta_zT=(T\otimes 1+1\otimes T)\Delta_z.
        \ee

          \item It is the Taylor expansion at $z^{-1}$ of a well-defined rational function when evaluated on any tensor product of finite-dimensional smooth modules. 
        
    \end{enumerate}
\end{Prop}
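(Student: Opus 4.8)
The plan is to realize \(\Delta_z\) as the continuous transpose of the vertex operation of the vertex algebra \(W := S(t^{-1}\fg^*[t^{-1}]) \otimes V_0(\fg)\), in which the first factor carries the commutative vertex algebra structure \(Y(A,z)B = e^{zT}(A)B\) and the second is the affine vacuum vertex algebra at level \(0\). Under the identification of a suitable completion of \(Y_\hbar^\circ(\fd)\) with the continuous dual of \(W[\![\hbar]\!]\) (cf.\ Proposition~\ref{Prop:dualVOAintro}), the map \(\Delta_z\) is, by construction, dual to \(Y_W(\cdot,z)\colon W \otimes W \to W(\!(z)\!)\). With this dictionary in hand, items 1 and 2 become the transposes of the usual vertex algebra axioms for \(W\), and item 3 becomes the classical statement that genus-zero correlators of \(W\) are rational when all insertions live in finite-dimensional modules.

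Concretely, for item 1: weak coassociativity \((\Delta_{z_1}\otimes 1)\Delta_{z_2} = (1\otimes\Delta_{z_2})\Delta_{z_1+z_2}\) is dual to the associativity of the vertex operation, \(Y_W(a, z_1{+}z_2)Y_W(b,z_2)c = Y_W(Y_W(a,z_1)b,z_2)c\), read off after expanding in the region \(|z_2| \gg |z_1|\), which is exactly the regime matching the target \((\,\cdot\,)(\!(z_2^{-1})\!)(\!(z_1^{-1})\!)\); weak cocommutativity \(\Delta_z^{\mathrm{op}} = (\tau_z\otimes\tau_z)\Delta_{-z}\) is dual to skew-symmetry \(Y_W(a,z)b = e^{zT}Y_W(b,-z)a\), using that \(\tau_z = e^{zT}\) is the transpose of the translation endomorphism of \(W\). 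For item 2, \(\Delta_z T = (T\otimes 1 + 1\otimes T)\Delta_z\) is dual to translation covariance, \(\partial\bigl(Y_W(a,z)b\bigr) = Y_W(\partial a, z)b + Y_W(a,z)\partial b\). In each case the only thing to verify is that transposition is compatible with the completions (everything in sight is continuous by construction) and that the formal expansion ranges on the two sides agree because both land in the same space of Laurent series in \(z^{-1}\) (resp.\ \(z_1^{-1},z_2^{-1}\)); I would carry this out using the explicit exponential formula for \(\Delta_z\) in terms of the current generating series \(I_a(u)\) if a self-contained argument independent of the duality is wanted.

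For item 3 — which I expect to be the main obstacle — the argument would run as follows. Fix finite-dimensional smooth modules \(M,N\); smoothness says that \(I_{a,n}\) and \(I^a_n\) act by zero on \(M\) and on \(N\) for \(n \geq K\), \(K \gg 0\). On such a module the generating series \(Y(I_a,u) = I_a/(u-t)\) therefore acts as a \emph{polynomial} in \(u^{-1}\) (only finitely many modes survive), and the commutative factor \(S(t^{-1}\fg^*[t^{-1}])\) contributes only polynomial dependence on \(z\) since \(T\) acts nilpotently. Consequently, evaluating \(\Delta_z(a)\) on \(m\otimes n\) and pairing with a covector reduces, after unwinding the duality, to a \emph{finite} composition of \(V_0(\fg)\)-vertex operators inserted at the two points \(z\) and \(0\) and acting on finite-dimensional spaces; the operator product expansion of currents at level zero, \(I_a(z)I_b(w) \sim \tfrac{[I_a,I_b](w)}{z-w}\) (no central term), together with finite-dimensionality, shows that every such matrix coefficient is a rational function of \(z\) with a pole only at \(z = 0\), of order bounded in terms of \(K\), \(\dim M\), \(\dim N\). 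The formal series in \(z^{-1}\) produced by \(\Delta_z\) is then exactly the Laurent expansion of this rational function at \(z = \infty\). The genuine difficulty is the bookkeeping identifying finite-dimensional smooth \(Y_\hbar^\circ(\fd)\)-modules with the right module category on the vertex-algebra side and tracking how the \emph{a priori} infinite normally-ordered exponentials defining \(\Delta_z\) truncate after this evaluation; once that truncation is in place, rationality is automatic. As a final subsidiary point I would note that this rationality upgrades the formal identities of items 1 and 2 to honest identities of rational functions on tensor products of finite-dimensional modules, so that e.g.\ \((\Delta_{z_1}\otimes 1)\Delta_{z_2}\) and \((1\otimes\Delta_{z_2})\Delta_{z_1+z_2}\) are the expansions of one and the same rational function in two different regions.
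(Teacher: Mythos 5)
Your proposal takes essentially the same route as the paper: $\Delta_z$ is treated as the transpose of the vertex operation on the dual vertex algebra $V_0(\fg)\otimes S(t^{-1}\fg^*[t^{-1}])$ (this is exactly the paper's Remark on the dual VOA), items 1--2 are obtained by dualizing associativity/locality, skew-symmetry and translation covariance of $V_0(\fg)$ (mind the paper's opposite sign convention for $T$, which turns skew-symmetry into $\CY(B,z)A=e^{-zT}\CY(A,-z)B$), and item 3 is precisely the loop-grading truncation on finite-dimensional smooth modules that you flag as the remaining bookkeeping. The paper's proof of item 3 shows directly that each $z^m$-coefficient of $\Delta_z(I^a_n)$ sits in a fixed total loop degree, so positive powers of $z$ are bounded by the Laurent property of $\CY^\vee$ and negative powers die on smooth modules, making the action a Laurent polynomial in $z$; in particular the detour through level-zero OPEs and ``vertex operators acting on finite-dimensional spaces'' is unnecessary (and not literally correct, since the operators act on the infinite-dimensional $V_0(\fg)$), but your core mechanism is the right one.
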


We then show, in an extremely similar way to the proof of Proposition \ref{Prop:twistingF}, that there exists a meromorphic twisting, which we denote by $R_s(z)$ that intertwines $\Delta_z$ and $\Delta_{\hbar, z}$, satisfying a cocycle condition. This is summarized into the following: 

\begin{Thm}\label{Thm:meroR}
    There exists an element $R_s(z)\in (Y^\circ_\hbar (\fd)\otimes Y_\hbar^\circ (\fd))\lbb z^{-1}\rbb$, such that: 

    \begin{enumerate}
         \item The element $R_s(z)$ intertwines the two coproducts: $R_s(z)^{-1}\Delta_{\hbar, z}(a) R_s(z)=\Delta_z(a)$ for all \(a \in Y^\circ_\hbar(\fd)\).

    \item It satisfies the associativity axiom
    \begin{equation}
        (\Delta_{z_1}\otimes 1)(R_s(z_2)^{-1})R_s^{12}(z_1)^{-1}=(1\otimes \Delta_{z_2})(R_s(z_1+z_2)^{-1}) R_s^{23}(z_2)^{-1}.
    \end{equation} 

    \item It satisfies $(\tau_{z_1}\otimes \tau_{z_2})R_s(z_3)=R_s(z_3+z_1-z_2)$. 
    
    \item It is the Taylor expansion at $z^{-1}$ of a well-defined analytic function when evaluated on any tensor product of finite-dimensional smooth modules. 

    \item It is the unique element with these properties in $(S(\fg^*(\CO))\lbb\hbar\rbb\otimes_{\C\lbb\hbar\rbb} U(\fg(\CO))\lbb\hbar\rbb)\lpp z^{-1}\rpp$. 
    
    \end{enumerate}
\end{Thm}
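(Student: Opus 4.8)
The plan is to construct $R_s(z)$ by mirroring the construction of the quantum twist $F=\CE_\gamma^{-1}\CE_\rho$ of Proposition \ref{Prop:twistingF}, replacing the comparison of two $\fg(r)$-decompositions by the comparison of the two products on $Y^\circ_\hbar(\fd)^{*}$ that $\Delta_{\hbar,z}$ and $\Delta_z$ transpose. Recall that $Y^\circ_\hbar(\fd)=U(\fg[t])[\![\hbar]\!]\#S(\fg^*[t])[\![\hbar]\!]$ and that on the $S(\fg^*(\CO))$-factor the holomorphic coproduct $\Delta_{\hbar,z}=(\tau_z\otimes 1)\Delta_\hbar$ is the transpose, under the continuous pairing with $U(\fg(\gamma))[\![\hbar]\!]=U(t^{-1}\fg[t^{-1}])[\![\hbar]\!]$, of the map
\[
(a,b)\longmapsto \tau_{-z}(a)\cdot b\in U(\fg(\CK))(\!(z^{-1})\!)[\![\hbar]\!],
\]
where $\tau_{-z}$ carries $\fg(\gamma)$ into $\fg[t](\!(z^{-1})\!)\subseteq\fg(\CO)(\!(z^{-1})\!)$ via $x(t)\mapsto x(t-z)$, so that the product is formed in the bicrossed product $U(\fg(\CK))(\!(z^{-1})\!)[\![\hbar]\!]=U(\fg(\gamma))(\!(z^{-1})\!)[\![\hbar]\!]\otimes U(\fg(\CO))(\!(z^{-1})\!)[\![\hbar]\!]$; on the $U(\fg(\CO))$-factor it is conjugation by $\CE_\gamma$ followed by $\tau_z\otimes 1$. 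By the Proposition preceding this theorem, $\Delta_z$ is likewise the transpose of the intertwining product of the vertex algebra $S(t^{-1}\fg^*[t^{-1}])\otimes V_0(\fg)$, and $R_s(z)$ will be the element of $(S(\fg^*(\CO))\otimes U(\fg(\CO)))[\![\hbar]\!](\!(z^{-1})\!)$ measuring the discrepancy between these two products, exactly as $F$ measured the discrepancy between the $\fg(r_1)$- and $\fg(r_2)$-decompositions.

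Concretely, I would first prove the analogue of Lemma \ref{Lem:group}: for $x\in\fg(\gamma)$ the group-like element $e^{\hbar\, x(t-z)}\in U(\fg(\CK))(\!(z^{-1})\!)[\![\hbar]\!]$ admits a unique factorization $e^{\hbar\, x(t-z)}=e^{\hbar\,(x(t-z))_-}\,e^{\hbar\,(x(t-z))_+}$ with $(x(t-z))_-\in\fg(\gamma)(\!(z^{-1})\!)[\![\hbar]\!]$ and $(x(t-z))_+\in\fg(\CO)(\!(z^{-1})\!)[\![\hbar]\!]$, and then define $R_s(z)$ by $\lag R_s(z),e^{\hbar x}\otimes -\rag=e^{\hbar\,(x(t-z))_+}$, paralleling the characterisation $e^{\hbar x}\mapsto e^{\hbar x_+}$ of $F$. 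With $R_s(z)$ so defined, the intertwining identity (1) and the associativity/cocycle identity (2) are obtained by transposing the product $e^{\hbar\,x(t-z)}e^{\hbar\, y(t)}$ and re-associating it inside the bicrossed product, verbatim as in the proof of Proposition \ref{Prop:twistingF}.1--2, now invoking the weak coassociativity of $\Delta_z$ from the preceding Proposition together with the shifted coassociativity $(\Delta_{\hbar,z_1}\otimes 1)\Delta_{\hbar,z_2}=(1\otimes\Delta_{\hbar,z_2})\Delta_{\hbar,z_1+z_2}$ of $\Delta_{\hbar,z}$; invertibility of $R_s(z)$ is automatic since $R_s(z)=1+\CO(\hbar)$. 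The translation covariance (3), $(\tau_{z_1}\otimes\tau_{z_2})R_s(z_3)=R_s(z_3+z_1-z_2)$, follows from the fact that $\CE_\gamma$ commutes with $T\otimes 1+1\otimes T$ (Proposition \ref{Prop:TFT}) and that $T$ is a coderivation for $\Delta_z$. Uniqueness (5) follows as in Lemma \ref{Lem:group}.2: a tensor in $(S(\fg^*(\CO))[\![\hbar]\!]\otimes U(\fg(\CO))[\![\hbar]\!])(\!(z^{-1})\!)$ is determined by its pairings against the $e^{\hbar x}$, $x\in\fg(\gamma)$, and (1) forces these pairings to equal $e^{\hbar\,(x(t-z))_+}$.

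The genuinely new point — the only place this is more than a transcription of Proposition \ref{Prop:twistingF} — is the finiteness and analyticity content: that $R_s(z)$ lies in $(Y^\circ_\hbar(\fd)\otimes Y^\circ_\hbar(\fd))[\![z^{-1}]\!]$, i.e. that each coefficient of $z^{-n}$ lies in the finitely generated dense subalgebra rather than merely in its completion, that no positive powers of $z$ occur, and (claim (4)) that on a tensor product of finite-dimensional smooth modules $R_s(z)$ is the Taylor expansion of an analytic function. These are the analogues of the rationality assertion at the end of Proposition \ref{Prop:twistingF} and of the finiteness arguments in Proposition \ref{Cor:finiteh} and Lemma \ref{Lem:nilT}: I would use the $\Z\times\Z$-grading (loop grading together with the $\epsilon$-grading), the fact that $(x(t-z))_+$ has negative loop degree bounded below when $x$ has bounded degree, and, for (4), the fact that $\fg^*[t]$ acts nilpotently — vanishing on $t^N\fg^*[t]$ — on any finite-dimensional smooth module, so that in a fixed matrix coefficient only finitely many terms of each exponential and of the $z^{-1}$-expansion survive; for (4) one also transports the rationality statement for $\Delta_z$ from the preceding Proposition along the intertwiner $R_s(z)$. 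I expect this finiteness bookkeeping, rather than any of the algebraic identities, to be the main obstacle.
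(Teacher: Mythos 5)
Your overall strategy — dualize, evaluate on group-like elements $e^{\hbar x}$, and exhibit $R_s(z)$ as the tensor measuring the discrepancy between the two transposed products — is indeed the spirit of the paper's proof. But the concrete definition you propose, and the key lemma you omit, do not work. First, your identification of the transpose of $\Delta_{\hbar,z}$ is wrong: on $S(\fg^*(\CO))$, $\Delta_{\hbar,z}=(\tau_z\otimes 1)\Delta_\hbar$ is the transpose of $(a,b)\mapsto e^{-zT}(a)\cdot b$, where $e^{-zT}(a)$ is the Taylor shift that stays inside $U(\fg(\gamma))[\![z]\!]$ — not of multiplication by the $z=\infty$ expansion $\tau_{-z}(a)\in U(\fg[t])\lpp z^{-1}\rpp$ formed in the bicrossed product (the two differ already at first order: one produces only non-negative powers of $z$, the other only negative ones). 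Second, with your choice of expansion the element $\hbar x(t-z)$ already lies in $\hbar\,\fg(\CO)\lpp z^{-1}\rpp[\![\hbar]\!]$, so the Gauss factorization $e^{\hbar x(t-z)}=e^{\hbar(x(t-z))_-}e^{\hbar(x(t-z))_+}$ is trivial ($(x(t-z))_-=0$): there is no analogue here of the two complementary subalgebras $\fg(r_1),\fg(r_2)$ of Proposition \ref{Prop:twistingF}, and your recipe just returns $e^{\hbar[x(t-z)]_{|z|>|t|}}=e^{-\hbar x(z)_{\textnormal{sing}}}$ — the right answer up to conventions, but obtained without the ingredient that actually makes (1) and (2) true. (Had you instead meant a nontrivial factorization, the BCH corrections at order $\hbar^2$ would spoil the formula.) The essential input in the paper is the Wick/normal-ordering identity $\CY(e^{\hbar x},z)=e^{\hbar x(z)_{\textnormal{reg}}}e^{\hbar x(z)_{\textnormal{sing}}}$ with $x(z)_{\textnormal{reg}}=e^{-zT}(x)$, together with $[\Delta_z(x),e^{r_{\textnormal{reg}}(z)}e^{r_{\textnormal{sing}}(z)}]=0$ (Lemma \ref{lem:identitites_for_Y}, Proposition \ref{Prop:YR}); it is exactly this, and not a re-association in the bicrossed product, that proves the intertwining property, and the cocycle condition (2) further requires $(1\otimes\Delta_{z_2})R_s(z_1+z_2)=R_s^{12}(z_1)R_s^{13}(z_1+z_2)$ and $(\Delta_{\hbar,z_1}\otimes 1)R_s(z_2)=R_s^{23}(z_2)R_s^{13}(z_1+z_2)$ (Lemma \ref{LemdeltazR} and \eqref{eq:Rsproof}), whose proofs are genuine normal-ordered-product computations, not verbatim transcriptions of Proposition \ref{Prop:twistingF}.

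Two smaller points. For uniqueness (5), pairing a candidate $\wt R(z)$ against $e^{\hbar x}$ in the first slot yields an element of $U(\fg(\CO))\lpp z^{-1}\rpp[\![\hbar]\!]$, and property (1) only pins down its action on $V_0(\fg)$ via $\rhd$, not the element itself; Lemma \ref{Lem:group}.2 alone does not close this gap — the paper additionally invokes the injectivity of $U(\fg(\CK))\to\End(V_0(\fg))$ at the non-critical level $0\neq -h^\vee$, which your sketch omits. Finally, your assessment of where the difficulty sits is inverted: once one has the closed formula $R_s(z)=e^{r_{\textnormal{sing}}(z)}$, properties (3) and (4) are nearly immediate (translation covariance from the explicit kernel $\hbar\,C/(t_2-z-t_1)$, analyticity because on finite-dimensional smooth modules only finitely many modes act nonzero, so $R_s(z)$ acts by a polynomial in $z^{-1}$), whereas the substantive work lies precisely in the vertex-algebra identities behind (1) and (2) that your proposal treats as routine.
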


As a consequence of this and the weak cocommutativity of $\Delta_z$, we construct the meromorphic $R$-matrix of $\Delta_{\hbar, z}$. 

\begin{Thm}\label{Thm:fullR}
    Let $R(z)=R_{s}^{21}(-z)R_s(z)^{-1}$, then:
      \begin{enumerate}
      
        \item \(
            (\tau_z\otimes 1)\Delta_{\hbar}^{\textnormal{op}}(a)=R(z)  (\tau_z \otimes 1)\Delta_{\hbar}(a) R(z)^{-1}\) for all \(a \in Y^\circ_\hbar(\fd)\);
        
        \item \((\Delta_{\hbar, z_1}\otimes 1) R(z_2)=R^{13}(z_1+z_2)R^{23}(z_2)\), \((1\otimes \Delta_{\hbar, z_2})R(z_1+z_2)=R^{13}(z_1+z_2)R^{12}(z_2)\) and \(R(z)\) is a solution of quantum Yang-Baxter equation:
        \be
            R^{12}(z_1)R^{13}(z_1+z_2)R^{23}(z_2)=R^{23}(z_2)R^{13}(z_1+z_2)R^{12}(z_1);
        \ee

        \item \(R(z)\) satisfies $(\tau_{z_1}\otimes \tau_{z_2})R(z_3)=R(z_3+z_1-z_2)$; 

        \item \(R(z)\) is the Taylor expansion at $z=\infty$ of a well-defined analytic function when evaluated on any tensor product of finite-dimensional smooth modules. 

    \end{enumerate}
\end{Thm}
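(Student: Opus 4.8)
The plan is to reduce everything to Theorem~\ref{Thm:meroR}, following the passage from the ``half'' $R$--matrix to the full $R$--matrix in \cite{gautam2021meromorphic}. Write $\mathrm{Ad}(u)(x) = uxu^{-1}$, so that Theorem~\ref{Thm:meroR}.1 reads $\Delta_{\hbar,z} = \mathrm{Ad}(R_s(z))\circ\Delta_z$ on $Y^\circ_\hbar(\fd)$, and record the following consequences of the shift rule $(\tau_{z_1}\otimes\tau_{z_2})R_s(z_3) = R_s(z_3+z_1-z_2)$ of Theorem~\ref{Thm:meroR}.3: setting $z_1 = z_2$ gives $(\tau_z\otimes\tau_z)R_s(w) = R_s(w)$, setting $z_2 = 0$ (resp.\ $z_1 = 0$) gives $(\tau_z\otimes 1)R_s(w) = R_s(w+z)$ (resp.\ $(1\otimes\tau_z)R_s(w) = R_s(w-z)$), and applying the flip $\tau$ yields the same statements for $R_s^{21}$. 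Part~3 is then immediate, since $\tau_{z_1}\otimes\tau_{z_2}$ is an algebra homomorphism commuting with $\tau$: $(\tau_{z_1}\otimes\tau_{z_2})R(z_3) = R_s^{21}\big(-(z_3+z_1-z_2)\big)R_s(z_3+z_1-z_2)^{-1} = R(z_3+z_1-z_2)$. Part~4 is also immediate: on a tensor product $M\otimes N$ of finite--dimensional smooth modules, $R_s(z)$ acts by Theorem~\ref{Thm:meroR}.4 as an analytic function near $z = \infty$ which is invertible there, and via the flip so does $R_s^{21}(-z)$, hence $R(z)\vert_{M\otimes N}$ is a product of two functions analytic near $z = \infty$.

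For part~1 I would first record the purely formal identity $(\tau_z\otimes 1)\Delta_\hbar^{\textnormal{op}} = (\tau_z\otimes\tau_z)\circ\Delta_{\hbar,-z}^{\textnormal{op}}$, which one checks by evaluating both sides on $\Delta_\hbar(a) = \sum a_{(1)}\otimes a_{(2)}$ (both give $\sum\tau_z(a_{(2)})\otimes a_{(1)}$). Combining Theorem~\ref{Thm:meroR}.1, $\Delta_{\hbar,-z}^{\textnormal{op}} = \tau\circ\Delta_{\hbar,-z}$ with $\tau\circ\mathrm{Ad}(R_s(-z)) = \mathrm{Ad}(R_s^{21}(-z))\circ\tau$, and the weak cocommutativity $\Delta_w^{\textnormal{op}} = (\tau_w\otimes\tau_w)\Delta_{-w}$ of the meromorphic coproduct, one gets
\[
\Delta_{\hbar,-z}^{\textnormal{op}} = \mathrm{Ad}\big(R_s^{21}(-z)\big)\circ\Delta_{-z}^{\textnormal{op}} = \mathrm{Ad}\big(R_s^{21}(-z)\big)\circ(\tau_{-z}\otimes\tau_{-z})\circ\Delta_z .
\]
Conjugating this by $\tau_z\otimes\tau_z$ and using $(\tau_z\otimes\tau_z)R_s^{21}(-z) = R_s^{21}(-z)$ collapses the right-hand side, giving $(\tau_z\otimes 1)\Delta_\hbar^{\textnormal{op}} = \mathrm{Ad}\big(R_s^{21}(-z)\big)\circ\Delta_z$; substituting $\Delta_z = \mathrm{Ad}(R_s(z)^{-1})\circ\Delta_{\hbar,z}$ then yields $(\tau_z\otimes 1)\Delta_\hbar^{\textnormal{op}} = \mathrm{Ad}(R(z))\circ(\tau_z\otimes 1)\Delta_\hbar$, which is part~1.

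For part~2, the heart is the two hexagon identities. Inverting the cocycle condition of Theorem~\ref{Thm:meroR}.2 gives $R_s^{12}(z_1)\,(\Delta_{z_1}\otimes 1)R_s(z_2) = R_s^{23}(z_2)\,(1\otimes\Delta_{z_2})R_s(z_1+z_2)$; rewriting the $\hbar$--coproducts $\Delta_{\hbar,z_i}$ in terms of $\Delta_{z_i}$ via conjugation by $R_s(z_i)$ on the appropriate pair of tensor legs, and tracking the spectral arguments carried by the $\tau$'s inside $\Delta_{\hbar,z_1}$ with the shift rule of Theorem~\ref{Thm:meroR}.3 (together with the companion identities for $R_s^{21}$ coming from $R(z) = R_s^{21}(-z)R_s(z)^{-1}$), converts this identity into
\[
(\Delta_{\hbar,z_1}\otimes 1)R(z_2) = R^{13}(z_1+z_2)R^{23}(z_2), \qquad (1\otimes\Delta_{\hbar,z_2})R(z_1+z_2) = R^{13}(z_1+z_2)R^{12}(z_2);
\]
this is precisely the computation of \cite{gautam2021meromorphic} transported to our setting, and I would reproduce it line by line. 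Granted the first of these identities, the quantum Yang--Baxter equation follows from part~1 and the translation covariance $\Delta_\hbar\circ\tau_z = (\tau_z\otimes\tau_z)\circ\Delta_\hbar$ of Lemma~\ref{Lem:nilT}: one evaluates $\big((\tau_{z_1}\otimes 1)\Delta_\hbar^{\textnormal{op}}\otimes 1\big)R(z_2)$ in two ways --- directly, using $(1\otimes\tau_{z_1})\Delta_\hbar = \Delta_{\hbar,-z_1}\circ\tau_{z_1}$, the identity $(\tau_{z_1}\otimes 1)R(z_2) = R(z_1+z_2)$ from part~3, and the first hexagon identity, to obtain $R^{23}(z_2)R^{13}(z_1+z_2)$; and via part~1 together with the first hexagon identity, to obtain $R^{12}(z_1)R^{13}(z_1+z_2)R^{23}(z_2)R^{12}(z_1)^{-1}$ --- and equates the results.

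The main obstacle is the passage from the cocycle condition of Theorem~\ref{Thm:meroR}.2 to the two hexagon identities for $R(z)$: the bookkeeping of tensor legs combined with the matching of spectral shifts via Theorem~\ref{Thm:meroR}.3 is delicate, and one must work in an appropriate completion --- a suitable iterated Laurent ring, such as $\big(Y_\hbar(\fd)^{\otimes 3}\big)\lpp z_1^{-1}\rpp\lpp z_2^{-1}\rpp$ with a fixed expansion of $(z_1+z_2)^{-1}$ --- so that products such as $R^{13}(z_1+z_2)R^{23}(z_2)$ are well defined there and the formal manipulations are legitimate. By contrast, parts~1, 3 and 4 are essentially formal consequences of Theorem~\ref{Thm:meroR}.
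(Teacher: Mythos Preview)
Your proposal is correct and follows essentially the same route as the paper. Part~1 is derived in both cases from the weak cocommutativity of $\Delta_z$ combined with the intertwining property and shift covariance of $R_s$; for the hexagon identities in part~2 the paper, like you, defers to the computation in \cite[Theorem~7.1]{gautam2021meromorphic} (replacing their $R^-$ by $R_s(z)^{-1}$ and $R^0$ by $1$), and parts~3 and~4 are treated as formal consequences of Theorem~\ref{Thm:meroR} in both. Your remark about working in an appropriate iterated Laurent completion matches the paper's Remark on how these equalities are to be interpreted.
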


\begin{Rem}
    In the above statements, any equality between two formal series means that they are the expansion of the same common series in two different domains of convergence, just as the case of vertex algebras in \cite[Section 1]{frenkel2004vertex}. In fact, this will remain true throughout the section. More details on what is meant by this will be given in Remark \ref{rem:equality_of_series}.  From this notion of equality of series, it will always follow that as soon as two series can be evaluated in \(\hbar\) at some $s\in \C^\times$, the equality will mean equality as meromorphic functions. 
\end{Rem}

Using Proposition \ref{Prop:twistingF} and Proposition \ref{Prop:TFT}, we can adjust Theorem \ref{Thm:fullR} when \(\gamma\) is replaced by other \(\epsilon\)-graded \(r\)-matrices \(\rho\). This will be the content of Section \ref{sec:twisting_R_matrices}.

The proof of Theorem \ref{Thm:meroR} and Theorem \ref{Thm:fullR} turns out to be extremely natural. The idea is to recognize the intertwining operator $\CY$ as an element in $(S(\fg^*(\CO))\lbb\hbar\rbb\otimes_{\C\lbb\hbar\rbb} \End(M)[\![\hbar]\!])\lbb z,z^{-1}\rbb$ for any smooth $\fg(\CK)$-module $M$ and show that it can be written as $\CY=\CE_\gamma(z)\cdot R_s(z)$, where $\CE_\gamma(z)$ is the tensor series associated to the map $U(\fg_{<0})\to U(\fg(\CK))\lbb z\rbb\to \End(M)\lbb z\rbb$ given by $A\mapsto e^{-zT}(A)$. As a consequence, $R_s(z)$ can be seen as a rotation from multiplication to intertwining operators:
\be
\btik
e^{-zT}(A)\cdot B\rar{R_s} &  \CY(A,z)B
\etik
\ee
which then allows us to rotate in the following way:
\be
\btik
e^{-zT}(A)\cdot B\rar{R_s} &  \CY(A,z)B\rar{=} & e^{zT} \CY(B,-z)A\rar{R_s^{-1}} &e^{zT} (e^{zT}(B)\cdot A)
\etik
\ee
Here the equality in the second arrow is the commutativity condition of vertex algebras (also called skew-symmetry, see \cite[Proposition 3.2.5]{frenkel2004vertex}).

The structure of this section is as follows. In Section \ref{subsec:quantVHopf}, we construct the coproduct $\Delta_z$ using the vertex algebra structure of $V_0(\fg)$. In Section \ref{subsec:MeroR}, we construct the twisting matrix $R_s(z)$ and prove Theorem \ref{Thm:meroR}. In Section \ref{subsec:fullR}, we construct the $R$-matrix and prove Theorem \ref{Thm:fullR}. The generalization of Theorem \ref{Thm:fullR} if \(\gamma\) is replaced by other \(\epsilon\)-graded \(r\)-matrices is discussed in Section \ref{sec:twisting_R_matrices}.

\subsection{Meromorphic tensor product for the quantization}\label{subsec:quantVHopf}

The vacuum vertex algebra $V_0(\fg)$ at level $0$ is by definition
\be
V_0(\fg):=\Ind_{\fg(\CO)}^{\fg(\CK)}(\C).
\ee
as a \(\fg(\CK)\)-module. Its intertwining operator $\CY\colon V_0(\fg) \to \textnormal{End}(V_0(\fg))[\![z,z^{-1}]\!]$ is defined so that $\CY(I_{a, -1}, z)=\sum_{n \in \Z} I_{a, n}z^{-n-1}$, where we recall that \(I_{a,n} = I_a t^n\). As a vector space, we can identify $V_0(\fg)$ with $U(\fg_{<0})$, and as such, it has a cocommutative coproduct $\Delta$. We would like to first show that this coproduct is a morphism of vertex algebras. The following is clear:

\begin{Prop}\label{Cor:coYcommut}
    Let $V_0(\fg)^{\otimes 2}$ be the tensor product vertex algebra of $V_0(\fg)$. Then there is an embedding of vertex algebras $\Delta': V_0(\fg)\to V_0(\fg)^{\otimes 2}$ defined by:
    \be\label{eq:vertex_embedding}
     \CY(I_{a,-1}, z)\stackrel{\Delta'}\longmapsto \CY(I_{a,-1}, z)\otimes 1+1\otimes \CY(I_{a,-1}, z). 
    \ee
    Moreover, under the identification $V_0(\fg)\cong U(\fg_{<0})$, this $\Delta'$ is identified with the standard coproduct $\Delta$ of $U(\fg_{<0})$ and therefore the following diagram commutes: 
    \be
    \btik
        U(\fg_{<0})\otimes U(\fg_{<0})\rar{\CY}\dar{\Delta} & U(\fg_{<0})\lpp z\rpp\dar{\Delta}\\
        U(\fg_{<0})^{\otimes 2}\otimes U(\fg_{<0})^{\otimes 2} \rar{\CY^{\otimes 2}} & U(\fg_{<0})^{\otimes 2}\lpp z\rpp
    \etik
    \ee
    In particular, \(\CY\) is a morphism of cocommutative coalgebras in this way.
\end{Prop}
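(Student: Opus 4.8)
The plan is to deduce the whole proposition from the universal property of the vacuum vertex algebra $V_0(\fg)$ at level $0$, together with the PBW identification $V_0(\fg)\cong U(\fg_{<0})$. \emph{First} I would construct the map $\Delta'$ of \eqref{eq:vertex_embedding}. On the tensor product vertex algebra $V_0(\fg)^{\otimes 2}$ consider the fields $X_a(z)\coloneqq \CY(I_{a,-1},z)\otimes 1 + 1 \otimes \CY(I_{a,-1},z)$. They are pairwise mutually local and satisfy the operator product expansion $X_a(z)X_b(w)\sim \sum_{c=1}^d f_{ab}^c\,X_c(w)/(z-w)$: indeed, each tensor factor satisfies this OPE with no central term since the level is $0$, the two factors are mutually local with nonsingular operator products, and the right-hand side is additive over the factors by bilinearity of the bracket. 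Moreover $X_a(z)(|0\rangle\otimes|0\rangle)$ lies in $(V_0(\fg)^{\otimes2})\lbb z\rbb$ with constant term $I_{a,-1}|0\rangle\otimes|0\rangle + |0\rangle\otimes I_{a,-1}|0\rangle$. By the universal property of $V_0(\fg)$ (see \cite{frenkel2004vertex}), there is then a unique vertex algebra homomorphism $\Delta'\colon V_0(\fg)\to V_0(\fg)^{\otimes2}$ with $\Delta'(|0\rangle)=|0\rangle\otimes|0\rangle$ and $\CY(\Delta'(I_{a,-1}|0\rangle),z)=X_a(z)$, and this is precisely the assignment \eqref{eq:vertex_embedding}.

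\emph{Second} I would identify $\Delta'$ with the standard coproduct of $U(\fg_{<0})$. Since $\Delta'$ is a vertex algebra homomorphism, $\CY(\Delta'(I_{a,-1}|0\rangle),z)\Delta'(v)=\Delta'(\CY(I_{a,-1}|0\rangle,z)v)$, and comparing coefficients of powers of $z$ gives $\Delta'(I_{a,n}\cdot v)=(I_{a,n}\otimes 1+1\otimes I_{a,n})\Delta'(v)$ for all $a$ and all $n\in\Z$; that is, $\Delta'$ is equivariant for the diagonal $\fg(\CK)$-action on $V_0(\fg)^{\otimes 2}$. Because $V_0(\fg)=U(\fg_{<0})|0\rangle$ and $\fg_{<0}=t^{-1}\fg[t^{-1}]$ is a Lie subalgebra of $\fg(\CK)$, an easy induction along the PBW filtration then yields $\Delta'(u|0\rangle)=\big(\Delta_{\mathrm{std}}(u)\big)(|0\rangle\otimes|0\rangle)$ for all $u\in U(\fg_{<0})$, where $\Delta_{\mathrm{std}}$ denotes the coproduct of $U(\fg(\CK))$, which maps $U(\fg_{<0})$ into $U(\fg_{<0})^{\otimes 2}$. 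Under $V_0(\fg)\cong U(\fg_{<0})$ and $V_0(\fg)^{\otimes 2}\cong U(\fg_{<0})^{\otimes 2}$ this reads $\Delta'=\Delta$, the standard cocommutative coproduct; in particular $\Delta'$ is injective (the coproduct of an enveloping algebra is counital), hence an embedding of vertex algebras.

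\emph{Third}, the commutativity of the displayed square is now immediate, since it is just the homomorphism identity $\Delta'\big(\CY(A,z)B\big)=\CY^{\otimes 2}\big(\Delta'(A),z\big)\big(\Delta'(B)\big)$ rewritten by means of $\Delta'=\Delta$, with $\CY^{\otimes 2}$ the vertex operation of $V_0(\fg)^{\otimes 2}$, which acts as $\CY^{\otimes 2}(a\otimes b,z)(c\otimes d)=\CY(a,z)c\otimes\CY(b,z)d$. Finally, $U(\fg_{<0})$, $U(\fg_{<0})\otimes U(\fg_{<0})$ and $U(\fg_{<0})\lpp z\rpp$ are all cocommutative coalgebras, and the square says exactly that $\CY$ intertwines the coproducts; hence $\CY$ is a morphism of cocommutative coalgebras. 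I do not anticipate a genuine obstacle: the only points requiring care are the precise formulation of the universal property of $V_0(\fg)$ at level $0$, and the inductive check in the second step that $\Delta'$ really is the enveloping-algebra coproduct (rather than some twist of it), both of which are handled by the equivariance observation.
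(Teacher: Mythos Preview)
Your proposal is correct and follows essentially the same approach as the paper: both verify that the diagonal fields $X_a(z)$ satisfy the correct OPE (the paper phrases this as ``strongly generated, so check OPE''; you phrase it via the universal property of $V_0(\fg)$), and both then identify $\Delta'$ with the standard coproduct by an induction on PBW monomials using the mode action (the paper writes this as a residue computation $\Delta'(I_{a,-n}B)=\mathrm{res}_{z=0}z^{-n}\CY(\Delta'(I_{a,-1}),z)\Delta'(B)$, which is exactly your equivariance statement $\Delta'(I_{a,n}\cdot v)=(I_{a,n}\otimes 1+1\otimes I_{a,n})\Delta'(v)$). Your explicit remark that injectivity follows from counitality is a small addition the paper leaves implicit.
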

\begin{proof}
    The vertex algebra $V_0(\fg)$ is strongly generated by $\CY(I_{a,-1}, z)$, and therefore we only need to check that the right-hand side in \eqref{eq:vertex_embedding} has the correct OPE, which is simple. 
    
    The usual coproduct of \(U(\fg_{<0})\) and \(\Delta'\) coincide on primary fields $I_{a, -n}$. We use induction. Suppose we now that they coincides on $B$, we show that they coincides on $I_{a, -n}B$ for all $I_{a, -n}$. We have
    \be
    \begin{aligned}
        \Delta'(I_{a,-n} B)&=\Delta'\left(\mathrm{res}_{z=0}\frac{1}{z^n}\CY(I_{a,-1}, z)B\right)=\mathrm{res}_{z=0}\frac{1}{z^n} \CY(\Delta'(I_{a,-1}), z)\Delta'(B)\\
        &= \mathrm{res}_{z=0}\frac{1}{z^n}(\CY(I_{a,-1}, z)\otimes 1+1\otimes \CY(I_{a,-1}, z))\Delta(B)\\&=\Delta(I_{a,-n})\Delta(B)=\Delta(I_{a,-n}B),
    \end{aligned}
    \ee
    and this is the proof. 
\end{proof}

We can now use the duality between \(U(\fg_{<0}) = U(\fg(\gamma_\fg))\) and \(S(\hbar\fg^*(\CO))[\![\hbar]\!]\) from Lemma \ref{Lem:group} in order to dualize \(\CY\). In the following, recall that \(\CO\) is \(\C[\![t]\!]\), and it will be convenient to remain with the latter notation sometimes.

\begin{Lem}
    For any $f\in S(\hbar\fg^*[t])[\![\hbar]\!]$, define $\CY^\vee(f)\in (S(\fg^*\lbb t\rbb)\otimes S(\fg^*\lbb t\rbb)\lbb\hbar]\!][\![z,z^{-1}\rbb$ by
    \be
\lag \CY^\vee(f), A\otimes B\rag=\lag f, \CY(A, z)B\rag.
    \ee
    for any \(A, B \in U(\fg_{<0})[\![\hbar]\!]\).
    Then $\CY^\vee(f)$ is valued in $(S(\hbar\fg^*[t])\otimes S(\hbar\fg^*[t]))\lbb\hbar\rbb\lpp z^{-1}\rpp$ and defines an algebra homomorphism. 
    
\end{Lem}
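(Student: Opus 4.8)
The plan is to split the claim into two parts: (a) the target of $\CY^\vee$ is really $(S(\hbar\fg^*[t])\otimes S(\hbar\fg^*[t]))[\![\hbar]\!](\!(z^{-1})\!)$, i.e.\ the Laurent series in $z^{-1}$ has only finitely many positive powers of $z$, and moreover each coefficient is a \emph{polynomial} rather than a formal power series in $t$; and (b) $\CY^\vee$ intertwines the commutative multiplication on $S(\hbar\fg^*[t])[\![\hbar]\!]$ (dual to $\Delta$ on $U(\fg_{<0})$) with the multiplication on the tensor square. For part (b) the key input is Proposition \ref{Cor:coYcommut}: under the identification $V_0(\fg) \cong U(\fg_{<0})$, the intertwining operator $\CY$ is a morphism of cocommutative coalgebras, i.e.\ $\Delta(\CY(A,z)B) = \CY^{\otimes 2}(\Delta A \otimes \Delta B)$ where on the right one uses the tensor-product vertex algebra structure. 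Dualizing this identity against $f_1 \otimes f_2 \in S(\hbar\fg^*[t])^{\otimes 2}$ and using that the multiplication on $S(\hbar\fg^*[t])[\![\hbar]\!]$ is by definition the transpose of $\Delta$ on $U(\fg_{<0})[\![\hbar]\!]$, one gets
\be
\lag \CY^\vee(f_1 f_2), A \otimes B\rag = \lag f_1 \otimes f_2, \Delta(\CY(A,z)B)\rag = \lag \CY^\vee(f_1) \CY^\vee(f_2), A \otimes B\rag,
\ee
where on the far right the multiplication is componentwise (tensor-factor-wise) in $(S(\hbar\fg^*[t])\otimes S(\hbar\fg^*[t]))[\![\hbar]\!]$, and one reads off the Sweedler legs $\Delta(A) = \sum A_{(1)} \otimes A_{(2)}$, $\Delta(B) = \sum B_{(1)} \otimes B_{(2)}$ to split the four-fold pairing into the product of two two-fold pairings. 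Since by Lemma \ref{Lem:group}.2 the pairing of $S(\hbar\fg^*(\CO))[\![\hbar]\!]$ with group-likes $e^{\hbar x}$ (equivalently with all of $U(\fg_{<0})[\![\hbar]\!]$) is nondegenerate, this identity of pairings forces $\CY^\vee(f_1 f_2) = \CY^\vee(f_1)\CY^\vee(f_2)$, which is exactly multiplicativity. Unitality ($\CY^\vee(1) = 1 \otimes 1$) follows from $\CY(\mathbf{1},z) = \mathrm{id}$, i.e.\ the vacuum axiom, together with the fact that $1 \in S(\hbar\fg^*[t])$ is dual to the counit.

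For part (a), the finiteness in $z$ comes from the standard locality/truncation property of vertex algebras: for fixed homogeneous $A, B \in U(\fg_{<0})$ of finite $t$-degree, $\CY(A,z)B \in U(\fg_{<0})(\!(z)\!)$ has only finitely many negative powers of $z$, and each coefficient lies in a finite-dimensional graded piece of $U(\fg_{<0})$. Dually, since $f \in S(\hbar\fg^*[t])$ is a \emph{polynomial} (finite symmetric degree, finite $t$-degree), the pairing $\lag f, \CY(A,z)B\rag$ is nonzero only for $A \otimes B$ in a bounded range of bidegrees; by loop-grading bookkeeping (the loop degree of $f$ equals the sum of loop degrees of the relevant $A$, $B$, shifted by the power of $z$) the resulting series in $z$ is bounded above, i.e.\ lies in $(\cdots)(\!(z^{-1})\!)$, and each coefficient is again polynomial in $t_1, t_2$. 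I would make this precise by writing $\CY(I_{a,-1},z) = \sum_n I_{a,n} z^{-n-1}$, noting that $I_{a,n}$ with $n \geq 0$ annihilates the vacuum so only finitely many terms contribute to $\CY(A,z)B$ for any fixed $A$ built from finitely many $I_{b,-m}$'s, and tracking the grading: an element of loop degree $-k$ in $S(\hbar\fg^*[t])$ pairs nontrivially only with elements of $U(\fg_{<0})$ of the complementary degree.

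The main obstacle I anticipate is the careful bookkeeping in part (a): one must verify simultaneously that (i) only finitely many \emph{positive} powers of $z$ appear — this is where the direction of the grading (and the choice of $z^{-1}$-expansion rather than $z$-expansion) matters, and the naive estimate from $\CY \colon V_0(\fg) \to \End(V_0(\fg))[\![z,z^{-1}]\!]$ a priori allows both directions — and (ii) the coefficients are genuinely \emph{polynomial} in $t$, not merely formal power series, which again rests on $f$ being a polynomial and on each $z$-coefficient of $\CY(A,z)B$ living in a single finite-dimensional homogeneous component. Part (b) by contrast is essentially formal once Proposition \ref{Cor:coYcommut} is invoked: it is just the transpose of a coalgebra morphism being an algebra morphism, with nondegeneracy of the pairing from Lemma \ref{Lem:group}.2 upgrading the pairing-level identity to an honest identity of elements. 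I would also remark that the $\hbar$-adic continuity is automatic since everything is $\C[\![\hbar]\!]$-linear and the $z$-coefficients stabilize $\hbar$-adically by the same grading argument (recall $\deg_\epsilon \hbar = -2$), so no separate convergence check is needed.
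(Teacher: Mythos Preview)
Your proposal is correct and follows essentially the same route as the paper: the loop-grading bookkeeping you describe in part (a) is exactly the paper's argument (they pick $f$ homogeneous of degree $m$, observe $\CY_\ell(A)B$ has degree $-n-p-\ell$, and conclude the pairing vanishes for $\ell>m$ and is supported on finitely many bidegrees for each fixed $\ell$), and your part (b) is precisely the paper's one-line appeal to Proposition~\ref{Cor:coYcommut}, which you have spelled out in more detail. The only slip is a sign in your informal remark ``an element of loop degree $-k$ in $S(\hbar\fg^*[t])$'': elements there have \emph{non-negative} loop degree, but this does not affect your argument.
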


\begin{proof}
    We must first show that $\CY^\vee$ is well-defined. We can write $\CY(A, z)B=\sum_n \CY_n(A)B z^n$, where $A \otimes B \mapsto \CY_n(A)B\) defines a linear map \(U(\fg_{<0})\otimes U(\fg_{<0}) \to U(\fg_{<0})\). The \(\C[\![\hbar]\!]\)-linear extension of this map has a well-defined dual $\CY_n^\vee \colon S(\hbar\fg^*\lbb t\rbb) \to (S(\hbar\fg^*[\![t]\!])\otimes S(\hbar\fg^*[\![t]\!]))\lbb\hbar\rbb$ according to Lemma \ref{Lem:group}. The element $\CY^\vee$ is simply given by $\CY^\vee=\sum_{n\in\Z}  \CY^\vee_nz^n$. 
    
    We use the loop grading to show that $\CY^\vee$ is valued in $(S(\hbar\fg^*[t])\otimes S(\hbar\fg^*[t]))\lbb\hbar\rbb\lpp z^{-1}\rpp$. Let $f$ have loop degree $m>0$ and $A, B$ have loop degree $-n, -p$ respectively, where $n,p>0$.  For every $\ell$ we have
    \begin{equation}\label{eq:pairing_CYvee}
        \lag \CY^\vee(f)_\ell, A\otimes B\rag=\lag f, \CY_\ell(A)B\rag,    
    \end{equation}
    where $\CY_\ell(A)B$ is in loop degree degree $-n-p-\ell$. Note that for $p,n$ large enough, $\ell+n+p$ will be larger than $m$ and therefore the pairing \eqref{eq:pairing_CYvee} is zero. This implies that $\CY^\vee(f)$ is valued in $(S(\hbar\fg^*[t])\otimes S(\hbar\fg^*[t]))\lbb\hbar\rbb\lbb z,z^{-1}\rbb$. We are left to show that $ \CY^\vee(f)_\ell=0$ for $\ell$ large enough. This is true because $A, B$ are both negatively graded, so when $\ell > m$, $n+\ell+p>m$ no matter what \(n,p\) are. Therefore, the pairing \eqref{eq:pairing_CYvee} is identically zero. It is clear that $\CY^\vee$ is an algebra homomorphism, thanks to Proposition \ref{Cor:coYcommut}.
\end{proof}

 The restriction of $\CY^\vee$ to $\hbar\fg^*[t]$ involves at least one order of $\hbar$, so we can extend \(\CY^\vee\) to a coproduct $\Delta_z\colon S(\fg^*[t])\lbb\hbar\rbb \to (S(\fg^*[t])\otimes S(\fg^*[t]))\lbb\hbar\rbb\lpp z^{-1}\rpp$ by writing
 \begin{equation}\label{eq:Delta_z_definition}
     \Delta_z(f) \coloneqq \hbar^{-1}\CY^\vee(\hbar f)
 \end{equation}
for \(f \in \fg^*[t]\).

\begin{Lem}\label{Lem:weakassoc}
    The algebra homomorphism \(\Delta_z\) defined in \eqref{eq:Delta_z_definition} satisfies the following associativity condition: for any $A,B,C \in V_0(\fg)$ and \(f \in S(\fg^*[t])[\![\hbar]\!]\)
    \be
        \langle (\Delta_{z_1}\otimes 1)\Delta_{z_2}(f), A\otimes B\otimes C\rangle =\langle  (1\otimes \Delta_{z_2}) \Delta_{z_1+z_2}(f), A\otimes B\otimes C\rangle
    \ee
    holds, where the equality means both sides are the expansion of the same meromorphic function in two different domain of convergence.

\end{Lem}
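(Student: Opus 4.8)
### Proof strategy for Lemma~\ref{Lem:weakassoc}

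The plan is to reduce the associativity of $\Delta_z$ to the associativity of the intertwining operator $\CY$ of the vertex algebra $V_0(\fg)$, via the duality between $U(\fg_{<0})=U(\fg(\gamma_\fg))$ and $S(\hbar\fg^*(\CO))[\![\hbar]\!]$ established in Lemma~\ref{Lem:group}. The key point is that $\Delta_z$ was \emph{defined} (through $\CY^\vee$) as the linear dual of $\CY$, so coassociativity of $\Delta_z$ should be exactly dual to the standard associativity identity for vertex algebras,
\be
\CY(A,z_1)\CY(B,z_2)C = \CY(\CY(A,z_1-z_2)B,z_2)C,
\ee
which holds as an equality of expansions of a common rational (in our level-$0$ commutative-enough case, actually rational) $V_0(\fg)$-valued function in the regions $|z_1|>|z_2|$ and $|z_1-z_2|<|z_2|$ respectively (see \cite{frenkel2004vertex}). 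I would first handle the case $f\in S(\hbar\fg^*[t])[\![\hbar]\!]$ directly, and then the general case $f\in\fg^*[t]$ follows by the rescaling convention $\Delta_z(f)=\hbar^{-1}\CY^\vee(\hbar f)$ from \eqref{eq:Delta_z_definition}, exactly as in the passage from \eqref{eq:delta_BCH}-type formulas to their $\hbar$-rescaled versions elsewhere in the paper.

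The concrete steps are as follows. First, I would unwind the left-hand side: by definition of $\Delta_z$ as the dual of $\CY$ and by the fact that $\CY^\vee$ is an algebra homomorphism (so tensor powers of it are computed by pairing against products), one has
\be
\langle(\Delta_{z_1}\otimes 1)\Delta_{z_2}(f),A\otimes B\otimes C\rangle = \langle f,\CY(\CY(A,z_1)B,z_2)C\rangle,
\ee
where on the right $\CY(A,z_1)B\in U(\fg_{<0})[\![\hbar]\!]\lpp z_1\rpp$ and the outer $\CY$ is applied coefficient-wise in $z_1$. Here I use that $\langle\Delta_{z_1}(g),A\otimes B\rangle=\langle g,\CY(A,z_1)B\rangle$ together with the fact, proven in the preceding lemma, that $\CY^\vee$ respects multiplication, so that nesting is compatible with the pairing. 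Second, I would similarly unwind the right-hand side to obtain
\be
\langle(1\otimes\Delta_{z_2})\Delta_{z_1+z_2}(f),A\otimes B\otimes C\rangle = \langle f,\CY(A,z_1+z_2)\CY(B,z_2)C\rangle.
\ee
Third, I invoke the vertex-algebra associativity identity for $V_0(\fg)$ (with the shift $z_1\mapsto z_1+z_2$, so that $\CY(A,z_1+z_2)\CY(B,z_2)C$ and $\CY(\CY(A,z_1)B,z_2)C$ are expansions of the same $V_0(\fg)$-valued rational function in $|z_1+z_2|>|z_2|$ and $|z_1|<|z_2|$ respectively), pair with $f$, and conclude. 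Because $f\in S(\hbar\fg^*[t])[\![\hbar]\!]$ has fixed loop degree, the pairing $\langle f,-\rangle$ against the $V_0(\fg)$-valued rational function picks out a genuine scalar-valued rational function of $z_1,z_2$ by the grading/finiteness argument already used in the proof that $\CY^\vee$ lands in $(\cdots)\lpp z^{-1}\rpp$; this is what legitimizes the phrase ``expansion of the same meromorphic function in two different domains.''

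The main obstacle is not any single hard computation but rather bookkeeping about \emph{where} the various series live and in which region each identity is an honest expansion. Specifically: (i) $\CY(A,z_1)B$ is a Laurent series in $z_1$ truncated below (an element of $U(\fg_{<0})\lpp z_1\rpp$), so applying the outer $\CY(-,z_2)$ to it and then pairing with $f$ must be checked to produce only finitely many negative powers of each variable after the finiteness-by-loop-grading argument — this is the analog of the estimate $\ell+n+p>m$ in the previous lemma and should be quoted rather than redone. (ii) One must be careful that the algebra-homomorphism property of $\CY^\vee$, combined with Lemma~\ref{Lem:group}.2 (faithfulness of the pairing on group-likes / on symmetric powers), genuinely lets one pass from the paired identity back to an identity of the coproducts; this is the same ``evaluate on $e^{\hbar y}$ and use non-degeneracy'' mechanism used throughout Section~\ref{subsec:quantHopf}, now with three tensor factors and three formal parameters. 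Once these domain-of-convergence and non-degeneracy points are set up, the statement is a direct transcription of vertex-algebra associativity, so I would present it compactly, citing \cite[Section~3]{frenkel2004vertex} for the associativity of $\CY$ and Lemma~\ref{Lem:group} for the duality.
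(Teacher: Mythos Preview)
Your proposal is correct and follows essentially the same route as the paper: both unwind the two sides of the paired identity to $\langle f,\CY(\CY(A,z_1)B,z_2)C\rangle$ and $\langle f,\CY(A,z_1+z_2)\CY(B,z_2)C\rangle$ respectively, then invoke the associativity/locality of $V_0(\fg)$ from \cite[Chapter~3]{frenkel2004vertex} to conclude that these are expansions of a common element of $\C[z_1^\pm,z_2^\pm,(z_1+z_2)^{-1}][\![\hbar]\!]$. Your additional bookkeeping about domains and the rescaling step for $f\in\fg^*[t]$ is sound but more detailed than the paper bothers with; note also that point~(ii) about recovering an identity of coproducts from the paired identity is unnecessary here, since the statement itself is already formulated at the level of pairings.
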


\begin{proof}
    By definition, we have
    \(
\lag (\Delta_{z_1}\otimes 1)\Delta_{z_2}(f), A\otimes B\otimes C\rag=\lag f, \CY(\CY(A, z_1)B, z_2)C\rag
    \). By the choice of $f$ and $A, B, C$, this is an element in $\C\lbb\hbar\rbb [z_2, z_2^{-1}]\lpp z_1\rpp$. 
    Locality of the vertex algebra \(V_0(\fg)\) implies that
    \be
        \begin{split}
            \lag f, \CY(\CY(A, z_1)B, z_2)C\rag&=\lag f, \CY(A,z_1+z_2)\CY(B, z_2)C\rag \\&= \lag (1\otimes \Delta_{z_2}) \Delta_{z_1+z_2}(f), A\otimes B\otimes C\rag
        \end{split}
    \ee
    holds. From \cite[Chapter 3]{frenkel2004vertex}, we find that both sides are the expression of the same element (borrowing their notation)
    \be
 f_{A, B, C}\in   \C [z_1^\pm, z_2^\pm, \frac{1}{z_1+z_2}]\lbb\hbar\rbb
    \ee
    in the specific domains of the two sides. This completes the proof. 
\end{proof}

\begin{Rem}
    In the following, we will call the coassociativity condition of Lemma \ref{Lem:weakassoc} \textnormal{weak coassociativity}. It is precisely the dual of the weak associativity condition satisfied by vertex algebras. 
\end{Rem}

Define now
\be
\Delta_z (x)=\tau_z (x)\otimes 1+1\otimes x=e^{zT}(x)\otimes 1+1\otimes x,
\ee
for $x\in \fg[t]$ in order to obtain an linear map 
\be\label{eq:complete_Delta_z}
    \Delta_z \colon  Y^\circ_\hbar(\fd)\to ( Y^\circ_\hbar(\fd)\otimes_{\C\lbb\hbar\rbb}  Y^\circ_\hbar(\fd))\lpp z^{-1}\rpp.
\ee
under consideration of \eqref{eq:Delta_z_definition}.

\begin{Prop}
    The map \(\Delta_z\) from \eqref{eq:complete_Delta_z} is well-defined weakly coassociative algebra homomorphism. Moreover, it is \textnormal{weakly cocommutative} in the sense that \(
\Delta_{z}^{\textnormal{op}}=(\tau_z\otimes \tau_z) \Delta_{-z}
    \) holds.
\end{Prop}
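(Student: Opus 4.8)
The statement asserts three things about the map $\Delta_z$ from \eqref{eq:complete_Delta_z}: it is well-defined, it is a weakly coassociative algebra homomorphism, and it is weakly cocommutative. The first two are essentially already in hand. Well-definedness follows because on $S(\fg^*[t])[\![\hbar]\!]$ we defined $\Delta_z$ via $\CY^\vee$ and \eqref{eq:Delta_z_definition}, and the preceding lemma shows $\CY^\vee$ lands in $(S(\hbar\fg^*[t]) \otimes S(\hbar\fg^*[t]))[\![\hbar]\!]\lpp z^{-1}\rpp$, so dividing by $\hbar$ is legitimate; on $\fg[t]$ the formula $\Delta_z(x) = \tau_z(x) \otimes 1 + 1 \otimes x$ makes sense because $T$ acts nilpotently on $\fg[t]$ (Lemma \ref{Lem:nilT}), hence $\tau_z(x) = e^{zT}(x)$ is polynomial in $z$. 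The compatibility of these two prescriptions with the smash-product relations of $\CA^\circ_\hbar(\fd,\gamma) = Y^\circ_\hbar(\fd)$ — i.e.\ that $\Delta_z$ respects $[\fg[t], \fg^*[t]]$ — is what makes $\Delta_z$ an algebra homomorphism on all of $Y^\circ_\hbar(\fd)$; this I would verify by pairing $\Delta_z([x,f])$ against $A \otimes B$ for $A,B \in U(\fg_{<0})$ and using that $\CY(I_{a,n},z)$ is the generating series of the $\fg(\CK)$-action together with the $\fg(\CK)$-module structure on $V_0(\fg)$, exactly mirroring the computation in the proof of Theorem \ref{Prop:qHopf} that $[\Delta_{\rho,\hbar}(x), \Delta_{\rho,\hbar}(t)] = \Delta_{\rho,\hbar}([x,t])$, but now with the shift $\tau_z$ on the first leg. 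Weak coassociativity on $S(\fg^*[t])[\![\hbar]\!]$ is Lemma \ref{Lem:weakassoc}, and on the $\fg[t]$-part it is the elementary identity $(\Delta_{z_1} \otimes 1)(\tau_{z_2}(x) \otimes 1 \otimes 1 + \dots)$, which reduces to $e^{(z_1+z_2)T} = e^{z_1 T}e^{z_2 T}$ applied to $x$; combining the two over the algebra generators gives weak coassociativity in general.

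**The main step: weak cocommutativity.** The identity to prove is $\Delta_z^{\textnormal{op}} = (\tau_z \otimes \tau_z)\Delta_{-z}$. On $\fg[t]$ this is immediate: $\Delta_z^{\textnormal{op}}(x) = 1 \otimes \tau_z(x) + x \otimes 1$, while $(\tau_z \otimes \tau_z)\Delta_{-z}(x) = (\tau_z \otimes \tau_z)(\tau_{-z}(x) \otimes 1 + 1 \otimes x) = x \otimes 1 + \tau_z(x) \otimes 1\cdots$ — wait, one must be careful: $(\tau_z \otimes \tau_z)(\tau_{-z}(x)\otimes 1 + 1\otimes x) = \tau_z\tau_{-z}(x) \otimes \tau_z(1) + \tau_z(1) \otimes \tau_z(x) = x \otimes 1 + 1 \otimes \tau_z(x)$, which matches $\Delta_z^{\textnormal{op}}(x)$. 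Good. On $S(\fg^*[t])[\![\hbar]\!]$ the content is the skew-symmetry (commutativity) axiom of the vertex algebra $V_0(\fg)$: for $A, B \in V_0(\fg) \cong U(\fg_{<0})$ one has $\CY(A,z)B = e^{zT}\CY(B,-z)A$ (see \cite[Proposition 3.2.5]{frenkel2004vertex}). Dualizing: for $f \in S(\hbar\fg^*[t])[\![\hbar]\!]$,
$$\lag \CY^\vee(f), A \otimes B\rag = \lag f, \CY(A,z)B\rag = \lag f, e^{zT}\CY(B,-z)A\rag = \lag T^{-\textnormal{dual}}\!\cdots$$
more precisely $= \lag \tau_z^\vee f, \CY(B,-z)A\rag$ where $\tau_z^\vee$ is the transpose of $e^{zT}$ acting on $S(\fg^*[t])$, which is precisely $\tau_z$ on the dual side (the action of $T$ on $S(\fg^*(\CO))$ was defined by dualizing $T$ on $U(\fg(r))$). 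Thus $\lag \CY^\vee(f), A \otimes B\rag = \lag (\CY^\vee \circ \tau_z)(f)|_{z \to -z}, B \otimes A\rag$, i.e.\ $\CY^\vee(f) = (\tau \circ \CY^\vee)(\tau_z(f))|_{z\mapsto -z}$ after accounting for the flip $A \otimes B \leftrightarrow B \otimes A$ — but since $\tau_z$ here acts on the whole $f$ before coproduct, one needs the intertwining relation $\CY^\vee \tau_z = (\tau_z \otimes \tau_z)\CY^\vee$ which follows from $[T \otimes 1 + 1 \otimes T, \CY(A,z)] = \partial_z \CY(A,z)$ integrated, equivalently the translation covariance of $\CY$; chaining these yields $\Delta_z^{\textnormal{op}} = (\tau_z \otimes \tau_z)\Delta_{-z}$ on the symmetric-algebra part. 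Finally one checks the two halves glue: both sides of the claimed identity are algebra anti-homomorphisms... no — both $\Delta_z^{\textnormal{op}}$ and $(\tau_z\otimes\tau_z)\Delta_{-z}$ are algebra homomorphisms $Y^\circ_\hbar(\fd) \to (Y^\circ_\hbar(\fd) \otimes Y^\circ_\hbar(\fd))\lpp z^{-1}\rpp$ (the first because $\Delta_{-z}$ is and $\tau_z \otimes \tau_z$ is, the second likewise), so agreement on the generating sets $\fg[t]$ and $\fg^*[t]$ forces agreement everywhere.

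**Expected obstacle.** The delicate point is bookkeeping the direction of the dualization of the vertex-algebra skew-symmetry: one must carefully track that the transpose of $e^{zT} \colon U(\fg_{<0}) \to U(\fg_{<0})[\![z]\!]$ (appearing inside $\CY(A,z)B = e^{zT}\CY(B,-z)A$) is exactly the $\tau_z$ that we want on the $S(\fg^*[t])$-side, and that the substitution $z \mapsto -z$ and the tensor-flip $\tau$ are applied in the correct order — a sign or ordering slip here is the easiest way to produce a false identity. Relatedly, one must confirm that all manipulations survive the passage to the $\lpp z^{-1}\rpp$ completion and the extension from $S(\hbar\fg^*[t])$ to $S(\fg^*[t])$ via $\hbar^{-1}\CY^\vee(\hbar \cdot)$; since every term of $\CY^\vee$ on $\fg^*[t]$ carries a positive power of $\hbar$, this is harmless, but it should be stated. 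Once the dualized skew-symmetry identity is pinned down correctly, the rest is the routine "agree on generators, both sides are algebra maps" argument, so I would budget essentially all the effort on getting that one identity right.
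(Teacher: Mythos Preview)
Your proposal is correct and follows essentially the same route as the paper: the algebra-homomorphism check is done by pairing $\Delta_z([I_{a,n},f])$ against $A\otimes B\in V_0(\fg)^{\otimes 2}$ and invoking the vertex-algebra structure (the paper phrases this via the residue formula and locality rather than ``mirroring Theorem~\ref{Prop:qHopf}'', but the content is the same), weak coassociativity is inherited from Lemma~\ref{Lem:weakassoc} plus the trivial check on $\fg[t]$, and weak cocommutativity comes from dualizing vertex-algebra skew-symmetry together with the fact that $T$ is a coderivation for $\Delta_z$. Your flagged obstacle is exactly the one the paper addresses in the remark immediately following its proof: their $T$ differs from the standard vertex-algebra translation operator by a sign, which is why the skew-symmetry identity appears there as $\CY(B,z)A=e^{-zT}\CY(A,-z)B$ rather than the usual $e^{zT}$; keeping that convention straight is indeed the only delicate point.
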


\begin{proof}
    It is clear by now that \(\Delta_z\) is an algebra homomorphism on \(U(\fg[t])\) and \(S(\fg^*[t])\). Therefore, we just need to see that \(\Delta_z\) is compatible with the commutation relation of elements in $\fg[t]$ and $S(\fg^*[t])$ with each other. More precisely, we need to show that 
    \be
        \Delta_z([I_{a,n},f]) = [\Delta_z(I_{a,n}),\Delta_z(f)]
    \ee
    for all \(f \in \fg^*[t]\). To this end, we evaluate both sides with elements \(A,B \in V_0(\fg) = U(\fg_{<0})\). The left-hand side becomes:
    \be
        \begin{split}
            \lag \Delta_z[I_{a, n},f], A\otimes B\rag&= \lag [I_{a,n},f],\CY(A,z)B\rag =-\lag f, I_{a, n}\rhd \CY(A, z)B\rag\\&=-\mathrm{res}_{w=\infty}w^n\lag f, \CY(I_{a, -1}, w) \CY(A, z)B\rag.
        \end{split}
    \ee
    By residue formula and locality, the argument of \(-f\) can be re-written as:
    \be
    \begin{aligned}
        &\mathrm{res}_{w=\infty}w^n\CY(I_{a,-1}, w) \CY(A, z)B
        \\&=\mathrm{res}_{w=0} w^n \CY(A, z) \CY(I_{a,-1}, w)B+\mathrm{res}_{w=z} w^n \CY(\CY(I_{a,-1}, w-z)A, z)B\\ &=\CY(A, z)I_{a, n} B+ \sum_{m\leq n}\binom{n}{m} z^{n-m} \CY(I_{a, m}A, z) B.
    \end{aligned}
    \ee
     This coincides beautifully with $\lag [\Delta_z(I_{a, n}), \Delta_z(f)], A\otimes B\rag$, proving that \(\Delta_z\) is indeed an algebra homomorphism. 
    
    The weak coassociativity and cocommutativity on the generators of $\fg[t]$ is clear.  We are left to prove the weak cocommutativity for elements $f \in S(\fg^*[t])$. Evaluating again on $A,B\in V_0(\fg)$, we see that:
    \be
        \begin{split}
            \lag \Delta_{z}^{\textnormal{op}}(f), A\otimes B\rag&=\lag f, \CY(B, z)A\rag=\lag f, e^{-zT}\CY(A, -z)B\rag \\&=\lag \Delta_{-z}\tau_z(f), A\otimes B\rag 
            \\& = \lag (\tau_z \otimes \tau_z)\Delta_{-z},A \otimes B\rag.
        \end{split}
    \ee
    In the last expression we used that $T$ is clearly a coderivation for $\Delta_z$. This concludes the proof.
\end{proof}

\begin{Rem}
    Note that the minus sign in the proof above, used in the weak commutativity of the intertwining operators, is due to our definition of $T$. It differs from the usual definition in vertex algebras by a sign. 
\end{Rem}

\begin{Rem}
    Note that we need to define $\Delta_z$ on $ Y^\circ_\hbar(\fd)$ because otherwise $\Delta_z$ is valued in $(Y_\hbar(\fd)\otimes_{\C\lbb\hbar\rbb}Y_\hbar(\fd))\lbb z,z^{-1}\rbb$, which is not an algebra.
    
\end{Rem}

\begin{Rem}\label{Rem:dualVA}
    The loop-graded dual of $Y^\circ_\hbar(\fd)$ can be identified with \(V_0(\fg) \otimes S(t^{-1}\fg^*[t^{-1}]))[\![\hbar]\!]\)
    and in doing so $\Delta_z$ becomes the linear dual of the intertwining operator of this vertex algebra. Here, $\CV:= V_0(\fg) \otimes S(t^{-1}\fg^*[t^{-1}]) $ is the tensor product vertex algebra, where the vertex algebra structure on $S(t^{-1}\fg^*[ t^{-1}])$ is commutative.
\end{Rem}

Finally, we show that the structures above have well-defined evaluation at $\hbar=\xi\in \C$ and $z=s\in \C^\times$, if we consider smooth modules of $Y^\circ_\xi(\fd)$. 

\begin{Prop}
    Let $\xi\in \C$ and $s\in \C^\times$, then for two finite-dimensional smooth modules \(M,N\) of $Y^\circ_\xi(\fd)$, the coproduct $\Delta_z$ has a well-defined evaluation at $z=s$ on \(M \otimes N\).
\end{Prop}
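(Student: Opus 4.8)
The plan is to reduce the statement to a finiteness claim about the pairing that defines $\Delta_z$ and then exploit that finite-dimensional smooth modules are annihilated by $t^N\fd[t]$ for $N$ large. First I would recall from Remark \ref{Rem:dualVA} that $\Delta_z$ is dual to the intertwining operator $\CY$ of the vertex algebra $\CV = V_0(\fg) \otimes S(t^{-1}\fg^*[t^{-1}])$, and that by construction, for $f$ a fixed element of $Y^\circ_\xi(\fd)$ lying in a fixed loop degree, the coefficient $\langle \Delta_z(f), A \otimes B\rangle$ is a Laurent polynomial in $z^{-1}$ (not merely a formal series) once $A, B$ range over fixed loop degrees; this is exactly the content already proven in the Lemma constructing $\CY^\vee$, where the loop-grading argument shows $\CY^\vee(f)_\ell = 0$ for $|\ell|$ large. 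So on a single graded component the evaluation at $z = s$ is literally a finite sum and presents no issue.

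The substantive point is to pass from graded components to a genuine linear operator $M \otimes N \to M \otimes N$ of finite-dimensional spaces. Here is where smoothness enters: if $M$ is a finite-dimensional smooth $Y^\circ_\xi(\fd)$-module, then $\fg^*[t]$ acts nilpotently and in fact $t^{N}\fg^*[t]$ acts by zero for some $N = N(M)$, and likewise $\fg[t]$ acts through a finite-dimensional quotient (the generators $I_{a,n}$ for $n$ large act by zero up to the $G$-integrability hypothesis — more precisely the $t$-adic topology makes the action factor through $\fd[t]/t^N\fd[t]$ for $N$ large enough). Therefore, to evaluate $\Delta_z(a)$ on $m \otimes n \in M \otimes N$ for a generator $a \in \fd[t]$, only finitely many terms in the expansion of $\Delta_z(a)$ — namely those whose two tensor legs lie in the finitely many loop degrees that act nontrivially on $M$ and $N$ respectively — contribute. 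Combined with the previous paragraph, each such term is a polynomial in $z^{-1}$ and there are finitely many of them, so $\Delta_z(a)$ acts as an $\End(M \otimes N)$-valued Laurent polynomial in $z^{-1}$, which can be evaluated at $z = s$ for any $s$. Since $\Delta_z$ is an algebra homomorphism (by the preceding Proposition) and $Y^\circ_\xi(\fd)$ is generated by $\fd[t]$, the evaluation extends uniquely and multiplicatively to all of $Y^\circ_\xi(\fd)$, acting on the finite-dimensional space $M \otimes N$; one must only check consistency, i.e.\ that the relations of $Y^\circ_\xi(\fd)$ are respected after evaluation, which follows because the unevaluated $\Delta_z$ already respects them in $(Y^\circ_\hbar(\fd) \otimes Y^\circ_\hbar(\fd))\lpp z^{-1}\rpp$ and evaluation at $z = s$, $\hbar = \xi$ is a ring homomorphism on the subring of series that are actually Laurent polynomials in $z^{-1}$ with coefficients acting locally finitely.

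The main obstacle I anticipate is not any single hard estimate but rather the bookkeeping needed to make precise the claim that "only finitely many terms act nontrivially": one has to argue that for a given $a$ and given $M, N$, there is a uniform bound on the loop degrees of the tensor legs of $\Delta_z(a)$ that can act nonzero, and that within each such bidegree the $z^{-1}$-expansion truncates — the first bound comes from the smoothness of $M$ and $N$ (the annihilator $t^N\fd[t]$), the second from the grading lemma already established for $\CY^\vee$. A secondary subtlety is the case $s = 0$ being excluded: one should note that $\Delta_z(f)$ for $f \in \fg^*[t]$ genuinely has poles at $z = 0$ coming from the $\CY$-OPE (the residue-at-$w=z$ terms in the computation of $\Delta_z([I_{a,n},f])$ produce negative powers of $z$), which is why the statement restricts to $s \in \C^\times$; I would remark on this explicitly to explain the hypothesis. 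Finally I would observe that the resulting evaluated operator agrees, as a rational function of $s$, with the meromorphic function whose Taylor expansion at $z = \infty$ was asserted in part (3) of the earlier Proposition on $\Delta_z$, tying the two statements together.
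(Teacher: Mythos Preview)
Your proposal is correct and takes essentially the same approach as the paper: both arguments use the loop grading of $\Delta_z$ together with smoothness of $M,N$ to conclude that $\Delta_z(a)$ acts on $M\otimes N$ as a Laurent polynomial in $z$, hence can be evaluated at $z=s\in\C^\times$. The paper is terser, noting that for $x\in\fg[t]$ the formula $\Delta_z(x)=\tau_z(x)\otimes 1+1\otimes x$ is already polynomial in $z$ (so smoothness is only needed for the $\fg^*[t]$ generators), and then observing directly that the coefficient of $z^m$ in $\Delta_z(I^a_n)$ has loop degree $n+1-m$, which is bounded above by the Laurent condition and acts trivially once large by smoothness; your version packages the same two bounds slightly differently and adds remarks on consistency of relations and the $s=0$ exclusion, but the substance is identical.
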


\begin{proof}
    It is another simple consequence of loop grading. The statement is clear for the generators $\fg[t]$, and therefore we only need to consider $\fg^*[t]$. Note that the VOA $V_0(\fg)$ is graded by the loop grading and that $\CY$ respects this grading, and therefore $\Delta_z$ respects the loop grading as well. However, since $V_0(\fg)$ is graded in negative degrees with finite-dimensional graded pieces, its dual $V_0(\fg)^*=S(\hbar \fg^*(\CO))$ is positively topologically graded with finite-dimensional graded pieces. In particular, for each $I^a_n$, the coproduct $\Delta_z(I^a_n)$ is of the form:
    \be
\Delta_z(I^a_n)=\sum_{m} z^m f(I^b_k\otimes 1, 1\otimes I^c_j) 
    \ee
    where $f(I^b_k\otimes 1, 1\otimes I^c_j)$ is an element in degree $n+1-m$. Moreover, it is zero for $m$ large enough, due to the fact that the OPE is a Laurent series. For $m$ very small, then $n+1-m$ is very large, and therefore $f(I^b_k\otimes 1, 1\otimes I^c_j)$ acts trivially eventually on smooth modules. We see that the action of $\Delta_z(I^a_n)$ on \(M \otimes N\) is a polynomial of $z$ and consequently is well-defined when evaluated at $z=s\in \C^\times$. 
    
\end{proof}

\begin{Rem}
    Unfortunately, \(\Delta_z\) \textbf{does not} define a meromorphic tensor structure on the category of modules of $ Y^\circ_\hbar(\fd)$. This is because we can only take the tensor products of smooth modules, but the results are not smooth anymore.
    
\end{Rem}

\subsection{Existence and uniqueness of a meromorphic twisting matrix}\label{subsec:MeroR}

We have seen that the algebra $ Y^\circ_\hbar(\fd)$ has two coproducts
\be
\begin{split}
&\Delta_{\hbar, z} = (\tau_z \otimes 1)\Delta_{\hbar}:  Y^\circ_\hbar(\fd)\to ( Y^\circ_\hbar(\fd)\otimes  Y^\circ_\hbar(\fd))[z];
\\&\Delta_z:  Y^\circ_\hbar(\fd)\to ( Y^\circ_\hbar(\fd)\otimes  Y^\circ_\hbar(\fd))\lpp z^{-1}\rpp.    
\end{split}
\ee
essentially defined by identifying $\overline{S}(\hbar\fg^*\lp\CO\rp)$ with $U(\fg_{<0})^*$ and $V_0(\fg)^*$ respectively. In this section, we show that there is a twisting matrix rotating these two coproducts into each other. More precisely, we show that there exists an element $R_s(z)$ such that
\be
R_s(z)^{-1}\Delta_{\hbar,z}(a) R_s(z)=\Delta_z(a)
\ee
holds for all \(a \in Y^\circ_\hbar(\fd)\). To do so, let us first collect some easy identities involving \(\CY\).

\begin{Lem}\label{lem:identitites_for_Y}
    For $A=\sum_{n = 0}^k c^a_n \cdot I_{a,-n-1}\in \fg_{<0}$ the following identities hold:
    \begin{enumerate}
        \item For any \(B \in U(\fg_{< 0})\) we have:
    \be
    \CY(A, z)B=e^{-zT}(A)B +\sum_{i,j=0}^\infty (-1)^i \binom{i+j}{i}\frac{c^a_iI_{a,j}}{z^{i+j+1}}\rhd B. 
    \ee
    Moreover, the first term can be calculated using:
    \be
    e^{-zT}(A)=\sum_{n = 0}^kc^a_n\sum_{m= n}^\infty\binom{m}{n} z^{m-n}I_{a,-m-1}. 
    \ee

    \item Writing $\CY(A,z)=A(z)_{\textnormal{reg}}+A(z)_{\textnormal{sing}}$ for the decomposition of \(\CY(A,z) = A(z)\) into its regular part $A(z)_{\textnormal{reg}}$ and its singular part $A(z)_{\textnormal{sing}}$ we have
    \be
:\CY(A^n,z):=:\CY(A,z)^n:=\sum_{m = 0}^n \binom{n}{m} A(z)_{\textnormal{reg}}^m A(z)_{\textnormal{sing}}^{n-m}.
    \ee
    for every \(n > 0\).
    \end{enumerate}
\end{Lem}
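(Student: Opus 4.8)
The plan is to derive both identities from the standard structure of the vacuum vertex algebra $V_0(\fg)$: the mode expansion $\CY(I_{a,-1},z) = \sum_{n\in\Z}I_{a,n}z^{-n-1}$, the reconstruction formula $\CY(a_{(-1)}b,z) = {:}\CY(a,z)\CY(b,z){:}$, and the description of the $\fg(\CK)$-action on $V_0(\fg) = \Ind_{\fg(\CO)}^{\fg(\CK)}\C$. Under the identification $V_0(\fg)\cong U(\fg_{<0})$, the negative modes $I_{a,-m}$ ($m\ge 1$) act by left multiplication and the non-negative modes $I_{a,j}$ ($j\ge 0$, so $I_{a,j}\in\fg(\CO)$) act through the Hopf action $\rhd$ of Section~\ref{subsubsec:crossprod}; I will also use that our $T$ equals minus the usual vertex-algebra translation operator $L_{-1}$ on $V_0(\fg)$, as recorded after the previous proposition, so that $e^{-zT} = e^{zL_{-1}}$ there.

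\textbf{Item 1.} By linearity in the state it suffices to take $A = I_{a,-i-1}$. First I would use $I_{a,-i-1} = \tfrac1{i!}L_{-1}^iI_{a,-1}$ together with $\CY(L_{-1}B,z) = \partial_z\CY(B,z)$ to get $\CY(I_{a,-i-1},z) = \tfrac1{i!}\partial_z^i\CY(I_{a,-1},z)$. Splitting $\CY(I_{a,-1},z)$ into its regular part $\sum_{m\ge0}I_{a,-m-1}z^m$ (acting by left multiplication) and its singular part $\sum_{j\ge0}I_{a,j}z^{-j-1}$ (acting by $\rhd$) and applying $\tfrac1{i!}\partial_z^i$, the regular part becomes $\sum_{m\ge i}\binom mi z^{m-i}I_{a,-m-1}$, which I would identify with $e^{-zT}(I_{a,-i-1}) = e^{zL_{-1}}(I_{a,-i-1})$ — this also gives the asserted closed form, obtained equivalently by exponentiating $-T(I_{a,-m-1}) = (m+1)I_{a,-m-2}$ — while the singular part becomes $\sum_{j\ge0}(-1)^i\binom{i+j}{i}z^{-i-j-1}I_{a,j}$. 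Applying both to $B$ and summing against the $c^a_n$ yields the stated formula.

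\textbf{Item 2.} Reading off the $z^0$-coefficient in Item 1 gives $A_{(-1)}B = AB$ for every $B\in U(\fg_{<0})$, hence the iterated state $A_{(-1)}^n 1$ coincides with the $n$-th power $A^n$ computed in $U(\fg_{<0})$. Applying the reconstruction formula $n-1$ times then gives $\CY(A^n,z) = {:}\CY(A,z)^n{:}$. Finally I would unwind the definition of the iterated normal-ordered product of $\CY(A,z) = A(z)_{\mathrm{reg}} + A(z)_{\mathrm{sing}}$ — each regular factor is placed to the left of each singular factor — which collapses it to $\sum_{m=0}^n\binom nm A(z)_{\mathrm{reg}}^m A(z)_{\mathrm{sing}}^{n-m}$, the binomial coefficient counting the $\binom nm$ choices of which of the $n$ factors contribute their regular part. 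No commutativity of the factors is needed, since the ordering is imposed by the normal-ordering prescription itself; this is what makes the identity insensitive to $\fg_{<0}$ being non-abelian.

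\textbf{Main obstacle.} None of this is deep. The only points requiring genuine care are the sign conventions — $T$ versus $L_{-1}$, hence $e^{-zT}$ versus $e^{zL_{-1}}$ — and the one structural input that the non-negative modes of $\CY(I_{a,-1},z)$ implement $\rhd$ under $V_0(\fg)\cong U(\fg_{<0})$, which I would double-check directly from $V_0(\fg) = \Ind_{\fg(\CO)}^{\fg(\CK)}\C$ and the matched-pair description in Section~\ref{subsubsec:crossprod}. Everything else is bookkeeping with binomial coefficients.
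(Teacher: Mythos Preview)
Your proposal is correct and follows essentially the same route as the paper. For Item~1 the paper also reduces to $\CY(I_{a,-n-1},z)=\tfrac{1}{n!}\partial_z^n\CY(I_{a,-1},z)$ and separates the regular and singular parts exactly as you do; for Item~2 the paper proves the binomial expansion by induction on $n$ using the recursive definition $:\!A(z)^n\!:=A(z)_{\textnormal{reg}}\,{:}A(z)^{n-1}{:}+{:}A(z)^{n-1}{:}\,A(z)_{\textnormal{sing}}$ together with Pascal's rule, which is the rigorous version of your counting argument.
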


\begin{proof}
    By definition of $\CY$, we have:
    \be
\CY(A, z)=\sum_{n = 0}^k c^a_n \CY(I_{a,-n-1}, z)=\sum_{n = 0}^k c^a_n \frac{\pd_z^{n}}{n!} \CY(I_{a,-1}, z).
    \ee
The regular part of this is equal to
\be
\sum_{n = 0}^kc^a_n\sum_{m= n}^\infty\binom{m}{n} z^{m-n}I_{a,-m-1}B=\sum_{n = 0}^k c^a_ne^{-zT}(I_{a,-n})B,
\ee
where as the singular part is given by
\be
\sum(-1)^n c^a_n \sum_{m\geq 0} \binom{m+n}{m} \frac{I_{a,m}\rhd B}{z^{m+n+1}}.
\ee
This completes the proof of 1. 

Let us prove 2.\ using induction. Suppose the claim is true for $n-1$. Using
    \be
    \begin{split}
        :\CY(A,z)^n:\,&=\,:\CY (A,z) \CY (A,z)^{n-1}:
        \\&=A(z)_{\textnormal{reg}}:\CY (A,z)^{n-1}:+:\CY(A,z)^{n-1}:A(z)_{\textnormal{sing}}        
    \end{split}
    \ee
and the induction assumption, we see that
\be
\begin{split}
    &\sum_{m = 0}^{n-1} \binom{n-1}{m} X(z)_{\textnormal{reg}}^{m+1} X(z)_{\textnormal{sing}}^{n-1-m}+\sum_{m = 0}^{n-1} \binom{n-1}{m} X(z)_{\textnormal{reg}}^{m} X(z)_{\textnormal{sing}}^{n-m}\\&=\sum_{m = 0}^{n} \binom{n}{m} X(z)_{\textnormal{reg}}^m X(z)_{\textnormal{sing}}^{n-m},
\end{split}
\ee
by virtue of the binomial coefficient formulas. 
\end{proof}

Let us now consider the following elements:
\be\label{eq:rsingreg}
\begin{aligned}
    & r_{\textnormal{sing}}(z):=\hbar\sum_{i,j = 0}^\infty(-1)^i\binom{i+j}{i}\frac{I^a_i\otimes I_{a,j}}{z^{i+j+1}}\in ( Y^\circ_\hbar(\fd)\otimes_{\C\lbb\hbar\rbb} U(\fg[t])\lbb\hbar\rbb) \lbb z^{-1}\rbb,\\
    & r_{\textnormal{reg}}(z):=\hbar \sum_{j = 0}^\infty\sum_{i=0}^j \binom{j}{i} z^{j-i}I^a_i\otimes I_{a,-j-1}\in (Y_\hbar(\fd)\otimes_{\C\lbb\hbar\rbb} U(\fg_{<0})\lbb\hbar\rbb)\lbb z\rbb. 
\end{aligned}
\ee
Note that both of them can be written effectively as $\hbar \frac{I^a\otimes I_a}{t_2-z-t_1}$. More precisely, \(r_{\textnormal{sing}}\) is the expansion of this function in $z = \infty$ whereas \(r_{\textnormal{reg}}\) is the expansion in $z = 0$. Consider $e^{r_{\textnormal{reg}}(z)}, e^{r_{\textnormal{sing}}(z)}$, where the multiplication is taken in the corresponding algebras from \eqref{eq:rsingreg}. We can then naturally view both of these as elements in the vector space $(S(\fg^*\lbb t\rbb)\otimes U(\fg(\CK)))\lbb\hbar\rbb\lbb z, z^{-1}\rbb$. Therefore, both give rise to elements in
\be
\Hom_\C \lp V_0(\fg), U(\fg(\CK))\lbb z,z^{-1 }\rbb\rp,
\ee
such that $\lag r_{\textnormal{sing}}(z), I_{a,-n}\rag = I_{a,-n}(z)_{\textnormal{sing}}$ and $\lag r_{\textnormal{reg}}(z), I_{a,-n}\rag =  I_{a,-n}(z)_{\textnormal{reg}}$ for all \(n \ge 1\). Similarly, we can treat the intertwining map $\CY$ as an element in the above vector space. From the above lemma, we deduce:

\begin{Prop}\label{Prop:YR}
 Let $M$ be any smooth module of $\fg(\CK)$. Treat $\CY, e^{r_{\textnormal{reg}}(z)}$ and $e^{r_{\textnormal{sing}}(z)}$ as elements in the vector space:
 \be
(S(\fg^*\lbb t\rbb)\otimes \End(M))\lbb\hbar\rbb\lbb z,z^{-1}\rbb.
 \ee
 Then $e^{r_{\textnormal{reg}}(z)}$ and $e^{r_{\textnormal{sing}}(z)}$ have a well-defined multiplication in this space, and:
 \be\label{eq:Y_and_rmatrices}
\CY= e^{r_{\textnormal{reg}}(z)}\cdot e^{r_{\textnormal{sing}}(z)}.
 \ee
Moreover, the following identity in $(Y_\hbar(\fd) \otimes_{\C[\![\hbar]\!]} \End(M)[\![\hbar]\!])\lbb z^\pm\rbb$ holds
\be\label{eq:[xR]}
[\Delta_z(x),e^{r_{\textnormal{reg}}(z)} e^{r_{\textnormal{sing}}(z)}]=0,
\ee
for any \(x \in \fg[t]\), which is well-defined since $\Delta_z(x)$ is a polynomial in \(z\).
\end{Prop}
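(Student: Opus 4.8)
The plan is to derive both statements from Lemmas~\ref{lem:identitites_for_Y} and~\ref{Lem:group} together with the residue/locality manipulations already used to show that $\Delta_z$ is an algebra homomorphism. Recall from Lemma~\ref{Lem:group}.1 that, under $V_0(\fg)\cong U(\fg_{<0})$, the continuous loop-graded $\C\lbb\hbar\rbb$-linear dual of $V_0(\fg)$ is $S(\hbar\fg^*(\CO))\lbb\hbar\rbb$, so $\CY$, $e^{r_{\textnormal{reg}}(z)}$ and $e^{r_{\textnormal{sing}}(z)}$, viewed as $\C\lbb\hbar\rbb$-linear maps $V_0(\fg)\to\End(M)\lbb z,z^{-1}\rbb$, are really the same data as elements of $(S(\fg^*\lbb t\rbb)\otimes\End(M))\lbb\hbar\rbb\lbb z,z^{-1}\rbb$. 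By Lemma~\ref{Lem:group}.2 it suffices to verify every identity after pairing the first tensor slot against all group-like elements $e^{\hbar y}$, $y\in\fg_{<0}$, of $U(\fg_{<0})\lbb\hbar\rbb$; and since $\Delta(e^{\hbar y})=e^{\hbar y}\otimes e^{\hbar y}$, Lemma~\ref{Lem:group}.3 shows that such a pairing turns a product of tensors into the product of the two evaluations, and that $\lag e^{t},e^{\hbar y}\rag=e^{\lag t,e^{\hbar y}\rag}$ for any tensor $t$.

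For the identity $\CY=e^{r_{\textnormal{reg}}(z)}e^{r_{\textnormal{sing}}(z)}$ I would first check that the two exponentials and their product lie in the stated space: $r_{\textnormal{reg}}(z)\in\hbar(S(\fg^*[t])\otimes U(\fg_{<0}))\lbb z\rbb$ and $r_{\textnormal{sing}}(z)\in\hbar(S(\fg^*[t])\otimes U(\fg[t]))\lbb z^{-1}\rbb$, so the exponentials converge $\hbar$-adically, pairing the first slot against a fixed PBW monomial of $V_0(\fg)$ kills all but finitely many terms by loop degree, and the resulting composition of an element of $\End(M)\lbb z\rbb$ with one of $\End(M)\lbb z^{-1}\rbb$ is controlled since on every vector of the smooth module $M$ only finitely many powers of $z^{-1}$ survive from $e^{r_{\textnormal{sing}}(z)}$. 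Next, writing $e^{r_{\textnormal{reg}}(z)}=(1\otimes e^{-zT})(\CE_\gamma)$ and invoking Lemma~\ref{Lem:group}.4 gives $\lag e^{r_{\textnormal{reg}}(z)},e^{\hbar y}\rag=e^{-zT}(e^{\hbar y})=e^{\hbar\,y(z)_{\textnormal{reg}}}$, the last equality being Lemma~\ref{lem:identitites_for_Y}.1; and a direct evaluation of~\eqref{eq:rsingreg} yields $\lag e^{r_{\textnormal{sing}}(z)},e^{\hbar y}\rag=e^{\hbar\,y(z)_{\textnormal{sing}}}$ with $y(z)_{\textnormal{sing}}$ again as in Lemma~\ref{lem:identitites_for_Y}.1. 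On the other hand $\lag\CY,e^{\hbar y}\rag=\CY(e^{\hbar y},z)=\sum_{k\ge0}\tfrac{\hbar^k}{k!}\CY(y^k,z)$, and Lemma~\ref{lem:identitites_for_Y}.2 gives $\CY(y^k,z)=\sum_{m=0}^{k}\binom{k}{m}y(z)_{\textnormal{reg}}^{m}y(z)_{\textnormal{sing}}^{k-m}$, so resumming the double series factorizes it as $e^{\hbar\,y(z)_{\textnormal{reg}}}\,e^{\hbar\,y(z)_{\textnormal{sing}}}$. Comparing the three evaluations and applying Lemma~\ref{Lem:group}.2--.3 gives $\CY=e^{r_{\textnormal{reg}}(z)}e^{r_{\textnormal{sing}}(z)}$.

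For the commutator identity I would use the first part to rewrite the claim as $[\Delta_z(x),\CY]=0$ in $(Y_\hbar(\fd)\otimes\End(M))\lbb z,z^{-1}\rbb$ for $x\in\fg[t]$, and by linearity take $x=I_{a,n}$. Writing $\CY=\sum_\alpha c_\alpha\otimes\CY_\alpha$ with $c_\alpha\in S(\fg^*[t])$ and $\CY_\alpha\in\End(M)$, and using $e^{zT}(I_{a,n})=\sum_{j=0}^{n}\binom{n}{j}z^{n-j}I_{a,j}$ (which follows from $T(I_{a,m})=mI_{a,m-1}$) together with $[I_{a,j},f]=-(f\circ I_{a,j})$ in $Y_\hbar(\fd)$ (established in the proof of Theorem~\ref{Prop:qHopf}), one obtains
\[
[\Delta_z(I_{a,n}),\CY]=\sum_\alpha c_\alpha\otimes[I_{a,n},\CY_\alpha]-\sum_{j=0}^{n}\binom{n}{j}z^{n-j}\sum_\alpha(c_\alpha\circ I_{a,j})\otimes\CY_\alpha,
\]
where in the first term $I_{a,n}$ acts on $M$. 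Pairing the first slot against $A\in V_0(\fg)$ and using $\lag c_\alpha\circ I_{a,j},A\rag=\lag c_\alpha,I_{a,j}\rhd A\rag$ from~\eqref{eq:defining_hopf_action}, this becomes
\[
[I_{a,n},\CY(A,z)]-\sum_{j=0}^{n}\binom{n}{j}z^{n-j}\,\CY(I_{a,j}\rhd A,z),
\]
which vanishes: it is exactly the $\fg(\CK)$-module commutation relation for the intertwining operator of $V_0(\fg)$, obtained from $\mathrm{res}_{w=\infty}w^n\,\CY(I_{a,-1},w)\CY(A,z)(-)=\CY(A,z)(I_{a,n}\rhd(-))+\sum_{m\le n}\binom{n}{m}z^{n-m}\CY(I_{a,m}\rhd A,z)(-)$ via the residue theorem and locality --- the very computation already used to prove that $\Delta_z$ is an algebra homomorphism. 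Since pairings against all $A\in V_0(\fg)$ separate, $[\Delta_z(I_{a,n}),\CY]=0$, and hence $[\Delta_z(x),e^{r_{\textnormal{reg}}(z)}e^{r_{\textnormal{sing}}(z)}]=0$.

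I expect the main obstacle to be bookkeeping rather than conceptual: keeping the $\hbar$-rescaling in $V_0(\fg)^*\cong S(\hbar\fg^*(\CO))$ consistent so that $e^{r_{\textnormal{reg}}(z)}$ and $e^{r_{\textnormal{sing}}(z)}$ pair with $e^{\hbar y}$ to \emph{exactly} $e^{\hbar\,y(z)_{\textnormal{reg}}}$ and $e^{\hbar\,y(z)_{\textnormal{sing}}}$ (the one-tensor analogue of $\lag\CE_r,e^{\hbar x}\rag=e^{\hbar x}$ from Lemma~\ref{Lem:group}.4), and carefully justifying that the formal product of a power series in $z$ with a power series in $z^{-1}$ lands in $\End(M)\lbb z,z^{-1}\rbb$ on smooth modules. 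By contrast, the two genuinely vertex-algebraic inputs --- the normal-ordering identity $\CY(y^k,z)=\,:\CY(y,z)^k:$ and the module commutation relation --- are already packaged in Lemma~\ref{lem:identitites_for_Y} and in the residue computation reused from the $\Delta_z$ argument, so they require no new work.
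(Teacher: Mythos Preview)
Your proposal is correct and follows essentially the same route as the paper: both evaluate the identity $\CY=e^{r_{\textnormal{reg}}(z)}e^{r_{\textnormal{sing}}(z)}$ on group-likes $e^{\hbar y}$ via Lemma~\ref{Lem:group} and Lemma~\ref{lem:identitites_for_Y}, and both reduce the commutator identity to the Borcherds-type relation $[x,\CY(A,z)]=\CY(e^{zT}(x)\rhd A,z)$ (the paper states this directly, you unpack it into the explicit sum over $j$). Your identification $e^{r_{\textnormal{reg}}(z)}=(1\otimes e^{-zT})\CE_\gamma$ differs cosmetically from the paper's $(\tau_z\otimes 1)\CE_\gamma$, but the two agree since $\langle\tau_z f,A\rangle=\langle f,e^{-zT}A\rangle$, so the subsequent evaluation is identical.
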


\begin{proof}

For each $k>0$ we can consider $e^{r_{\textnormal{reg}}(z)}$ and $e^{r_{\textnormal{sing}}(z)}$ as elements in:
\be
(S(\fg^*[t]/t^k\fg^*[t])\lbb\hbar\rbb\otimes_{\C\lbb\hbar\rbb} \End(M)\lbb\hbar\rbb)\lbb z,z^{-1}\rbb,
\ee
and $r_{\textnormal{sing}}(z)^n (1\otimes m)$ is a polynomial in $z^{-1}$ for all $m \in M$ and all $n>0$. Therefore \cite[Lemma 2.2.3]{frenkel2004vertex} applies and $e^{r_{\textnormal{reg}}}\cdot e^{r_{\textnormal{sing}}}(1\otimes m)$ is well-defined for any $k$ (since the multiplication is well-defined for any finite order of $\hbar$). Now taking a limit of $k$ we get the desired well-definiteness. 

To show the equality \eqref{eq:Y_and_rmatrices}, due to Lemma \ref{Lem:group}, we only need to evaluate both sides on elements of the form $A=e^{\hbar y}$ for \(y \in \fg_{<0}\). We have:
    \be
\lag e^{r_{\textnormal{sing}}(z)}, e^{\hbar y}\rag=\langle \sum_{n = 0}^\infty \frac{r_{\textnormal{sing}}(z)^n}{n!}, \sum_{n = 0}^\infty \frac{1}{n!} \hbar^ny^n\rangle=\sum_{n = 0}^\infty \frac{\hbar^n}{n!} \langle r_{\textnormal{sing}}(z), y\rangle^n=e^{\hbar y(z)_{\textnormal{sing}}},
\ee
and similarly \(\lag e^{r_{\textnormal{sing}}(z)}, e^{\hbar y}\rag = e^{\hbar y(z)_{\textnormal{reg}}}\). Consequently, 
    \be
e^{-zT}(e^{\hbar y})=e^{\hbar e^{-zT}(y)}=e^{\hbar y(z)_{\textnormal{reg}}}=\lag e^{r_{\textnormal{reg}}(z)}, e^{\hbar y}\rag.
    \ee
Moreover, the identity
\be
e^{\hbar y(z)_{\textnormal{reg}}}e^{\hbar y(z)_{\textnormal{sing}}} m=:\CY(e^{\hbar y}, z):m
\ee
holds for all \(m \in M\), due to Lemma \ref{lem:identitites_for_Y}.2. This completes the proof of \eqref{eq:Y_and_rmatrices}.

In order to prove \eqref{eq:[xR]}, note that for any $x\in \fg[t]$ and \(A \in V_0(\fg)\) we have:
\be
[x, \CY(A,z)]=\CY(e^{zT}(x)\rhd A, z).
\ee
The element $[1\otimes x, e^{r_{\textnormal{reg}}(z)} e^{r_{\textnormal{sing}}(z)}]$ is in $(S(\fg^*(\CO))\lbb\hbar\rbb\otimes_{\C\lbb\hbar\rbb} \End(M)\lbb\hbar\rbb)\lbb z,z^{-1}\rbb$ and can be identified with $[x, \CY(-, z)]$. Furthermore, the element $[e^{zT}(x)\otimes 1, e^{r_{\textnormal{reg}}(z)} e^{r_{\textnormal{sing}}(z)}]$ is identified with $-\CY(e^{zT}(x)\rhd -, z)$. This completes the proof.
\end{proof}

\begin{Rem}
    From the proof, we see that $e^{r_{\textnormal{reg}}(0)}=\CE_\gamma$ for Yang's $r$-matrix $\gamma$. Therefore, we have $e^{r_{\textnormal{reg}}(z)}=(\tau_z\otimes 1)\CE_\gamma$.  
    
\end{Rem}

Let us now define:
\be\label{eq:Rsmz}
\begin{aligned}
    &R_s(z):=e^{r_{\textnormal{sing}}(z)}\in ( Y^\circ_\hbar(\fd) \otimes_{\C\lbb\hbar \rbb}  Y^\circ_\hbar(\fd))\lbb z^{-1}\rbb;\\
    &\CE_\gamma(z):=(\tau_z\otimes 1) \CE_\gamma=e^{r_{\textnormal{reg}}(z)}\in ( Y^\circ_\hbar(\fd) \otimes_{\C\lbb\hbar \rbb} U(\fg_{<0})\lbb\hbar\rbb)\lbb z\rbb. 
\end{aligned}
\ee

\begin{Prop}
    The tensor series $R_s(z)$ satisfies \( (\tau_{z_1}\otimes \tau_{z_2})R_s(z_3)=R_s(z_3+z_1-z_2)\) and
    \be\label{eq:rotation_Deltahbar_Deltaz}
        R_s(z)^{-1}(\tau_z\otimes 1)\Delta_\hbar(a) R_s(z)=\Delta_z(a)
    \ee
    holds for all \(a \in Y^\circ_\hbar(\fd)\).
\end{Prop}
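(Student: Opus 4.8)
The plan is to prove the two assertions separately, the conjugation identity \eqref{eq:rotation_Deltahbar_Deltaz} being the substantial one. For the translation identity $(\tau_{z_1}\otimes\tau_{z_2})R_s(z_3)=R_s(z_3+z_1-z_2)$ I would use that $\tau_{z_1}\otimes\tau_{z_2}$ is a continuous $\C[\![\hbar]\!]$-algebra homomorphism, hence commutes with the exponential, so that it suffices to check $(\tau_{z_1}\otimes\tau_{z_2})r_{\textnormal{sing}}(z_3)=r_{\textnormal{sing}}(z_3+z_1-z_2)$. Recalling that $r_{\textnormal{sing}}(z)$ is the expansion at $z=\infty$ of $\hbar(I^a\otimes I_a)/(t_2-z-t_1)$ and that $\tau_{z_i}=e^{z_iT}$ acts on the $i$-th tensor factor by the shift $t_i\mapsto t_i+z_i$, applying $\tau_{z_1}\otimes\tau_{z_2}$ turns this rational function into $\hbar(I^a\otimes I_a)/(t_2-(z_3+z_1-z_2)-t_1)$; since the $\tau_{z_i}$ introduce only non-negative powers of $z_1,z_2$, a direct re-expansion of geometric series shows that the result is exactly the expansion at $z_3=\infty$ of the shifted function (with $z_1,z_2$ small), i.e.\ $r_{\textnormal{sing}}(z_3+z_1-z_2)$. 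This is a routine binomial computation.

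For \eqref{eq:rotation_Deltahbar_Deltaz} the structural observation is that both sides are continuous $\C[\![\hbar]\!]$-algebra homomorphisms $Y^\circ_\hbar(\fd)\to(Y^\circ_\hbar(\fd)\otimes_{\C[\![\hbar]\!]}Y^\circ_\hbar(\fd))\lpp z^{-1}\rpp$: the right-hand side $\Delta_z$ is one by the results of Section~\ref{subsec:quantVHopf}, and the left-hand side is the composite algebra homomorphism $(\tau_z\otimes 1)\Delta_\hbar$ (landing in $(Y^\circ_\hbar(\fd)\otimes Y^\circ_\hbar(\fd))[z]$, since $T$ is nilpotent on the generators) followed by conjugation by the invertible element $R_s(z)$. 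Since $Y^\circ_\hbar(\fd)$ is topologically generated over $\C[\![\hbar]\!]$ by $\fd[t]=\fg[t]\oplus\fg^*[t]$, it suffices to check the identity on these generators. On $\fg[t]$ this is immediate: by Theorem~\ref{Prop:qHopf} with $\rho=\gamma$ one has $\Delta_\hbar(x)=\CE_\gamma^{-1}(x\otimes 1+1\otimes x)\CE_\gamma$, and applying the algebra homomorphism $\tau_z\otimes 1$, together with $(\tau_z\otimes 1)\CE_\gamma=\CE_\gamma(z)=e^{r_{\textnormal{reg}}(z)}$ (the remark after Proposition~\ref{Prop:YR}), $(\tau_z\otimes 1)(1\otimes x)=1\otimes x$ and $\tau_z(x)\otimes 1+1\otimes x=\Delta_z(x)$, gives $(\tau_z\otimes 1)\Delta_\hbar(x)=\CE_\gamma(z)^{-1}\Delta_z(x)\CE_\gamma(z)$; conjugating by $R_s(z)$ then conjugates $\Delta_z(x)$ by $\CE_\gamma(z)R_s(z)=e^{r_{\textnormal{reg}}(z)}e^{r_{\textnormal{sing}}(z)}=\CY$, which commutes with $\Delta_z(x)$ by \eqref{eq:[xR]}, so the left-hand side of \eqref{eq:rotation_Deltahbar_Deltaz} equals $\Delta_z(x)$.

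For $x$ replaced by $f\in\fg^*[t]$ (or, more generally, $f\in S(\fg^*[t])$) I would argue by duality, pairing the two tensor factors against group-like elements $e^{\hbar y_1}\otimes e^{\hbar y_2}$ with $y_1,y_2\in\fg(\gamma)=\fg_{<0}$, as permitted by Lemma~\ref{Lem:group}. By definition of $\Delta_z$ the right-hand side pairs to $\langle f,\CY(e^{\hbar y_1},z)e^{\hbar y_2}\rangle$, which by Proposition~\ref{Prop:YR} equals $\langle f,\,e^{\hbar y_1(z)_{\textnormal{reg}}}(e^{\hbar y_1(z)_{\textnormal{sing}}}\rhd e^{\hbar y_2})\rangle$, where $\CY(A,z)=A(z)_{\textnormal{reg}}+A(z)_{\textnormal{sing}}$ is the splitting into regular and singular parts. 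For the left-hand side one uses that pairing the first tensor slot against a group-like is multiplicative, that $\tau_z\otimes 1$ is adjoint to $e^{-zT}\otimes 1$ on $U(\fg_{<0})$ with $e^{-zT}e^{\hbar y_1}=e^{\hbar y_1(z)_{\textnormal{reg}}}$, that pairing $R_s(z)^{\pm 1}$ in the first slot against $e^{\hbar y_1}$ produces the group-like $e^{\pm\hbar y_1(z)_{\textnormal{sing}}}$, and that $\Delta_\hbar(f)$ is by construction dual to multiplication in $U(\fg(\gamma))$. Combining these, and using the smash-product identities $e^{\hbar w}g\,e^{-\hbar w}=g\circ e^{\hbar w}$ for $w\in\fg(\CO)$, $g\in S(\fg^*(\CO))$ and $\langle g\circ a,b\rangle=\langle g,a\rhd b\rangle$, the left-hand side of \eqref{eq:rotation_Deltahbar_Deltaz} also pairs to $\langle f,\,e^{\hbar y_1(z)_{\textnormal{reg}}}(e^{\hbar y_1(z)_{\textnormal{sing}}}\rhd e^{\hbar y_2})\rangle$, matching the right-hand side and completing the verification on generators.

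The main obstacle is precisely this $\fg^*[t]$ bookkeeping: tracking, under the identification of the continuous dual of $Y^\circ_\hbar(\fd)$ with a completion of $V_0(\fg)\otimes S(t^{-1}\fg^*[t^{-1}])$, how $\tau_z\otimes 1$ and conjugation by $R_s(z)$ turn into insertion of the regular and singular parts of the currents on the vertex-algebra side, keeping all signs straight (the paper's $T$ differs from the standard vertex-algebra translation by a sign), and observing that since $y_1\in\fg_{<0}$ the vertex operator $\CY(e^{\hbar y_1},z)$ is already normally ordered, so that $e^{\hbar y_1(z)_{\textnormal{reg}}}e^{\hbar y_1(z)_{\textnormal{sing}}}$ really computes its action on states without further reordering. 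As throughout this section, all intermediate equalities of formal series are understood in the sense of the paper's conventions --- as equalities of expansions of a common series in different regions, becoming literal equalities of operators once evaluated on tensor products of finite-dimensional smooth modules, where the $\lpp z^{-1}\rpp$-versus-$\lbb z\rbb$ domains make the relevant products well-defined.
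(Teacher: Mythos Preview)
Your proposal is correct and follows essentially the same route as the paper: both handle the translation identity via the rational-function form of $r_{\textnormal{sing}}$, and verify \eqref{eq:rotation_Deltahbar_Deltaz} separately on $S(\fg^*[t])$ by pairing against group-likes $e^{\hbar y_1}\otimes e^{\hbar y_2}$ and on $\fg[t]$ via the identity $(\tau_z\otimes 1)\Delta_\hbar(x)=\CE_\gamma(z)^{-1}\Delta_z(x)\CE_\gamma(z)$ combined with \eqref{eq:[xR]}. The only nuance is that the paper avoids writing the formally ill-defined inverse $\CY^{-1}$: it instead observes that $e^{r_{\textnormal{sing}}}\Delta_z(x)e^{-r_{\textnormal{sing}}}$ lies in the image of $(Y_\hbar(\fd)\otimes U(\fg(\CO)))\lpp z^{-1}\rpp$ while $e^{-r_{\textnormal{reg}}}\Delta_z(x)e^{r_{\textnormal{reg}}}$ lies in $(Y_\hbar(\fd)\otimes\End(M))\lbb z\rbb$, and since \eqref{eq:[xR]} forces them to agree on smooth modules both are polynomial in $z$ and can be computed directly---a minor reorganization of your conjugation-by-$\CY$ step.
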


\begin{proof}
    The first identity about translation follows from the identity:
    \be
    (\tau_{z_1}\otimes \tau_{z_2}) \frac{I^a\otimes I_a}{t_2-z_3-t_1}=\frac{I^a\otimes I_a}{t_2+z_2-z_3-t_1-z_1},
    \ee
    where the RHS is expanded in the region where $|z_3|>|z_1|, |z_2|$. 
    
    To show \eqref{eq:rotation_Deltahbar_Deltaz}, first consider $f\in S(\fg^*[t])\lbb\hbar\rbb$. It suffices to evaluate both sides on $(e^{\hbar x}, e^{\hbar y})$ for \(x,y\in \fg_{<0}\). The right-hands side reads:
    \be
\lag \Delta_z(f), e^{\hbar x}\otimes e^{\hbar y}\rag=\lag f, \CY(e^{\hbar x}, z)e^{\hbar y}\rag=\lag f, :e^{\hbar x(z)}:e^{\hbar y}\rag,
    \ee
Writing $x=\sum c^a_i I_{a, -i-1}$ and using Lemma \ref{lem:identitites_for_Y}, this becomes
\be
\lag f, e^{\hbar e^{-zT}(x)} e^{\hbar \sum_{i,j\geq 0} (-1)^i \binom{i+j}{i}\frac{c^a_iI_{a,j}}{z^{i+j+1}}}e^{\hbar y}\rag.
\ee
On the other hand, if we write $\Delta_\hbar (f)=\sum_{(f)} f_{(1)}\otimes f_{(2)}$ and use 
\be
    R_s(z)^{-1} (-) R_s(z) = e^{-\textnormal{ad}(r_{\textnormal{sing}}(z))}(-),
\ee
the left-hand side in \eqref{eq:rotation_Deltahbar_Deltaz} takes the form:
\be
\sum \lag \tau_z (f_{(1)}), e^{\hbar x}\rag\otimes \lag f_{(2)}, \lag R_s(z), e^{\hbar x}\rag\rhd e^{\hbar y}\rag. 
\ee
Here, we evaluate $R_s(z)$ at $e^{\hbar x}$ and act on the second factor. By definition, this is equal to
\be
\lag f, e^{\hbar e^{-zT}(x)}\cdot \lag R_s(z), e^{\hbar x}\rag \rhd e^{\hbar y})=\lag f, e^{\hbar e^{-zT}(x)} e^{\hbar \sum_{i,j\geq 0} (-1)^i \binom{i+j}{i}\frac{c^a_iI_{a,j}}{z^{i+j+1}}}e^{\hbar y}\rag,
\ee
which beautifully coincides with the previously calculated right-hand side in \eqref{eq:rotation_Deltahbar_Deltaz}. 

For $x\in \fg[t]$, we would like to compute:
\be\label{eq:Rconj1}
e^{r_{\textnormal{sing}}(z)}\Delta_z(x)e^{-r_{\textnormal{sing}}(z)},
\ee
which using equation \eqref{eq:[xR]}, is equal to:
\be\label{eq:Rconj2}
e^{-r_{\textnormal{reg}}(z)}\Delta_z(x)e^{r_{\textnormal{reg}}(z)}. 
\ee
Note that we compute this inside $(Y_\hbar(\fd) \otimes_{\C[\![\hbar]\!]} \End(M)[\![\hbar]\!])\lbb z, z^{-1}\rbb$ for any smooth module \(M\) of \(\fg(\CK)\), since equation \eqref{eq:[xR]} holds true here. Equation \eqref{eq:Rconj1} is valued in the image of the algebra $(Y_\hbar (\fd)\otimes_{\C[\![\hbar]\!]} U(\fg(\CO))[\![\hbar]\!])\lpp z^{-1}\rpp$, whereas equation \eqref{eq:Rconj2} is valued in $(Y_\hbar(\fd) \otimes_{\C[\![\hbar]\!]} \End(M)[\![\hbar]\!])\lbb z\rbb$. Therefore, we conclude that both are in fact valued in the image of $(Y_\hbar (\fd)\otimes_{\C[\![\hbar]\!]} U(\fg(\CO))[\![\hbar]\!])[z]$.  

In this algebra, we can compute
\be
e^{-r_{\textnormal{reg}}(z)}\Delta_z(x)e^{r_{\textnormal{reg}}(z)}=(\tau_z\otimes 1)(\CE_\gamma^{-1}\Delta(x)\CE_\gamma)=(\tau_z\otimes 1)\Delta_{\hbar}(x),
\ee
thanks to the definition of $\Delta_\hbar$ in Theorem \ref{Prop:qHopf}.
\end{proof}

We can think of this $R_s(z)$ as giving an isomorphism:
\be
R_s(z)\colon (M\otimes N)\lpp z^{-1}\rpp \cong (\tau_z(M)\otimes N)\lpp z^{-1}\rpp,
\ee
where $M, N$ are two modules of $ Y^\circ_\hbar(\fd)$ and the tensor product on the left-hand side uses $\Delta_z$ while the one on the right-hand side uses $\Delta_{\hbar,z}$ to define the module structure. 

Of course, we have seen that any two modules of $ Y^\circ_\hbar(\fd)$ have another meromorphic (in fact, holomorphic) tensor product, namely using $\Delta_{\hbar, z}=(\tau_z\otimes 1)\Delta_\hbar$. This is weakly coassociative because of the coassociativity of $\Delta_\hbar$ and since $T$ is a coalgebra coderivation. The existence of $R_s(z)$ means that for any two objects, these two meromorphic tensor products are isomorphic. Namely, for any $M, N$, we have:
\be
M\otimes_{\Delta_z} N\cong M\otimes_{\Delta_{\hbar, z}} N.
\ee
We now prove the following statement, which is the meromorphic version of the ``monoidal structure axiom" for $R_s(z)$. 

\begin{Thm}\label{Thm:Rcocycle}
    The tensor series $R_s$ satisfies:
    \be\label{eq:Rcocycle}
        (\Delta_{z_1}\otimes 1)(R_s(z_2)^{-1})R_{s}^{12}(z_1)^{-1}=(1\otimes \Delta_{z_2})(R_s(z_1+z_2)^{-1}) R_{s}^{23}(z_2)^{-1}. 
    \ee

\end{Thm}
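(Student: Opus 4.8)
The plan is to first reduce \eqref{eq:Rcocycle} to its ``un-conjugated'' form
\[
(\Delta_{\hbar,z_1}\otimes 1)\big(R_s(z_2)\big)\,R_s^{12}(z_1)=(1\otimes\Delta_{\hbar,z_2})\big(R_s(z_1+z_2)\big)\,R_s^{23}(z_2),\qquad \Delta_{\hbar,z}\coloneqq(\tau_z\otimes 1)\Delta_\hbar,
\]
and then to prove the latter. For the reduction one uses the intertwining relation \(\Delta_z(a)=R_s(z)^{-1}\Delta_{\hbar,z}(a)R_s(z)\) from \eqref{eq:rotation_Deltahbar_Deltaz}: it gives \((\Delta_{z_1}\otimes 1)(X)=R_s^{12}(z_1)^{-1}\big((\Delta_{\hbar,z_1}\otimes 1)X\big)R_s^{12}(z_1)\) and, symmetrically, \((1\otimes\Delta_{z_2})(X)=R_s^{23}(z_2)^{-1}\big((1\otimes\Delta_{\hbar,z_2})X\big)R_s^{23}(z_2)\) for \(X\) in the relevant tensor square; substituting \(X=R_s(z_2)^{-1}\) into the left and \(X=R_s(z_1+z_2)^{-1}\) into the right of \eqref{eq:Rcocycle} and cancelling the conjugating factors, \eqref{eq:Rcocycle} becomes exactly the displayed identity after taking inverses of both sides.

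To prove the un-conjugated cocycle I would follow the strategy of the proof of Proposition \ref{Prop:twistingF}.2, using the factorization \(\CY=\CE_\gamma(z)R_s(z)\) of Proposition \ref{Prop:YR} (valid in \((S(\fg^*(\CO))\otimes\End(M))\lbb\hbar\rbb\lbb z,z^{-1}\rbb\) for every smooth \(\fg(\CK)\)-module \(M\)), so that it suffices to check the identity as operators on tensor products of such modules. The two structural inputs are: (i) the comultiplication identity \((\Delta_{z_1}\otimes 1)(\CY(z_2))=\CY^{13}(z_1+z_2)\,\CY^{23}(z_2)\) and its right-leg analogue, which are the linear duals of the associativity/locality of the intertwiner of \(V_0(\fg)\), \(\CY(\CY(A,z_1)B,z_2)C=\CY(A,z_1+z_2)\CY(B,z_2)C\), proved exactly as Lemma \ref{Lem:weakassoc} by pairing with \(A,B,C\in V_0(\fg)\); and (ii) the \(\CE_\gamma\)-compatibilities, e.g.\ \((\Delta_{\hbar,z_1}\otimes 1)(\CE_\gamma(z_2))=\CE_\gamma(z_1+z_2)^{13}\CE_\gamma(z_2)^{23}\) and its right-leg analogue, which follow from Lemma \ref{Lem:twistcoprod}.2 together with \(\CE_\gamma(z)=(\tau_z\otimes 1)\CE_\gamma\), the fact that \(T\) is a Hopf-algebra coderivation, and \(\Delta_\hbar\circ\tau_z=(\tau_z\otimes\tau_z)\circ\Delta_\hbar\). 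Substituting \(R_s=\CE_\gamma^{-1}\CY\) everywhere, applying (i)--(ii), and using that the first tensor slot of every \(\CE_\gamma(\cdot)\) and every \(R_s(\cdot)\) lies in the \emph{commutative} algebra \(S(\fg^*(\CO))\) — so two factors sharing only that slot commute — the \(\CE_\gamma\)-factors telescope and both sides collapse to one and the same ordered word in the \(R_s^{ij}(\cdot)\) and \(\CE_\gamma^{ij}(\cdot)\). Throughout, the occurrences of \((z_1\pm z_2)^{-1}\) (and the geometric series implicit in \(\CE_\gamma(z_1+z_2)\), \(\CY(z_1+z_2)\)) are to be expanded in the appropriate domains, and the resulting equalities are equalities of the common analytic continuations, in the sense of the Remark following Theorem \ref{Thm:fullR}.

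The main obstacle is that \(\CY(z)\) is \emph{not} an element of \(Y^\circ_\hbar(\fd)\otimes Y^\circ_\hbar(\fd)\): its second tensor slot involves the negative modes in \(\fg_{<0}\), so it only makes sense acting on tensor products of smooth modules. Consequently the right-leg manipulations of step (i) — in particular giving \((1\otimes\Delta_{z_2})(\CY(z_1+z_2))\) a precise meaning and identifying it with \(\CY^{12}(z_1)\CY^{13}(z_1+z_2)\) — cannot be performed by formal algebra in a triple tensor power of \(Y^\circ_\hbar(\fd)\) and must instead be carried out in the module picture, using the action of iterated intertwining operators on triple tensor products \(M_1\otimes M_2\otimes M_3\) of smooth modules together with \(V_0(\fg)\)-module associativity; keeping track of the expansion domains here is the delicate part. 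A cleaner but less explicit alternative for the final step: since both \(\Delta_z\) (Lemma \ref{Lem:weakassoc}) and \(\Delta_{\hbar,z}\) (a short computation from coassociativity of \(\Delta_\hbar\) and \(\Delta_\hbar\tau_z=(\tau_z\otimes\tau_z)\Delta_\hbar\)) are weakly coassociative and are intertwined by \(R_s(z)\), the ratio of the two sides of the un-conjugated cocycle commutes with the iterated coproduct \((\Delta_{\hbar,z_1}\otimes 1)\Delta_{\hbar,z_2}(Y^\circ_\hbar(\fd))\); being \(\equiv 1\) modulo \(z^{-1}\) (as \(R_s(z)=e^{r_{\textnormal{sing}}(z)}=1+\CO(z^{-1})\)) and having trivial partial counits (from \((\epsilon\otimes 1)R_s(z)=(1\otimes\epsilon)R_s(z)=1\)), it then equals \(1\); the work in this route is to pin down the relevant centralizer using the explicit \(S(\fg^*(\CO))\otimes U(\fg(\CO))\)-shape of \(R_s(z)\).
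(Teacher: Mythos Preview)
Your reduction step is correct and matches the paper's opening move: using the intertwining relation \eqref{eq:rotation_Deltahbar_Deltaz} on the first leg, the LHS of \eqref{eq:Rcocycle} becomes $R_s^{12}(z_1)^{-1}(\Delta_{\hbar,z_1}\otimes 1)(R_s(z_2)^{-1})$, exactly as in the paper. Where you diverge is in the treatment of the RHS. The paper does \emph{not} attempt a symmetric reduction using the intertwining relation on the second leg; instead it invokes a one-line lemma (Lemma \ref{LemdeltazR}):
\[
(1\otimes\Delta_{z_2})R_s(z_1+z_2)=R_s^{12}(z_1)\,R_s^{13}(z_1+z_2),
\]
which is read off \emph{directly} from the explicit exponent $r_{\textnormal{sing}}(z)=\hbar\,\frac{\sum_a I^a\otimes I_a}{t_2-z-t_1}$ and the fact that $r_{\textnormal{sing}}^{12}$ and $r_{\textnormal{sing}}^{13}$ commute. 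This collapses the RHS of \eqref{eq:Rcocycle} to $R_s^{12}(z_1)^{-1}R_s^{13}(z_1+z_2)^{-1}R_s^{23}(z_2)^{-1}$, and the whole theorem reduces to the single identity
\[
(\Delta_{\hbar,z_1}\otimes 1)R_s(z_2)=R_s^{23}(z_2)\,R_s^{13}(z_1+z_2),
\]
which the paper then proves by pairing with $e^{\hbar x}\otimes e^{\hbar y}$ and comparing normal-ordered products of vertex operators. So the paper's proof never needs either a ``right-leg analogue of (i)'' or a ``right-leg analogue of (ii)''; the explicit formula for $R_s$ does all the work on that side.

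Your proposed route via (i)--(ii) has a real gap as written: your target un-conjugated cocycle involves $\Delta_{\hbar,z_1}$ and $\Delta_{\hbar,z_2}$, but your ingredient (i) is a statement about $\Delta_{z_1}$ applied to $\CY$, not $\Delta_{\hbar,z_1}$. Converting between the two reintroduces $R_s^{12}(z_1)$-conjugation, and when you then substitute $R_s=\CE_\gamma^{-1}\CY$ the resulting word in $\CE_\gamma^{ij},\CY^{ij},R_s^{ij}$ does \emph{not} telescope by commutativity of first slots alone: e.g.\ $\CE_\gamma(z_2)^{-1,23}$ and $R_s^{13}(z_1+z_2)$ share slot $3$, where one contributes elements of $U(\fg_{<0})$ and the other of $U(\fg(\CO))$, which do not commute. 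You correctly flag the right-leg difficulty for $\CY$, but you underestimate it: it is not just a matter of working in modules, since the required rearrangements genuinely mix non-commuting algebra factors. Your ``cleaner alternative'' (the centralizer argument) is also not a proof as stated: showing that an element of $(S(\fg^*(\CO))\otimes U(\fg(\CO)))^{\otimes}$-type commuting with the image of the iterated coproduct is trivial is exactly the hard part, and the paper's uniqueness result (for $R_s$ itself) already relies on the injectivity of $U(\fg(\CK))\to\End(V_0(\fg))$ at level $0$; extending this to the three-fold setting needs an argument you have not supplied.

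In short: keep your reduction, but replace the (i)--(ii) program by the direct computation of $(1\otimes\Delta_{z_2})R_s(z_1+z_2)$ from the explicit exponent $r_{\textnormal{sing}}$. That reduces the problem to a single identity involving only $(\Delta_{\hbar,z_1}\otimes 1)R_s(z_2)$, which can then be checked on group-likes via the normal-ordered product, avoiding the right-leg issue entirely.
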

\begin{Rem}\label{rem:equality_of_series}
    We first explain in what sense \eqref{eq:Rcocycle} is even a well-defined equality. The LHS of equation \eqref{eq:Rcocycle} is naturally an element in the algebra
\be
(Y^\circ_\hbar(\fd) \otimes Y^\circ_\hbar(\fd) \otimes Y^\circ_\hbar(\fd))\lpp z_1^{-1}\rpp\lpp z_2^{-1} \rpp
\ee
where the RHS is naturally an element in the algebra
\be
(Y^\circ_\hbar(\fd) \otimes Y^\circ_\hbar(\fd) \otimes Y^\circ_\hbar(\fd)) \lpp z_2^{-1}\rpp \lpp (z_1+z_2)^{-1}\rpp .
\ee
Recall from Remark \ref{Rem:dualVA} that the tensor product vertex algebra $\CV = V_0(\fg)\otimes S(\fg^*_{<0})$  is dual to $Y_\hbar^\circ (\fd)$ with coproduct \(\Delta_z\). For every $A, B, C\in \CV$, we have
\be
\lag (\Delta_{z_1}\otimes 1)(R_s(z_2)^{-1})R_{s}^{12}(z_1)^{-1}, A\otimes B\otimes C\rag\in \C [z_2,z_2^{-1}]\lpp z_1\rpp\lbb\hbar\rbb
\ee
 where as
 \be
\lag (1\otimes \Delta_{z_2})(R_s(z_1+z_2)^{-1}) R_{s}^{23}(z_2)^{-1}, A\otimes B\otimes C\rag \in \C [z_1+z_2,(z_1+z_2)^{-1}]\lpp z_2\rpp\lbb\hbar\rbb.
 \ee
 The equality \eqref{eq:Rcocycle} simply means that these elements are the expansion of the same element from
 \be
\C [z_1, z_1^{-1}, z_2,z_2^{-1},(z_1+z_2)^{-1}] \lbb\hbar\rbb
 \ee
 into the respective series. In what follows, the equality of two formal series is always understood in this sense. 
\end{Rem}

The proof of Theorem \ref{Thm:Rcocycle} will be based on the following lemma, which is of independent utility.

\begin{Lem}\label{LemdeltazR}
    The identity \(
        (1\otimes \Delta_{z_2})R_s(z_1+z_2)= R_{s}^{12}(z_1)R_{s}^{13}(z_1+z_2)
    \) holds.
\end{Lem}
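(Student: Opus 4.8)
The plan is to prove the identity $(1 \otimes \Delta_{z_2})R_s(z_1+z_2) = R_s^{12}(z_1)R_s^{13}(z_1+z_2)$ by pairing both sides against elements of the dual vertex algebra and reducing to a computation inside $V_0(\fg)$. Recall from \eqref{eq:Rsmz} that $R_s(z) = e^{r_{\textnormal{sing}}(z)}$, where $r_{\textnormal{sing}}(z)$ is the expansion at $z = \infty$ of $\hbar \frac{I^a \otimes I_a}{t_2 - z - t_1}$, and that (Proposition \ref{Prop:YR}) $R_s(z)$ represents, as an element of $\Hom_\C(V_0(\fg), U(\fg(\CK))\lbb z, z^{-1}\rbb)$ via the pairing of Lemma \ref{Lem:group}, the map $I_{a,-n} \mapsto I_{a,-n}(z)_{\textnormal{sing}}$ — i.e.\ $\langle R_s(z), e^{\hbar y}\rangle = e^{\hbar y(z)_{\textnormal{sing}}}$ for $y \in \fg_{<0}$. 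Likewise $\langle \CE_\gamma(z), e^{\hbar y}\rangle = e^{\hbar y(z)_{\textnormal{reg}}} = e^{\hbar e^{-zT}(y)}$.

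First I would set up the evaluation. Both sides of the asserted identity are elements of $(Y^\circ_\hbar(\fd) \otimes_{\C\lbb\hbar\rbb} Y^\circ_\hbar(\fd) \otimes_{\C\lbb\hbar\rbb} Y^\circ_\hbar(\fd))\lpp z_1^{-1}\rpp$ — note $R_s^{13}(z_1+z_2)$ is expanded at $z_1 = \infty$ with $z_2$ treated as subleading, matching the domain of $R_s^{12}(z_1)$ — so it suffices to pair against $e^{\hbar x} \otimes e^{\hbar y} \otimes e^{\hbar w}$ for $x, y, w \in \fg_{<0}$, using Lemma \ref{Lem:group}.2. For the left-hand side, I would use the definition $\langle \Delta_{z_2}(-), A \otimes B\rangle = \langle -, \CY(A, z_2)B\rangle$ of the meromorphic coproduct, so that
\[
\langle (1 \otimes \Delta_{z_2})R_s(z_1+z_2), e^{\hbar x} \otimes e^{\hbar y} \otimes e^{\hbar w}\rangle = \langle R_s(z_1+z_2), e^{\hbar x} \otimes \CY(e^{\hbar y}, z_2)e^{\hbar w}\rangle.
\]
By Lemma \ref{lem:identitites_for_Y}.2 and the multiplicativity in Proposition \ref{Prop:YR}, $\CY(e^{\hbar y}, z_2)e^{\hbar w} = e^{\hbar y(z_2)_{\textnormal{reg}}} e^{\hbar y(z_2)_{\textnormal{sing}}} e^{\hbar w}$ as an operator, but since $R_s(z_1+z_2)$ only sees the $S(\fg^*)$-dual of the second tensor slot via group-like elements, what matters is the group-like element $\CY$ produces; I would then use that $R_s$ is multiplicative on products of group-likes (since $\Delta(e^{\hbar y}) = e^{\hbar y} \otimes e^{\hbar y}$, cf.\ \eqref{eq:product_tensor_endomorphism}) to factor the pairing. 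For the right-hand side, $\langle R_s^{12}(z_1) R_s^{13}(z_1+z_2), e^{\hbar x} \otimes e^{\hbar y} \otimes e^{\hbar w}\rangle$ factors directly as $e^{\hbar x(z_1)_{\textnormal{sing}}}$-type contributions in slots $2,3$ of the target. The heart of the matter is then the operator identity on $V_0(\fg)$:
\[
I_{a,-n}(z_1+z_2) \text{ acting after } \CY(-, z_2) = \text{(a suitable composition)},
\]
which is exactly the associativity/locality of the intertwining operator $\CY$ of $V_0(\fg)$ — $\CY(\CY(A, z_1)B, z_2) = \CY(A, z_1+z_2)\CY(B, z_2)$ in the appropriate domain — restricted to its singular part, together with the rational-function identity
\[
(\tau_{z_2} \otimes 1 \otimes 1)\frac{I^a \otimes 1 \otimes I_a}{t_3 - z_1 - z_2 - t_1} \;\longleftrightarrow\; \frac{I^a \otimes I_a \otimes 1}{t_2 - z_1 - t_1} \;+\; \frac{I^a \otimes 1 \otimes I_a}{t_3 - z_1 - z_2 - t_1}
\]
obtained by the partial-fraction/coproduct structure of $\frac{1}{t_2 - z - t_1}$ under $\Delta_{z_2}$ acting on the second slot. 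I expect the main obstacle to be bookkeeping: carefully matching the expansion domains (which variable is "large") on the two sides so that the formal-series identity is literally true and not merely true after re-expansion, and tracking that the regular parts cancel correctly when one passes through Proposition \ref{Prop:YR}'s factorization $\CY = \CE_\gamma(z) R_s(z)$; the underlying algebraic content, by contrast, is just the vertex-algebra associativity already available from \cite[Chapter 3]{frenkel2004vertex}. Once the pairing against all $e^{\hbar x} \otimes e^{\hbar y} \otimes e^{\hbar w}$ agrees, Lemma \ref{Lem:group}.2 (applied in each slot) forces equality of the two tensor series, completing the proof.
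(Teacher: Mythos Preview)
Your approach is significantly more complicated than needed, and the three-fold pairing you propose does not type-check as written. The element $R_s(z) = e^{r_{\textnormal{sing}}(z)}$ has its first tensor factor in $S(\fg^*(\CO))$, which indeed pairs with $U(\fg_{<0})$ via Lemma \ref{Lem:group}, but its second tensor factor lies in $U(\fg[t])$, and there is no pairing of $U(\fg[t])$ against $e^{\hbar y}$ for $y \in \fg_{<0}$ available in this setup. Consequently the step ``pair both sides against $e^{\hbar x}\otimes e^{\hbar y}\otimes e^{\hbar w}$'' is not well-posed, and your invocation of $\langle \Delta_{z_2}(-), A\otimes B\rangle = \langle -,\CY(A,z_2)B\rangle$ applies only when the argument lies in $S(\fg^*[t])$, not in $U(\fg[t])$ as it does here. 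The vertex-algebra associativity of $\CY$ is simply not the relevant mechanism.

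The paper's proof avoids all of this by working directly at the level of the exponent. Since $\Delta_{z_2}$ is an algebra homomorphism and $R_s(z)=e^{r_{\textnormal{sing}}(z)}$, it suffices to establish
\[
(1\otimes \Delta_{z_2})\, r_{\textnormal{sing}}(z_1+z_2) \;=\; r_{\textnormal{sing},12}(z_1) \,+\, r_{\textnormal{sing},13}(z_1+z_2),
\]
after which the lemma follows by exponentiating, using that $r_{\textnormal{sing},12}$ and $r_{\textnormal{sing},13}$ commute (the shared first slot is the commutative algebra $S(\fg^*(\CO))$, and the remaining factors sit in disjoint tensor slots). The displayed identity is a one-line computation: on the second slot $\Delta_{z_2}$ sends $I_{a,j}\mapsto \tau_{z_2}(I_{a,j})\otimes 1 + 1\otimes I_{a,j}$, which at the level of the generating rational function $\frac{I^a\otimes I_a}{t_2-(z_1+z_2)-t_1}$ produces exactly the two terms on the right. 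Your displayed ``rational-function identity'' is a garbled version of this --- you wrote $(\tau_{z_2}\otimes 1\otimes 1)$ applied to a three-tensor where it should be $(1\otimes\Delta_{z_2})$ applied to a two-tensor, and the ``$\longleftrightarrow$'' should be an equality. No pairing, no vertex-algebra associativity, and no expansion-domain bookkeeping are needed.
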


\begin{proof}
    We will prove:
    \be
        (1\otimes \Delta_{z_2})r_{\textnormal{sing}}(z_1+z_2)=r_{\textnormal{sing}, 12}(z_1)+ r_{\textnormal{sing}, 13}(z_1+z_2).
    \ee
    From this one derives the lemma since $r_{\textnormal{sing}, 12}$ and $r_{\textnormal{sing}, 13}$ commute with each other. We have:
    \be
(1\otimes \Delta_{z_2})\left(\frac{\sum_{a = 1}^nI^a\otimes I_a}{t_2-z_1-z_2-t_1}\right)=\frac{\sum_{a = 1}^nI^a\otimes 1\otimes I_a}{t_3-z_1-z_2-t_1}+\frac{\sum_{a = 1}^nI^a\otimes I_a\otimes 1}{(t_2+z_2)-z_1-z_2-t_1}
    \ee
    where the second term is $r_{\textnormal{sing}, 12}(z_1)$. This completes the proof. 
\end{proof}

\begin{proof}[Proof of Theorem \ref{Thm:Rcocycle}]
Using Lemma \ref{LemdeltazR}, the RHS of equation \eqref{eq:Rcocycle} is equal to
\be
R_{s}^{12}(z_1)^{-1}R_{s}^{13}(z_1+z_2)^{-1} R_s^{23}(z_2)^{-1},
\ee
whereas the LHS is
\be\label{eq:to_prove_for_Rs_cocycle}
 (\Delta_{z_1}\otimes 1)(R_s(z_2)^{-1})R_{s}^{12}(z_1)^{-1}=R_s^{12}(z_1)^{-1} (\Delta_{\hbar, z_1}\otimes 1)(R_s(z_2)^{-1})
\ee
by virtue of Equation \eqref{eq:rotation_Deltahbar_Deltaz}. This implies that it remains to prove the following equation:
\be\label{eq:Rsproof}
(\Delta_{\hbar, z_1}\otimes 1)(R_s(z_2))=R_s^{23}(z_2) R_{s}^{13}(z_1+z_2). 
\ee

Recall $R_s(z)$ and $\CE_\gamma(z)$ from equation \eqref{eq:Rsmz} and let $\CY (-, z)_{\textnormal{sing}}$ be the series $R_s(z)(-)$ and $\CY(-, z)_{\textnormal{reg}}$ be the series \(R_s(z)(-)\) and $\CE_\gamma (z)(-)$ respectively. The following is true, thanks to Proposition \ref{Prop:YR}:
\be
\CY (A, z)=\sum \CY (A^{(1)}, z)_{\textnormal{reg}}\CY (A^{(2)}, z)_{\textnormal{sing}}.
\ee

Let us pair both sides of \eqref{eq:to_prove_for_Rs_cocycle} with $e^{\hbar x}\otimes e^{\hbar y}$ for $x, y\in \fg_{<0}$. The LHS, as an element in $U(\fg)(\CK)$, reads:
\be
\CY \lp e^{\hbar x(z_1)_{\textnormal{reg}}}\cdot e^{\hbar y}, z_2\rp_{\textnormal{sing}},
\ee
where we used the same notation as in the proof of Proposition \ref{Prop:YR}. The RHS of \eqref{eq:to_prove_for_Rs_cocycle} gives
\be
\CY (e^{\hbar y}, z_2)_{\textnormal{sing}}\CY (e^{\hbar x}, z_1+z_2)_{\textnormal{sing}}.
\ee
Now multiply both sides with
\be
\CY \lp e^{\hbar x(z_1)_{\textnormal{reg}}}\cdot e^{\hbar y}, z_2\rp_{\textnormal{reg}}=e^{\hbar x(z_1+z_2)_{\textnormal{reg}}} e^{\hbar y (z_2)_{\textnormal{reg}}}
\ee
from the left. The LHS becomes
\be
\CY \lp e^{\hbar x(z_1)_{\textnormal{reg}}}\cdot e^{\hbar y}, z_2\rp,
\ee
which by definition of $\CY$ is precisely the normal-ordered product
\be
\norm{\CY (e^{\hbar x(z_1)_{\textnormal{reg}}}, z_2)\CY (e^{\hbar y}, z_2)}=\norm{\CY (e^{\hbar x}, z_1+z_2)\CY (e^{\hbar y}, z_2)}.
\ee
On the other hand, the RHS becomes
\be
\begin{split}
    &e^{\hbar x(z_1+z_2)_{\textnormal{reg}}}  e^{\hbar y(z_2)_{\textnormal{reg}}}\CY (e^{\hbar y}, z_2)_{\textnormal{sing}} \CY (e^{\hbar x}, z_1+z_2)_{\textnormal{sing}}\\&= e^{\hbar x(z_1+z_2)_{\textnormal{reg}}} \CY (e^{\hbar y}, z_2) \CY (e^{\hbar x}, z_1+z_2)_{\textnormal{sing}}.
\end{split}
\ee
Looking at the proof of Lemma \ref{lem:identitites_for_Y}.2, we find that this is exactly the normal-ordered product of $\CY (e^{\hbar x}, z_1+z_2)$ and $\CY (e^{\hbar y}, z_2)$. Since we multiplied the invertible element $e^{\hbar x(z_1+z_2)_{\textnormal{reg}}} e^{\hbar y (z_2)_{\textnormal{reg}}}$, we conclude equation \eqref{eq:Rsproof} holds, and therefore equation \eqref{eq:Rcocycle} holds. 
\end{proof}

We can in fact show that such an element $R_s(z)$ is unique, if we assume that it is of certain form. 

\begin{Prop}
    The element $R_s(z)$ is the unique element in 
    \be
    (S(\fg^*(\CO))\lbb\hbar\rbb\otimes_{\C\lbb\hbar\rbb} U(\fg(\CO))\lbb\hbar\rbb)\lbb z^{-1}\rbb
    \ee
    such that $R_s(z)^{-1}\Delta_{\hbar, z}(a) R_s(z)=\Delta_z(a)$ holds for all \(a \in Y^\circ_\hbar(\fd)\).
    
\end{Prop}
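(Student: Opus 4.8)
\medskip
\noindent\emph{Proof strategy.} The plan is to deduce the uniqueness from a centralizer computation combined with an \(\hbar\)-adic induction. Write \(\CB \coloneqq (S(\fg^*(\CO))\lbb\hbar\rbb \otimes_{\C\lbb\hbar\rbb} U(\fg(\CO))\lbb\hbar\rbb)\lbb z^{-1}\rbb\); this is a complete \(\C\lbb\hbar\rbb\)-algebra containing \(R_s(z) = 1 + \CO(\hbar)\) as a unit. Suppose \(R_s'(z) \in \CB\) is another element with \(R_s'(z)^{-1}\Delta_{\hbar,z}(a)R_s'(z) = \Delta_z(a)\) for all \(a \in Y^\circ_\hbar(\fd)\). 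I would first record the normalization \((\mathrm{id}\otimes\epsilon)R_s(z) = 1\) — which holds because \(r_{\textnormal{sing}}(z)\) has all of its right tensor legs in \(\fg(\CO) = \ker\epsilon\) — and impose the same normalization on \(R_s'(z)\); this is genuinely needed for rigidity, since otherwise \(R_s(z)\) could be multiplied by a central unit of \(\CB\), e.g.\ \(1 + \hbar\sum_a (I^a_0)^2 \otimes 1\), which commutes with all of \(\Delta_{\hbar,z}(Y^\circ_\hbar(\fd))\). Comparing the two conjugation identities, the element \(K(z) \coloneqq R_s'(z)R_s(z)^{-1} \in \CB\) commutes with \(\Delta_{\hbar,z}(a)\) for every \(a \in Y^\circ_\hbar(\fd)\) — equivalently, since \(\Delta_{\hbar,z}\) is an algebra homomorphism and \(Y^\circ_\hbar(\fd)\) is topologically generated by \(\fd[t]\), for every \(a \in \fd[t]\) — and satisfies \((\mathrm{id}\otimes\epsilon)K(z) = 1\). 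The goal is then to prove \(K(z) = 1\).

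\medskip
\noindent\emph{The centralizer lemma.} The engine of the argument is the following statement, to be proved modulo \(\hbar\), i.e.\ in \(\CB_0 \coloneqq \CB/\hbar\CB = (S(\fg^*(\CO)) \otimes U(\fg(\CO)))\lbb z^{-1}\rbb \subseteq (U(\fd(\CO))\otimes U(\fd(\CO)))\lbb z^{-1}\rbb\): if \(X \in \CB_0\) commutes with \(\tau_z(a)\otimes 1 + 1\otimes a\) for every \(a \in \fd[t]\), then \(X = h(z)\otimes 1\) with \(h(z) \in S(\fg^*(\CO))^{\fg(\CO)}\lbb z^{-1}\rbb\). To see this, take \(a = I^b_m \in \fg^*[t]\): the first tensor leg \(\tau_z(I^b_m)\) lies in the abelian subalgebra \(S(\fg^*(\CO))\) and hence commutes with the first legs of \(X\), so writing \(X = \sum_\alpha s_\alpha \otimes u_\alpha\) with \(\{s_\alpha\}\) linearly independent, the vanishing commutator forces \([u_\alpha, I^b_m] = 0\) in \(U(\fd(\CO))\) for all \(\alpha, b, m\). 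Thus each \(u_\alpha\) lies in the centralizer of \(\fg^*(\CO)\) in \(U(\fg(\CO))\), which equals \(\C\cdot 1\): working in the PBW decomposition \(U(\fd(\CO)) = S(\fg^*(\CO)) \otimes U(\fg(\CO))\), one reduces by filtration degree to the degree-one statement that an element of \(\fg(\CO)\) commuting with \(\fg^*(\CO)\) has all of its loop components in the centre of \(\fg\), hence vanishes — and this is the point at which simplicity of \(\fg\) enters (through nondegeneracy of the structure constants). Therefore \(X = h(z)\otimes 1\), and then commuting with \(\tau_z(I_{a,n})\otimes 1 + 1\otimes I_{a,n}\) for \(a \in \fg[t]\) reduces to \([h(z), \tau_z(I_{a,n})] = 0\), showing \(h(z)\) is \(\fg(\CO)\)-invariant.

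\medskip
\noindent\emph{The induction.} Write \(K(z) = \sum_{k\geq 0}\hbar^k K_k(z)\) with \(K_k(z) \in \CB_0\); I would show \(K_k(z) = \delta_{k,0}\cdot 1\) by induction on \(k\). Assuming \(K_j(z) = \delta_{j,0}\) for \(j < N\), the \(\hbar^N\)-coefficient of \([K(z), \Delta_{\hbar,z}(a)] = 0\) reduces — using \(\Delta_{\hbar,z}(a) \equiv \tau_z(a)\otimes 1 + 1\otimes a \pmod{\hbar}\), valid since \(R_s(z) \equiv 1\) and \(\CA_\hbar(\fd,\gamma)/\hbar = U(\fd(\CO))\) carries the standard coproduct — to \([K_N(z), \tau_z(a)\otimes 1 + 1\otimes a] = 0\) for all \(a \in \fd[t]\); by the lemma, \(K_N(z) = h_N(z)\otimes 1\) with \(h_N(z) \in S(\fg^*(\CO))^{\fg(\CO)}\lbb z^{-1}\rbb\). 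On the other hand, the \(\hbar^N\)-coefficient of \((\mathrm{id}\otimes\epsilon)K(z) = 1\) is \((\mathrm{id}\otimes\epsilon)K_N(z) = h_N(z)\), which must equal \(\delta_{N,0}\cdot 1\). Hence \(K_N(z) = \delta_{N,0}\cdot 1\), and summing over \(k\) gives \(K(z) = 1\), i.e.\ \(R_s'(z) = R_s(z)\).

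\medskip
\noindent\emph{Main obstacle.} I expect the hard part to be the centralizer lemma, on two fronts. First, the triviality of the centralizer of \(\fg^*(\CO)\) in \(U(\fg(\CO))\) cannot be obtained by a naive \(\mathrm{ad}\)-invariance argument, because \(\mathrm{ad}(\fg^*(\CO))\) does not preserve \(U(\fg(\CO))\); one must argue directly with the PBW basis of \(U(\fd(\CO))\) and reduce to the degree-one statement, which is where simplicity of \(\fg\) is genuinely used. Second, the \(\fg(\CO)\)-invariants \(S(\fg^*(\CO))^{\fg(\CO)}\) — for instance the images of the Casimirs of \(\fg^*\), which are in fact central in \(U(\fd(\CO))\) — survive the commutation constraints at every order of \(\hbar\) and are eliminated only by the counit normalization; this is precisely why that normalization is an indispensable ingredient rather than a convenience.
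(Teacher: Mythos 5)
Your route is genuinely different from the paper's. The paper proves uniqueness by evaluating the conjugation identity only on $a=f\in S(\fg^*[t])\lbb\hbar\rbb$ against group-like elements $e^{\hbar X}\otimes e^{\hbar Y}$ with $X,Y\in\fg_{<0}$: this reduces the hypothesis to the statement that $\lag \wt{R}(z),e^{\hbar X}\rag$ and $\lag R_s(z),e^{\hbar X}\rag$ act identically on $V_0(\fg)$, and then injectivity of $U(\fg(\CK))\to\End(V_0(\fg))$ at level $0\neq -h^\vee$ together with Lemma \ref{Lem:group} forces $\wt{R}=R_s$. You instead compute the commutant of the image of $\Delta_{\hbar,z}$ modulo $\hbar$ and run an $\hbar$-adic induction with a counit normalization. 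The skeleton of your argument is sound: the reduction to the generators $\fd[t]$, the triviality of the centralizer of $\fg^*(\CO)$ in $U(\fg(\CO))$ (your filtration/symbol reduction to the degree-one statement is the right mechanism), and the induction step are all correct, and this is more elementary than the paper's appeal to faithfulness of the $\fg(\CK)$-action on the vacuum module.

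However, two of your concrete claims are wrong. First, $1+\hbar\sum_a (I^a_0)^2\otimes 1$ is \emph{not} central for $\Delta_{\hbar,z}$: already at order $\hbar$ one has $[I_{b,m},\sum_a(I^a_0)^2]=-2\sum_{a,c}f^a_{bc}\,I^c_m I^a_0$, which is nonzero for $m\geq 1$ (the antisymmetry of $f^a_{bc}$ only produces a cancellation when the two loop degrees coincide), so the element fails to commute with $\Delta_{\hbar,z}(I_{b,m})$ for $m\geq 1$. For the same reason the ``main obstacle'' paragraph is off: $\sum_a(I^a_0)^2$ is $\fg$-invariant but not $\fg(\CO)$-invariant, hence it lies neither in $S(\fg^*(\CO))^{\fg(\CO)}$ nor in the centre of $U(\fd(\CO))$; such Casimir-type elements are killed by the commutation constraints themselves, not by the counit normalization. (Your lemma survives this, since whatever $S(\fg^*(\CO))^{\fg(\CO)}$ is, the normalization eliminates any $h(z)\otimes 1$.)

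Second, what you prove is a normalized variant of the proposition, with the extra hypothesis $(\mathrm{id}\otimes\epsilon)R=1$, rather than the statement as printed. Your instinct that strict uniqueness needs a normalization is in fact correct, but for a much simpler reason than your example: any invertible scalar series $c(\hbar,z^{-1})$ yields $c\,R_s(z)$ in the same space with the same conjugation property, so the bare condition can only determine $R_s$ up to such central elements. The paper's proof does not see this because its key evaluation identity replaces conjugation by $\lag\wt R(z),e^{\hbar X}\rag$ with a single $\rhd$-action, a step that is automatic only when this evaluation is group-like (as it is for $R_s=e^{r_{\textnormal{sing}}(z)}$), i.e.\ it implicitly builds in a normalization. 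So: fix the counterexample (scalars suffice), state explicitly that you prove the counit-normalized version, and your argument goes through; as written, the exhibited central element and the claim about surviving invariants are genuine errors.
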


\begin{proof}
    Let $\wt{R}(z)$ be another such element, namely:
    \be
\wt{R}(z)^{-1}\Delta_{\hbar, z}(a) \wt{R}(z)=\Delta_z(a).
    \ee
holds for all \(a \in Y^\circ_\hbar(\fd)\). Let $a = f\in S(\fg^*[t])\lbb\hbar\rbb$. For any $X,Y\in \fg_{<0}$, we have
\be
\lag \wt{R}(z)^{-1}\Delta_{\hbar, z}(f) \wt{R}(z) ,e^{\hbar X}\otimes e^{\hbar Y}\rag=\lag f, e^{\hbar X(z)_{\textnormal{reg}}}\cdot \lag \wt{R}(z), e^{\hbar X}\rag\rhd e^{\hbar Y}\rag,
\ee
which implies that:
\be
 e^{\hbar X(z)_{\textnormal{reg}}}\cdot \lag \wt{R}(z), e^{\hbar X}\rag\rhd e^{\hbar Y}= e^{\hbar X(z)_{\textnormal{reg}}}\cdot \lag R_s(z), e^{\hbar X}\rag\rhd e^{\hbar Y}.
\ee
In other words
\be
\lag \wt{R}(z), e^{\hbar X}\rag \rhd e^{\hbar Y}= \lag R_s(z), e^{\hbar X}\rag\rhd e^{\hbar Y},
\ee
for any $X,Y$. In \cite{frenkel2007langlands}, it is stated that since our vacuum vertex algebra is of level $0\ne -h^\vee$, the map $U(\fg(\CK))\to \mathrm{End}(V_0(\fg))$ is injective. Therefore, the above implies that 
\be
    \lag R_s(z), e^{\hbar X}\rag=\lag \wt{R}(z), e^{\hbar X}\rag
\ee
for all $X \in \fg_{<0}$, and hence $R_s(z)=\wt{R}$. 
\end{proof}

\subsection{The quantum $R$-matrix}\label{subsec:fullR}
We now use $R_s(z)$ to construct the full $R$-matrix. The idea is that the existence of $R_s(z)$ means that:
\be
M\otimes_{\Delta_z} N\cong M\otimes_{\Delta_{\hbar, z}}N.
\ee
On the other hand, weak cocommutativity of $\Delta_z$ implies that:
\be
M\otimes_{\Delta_z} N\cong \tau_z \lp N\otimes_{\Delta_{-z}}M\rp.
\ee
Using $R$ again, we find
\be
M\otimes_{\Delta_{\hbar, z}}N= \tau_z \lp N\otimes_{\Delta_{\hbar, -z}}M\rp,
\ee
thus swapping the order of $M$ and $N$. Therefore, the full $R$-matrix can be obtained from the composition of the above isomorphisms. 

More concretely, note that since $\Delta_z$ satisfies \(
\Delta_{z}^{\textnormal{op}}=(\tau_z\otimes \tau_z) \Delta_{-z}\) we have
\be
((\tau_z\otimes 1)\Delta_\hbar(a))^{\textnormal{op}}=(R_s(z)\Delta_z(a)R_s(z)^{-1})^{\textnormal{op}}=R_{s}^{21}(z)(\tau_z\otimes \tau_z) \Delta_{-z}(a) R_{s}^{21}(z)^{-1}
\ee
for any \(a \in Y^\circ_\hbar(\fd)\). On the other hand:
\be
\begin{split}
(\tau_z\otimes \tau_z) \Delta_{-z}(a)&=(\tau_z\otimes \tau_z) (R_s(-z)^{-1} (\tau_{-z}\otimes 1)\Delta_\hbar(a) R_s(-z))\\&=R_s(-z)^{-1} (1\otimes \tau_z)\Delta_\hbar(a) R_s(-z). 
\end{split}
\ee
Here, we used the fact that $(T\otimes 1+1\otimes T)R_s(z)=0$. In summary:
\be
((\tau_z\otimes 1)\Delta_\hbar(a))^{\textnormal{op}}=R_{s}^{21}(z)R_s(-z)^{-1} (1\otimes \tau_{z})\Delta_\hbar(a) R_s(-z) R_{s}^{21}(z)^{-1}.
\ee
Let us define $R(z)=R_{s}^{21}(-z)R_s(z)^{-1}\in ( Y^\circ_\hbar(\fd)\otimes  Y^\circ_\hbar(\fd))\lpp z^{-1}\rpp$. Then the previous identity takes the simple form
\be
((\tau_z\otimes 1)\Delta_\hbar(a))^{\textnormal{op}}=R(-z)(1\otimes \tau_{z})\Delta_\hbar(a)  R(-z)^{-1},
\ee
Equivalently, we get the first identity of the following theorem.

\begin{Thm}\label{thm:Rmatrix_in_section}
    The $R$-matrix \(R(z) = R_s^{21}(-z)R_s(z)^{-1}\) defined above satisfies 
    the following identities:
    \begin{enumerate}
        \item \((\tau_z\otimes 1) \Delta_\hbar^{\textnormal{op}}(a)=R(z) (\tau_z\otimes 1) (\Delta_\hbar(a)) R(z)^{-1}\) for all \(a \in Y^\circ_\hbar(\fd)\);

        \item \((\Delta_{\hbar, z_1}\otimes 1)R(z_2)=R^{13}(z_1+z_2)R^{23}(z_2)\);

        \item \((1\otimes \Delta_{\hbar, z_2})R(z_1+z_2)=R^{13}(z_1+z_2)R^{12}(z_1)\);

        \item \((\tau_{z_1}\otimes \tau_{z_2})R(z_3)=R(z_3+z_1-z_2).\)
    \end{enumerate}
Here, the equalities again mean that two Laurent series are coming from the same function on different regions of convergence.  
In particular, $R$ is a solution to the quantum Yang-Baxter equation:
\be
R^{12}(z_1)R^{13}(z_1+z_2)R^{23}(z_2)=R^{23}(z_2)R^{13}(z_1+z_2)R^{12}(z_1).
\ee
\end{Thm}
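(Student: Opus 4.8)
The plan is to deduce all four identities, and then the quantum Yang--Baxter equation, purely formally from the structural properties of the meromorphic twisting matrix $R_s(z)$ already in hand: the rotation identity $(\tau_z\otimes 1)\Delta_\hbar=R_s(z)\Delta_z R_s(z)^{-1}$ and the translation covariance $(\tau_{z_1}\otimes\tau_{z_2})R_s(z_3)=R_s(z_3+z_1-z_2)$ from Theorem \ref{Thm:meroR}, the cocycle identity of Theorem \ref{Thm:Rcocycle}, and Lemma \ref{LemdeltazR}; together with the elementary observation that, since $R_s(z)=e^{r_{\textnormal{sing}}(z)}$ lies in $(S(\fg^*(\CO))\otimes U(\fg(\CO)))\lbb\hbar\rbb\lbb z^{-1}\rbb$, any two translates $R_s^{12}(w)$ and $R_s^{13}(w')$ commute (they share only the commutative leg $S(\fg^*(\CO))$). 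First I would dispose of identity~4: writing $R(z)=R_s^{21}(-z)R_s(z)^{-1}$ and combining the translation covariance of $R_s$ with the tensor flip gives $(\tau_{z_1}\otimes\tau_{z_2})R_s^{21}(-z_3)=R_s^{21}(-z_3+z_2-z_1)$ and $(\tau_{z_1}\otimes\tau_{z_2})R_s(z_3)^{-1}=R_s(z_3+z_1-z_2)^{-1}$, whose product is $R(z_3+z_1-z_2)$. Identity~1 was essentially derived in the computation just before the theorem statement, which yields $((\tau_z\otimes 1)\Delta_\hbar(a))^{\textnormal{op}}=R(-z)(1\otimes\tau_z)\Delta_\hbar(a)R(-z)^{-1}$; rewriting the left side as $(1\otimes\tau_z)\Delta_\hbar^{\textnormal{op}}(a)$, applying the algebra automorphism $\tau_z\otimes\tau_{-z}$ of $Y^\circ_\hbar(\fd)\otimes Y^\circ_\hbar(\fd)$ to both sides, and invoking identity~4 with $z_1=z,z_2=-z,z_3=-z$ turns this precisely into identity~1.

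For the hexagon identities~2 and~3 the key input is the conjugation relation $(\Delta_{\hbar,z_1}\otimes 1)=\mathrm{Ad}(R_s^{12}(z_1))\circ(\Delta_{z_1}\otimes 1)$ on (completions of) $Y^\circ_\hbar(\fd)^{\otimes 2}$, obtained by extending the rotation identity of $R_s$ to tensor products. Combining the cocycle identity of Theorem \ref{Thm:Rcocycle} with Lemma \ref{LemdeltazR} (and the companion obtained by the $2\leftrightarrow 3$ flip of the legs) I would rewrite $(\Delta_{z_1}\otimes 1)(R_s(z_2)^{\pm 1})$ as an explicit product of the $R_s^{ij}$; conjugating by $R_s^{12}(z_1)$ and using the commutativity of $R_s^{12}$ with $R_s^{13}$ collapses this to $(\Delta_{\hbar,z_1}\otimes 1)(R_s(z_2)^{-1})=R_s^{13}(z_1+z_2)^{-1}R_s^{23}(z_2)^{-1}$, and likewise a cyclic relabelling of legs produces the corresponding formula for $(\Delta_{\hbar,z_1}\otimes 1)(R_s^{21}(-z_2))$. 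Multiplying the two and matching against $R^{13}(z_1+z_2)R^{23}(z_2)=R_s^{31}(-z_1-z_2)R_s^{13}(z_1+z_2)^{-1}R_s^{32}(-z_2)R_s^{23}(z_2)^{-1}$ gives identity~2; identity~3 is proved symmetrically, using $(1\otimes\Delta_{\hbar,z_2})=\mathrm{Ad}(R_s^{23}(z_2))\circ(1\otimes\Delta_{z_2})$ and the $1\leftrightarrow 3$ version of Lemma \ref{LemdeltazR}.

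Finally, the quantum Yang--Baxter equation follows from identities~1,~2 and~4 by the standard coboundary argument, now carrying the spectral shifts. Indeed,
\be
R^{12}(z_1)R^{13}(z_1+z_2)R^{23}(z_2)=R^{12}(z_1)\,(\Delta_{\hbar,z_1}\otimes 1)R(z_2)
\ee
by identity~2, and identity~1 in its componentwise form $R^{12}(z_1)(\Delta_{\hbar,z_1}\otimes 1)(Y)R^{12}(z_1)^{-1}=((\tau_{z_1}\otimes 1)\Delta_\hbar^{\textnormal{op}}\otimes 1)(Y)$ moves $R^{12}(z_1)$ to the right. Rewriting $(\tau_{z_1}\otimes 1)\Delta_\hbar^{\textnormal{op}}=\tau^{(12)}\circ(1\otimes\tau_{z_1})\Delta_\hbar$, applying $\tau_{-z_1}\otimes\tau_{z_1}\otimes 1$ (an algebra automorphism) to reduce $(1\otimes\tau_{z_1})\Delta_\hbar$ to $\Delta_{\hbar,z_1}$, using identity~2 again and identity~4 to absorb the resulting shifts into $R^{13}(z_2)$ and $R^{23}(z_1+z_2)$, and finally the flip $\tau^{(12)}$, one lands on $R^{23}(z_2)R^{13}(z_1+z_2)R^{12}(z_1)$, which is the right-hand side of the QYBE.

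The main obstacle is not the algebra of these manipulations but their analytic meaning. As emphasized in Remark \ref{rem:equality_of_series}, each equality is an identity of formal Laurent series that are expansions, in distinct nested domains, of one common meromorphic function, and the products that appear --- e.g.\ $(\Delta_{\hbar,z_1}\otimes 1)R(z_2)$, or the conjugations by $R_s^{12}(z_1)$ and $R_s^{23}(z_2)$ --- are a priori ill-defined in $Y_\hbar(\fd)^{\otimes 3}\lbb z_1^{\pm},z_2^{\pm}\rbb$ because of infinitely many poles. The work is therefore to perform every step inside the dense subalgebra $Y^\circ_\hbar(\fd)$, where $\tau_z$ and $\Delta_z$ take values in honest Laurent series, and to pass to evaluations on tensor products of finite-dimensional smooth modules --- where by the last items of Theorem \ref{Thm:meroR} and Theorem \ref{Thm:fullR} all the series become genuine rational functions --- in order to legitimise the comparisons across expansion domains; one must also keep careful track of the three-fold permutation bookkeeping in steps~2 and~3, where the two legs of $R_s$ (lying in $S(\fg^*(\CO))$ and $U(\fg(\CO))$ respectively) play asymmetric roles.
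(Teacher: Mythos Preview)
Your proposal is correct and follows essentially the same route as the paper. The paper's own proof is extremely terse: it notes that identity~1 was established in the discussion preceding the theorem, and for identities~2 and~3 it simply cites \cite[Theorem~7.1]{gautam2021meromorphic}, observing that the argument there goes through with $R^-$ replaced by $R_s(z)^{-1}$ and $R^0$ by $1$, using the cocycle condition of Theorem~\ref{Thm:Rcocycle} and the intertwining relation \eqref{eq:rotation_Deltahbar_Deltaz}. Your outline is precisely an unpacking of that GTL argument---you use the same structural inputs (rotation identity, cocycle, Lemma~\ref{LemdeltazR}, translation covariance, and the asymmetry of the two legs of $R_s$)---so the approaches coincide; you are simply more explicit where the paper defers to a reference.
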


\begin{proof}
The first identity is proven in the arguments before the theorem. The remaining identities 2.\&3.\ are deduced by repeating the proof of \cite[Theorem 7.1]{gautam2021meromorphic}, after replacing \(R^-\) with $R_s(z)^{-1}$ and $R^0$ with \(1\), using the fact that $R_s^{-1}$ satisfies the corresponding cocycle condition from Theorem \ref{Thm:Rcocycle} and the fact that \(R_s\) intertwines the coproducts $\Delta_z$ and $\Delta_{\hbar,z}= (\tau_z\otimes 1)\Delta_\hbar$ in the sense of \eqref{eq:rotation_Deltahbar_Deltaz}. 
\end{proof}

This finishes the construction of $R_s$ and $R$, and the proof of Theorem \ref{Thm:meroR} and Theorem \ref{Thm:fullR}.

\subsubsection{Twisting of the \(R\)-matrix}\label{sec:twisting_R_matrices}
We now turn to the consequence of combining the twisting procedure from Section \ref{sec:twisting} and the existence of the \(R\)-matrix. Let \(r\) be a generalized \(r\)-matrix with coefficients in \(\fg\) and \(\rho\) be the associated \(\epsilon\)-graded \(r\)-matrix with coefficients in \(\fd\). Assume that \(\rho\) depends on the difference \(t_1-t_2\) of its variables. Then \(\CA_\hbar(\fd,\rho)\) is a twist of \(Y_\hbar(\fd)\) via 
\begin{equation}
    F \coloneqq \CE_r^{-1} \CE_\gamma \in (S(\fg^*(\CO)) \otimes U(\fg(\CO)))[\![\hbar]\!].
\end{equation}
This can be used to adjust Theorem \ref{Thm:fullR} to \(\CA_\hbar(\fd,\rho)\). However, in doing so one has to be careful, since \(F(z) \coloneqq (\tau_z \otimes 1)F \in (\CA_\hbar(\fd,\rho) \otimes_{\C[\![\hbar]\!]} \CA_\hbar(\fd,\rho))[\![z]\!]\) in general and therefore multiplications of \(F(z)\) and \(R(z)\) are not well-defined.

There are two options to resolve this problem: first one can only consider the evaluation on smooth modules, where \(R(z)\) is truncated to a polynomial in \(z^{-1}\), or to consider rational \(\rho\) in which case each $\hbar$-order of $\tau_z\otimes 1 (F)$ is a polynomial in $z$ and the above multiplication still has a meaning. 

Let us consider the former option first. Consider two smooth modules \(M,N\) of \(Y_\hbar(\fd)\) which are topologically free \(\C[\hbar]\!]\), where we recall that \(Y_\hbar(\fd)\) is isomorphic to \(\CA_\hbar (\fd,\rho)\) as \(\C[\![\hbar]\!]\)-algebra. This means that \(M = U[\![\hbar]\!],N = V[\![\hbar]\!]\) for two vector spaces \(U,V\) and \(I_at^k, I^at^k\) act trivially for all \(a\in\{1,\dots,d\}\) and \(k > K\) for a sufficiently large \(K \in \N\). We have isomorphisms:
\begin{equation}
    F_{M,N}(z) \colon M \otimes_{\Delta_{\hbar,z}} N \to M \otimes_{\Delta_{\rho,\hbar,z}} N \textnormal{ and } R_{M,N}(z) \colon M \otimes_{\Delta_{\hbar,z}} N \to \tau_z(N \otimes_{\Delta_{\hbar,-z}} M).
\end{equation} 
which are defined by invertible elements 
\begin{equation}
    F_{M,N}(z) \in \End(U \otimes V)[\![z]\!][\![\hbar]\!] \textnormal{ and }R_{M,N}(z) \in \End(U \otimes V)[z^{-1}][\![\hbar]\!].
\end{equation}
Here, the fact that \(R_{M,N}(z)\) is polynomial in every \(\hbar\)-coefficient is crucial and follows immediately from the smoothness of \(M,N\) and the definition of \(R\). In particular, multiplications of \(R_{M,N}(z)\) and \(F_{M,N}(z)\) are well-defined in \(\End(U \otimes V)(\!(z)\!)[\![\hbar]\!]\).
Combined with \((\tau_z \otimes \tau_z)(F) = F\)
by virtue of Proposition \ref{Prop:TFT} and Theorem \ref{thm:Rmatrix_in_section} we obtain the following.

\begin{Prop}
    For any topologically free smooth modules \(M,N\) of \(\CA_\hbar(\fd,\rho)\) the expression 
        \begin{equation}
            R_{\rho;M,N}(z)\coloneqq F_{N,M}(-z) R_{M,N}(z) F_{M,N}(z)^{-1}    
        \end{equation}
        defines an isomorphism
        \begin{equation}
            M \otimes_{\Delta_{\rho,\hbar,z}} N \cong \tau_z(N \otimes_{\Delta_{\rho,\hbar,-z}} M).
        \end{equation}
    Furthermore,         \((\tau_{z_1}\otimes \tau_{z_2})R_{\rho;M,N}(z_3)=R_{\rho;M,N}(z_3+z_1-z_2)\) holds.
\end{Prop}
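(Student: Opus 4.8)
\emph{Setup and well-definedness.} The plan is to deduce everything from three facts already established: the twist relation of Proposition~\ref{Prop:twistingF}, the $R$-matrix identities of Theorem~\ref{thm:Rmatrix_in_section}, and the translation invariance $(\tau_{z_1}\otimes\tau_{z_2})(F)=F$ of Proposition~\ref{Prop:TFT}. Write $M=U\lbb\hbar\rbb$, $N=V\lbb\hbar\rbb$. Since $M$ and $N$ are topologically free and smooth, one has, as recorded just above, $R_{M,N}(z)\in\End(U\otimes V)[z^{-1}]\lbb\hbar\rbb$ with each $\hbar$-coefficient a polynomial in $z^{-1}$, while $F_{M,N}(z)=(\tau_z\otimes 1)(F)$ and $F_{N,M}(-z)$ lie in $\End(U\otimes V)\lbb z\rbb\lbb\hbar\rbb$ and are invertible there. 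Hence $R_{\rho;M,N}(z)$ is a well-defined invertible element of $\End(U\otimes V)\lpp z\rpp\lbb\hbar\rbb$, and it remains only to identify which module structures it intertwines and to verify the translation identity.

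\emph{Rewriting the module structures.} Applying the algebra homomorphism $\tau_z\otimes 1$ to the twist relations of Proposition~\ref{Prop:twistingF} yields, on $M\otimes N$,
\[
\Delta_{\rho,\hbar,z}(a)=F_{M,N}(z)\,\Delta_{\hbar,z}(a)\,F_{M,N}(z)^{-1},\qquad (\tau_z\otimes 1)\Delta_{\rho,\hbar}^{\mathrm{op}}(a)=(\tau_z\otimes 1)(F^{21})\,(\tau_z\otimes 1)\Delta_{\hbar}^{\mathrm{op}}(a)\,(\tau_z\otimes 1)(F^{21})^{-1},
\]
where the second uses $\Delta_{\rho,\hbar}^{\mathrm{op}}=F^{21}\Delta_\hbar^{\mathrm{op}}(F^{21})^{-1}$. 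Since $T$ is a coderivation, $\Delta_\hbar\tau_z=(\tau_z\otimes\tau_z)\Delta_\hbar$, and a short computation gives $(\tau_z\otimes 1)\Delta_\hbar^{\mathrm{op}}(a)=\Delta_{\hbar,-z}^{\mathrm{op}}(\tau_z a)$ and likewise $(\tau_z\otimes 1)\Delta_{\rho,\hbar}^{\mathrm{op}}(a)=\Delta_{\rho,\hbar,-z}^{\mathrm{op}}(\tau_z a)$; thus the target module $\tau_z(N\otimes_{\Delta_{\rho,\hbar,-z}}M)$ has $\CA_\hbar(\fd,\rho)$ acting through $a\mapsto\Delta_{\rho,\hbar,-z}^{\mathrm{op}}(\tau_z a)$. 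Transporting $F_{N,M}(-z)=(\tau_{-z}\otimes 1)(F)$ through the flip and using $(\tau_z\otimes\tau_z)(F)=F$ identifies it, as an operator on $M\otimes N$, with $(\tau_z\otimes 1)(F^{21})$; combined with the second displayed relation this shows that $F_{N,M}(-z)$ intertwines $\Delta_{\hbar,-z}^{\mathrm{op}}(\tau_z\cdot)$ with $\Delta_{\rho,\hbar,-z}^{\mathrm{op}}(\tau_z\cdot)$.

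\emph{Assembly and translation.} Combining the previous step with the intertwining property $R_{M,N}(z)\,\Delta_{\hbar,z}(a)=(\tau_z\otimes 1)\Delta_\hbar^{\mathrm{op}}(a)\,R_{M,N}(z)$ from Theorem~\ref{thm:Rmatrix_in_section}(1), the factors $F_{M,N}(z)^{\pm 1}$ telescope and one obtains
\[
R_{\rho;M,N}(z)\,\Delta_{\rho,\hbar,z}(a)=\Delta_{\rho,\hbar,-z}^{\mathrm{op}}(\tau_z a)\,R_{\rho;M,N}(z),
\]
which is exactly the asserted isomorphism $M\otimes_{\Delta_{\rho,\hbar,z}}N\cong\tau_z(N\otimes_{\Delta_{\rho,\hbar,-z}}M)$ (invertibility coming from the first step). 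For the translation identity, apply $\tau_{z_1}\otimes\tau_{z_2}$ to the definition of $R_{\rho;M,N}$ and use $(\tau_{z_1}\otimes\tau_{z_2})R_{M,N}(z_3)=R_{M,N}(z_3+z_1-z_2)$ from Theorem~\ref{thm:Rmatrix_in_section}(4) together with the corresponding identities for $F_{M,N}(z_3)$ and for the flipped factor (both consequences of $(\tau_{z_1}\otimes\tau_{z_2})(F)=F$); this gives $(\tau_{z_1}\otimes\tau_{z_2})R_{\rho;M,N}(z_3)=R_{\rho;M,N}(z_3+z_1-z_2)$.

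The main obstacle is the bookkeeping in the second step: tracking which tensor factor each $\tau_z$ acts on, and using translation invariance of $F$ to identify $F_{N,M}(-z)$, a priori an operator on $N\otimes M$, with $(\tau_z\otimes 1)(F^{21})$ acting on $M\otimes N$. Once this identification is in place, the telescoping in the third step is purely formal, and the translation identity is immediate.
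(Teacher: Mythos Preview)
Your proof is correct and follows exactly the approach the paper indicates: the paper simply states that the proposition is obtained by combining Theorem~\ref{thm:Rmatrix_in_section} with the translation invariance \((\tau_z\otimes\tau_z)(F)=F\) from Proposition~\ref{Prop:TFT}, and you have carefully unpacked this. The key bookkeeping step you flag---identifying \(F_{N,M}(-z)\), after flipping to \(M\otimes N\), with \((\tau_z\otimes 1)(F^{21})\) via \((1\otimes\tau_{-z})(F^{21})=(\tau_z\otimes 1)(\tau_{-z}\otimes\tau_{-z})(F^{21})=(\tau_z\otimes 1)(F^{21})\)---is exactly the point where translation invariance of \(F\) enters, and once this is in place the telescoping and translation identity are indeed formal.
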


In order to make sense of the analog of the cocycle condition and Yang-Baxter equation in this setting, consider three finite topologically free smooth modules \(M_1,M_2,M_3\). In particular, \(M_i = V_i[\![\hbar]\!]\) for some finite-dimensional vector space \(V_i\) and every \(i \in \{1,2,3\}\). Then 
\be
\begin{split}
    &R_{\rho;M_1 \otimes_{\Delta_{\rho,\hbar,z_1}} M_2,M_3}(z_2) \in \End(V_1 \otimes V_2 \otimes V_3)[\![z_1]\!](\!(z_2)\!)[\![\hbar]\!] 
    \\&R_{\rho;M_1,M_3}(z_1+z_2)R_{\rho;M_2,M_3}(z_2) \in \End(V_1 \otimes V_2 \otimes V_3)(\!(z_1+z_2)\!)(\!(z_2)\!)[\![\hbar]\!].
\end{split}
\ee
where we identified \(M_1 \otimes_{\Delta_{\rho,\hbar,z_1}} M_2 \cong (V_1 \otimes V_2)[\![z_2]\!][\![\hbar]\!]\).
When stating that they are equal, as in the proof of equation \eqref{eq:Rcocycle}, we mean again that they are expansions of the same element in  
\be\label{eq:cocycle_large_space}
    \End(V_1 \otimes V_2 \otimes V_3)\lbb z_1, z_2\rbb [ z_1^{-1}, z_2^{-1}, (z_1+z_2)^{-1}] [\![\hbar]\!].
\ee
Similarly, we have
\begin{equation}
    \begin{split}
        &R_{\rho;M_1, M_2 \otimes_{\Delta_{\rho,\hbar,z_2}}M_3}(z_1+ z_2) \in \End(V_1 \otimes V_2 \otimes V_3)[\![z_2]\!](\!(z_1+z_2)\!)[\![\hbar]\!]
    \\&R_{\rho;M_1,M_3}(z_1+z_2)R_{\rho;M_1,M_2}(z_2) \in \End(V_1 \otimes V_2 \otimes V_3)(\!(z_2)\!)(\!(z_1+z_2)\!)[\![\hbar]\!]
    \end{split}
\end{equation}
and their equality means that they are expansions of the same element in \eqref{eq:cocycle_large_space}.
Finally, both \(R_\rho^{12}(z_1)R_\rho^{13}(z_1+z_2)R_\rho^{23}(z_2)\) and \(R_\rho^{23}(z_2)R_\rho^{13}(z_1+z_2)R_\rho^{12}(z_1)\) are elements of \eqref{eq:cocycle_large_space} as well.
Using Theorem \ref{thm:Rmatrix_in_section} and Proposition \ref{Prop:twistingF}, we obtain the following statement. 

\begin{Prop}
   For three finite topologically free smooth modules \(M_1,M_2,M_3\) we have 
        \begin{equation}
            \begin{split}
                &R_{\rho;M_1 \otimes_{\Delta_{\rho,\hbar,z_1}} M_2,M_3}(z_2) = R_{\rho;M_1,M_3}(z_1+z_2)R_{\rho;M_2,M_3}(z_2); 
                \\&R_{\rho;M_1, M_2 \otimes_{\Delta_{\rho,\hbar,z_2}}M_3}(z_1+ z_2) = R_{\rho;M_1,M_3}(z_1+z_2)R_{\rho;M_1,M_2}(z_2);
            \end{split}
        \end{equation}
        In particular, the following version of the quantum Yang-Baxter equation holds:
        \be
            \begin{split}
                &R_{\rho,M_1,M_2}(z_1)R_{\rho;M_1,M_3}(z_1+z_2)R_{\rho;M_2,M_3}(z_2)\\&=R_{\rho;M_2,M_3}(z_2)R_{\rho;M_1,M_3}(z_1+z_2)R_{\rho;M_1,M_2}(z_2).
            \end{split}
        \ee
\end{Prop}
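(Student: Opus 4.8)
The strategy is to reduce every assertion to the corresponding property of the untwisted $R$-matrix from Theorem~\ref{thm:Rmatrix_in_section} by conjugating with the twist $F$, exactly as in the classical fact that a Drinfeld twist $F$ of a quasitriangular structure $(\Delta,R)$ produces the quasitriangular structure $(\Delta_F,\,F^{21}RF^{-1})$. The only features not present in that purely algebraic statement are the spectral parameter and the passage to modules, and both are governed by the translation invariance of $F$ from Proposition~\ref{Prop:TFT}. Concretely, one first observes that, under the identifications of the preceding proposition, $R_{\rho;M,N}(z)$ is the action on $M\otimes N$ of a tensor of the shape $F^{21}RF^{-1}$ with appropriately matched spectral arguments, where $F(z)\coloneqq(\tau_z\otimes 1)F$; here one uses that $\tau_z=e^{zT}$ is a Hopf algebra homomorphism (Lemma~\ref{Lem:nilT}), that $(\tau_{z_1}\otimes\tau_{z_2})F=F$ (Proposition~\ref{Prop:TFT}), and the factorization $\Delta_{\rho,\hbar,z}=F(z)\Delta_{\hbar,z}F(z)^{-1}$.

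The first concrete step is to put the cocycle identity of Proposition~\ref{Prop:twistingF}.2 into the form convenient for conjugation. Substituting $\Delta_{\rho,\hbar}=F\Delta_\hbar F^{-1}$ into that identity, and using $(\Delta_{\rho,\hbar,z_1}\otimes 1)(x)=F^{12}(z_1)(\Delta_{\hbar,z_1}\otimes 1)(x)F^{12}(z_1)^{-1}$ together with the mirror statement in the third leg, one arrives at the standard cocycle relation
\[
F^{12}(z_1)\,(\Delta_{\hbar,z_1}\otimes 1)\big(F(z_2)\big)\;=\;F^{23}(z_2)\,(1\otimes\Delta_{\hbar,z_2})\big(F(z_1+z_2)\big),
\]
read on a triple tensor product $M_1\otimes M_2\otimes M_3$ of topologically free finite smooth modules. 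As in the proof of \eqref{eq:Rcocycle}, each $\hbar$-coefficient of every series appearing is a Laurent polynomial in the $z$-variables, so all products are legitimate and ``equality of series'' has the meaning of Remark~\ref{rem:equality_of_series}, i.e.\ equality in the ring~\eqref{eq:cocycle_large_space}.

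Next I would establish the two hexagon relations. By the formula $R_{\rho;M,N}(z)=F_{N,M}(-z)R_{M,N}(z)F_{M,N}(z)^{-1}$, and since ``first factor $=M_1\otimes_{\Delta_{\rho,\hbar,z_1}}M_2$'' means applying $\Delta_{\rho,\hbar,z_1}$ to the first leg, the left-hand side of the first hexagon unfolds, after unwinding flips, into a product of $\Delta_{\rho,\hbar,z_1}$ (resp.\ $1\otimes\Delta_{\rho,\hbar,z_1}$) applied to $F^{21}(-z_2)$, to $R(z_2)$, and to $F(z_2)^{-1}$; rewriting $(\Delta_{\rho,\hbar,z_1}\otimes 1)(\cdot)$ as $F^{12}(z_1)(\Delta_{\hbar,z_1}\otimes 1)(\cdot)F^{12}(z_1)^{-1}$, invoking the hexagon identity $(\Delta_{\hbar,z_1}\otimes 1)R(z_2)=R^{13}(z_1+z_2)R^{23}(z_2)$ of Theorem~\ref{thm:Rmatrix_in_section}.2, and collapsing the stray $F^{12}(z_1)^{\pm1}$ factors by the standard cocycle relation (together with $(\tau_{z_1}\otimes\tau_{z_2})F=F$), the product telescopes to $R_{\rho;M_1,M_3}(z_1+z_2)R_{\rho;M_2,M_3}(z_2)$. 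The second hexagon relation is obtained identically from Theorem~\ref{thm:Rmatrix_in_section}.3 and the mirrored cocycle relation. The quantum Yang--Baxter equation then follows formally from the two hexagons, exactly as the untwisted one is derived in Theorem~\ref{thm:Rmatrix_in_section} (equivalently, by conjugating the untwisted QYBE by $F^{12}(z_1)F^{13}(z_1+z_2)F^{23}(z_2)$ and simplifying with the cocycle relation). Finally, $(\tau_{z_1}\otimes\tau_{z_2})R_{\rho;M,N}(z_3)=R_{\rho;M,N}(z_3+z_1-z_2)$ is immediate from the corresponding property of $R$ in Theorem~\ref{thm:Rmatrix_in_section}.4 and from $(\tau_{z_1}\otimes\tau_{z_2})F=F$.

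The main obstacle is purely combinatorial: one has to track precisely which spectral argument and which tensor legs each occurrence of $F$ and $R$ carries, so that the cocycle relation gets applied in exactly the correctly shifted form, and one must verify at each step that the manipulation lives inside a space of $\End(V_1\otimes V_2\otimes V_3)$-valued series in which multiplication is defined modulo every power of $\hbar$, so that the equalities of series retain the meaning fixed in Remark~\ref{rem:equality_of_series}. No input beyond Proposition~\ref{Prop:twistingF}, Proposition~\ref{Prop:TFT} and Theorem~\ref{thm:Rmatrix_in_section} enters.
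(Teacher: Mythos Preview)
Your proposal is correct and follows the same approach the paper indicates: it simply says the result is obtained ``using Theorem~\ref{thm:Rmatrix_in_section} and Proposition~\ref{Prop:twistingF}'', i.e.\ the standard Drinfeld-twist conjugation argument you have spelled out, with Proposition~\ref{Prop:TFT} supplying the translation compatibility of $F$. Your sketch adds the explicit bookkeeping the paper omits (the translated cocycle identity and the telescoping of the $F$-factors), but the underlying idea and the inputs are identical; the only extraneous item is your final remark on $(\tau_{z_1}\otimes\tau_{z_2})R_{\rho}$, which belongs to the preceding proposition rather than this one.
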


Let us now turn to the second possibility to resolve the aforementioned problem. Therefore, we assume that \(\rho\) is rational for the reminder of this section.

\begin{Thm}\label{thm:twisted_Rmatrix}
If \(\rho\) is rational, the multiplication 
\be
    R_\rho(z) \coloneqq F^{21}(-z)R(z)F(z)^{-1}
\ee
is well-defined and has the following properties:
\begin{enumerate}
      
        \item \(
            (\tau_z\otimes 1)\Delta_{\rho,\hbar}^{\textnormal{op}}(a)=R_\rho(z)  (\tau_z \otimes 1)\Delta_{\rho,\hbar}(a) R_\rho(z)^{-1}\) for all \(a \in \CA_\hbar^\circ(\fd,\rho)\);
        
        \item \((\Delta_{\rho,\hbar, z_1}\otimes 1)R_\rho(z_2)=R_\rho^{13}(z_1+z_2)R^{23}_\rho(z_2)\), \((1\otimes \Delta_{\rho,\hbar, z_2})R_\rho(z_1+z_2)=R_\rho^{13}(z_1+z_2)R_\rho^{12}(z_2)\) and \(R_\rho(z)\) is a solution of quantum Yang-Baxter equation:
        \be
            R_\rho^{12}(z_1)R_\rho^{13}(z_1+z_2)R_\rho^{23}(z_2)=R_\rho^{23}(z_2)R_\rho^{13}(z_1+z_2)R_\rho^{12}(z_1);
        \ee

        \item \(R_\rho(z)\) satisfies $(\tau_{z_1}\otimes \tau_{z_2})R_\rho(z_3)=R_\rho(z_3+z_1-z_2)$;

\end{enumerate}
\end{Thm}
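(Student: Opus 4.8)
The plan is to deduce the theorem from the corresponding statements for $R(z)$ in Theorem~\ref{thm:Rmatrix_in_section} together with the twisting properties of $F = \CE_r^{-1}\CE_\gamma$ recorded in Proposition~\ref{Prop:twistingF} and Proposition~\ref{Prop:TFT}, the only real work being the spectral-parameter bookkeeping. I would begin with well-definedness. Since $\rho$ is rational, Proposition~\ref{Prop:twistingF} gives $F \in (S(\hbar\fg^*[t]) \otimes U(\fg[t]))\lbb\hbar\rbb$, and because $T$ acts locally nilpotently on $\fg[t]$, every $\hbar$-coefficient of $F$ is annihilated by a high power of $T \otimes 1$ and of $1 \otimes T$; hence each $\hbar$-coefficient of $F(z) \coloneqq (\tau_z \otimes 1)F$, of $F^{21}(-z) \coloneqq ((\tau_{-z}\otimes 1)F)^{21}$, and of their inverses, is a polynomial in $z$. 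On the other hand $R(z) \in (Y_\hbar^\circ(\fd) \otimes_{\C\lbb\hbar\rbb} Y_\hbar^\circ(\fd))\lpp z^{-1}\rpp$ with all coefficients honest finite elements of $Y_\hbar^\circ(\fd)^{\otimes 2}$, so multiplying it on either side by a factor that is polynomial in $z$ at each $\hbar$-order is legitimate, and $R_\rho(z) = F^{21}(-z)R(z)F(z)^{-1}$ is a well-defined element of $(\CA_\hbar^\circ(\fd,\rho) \otimes_{\C\lbb\hbar\rbb} \CA_\hbar^\circ(\fd,\rho))\lpp z^{-1}\rpp$, recalling that $\CA_\hbar(\fd,\rho)$ and $Y_\hbar(\fd)$ coincide as $\C\lbb\hbar\rbb$-algebras.

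For item~1, Proposition~\ref{Prop:twistingF}.1 (applied with $\rho_1 = \gamma$, $\rho_2 = \rho$) gives $\Delta_{\rho,\hbar} = F\Delta_\hbar F^{-1}$, and applying the algebra homomorphism $\tau_z \otimes 1$ yields $\Delta_{\rho,\hbar,z} = F(z)\Delta_{\hbar,z}F(z)^{-1}$. Passing to opposites, $\Delta_{\rho,\hbar}^{\textnormal{op}} = F^{21}\Delta_\hbar^{\textnormal{op}}(F^{21})^{-1}$; here I would use that $F$, hence $F^{21}$, commutes with $T \otimes 1 + 1 \otimes T$ by Proposition~\ref{Prop:TFT}, so $(\tau_z \otimes \tau_z)F^{21} = F^{21}$ and therefore $(\tau_z \otimes 1)F^{21} = (1 \otimes \tau_{-z})F^{21} = F^{21}(-z)$. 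Combining this with $(\tau_z \otimes 1)\Delta_\hbar^{\textnormal{op}}(a) = R(z)\Delta_{\hbar,z}(a)R(z)^{-1}$ from Theorem~\ref{thm:Rmatrix_in_section}.1 and with $\Delta_{\hbar,z}(a) = F(z)^{-1}\Delta_{\rho,\hbar,z}(a)F(z)$ gives, after telescoping,
\be
(\tau_z \otimes 1)\Delta_{\rho,\hbar}^{\textnormal{op}}(a) = F^{21}(-z)R(z)F(z)^{-1}\,\Delta_{\rho,\hbar,z}(a)\,F(z)R(z)^{-1}F^{21}(-z)^{-1} = R_\rho(z)\Delta_{\rho,\hbar,z}(a)R_\rho(z)^{-1},
\ee
which is item~1. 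Item~3 is then immediate: Theorem~\ref{thm:Rmatrix_in_section}.4 gives $(\tau_{z_1}\otimes\tau_{z_2})R(z_3) = R(z_3+z_1-z_2)$, while $(\tau_{z_1}\otimes\tau_{z_2})$ fixes both $F(z)$ and $F^{21}(z)$ by the $T\otimes 1 + 1\otimes T$-invariance of $F$, whence the same relation for $R_\rho$.

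For item~2, I would follow the argument of \cite[Theorem 7.1]{gautam2021meromorphic} essentially verbatim, with the two factors there replaced by $R_s(z)^{-1}$ and $F$: first establish the two hexagon identities
\be
(\Delta_{\rho,\hbar,z_1}\otimes 1)R_\rho(z_2) = R_\rho^{13}(z_1+z_2)R_\rho^{23}(z_2), \qquad (1\otimes\Delta_{\rho,\hbar,z_2})R_\rho(z_1+z_2) = R_\rho^{13}(z_1+z_2)R_\rho^{12}(z_2),
\ee
by substituting $R_\rho(z) = F^{21}(-z)R(z)F(z)^{-1}$, using the corresponding identities for $R(z)$ from Theorem~\ref{thm:Rmatrix_in_section}.2 and~3, the conjugation relation $\Delta_{\rho,\hbar,z} = F(z)\Delta_{\hbar,z}F(z)^{-1}$, and the cocycle identity $(\Delta_{\rho,\hbar}\otimes 1)(F)F^{12} = (1\otimes\Delta_{\rho,\hbar})(F)F^{23}$ of Proposition~\ref{Prop:twistingF}.2 read with the shifts produced by applying $\tau_{z_1}\otimes\tau_{z_1}\otimes 1$ and exploiting the $\tau\otimes\tau$-invariance of $F$ to redistribute the translations. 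The quantum Yang–Baxter equation for $R_\rho$ then follows formally from these two hexagon identities by the standard pentagon-type computation. Throughout, equalities of formal series are understood as in Remark~\ref{rem:equality_of_series} (common expansions of a rational function in different domains), and at each stage one checks that the products land in the appropriate completion, which is again guaranteed by the $\hbar$-coefficients of the $F$-factors being polynomial in the spectral variables.

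The main obstacle is precisely this last point within item~2: turning the static cocycle identity of Proposition~\ref{Prop:twistingF}.2 into its spectral form, relating $(\Delta_{\rho,\hbar,z_1}\otimes 1)(F(z_2))$ to a product of $F^{13}(z_1+z_2)$ and $F^{12}(z_2)$ with matching domains of convergence, and verifying that every intermediate product of a $z$-polynomial $F$-factor with a Laurent-in-$z^{-1}$ $R$-factor is a legitimate element of the relevant ring before the common-expansion identification is invoked. Once this bookkeeping is in place, all remaining steps reduce to the formal manipulations already carried out in the untwisted case.
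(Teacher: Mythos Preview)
Your proposal is correct and follows essentially the same route as the paper, which simply cites Proposition~\ref{Prop:twistingF} for well-definedness, derives item~1 from Theorem~\ref{Thm:fullR} and Proposition~\ref{Prop:twistingF}, and says items~2~and~3 are proved ``the same as for Theorem~\ref{Thm:fullR}.2.\&3.''; you have filled in exactly these details. One small slip in your item~3: $(\tau_{z_1}\otimes\tau_{z_2})$ does not \emph{fix} $F(z_3)$ but sends it to $F(z_3+z_1-z_2)$ (and likewise $F^{21}(-z_3)\mapsto F^{21}(-(z_3+z_1-z_2))$), so all three factors of $R_\rho(z_3)$ shift by the same amount, which is what gives the claim.
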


The well-definiteness of \(R_\rho(z)\) follows from the fact that for rational \(\rho\) we know that \(F \in (S(\fg^*[t]) \otimes U(\fg[t]))[\![\hbar]\!]\); see Proposition \ref{Prop:twistingF}.
The proof of Theorem \ref{thm:twisted_Rmatrix}.1.\ then follows immediately from Theorem \ref{Thm:fullR} and Proposition \ref{Prop:twistingF}. The proof of 
Theorem \ref{thm:twisted_Rmatrix}.2.\&3.\ is then the same as for Theorem \ref{Thm:fullR}.2.\&3.

Recall from the Section \ref{subsec:loopapply} that the coproduct \(\Delta_{\rho,\hbar,z}\) defines a product 
\begin{equation}
    \CY_{\rho,\hbar} \colon S(\fd(\rho))[\![\hbar]\!] \otimes_{\C[\![\hbar]\!]} S(\fd(\rho))[\![\hbar]\!] \to S(\fd(\rho))[\![\hbar]\!] [\![z]\!]   
\end{equation}
by identifying the continuous linear dual of \(\CA_\hbar(\fd,\hbar)\) with \(S(\fd(\rho))[\![\hbar]\!]\) as a \(\C[\![\hbar]\!]\)-module. Using the \(R\)-matrix \(R_\rho(z)\) for rational \(\rho\) we obtain the following result.

\begin{Prop}\label{prop:naiv_quantum_vertex}
    The tuple $(S(\fd(\rho))[\![\hbar]\!], \CY_{\rho,\hbar}, \Omega_\rho, T, R_\rho)$ defines a quantum vertex algebra that quantizes the quasi-classical commutative vertex algebra \(S(\fd(\rho))\) described in Section \ref{sec:quasi_classical_vertex}.
\end{Prop}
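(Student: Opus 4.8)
The plan is to verify the axioms of a quantum vertex algebra in the sense of \cite{Etingof2000quantumvertex} one at a time, translating each of them via the perfect pairing of Lemma \ref{Lem:group} into a statement about the Hopf algebra $\CA_\hbar(\fd,\rho) \cong Y_\hbar(\fd)$ that has already been proved. Recall that $\CY_{\rho,\hbar}$ is by definition the $\C[\![\hbar]\!]$-linear dual of the holomorphic coproduct $\Delta_{\rho,\hbar,z} = (\tau_z\otimes 1)\Delta_{\rho,\hbar}$, that $\Omega_\rho$ is dual to the counit $\epsilon$, and that $T$ on $S(\fd(\rho))[\![\hbar]\!]$ is dual to the Hopf differential $T$ of $\CA_\hbar(\fd,\rho)$. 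Then the vacuum axiom $\CY_{\rho,\hbar}(\Omega_\rho,z) = \mathrm{id}$ and the creation axiom $\CY_{\rho,\hbar}(A,z)\Omega_\rho = A + \CO(z)$, $T\Omega_\rho = 0$, are the duals of $(\epsilon\otimes 1)\Delta_{\rho,\hbar,z} = \mathrm{id}$, $(1\otimes\epsilon)\Delta_{\rho,\hbar,z} = \tau_z$ and $\epsilon T = 0$; here one uses $\epsilon\tau_z = \epsilon$ and $\tau_0 = \mathrm{id}$, and the nilpotency of $T$ on the generators (Lemma \ref{Lem:nilT}), which is exactly what guarantees that $\tau_z^{*}A$ lies in $S(\fd(\rho))[\![\hbar]\!][\![z]\!]$ and forces us to work with $\CA^\circ_\hbar(\fd,\rho)$ rather than the full completion. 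Likewise, translation covariance $[T,\CY_{\rho,\hbar}(A,z)] = \partial_z\CY_{\rho,\hbar}(A,z) = \CY_{\rho,\hbar}(TA,z)$ is the dual of the two consequences of Lemma \ref{Lem:nilT}, namely that $T$ is a coderivation for $\Delta_{\rho,\hbar,z}$ and that $\partial_z\Delta_{\rho,\hbar,z} = (T\otimes 1)\Delta_{\rho,\hbar,z}$.

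The axioms controlling the braiding $\mathcal{S} = R_\rho$ are precisely what Theorem \ref{thm:twisted_Rmatrix} was designed to supply: part 2 gives the two hexagon identities $(\Delta_{\rho,\hbar,z_1}\otimes 1)R_\rho(z_2) = R_\rho^{13}(z_1+z_2)R_\rho^{23}(z_2)$ and $(1\otimes\Delta_{\rho,\hbar,z_2})R_\rho(z_1+z_2) = R_\rho^{13}(z_1+z_2)R_\rho^{12}(z_2)$ together with the quantum Yang--Baxter equation, and part 3 gives the shift property $(\tau_{z_1}\otimes\tau_{z_2})R_\rho(z_3) = R_\rho(z_3+z_1-z_2)$. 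The remaining compatibility, $\mathcal{S}$-locality (braided commutativity) linking $\CY_{\rho,\hbar}$ with $R_\rho$, is the linear dual of the conjugation relation Theorem \ref{thm:twisted_Rmatrix}.1, $(\tau_z\otimes 1)\Delta_{\rho,\hbar}^{\textnormal{op}}(a) = R_\rho(z)(\tau_z\otimes 1)\Delta_{\rho,\hbar}(a)R_\rho(z)^{-1}$, combined with the associativity of $\CY_{\rho,\hbar}$ already recorded after \eqref{eq:quantum_vertex_opertation} (the dual of coassociativity of $\Delta_{\rho,\hbar,z}$): dualizing exhibits $\CY_{\rho,\hbar}(A,z)\CY_{\rho,\hbar}(B,w)C$ and $\CY_{\rho,\hbar}(B,w)\CY_{\rho,\hbar}(A,z)C$ as two expansions, in the sense of Remark \ref{rem:equality_of_series}, of a single common object related by $R_\rho(z-w)$. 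The crossing/unitarity relation $R_\rho^{21}(-z)R_\rho(z) = 1$, if it is part of the chosen axiom list, is read off from the explicit presentation $R_\rho(z) = F^{21}(-z)R(z)F(z)^{-1}$, $R(z) = R_s^{21}(-z)R_s(z)^{-1}$, $R_s(z) = e^{r_{\textnormal{sing}}(z)}$, using the shift property and the weak cocommutativity of $\Delta_z$ that entered the construction of $R$ in Section \ref{subsec:fullR}.

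Finally one must check the quasi-classical compatibility: modulo $\hbar$ the tuple $(S(\fd(\rho))[\![\hbar]\!],\CY_{\rho,\hbar},\Omega_\rho,T,R_\rho)$ must reduce to the commutative vertex algebra $(S(\fd(\rho)),Y,\Omega_\rho,T)$ and the quasi-classical structure $\varrho$ of Section \ref{sec:quasi_classical_vertex}. Since $\Delta_{\rho,\hbar}\equiv\Delta$ modulo $\hbar$ by Theorem \ref{Prop:qHopf} (the cocommutative coproduct of $U(\fd(\CO))$, whose dual in the exponential coordinates of Lemma \ref{Lem:group} is the symmetric algebra product on $S(\fd(\rho))$), the dual product reduces to $\CY_{\rho,\hbar}(A,z)B \equiv e^{zT}(A)\cdot B = Y(A,z)B$. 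For the braiding, $r_{\textnormal{sing}}(z)\in\hbar\cdot(\cdots)$ gives $R_s(z) = 1 + \CO(\hbar)$, hence $R(z) = 1 + \hbar\varrho_\gamma(z) + \CO(\hbar^2)$ with $\varrho_\gamma(z) = \gamma(t_1+z-t_2)$; twisting by $F$, which satisfies $F = 1 + \CO(\hbar)$ and $F - F^{21} = \hbar t + \CO(\hbar^2)$ with $t = \rho - \gamma$ (Proposition \ref{Prop:twistingF}), yields $R_\rho(z) = 1 + \hbar\varrho(z) + \CO(\hbar^2)$ with $\varrho(z) = \rho(t_1+z-t_2)$. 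The four properties of $\varrho$ listed in Section \ref{sec:quasi_classical_vertex} then emerge as the $\hbar$-linear parts of the quantum hexagon, Yang--Baxter, translation and unitarity identities just verified.

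The dualizations of the first paragraph are mechanical once the pairing conventions are fixed, and the quasi-classical bookkeeping of the third paragraph is routine. The main obstacle is in the second paragraph: matching the exact form of the $\mathcal{S}$-locality axiom of \cite{Etingof2000quantumvertex} — which tensor leg carries the braiding, whether $R_\rho$ or $R_\rho^{21}$ appears, and the sign of the spectral argument — with the dual of Theorem \ref{thm:twisted_Rmatrix}.1, and verifying the crossing symmetry; both require careful tracking of the flip $\tau$, of the non-commutativity of the $U(\fg[t])$-leg of $r_{\textnormal{sing}}$, and of the various domains of convergence in which the formal series are to be compared.
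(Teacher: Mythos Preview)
The paper does not actually supply a proof of this proposition; it is stated immediately before Section~\ref{sec:RavGrG} and left without argument, presumably because the authors regard it as a routine dualization of Theorem~\ref{thm:twisted_Rmatrix} and the surrounding Hopf-algebraic statements. Your proposal is therefore not competing with a proof in the paper but filling in an omitted one, and the overall strategy you outline---dualize each Hopf-algebra identity through the pairing of Lemma~\ref{Lem:group} to obtain the corresponding quantum vertex algebra axiom---is exactly the intended route and is sound in outline.

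Two points deserve a closer look before this becomes a complete proof. First, the $\CS$-locality axiom in \cite{Etingof2000quantumvertex} is formulated as an equality of compositions involving $\CY$ on both sides and the braiding inserted at a specific place; your reduction to Theorem~\ref{thm:twisted_Rmatrix}.1 is correct, but you should make explicit that the dual of $R_\rho(z)\Delta_{\rho,\hbar,z}(a)R_\rho(z)^{-1}=(\tau_z\otimes 1)\Delta_{\rho,\hbar}^{\mathrm{op}}(a)$ is precisely the statement $\CY_{\rho,\hbar}(z)\,R_\rho(z)=\sigma\circ\CY_{\rho,\hbar}^{21}(-z)\circ(\tau_z\otimes\tau_z)^*$ on $S(\fd(\rho))^{\otimes 2}$, and then combine it with associativity to get the three-point form of locality; the sign and leg bookkeeping you flag is genuine but mechanical once you fix the convention that $R_\rho$ acts on the \emph{source} of the dual pairing. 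Second, your unitarity argument is incomplete as written: $R(z)=R_s^{21}(-z)R_s(z)^{-1}$ gives $R^{21}(-z)R(z)=R_s(z)R_s^{21}(-z)^{-1}R_s^{21}(-z)R_s(z)^{-1}=1$ directly, and this survives twisting by $F$ because $(\tau_z\otimes\tau_z)F=F$ (Proposition~\ref{Prop:TFT}) forces $F^{21}(-z)$ and $F(z)^{-1}$ to cancel in the appropriate order; you should state this computation rather than invoke weak cocommutativity of $\Delta_z$, which is upstream of the point.
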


\section{Geometry of the equivariant affine Grassmannian and the Yangian}\label{sec:RavGrG}

In this section, we review the geometry of the equivariant affine Grassmannian $[G(\CO)\!\setminus\!\Gr_G]$. The main objective of this section is to motivate the construction of $Y_\hbar(\fd)$ in relation to this geometric object. In particular, we show how the coproducts $\Delta_\hbar$ and $\Delta_z$ naturally arise from such relations. We then formulate a conjecture relating the monoidal-factorization category $\Coh ([G(\CO)\!\setminus\!\wh{\Gr}_G])$ with modules of $Y_\hbar(\fd)$. 

This section is structured as follows. In Section \ref{subsec:equivariantgrassmannian}, we recall structures of $[G(\CO)\!\setminus\!\Gr_G]$ as well as its formal completion at identity $[G(\CO)\!\setminus\!\wh{\Gr}_G]$, and derive its relation with $Y_\hbar(\fd)$ (as well as general $\CA_\hbar(\fd, \rho)$) in Proposition \ref{Prop:abequiv}. In Section \ref{subsec:monoidal}, we recall the monoidal-factorization structure on $[G(\CO)\!\setminus\!\wh{\Gr}_G]$, and conjecture its relation to $Y_\hbar(\fd)$, of which we give some justification in Proposition \ref{Prop:dualVOA}.

\subsection{Recollection on the equivariant affine Grassmannian}\label{subsec:equivariantgrassmannian}

\subsubsection{The affine Grassmannian}

Let $G$ be a semisimple Lie group. The \textit{affine Grassmannian} associated to $G$, denoted by $\Gr_G$, can be defined as a quotient:
\be
\Gr_G:=G(\CK)/G(\CO).
\ee
Alternatively, it can be defined as the moduli space of $G$ bundles over $\mathbb{D}:=\mathrm{Spec}(\CO)$ with a trivialization over $\mathbb{D}^\times:=\mathrm{Spec}(\CK)$. It turns out that $\Gr_G$ is a classical ind-scheme; its geometry is well studied in the literature. For some details about this space, see \cite{zhu2016introduction}. The \textit{equivariant affine Grassmannian} is defined to be the moduli space of $G$ bundles on $\mathbb{B}:=\mathbb{D}\cup_{\mathbb{D}^\times}\mathbb{D}$, or in other words $\mathrm{Maps}(\mathbb{B}, BG)$. This $\mathbb{B}$ is the algebro-geometric definition of an infinitely small sphere, or a formal bubble. Alternatively, one can use the following realization:
\be
\mathrm{Maps}(\mathbb{B}, BG)\cong [G(\CO)\!\setminus \! \Gr_G]=[G(\CO)\! \setminus \! G(\CK)/G(\CO)],
\ee
where the right-hand side is the quotient stack of $\Gr_G$ by the left action of $G(\CO)$. As stated in introduction, interests in the study of the equivariant affine Grassmannian stem from its relation to the geometric Langlands correspondence and 4d $\CN=2$ holomorphic-topological theories. Following the physical predictions of \cite{kapustin2006holomorphic, kapustin2006wilson}, the authors of \cite{cautis2019cluster, cautis2023canonical} used the category of coherent sheaves on this space and its variation to define the category of line operators for the HT twist of 4d $\CN=2$ gauge theories. In \cite{niu2022local}, the second author used this category to realize Schur indices as characters of derived endomorphisms between objects. 

In the rest of this section, we will focus on $[G(\CO)\!\setminus \!\Gr_G]$ and its category of coherent sheaves $\Coh_{G(\CO)}(\Gr_G)$. The space $\Gr_G$ is an example of an ind-scheme, and therefore it is not straight-forward to define the category of coherent sheaves. Fortunately, in the last decade there has been much progress in algebraic geometry in extending the theory of coherent sheaves from finite type schemes and stacks to infinite-type. The structure of a DG ind-scheme and its category of sheaves are studied in \cite{gaitsgory2014dg, gaitsgory2019study, gaitsgory2017study} for locally almost finite type, and \cite{raskin2020homological} in general. The strong machinery in these works applies to $[G(\CO)\!\setminus \!\Gr_G]$, making it possible to access the category of sheaves. 

A reasonable DG ind-scheme, as defined in \cite[Definition 6.8.1]{raskin2020homological}, is a convergent prestack $X=\varinjlim X_i$ such that each $X_i$ is a quasi-compact, quasi-separated and eventually coconnective scheme, and that $X_i\to X_{j}$ are  almost finitely-presented closed embeddings. Let $H$ be a classical affine group scheme that acts on $X$. Then the quotient stack $[X/H]$ is called a weakly renormalizable pre-stack following \cite[Definition 6.28.1]{raskin2020homological}, and one can define the category $\mathrm{IndCoh}^*([X/H])$ via a right Kan extension:
\begin{equation}
\IndCoh^*([X/H]):= \lim\limits_{f: S\to [X/H] \text{ flat}} \IndCoh^*(S),
\end{equation}
where the limit is taken over all reasonable DG ind-schemes flat over $[X/H]$, using the functoriality of $f^{*,\IndCoh}$. For each $i$, one can similarly define $\IndCoh^*([X_i/H])$, and as discussed in \cite[Section 4.1]{niu2022local}, one can show that:
\be
\IndCoh^*([X/H])=\varinjlim\limits_{i} \IndCoh^*([X_i/H]).
\ee
The subcategory $\Coh ([X/H])$ is defined to be the image of $\Coh ([X_i/H])$ in the above ind-completion. 

It is well-known that $\Gr_G$ has a $G(\CO)$-equivariant stratification:
\be
\Gr_G=\varinjlim_i \Gr_{G, i},
\ee
which makes $\Gr_G$ into a reasonable ind-scheme. Applying the sheaf theory of \cite{raskin2020homological} to $X=\Gr_G$ and $H=G(\CO)$, we obtain the category $\Coh( [G(\CO)\!\setminus\!\Gr_G])$. The definition of this seems abstract, but on each strata $\Gr_{G, i}$, the category $\Coh([G(\CO)\!\setminus\!\Gr_{G, i}])$ is the derived category of $G(\CO)$-equivariant coherent sheaves on $\Gr_{G, i}$, and that an object in $\Coh( [G(\CO)\!\setminus\!\Gr_G])$ is just the image of an object in $\Coh([G(\CO)\!\setminus\!\Gr_{G, i}])$ under push-forward. 

\subsubsection{Formal completion of the equivariant affine Grassmannian}

To make contact with $Y_\hbar(\fd)$, we use a formal completion process. Let $e\in \Gr_G$ be the trivial coset, which is clearly a $G(\CO)$-equivariant subscheme. In \cite{gaitsgory2014dg}, the authors introduced formal completions in derived algebraic geometry, which was used in \cite{niu2022local} to compute derived endomorphism of identity object. Already showing up there is the realization that although the geometry of $\Gr_G$ is difficult, its formal completion around $e$ is much easier. 

Let $\mathcal{X}$ be a prestack; then its de-Rham stack is defined by:
\begin{equation}
\mathcal{X}_{\textnormal{dR}}(S)=\mathcal{X}(S_{\textnormal{red}}),
\end{equation} 
where $S_{\textnormal{red}}$ is the reduced scheme of $S$. Given a morphism of prestacks $\mathcal{X}\to \mathcal{Y}$, the formal completion is defined by(\cite[Section 6.1]{gaitsgory2014dg}):
\begin{equation}
\widehat{\mathcal{Y}_{\mathcal X}}:=\mathcal{Y}\times_{\mathcal{Y}_{\textnormal{dR}}}\mathcal{X}_{\textnormal{dR}}.
\end{equation}
This operation respects filtered colimits as explained in \cite[6.1.3]{gaitsgory2014dg}: if $\mathcal{X}=\varinjlim \mathcal{X}_n$ and $\mathcal{Y}=\varinjlim \mathcal{Y}_n$ such that the map $\mathcal{X}\to \mathcal{Y}$ comes from a system of maps $\mathcal{X}_n\to \mathcal{Y}_n$, then: 
\begin{equation}
\widehat{\mathcal{Y}_{\mathcal{X}}}=\varinjlim\widehat{\mathcal{Y}_{n, \mathcal{X}_n}}.
\end{equation}

Now assume that $X$ is a locally almost finite type DG scheme, $Y$ an almost finite type DG ind-scheme and $i \colon X\to Y$ is an embedding, then by \cite[Proposition 6.3.1]{gaitsgory2014dg}, $\widehat{Y_X}$ is a DG ind-scheme. Moreover, from the above we see that:
\begin{equation}
\widehat{Y_X}\cong \varinjlim \widehat{Y_{n, X}},
\end{equation}
which in particular means that:
\begin{equation}\label{eq:sheafformal}
\IndCoh(\widehat{Y_X})\cong \varinjlim \IndCoh (\widehat{Y_{n, X}}).
\end{equation}

Applying this to $X=[e]$ and $Y=\Gr_G$, we obtain the formal completion $\wh{\Gr}_{G, [e]}$. It turns out to that one can give $\wh{\Gr}_{G, [e]}$ the structure of a formal group over $[e]$ in the sense of \cite[Chapter 7]{gaitsgory2019study}. Let $G_{<0}:=G(t^{-1}\C[t^{-1}])$, then it is known that $G_{<0}$ is an open subset of $\Gr_G$ containing $e$, and therefore:
\be
\wh{\Gr}_{G, [e]}\cong \wh{G}_{<0, e}.
\ee
Another amazing result of \cite[Chapter 7]{gaitsgory2019study}, generalizing the work of \cite{lurie2011derived}, is that there is an equivalence between formal groups and Lie algebra objects over stacks in derived geometry. In our situation, the formal group is over a point, and so the Lie algebra is a genuine Lie algebra in the category of vector spaces. It is clear from the definition of $G_{<0}$ that:
\be
\mathrm{Lie}(\wh{G}_{<0, [e]})=\fg_{<0}.
\ee
This is an identification as an injective limit of Lie algebras. In our situation, where the Lie algebra is a classical one, one can identify $\wh{G}_{<0, e}$ with $\wh{\fg}_{<0}$, the formal completion of the ind-vector space $\fg_{<0}$ at $0$, such that the formal group law is given by Baker-Campbell-Hausdorff formula; see \cite[Construction 2.2.13.]{lurie2011derived}. In other words, let $\omega \coloneqq \omega_{\wh G_{<0, e}}$ be the dualizing sheaf of the formal group $\wh G_{<0, e}$ (whose existence is guaranteed by \cite{gaitsgory2017study}), then the global sections of $\omega$ form precisely the universal enveloping algebra of $\fg_{<0}$, as a Hopf algebra:
\be
\Gamma(\wh G_{<0, e},\omega)=U(\fg_{<0}).
\ee
Therefore, one can represent the formal group using the dual $U(\fg_{<0})^*$, which is precisely the filtered dual used in Section \ref{sec:Takiff}. In particular, one can represent $\wh{G}_{<0, e}$ by an injective limit of affine schemes:
\be
\C[\wh{G}_{<0, e}]=\varprojlim\limits_{k, n} S\lp \fg^*(\CO/(t^k))\rp/I_k^n.
\ee
Here $I_k$ is the ideal of $S\lp \fg^*(\CO/(t^k))\rp$ generated by $\fg^*(\CO/(t^k))$. The action of $G(\CO)$ on $\wh{G}_{<0, e}$ translates to the action of $\fg(\CO)$ on $S\lp \fg^*(\CO)\rp$ of \eqref{eq:defining_hopf_action}, and obviously each quotient is a $G(\CO)$-equivariant ring. We find, therefore, using equation \eqref{eq:sheafformal}, that we have an equivalence of categories:
\be
\Coh_{G(\CO)}\lp \wh{\Gr}_{G, e}\rp\simeq \varinjlim\limits_{k,n} S\lp \fg^*(\CO/(t^k))\rp/I_k^n\Mod^{G(\CO)}, 
\ee
where the right-hand side is the category of finite-dimensional $G(\CO)$-equivariant modules of the algebra $S\lp \fg^*(\CO/(t^k))\rp/I_k^n$. 

Let us now denote by $\CA_1(\fd,\rho)\Mod_G$ the category of finite-dimensional smooth modules of $\CA_1(\fd,\rho)$ ($\CA_\hbar(\fd,\rho)$ evaluated at $\hbar=1$), where the action of $\fg$ integrates to an algebraic action of $G$. Since for various $\rho$ the algebras $\CA_1(\fd, \rho)$ are canonically isomorphic to the smashed product of $U(\fg(\CO))$ with $\overline{S}(\fg^*(\CO))$, the above considerations lead to the following result.

\begin{Prop}\label{Prop:abequiv}
    There is an equivalence of abelian categories:
    \be
\Coh_{G(\CO)}\lp \wh{\Gr}_{G, e}\rp^{\heartsuit}\simeq \CA_1(\fd, \rho)\Mod_G.
    \ee
    
\end{Prop}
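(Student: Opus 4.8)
The plan is to identify both sides with one and the same filtered colimit of module categories over the truncated coordinate rings, using the presentation of $\Coh_{G(\CO)}(\wh\Gr_{G,e})$ already obtained above. Recall that $\wh\Gr_{G,e}\cong\wh G_{<0,e}$ is the formal completion of the ind-affine space $\fg_{<0}$ at the origin, that its pro-coordinate ring is $\C[\wh G_{<0,e}]=\varprojlim_{k,n}S(\fg^*(\CO/(t^k)))/I_k^n$ with $I_k$ the augmentation ideal of $S(\fg^*(\CO/(t^k)))$ (so that this pro-ring is the completion $\overline S(\fg^*(\CO))$), and that
\[
\Coh_{G(\CO)}(\wh\Gr_{G,e})\simeq\varinjlim_{k,n}\bigl(S(\fg^*(\CO/(t^k)))/I_k^n\bigr)\Mod^{G(\CO)}.
\]
Since $\wh\Gr_{G,e}$ is classical and the transition functors in this colimit are push-forwards along closed immersions of affine schemes, hence $t$-exact, taking hearts commutes with the colimit, so it suffices to identify $\varinjlim_{k,n}\bigl(A_{k,n}\Mod^{G(\CO)}\bigr)^{\heartsuit}$ with $\CA_1(\fd,\rho)\Mod_G$, where $A_{k,n}\coloneqq S(\fg^*(\CO/(t^k)))/I_k^n$ is a finite-dimensional local $\C$-algebra and the heart is the category of finite-dimensional $G(\CO)$-equivariant $A_{k,n}$-modules. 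By Theorem \ref{Prop:qHopf} the algebra $\CA_1(\fd,\rho)$ does not depend on $\rho$, so the target makes sense.

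First I would fix a truncation and unwind the equivariant module category. The group scheme $G(\CO)$ acts on $\mathrm{Spec}\,A_{k,n}$ through its finite-type quotient $G(\CO/(t^k))$, and for an affine scheme with an action of an affine group scheme the category of equivariant quasi-coherent sheaves is the category of modules over the smash product $U(\fg(\CO))\#A_{k,n}$ equipped with a compatible algebraic $G(\CO)$-action; here the smash product is taken with respect to the action $\circ$ of $\fg(\CO)$ on $S(\fg^*(\CO))$ from \eqref{eq:defining_hopf_action}, which descends to $A_{k,n}$ and which was identified above with the geometric action of $\fg(\CO)$ on $\C[\wh G_{<0,e}]$, and compatibility is precisely the relation $[x,f]=-f\circ x$ appearing in the proof of Theorem \ref{Prop:qHopf}. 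A finite-dimensional such module is annihilated by $t^N\fg(\CO)$ and by $I_k^N$ for $N\gg0$, so it is in particular a finite-dimensional smooth $\CA_1(\fd,\rho)$-module; conversely such a module factors through $U(\fg(\CO))\#A_{k,n}$ for $k,n\gg0$. For the integration constraint, write $G(\CO)=G\ltimes G^{(1)}(\CO)$ with $G^{(1)}(\CO)$ the pro-unipotent congruence subgroup, whose Lie algebra is $t\fg(\CO)$; on a fixed finite-dimensional module the action of $t\fg(\CO/(t^k))$ is nilpotent and hence automatically exponentiates, so the datum of an algebraic $G(\CO)$-action compatible with the $\fg(\CO)$-action is the same as the datum of an algebraic $G$-action extending the action of the constant subalgebra $\fg\subset\fg(\CO)$. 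Assembling these identifications over all $k,n$ yields the asserted equivalence of abelian categories, functorially and exactly since every functor in sight is between ordinary $1$-categories of modules.

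The main obstacle is the second step: making precise and justifying the comparison between $G(\CO)$-equivariant sheaves on the affine truncations $\mathrm{Spec}\,A_{k,n}$ and modules over the smash product subject to an integrability constraint, and in particular verifying that integrability for the infinite-dimensional pro-algebraic group scheme $G(\CO)$ genuinely collapses to integrability for the finite-dimensional group $G$ on a finite-dimensional smooth module — this rests on the pro-unipotence of the congruence subgroup and the local nilpotence of its action. One also has to check that the definition of $\Coh$ via flat pullback from \cite{raskin2020homological}, together with the colimit presentation \eqref{eq:sheafformal} for sheaves on formal completions, really produces the category of equivariant modules over the pro-ring with the heart as described. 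Once these points are settled, the remaining work — identifying the $\circ$-action with the coadjoint action up to the pro-nilpotent corrections, and checking naturality in the truncation indices $k,n$ — is routine bookkeeping.
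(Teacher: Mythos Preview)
Your proposal is correct and follows essentially the same approach as the paper: the paper's proof is really the discussion immediately preceding the proposition, which establishes the colimit presentation $\Coh_{G(\CO)}(\wh\Gr_{G,e})\simeq\varinjlim_{k,n}(S(\fg^*(\CO/(t^k)))/I_k^n)\Mod^{G(\CO)}$ and then invokes the identification of $\CA_1(\fd,\rho)$ with the smash product $U(\fg(\CO))\#\overline S(\fg^*(\CO))$. You have filled in the details the paper leaves implicit, in particular the reduction of $G(\CO)$-integrability to $G$-integrability via pro-unipotence of the congruence subgroup, which is exactly the content behind the paper's definition of $\CA_1(\fd,\rho)\Mod_G$.
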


\begin{Rem}
    The category $\Coh_{G(\CO)}\lp \wh{\Gr}_{G, e}\rp$ is smaller than $\Coh_{G(\CO)}\lp \Gr_{G}\rp$. There is a canonical morphism $j: \wh{\Gr}_{G, e}\to \Gr_G$ such that one obtains a fully-faithful functor
    \be
j_*: \Coh_{G(\CO)}\lp \wh\Gr_{G, e}\rp\longrightarrow \Coh_{G(\CO)}\lp \Gr_{G}\rp.
    \ee
One can use this functor to identify $\Coh_{G(\CO)}\lp \wh\Gr_{G, e}\rp$ with the full-subcategory of $ \Coh_{G(\CO)}\lp \Gr_{G}\rp$ whose pullback to the complement of $e$ is trivial. 

\end{Rem}

\subsection{Monoidal factorization structure and $R$-matrix}\label{subsec:monoidal}

\subsubsection{Monoidal structure from correspondences}

As we have already seen from Section \ref{subsec:quantHopf}, when there exists group $G$ and $H\subset G$ a subgroup, the stack $[H\!\setminus\!G/H]$ enjoys the following correspondence as in equation \eqref{eq:corrGK}:
\be
\btik
& (G\times_H G)/H \arrow[dr]\arrow[dl]&\\
G/H\times G/H & & G/H
\etik
\ee
which induces a monoidal structure on the category $\Coh ([H\!\setminus\!G/H])$. The machinery of derived geometry makes this applicable to $[G(\CO)\!\setminus\!G(\CK)/G(\CO)]$, thanks to the fact that $\Gr_G$ is locally almost of finite type. It is, however, very difficult to compute the monoidal tensor product, since the operation involves taking global sections over closed sub-varieties of $\Gr_G$, which are themselves bundles over flag varieties. Our main objective in this section is to show that the computation simplifies significantly when one restricts to $\wh{\Gr}_G$. 

\begin{Prop}
    There is a monoidal structure on $\Coh ([G(\CO)\!\setminus\!\wh{\Gr}_G])$ such that $j_*$ is a monoidal functor. Moreover, this monoidal structure is classical, in the sense that it is exact on the heart of the ordinary $t$-structure. 
\end{Prop}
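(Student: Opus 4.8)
The plan is to obtain the monoidal structure by restricting the convolution product on $\Coh_{G(\CO)}(\Gr_G)$ to the formal neighborhood of the identity coset $[e]$. Recall the convolution Grassmannian $\wt\Gr_G = G(\CK)\times_{G(\CO)}\Gr_G$ with its maps $p\colon\wt\Gr_G\to\Gr_G\times\Gr_G$, $[g_1,x]\mapsto(g_1G(\CO),g_1x)$, and $m\colon\wt\Gr_G\to\Gr_G$, $[g_1,x]\mapsto g_1x$, so that $\CF_1\star\CF_2 = m_*(\CF_1\,\wt\boxtimes\,\CF_2)$, where $\CF_1\,\wt\boxtimes\,\CF_2$ is the twisted exterior product (it descends from $\Gr_G\times\Gr_G$ to $\wt\Gr_G$ using the $G(\CO)$-equivariance of $\CF_2$). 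Since $[e]$ is $G(\CO)$-fixed, its formal neighborhood $\wh\Gr_G$ is $G(\CO)$-stable, and under the identification $\wh\Gr_G\cong\wh{G}_{<0,e}$ with the formal completion at the identity of the subgroup $G_{<0}=G(t^{-1}\C[t^{-1}])\subseteq G(\CK)$ (used already in Section \ref{subsec:equivariantgrassmannian}), it carries a formal group law (Baker--Campbell--Hausdorff). In particular the multiplication $\wh{G}_{<0,e}\times\wh{G}_{<0,e}\to\wh{G}_{<0,e}$ is defined, so $m$ carries the formal completion $\wh{\wt\Gr}_G$ of $\wt\Gr_G$ along $p^{-1}(\wh\Gr_G\times\wh\Gr_G)$ into $\wh\Gr_G$, and one sets $\CF_1\star\CF_2 := m_*(\CF_1\,\wt\boxtimes\,\CF_2)$ for $\CF_1,\CF_2\in\Coh_{G(\CO)}(\wh\Gr_G)$ with these restricted maps. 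One checks, using \cite[Proposition 6.3.1]{gaitsgory2014dg}, that $\wh{\wt\Gr}_G$ is again a reasonable DG ind-scheme and that $\wt\boxtimes$ and $m_*$ preserve coherence, so $\star$ lands in $\Coh_{G(\CO)}(\wh\Gr_G)$; associativity and unitality are inherited from those on $\Gr_G$ (alternatively proved directly exactly as in the finite-group case of Section \ref{subsubsec:crossprod}).

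That $j_*$ is monoidal follows because the completion maps assemble into a morphism of convolution diagrams
\[
\begin{tikzcd}[column sep = small]
\wh\Gr_G\times\wh\Gr_G \dar & \wh{\wt\Gr}_G \lar \rar \dar & \wh\Gr_G \dar\\
\Gr_G\times\Gr_G & \wt\Gr_G \lar \rar & \Gr_G
\end{tikzcd}
\]
in which, by compatibility of formal completion with the relevant fiber products and filtered colimits (\cite[6.1.3]{gaitsgory2014dg}), the left and right squares are (ind-pro) base-change squares. Hence $j_*m_*\cong m_*\widehat{j}_*$ for the completion map $\widehat{j}\colon\wh{\wt\Gr}_G\to\wt\Gr_G$, and $\widehat{j}_*$ intertwines the two twisted exterior products, yielding a natural isomorphism $j_*(\CF_1\star\CF_2)\cong j_*\CF_1\star j_*\CF_2$ compatible with the associativity constraints; the full faithfulness of $j_*$ recorded in the preceding remark upgrades this to a monoidal structure on the functor.

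For classicality the key point is that, in contrast with $m\colon\wt\Gr_G\to\Gr_G$ — which is ind-proper but has positive-dimensional fibers, so $\star$ on $\Coh_{G(\CO)}(\Gr_G)$ is not $t$-exact — the restricted multiplication $m\colon\wh{\wt\Gr}_G\to\wh\Gr_G$ is an ind-affine morphism, being a formal completion of the affine morphism $G_{<0}\times G_{<0}\to G_{<0}$ of affine ind-groups. Therefore $m_*$ is $t$-exact for the ordinary $t$-structures, while $\CF_1\,\wt\boxtimes\,\CF_2$ is $t$-exact in each variable since, on the finite-dimensional pieces $\varinjlim_k\wh{G_{<0}(t^{-1}\C[t^{-1}]/t^{-k})}$, it is an exterior product along flat maps. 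Hence $\star = m_*\circ\wt\boxtimes$ preserves the heart and is exact in each variable there. Equivalently, transporting along Proposition \ref{Prop:abequiv} identifies $\star$ on $\CA_1(\fd,\rho)\Mod_G$ with restriction of the module structure along the coproduct $\Delta_{\rho,1}$ of $\CA_1(\fd,\rho)$ — matching the bicrossed-product formulas $\Delta(\delta_h)=\sum_{h_ih_j=h}\delta_{h_i}\otimes\delta_{h_j}$, $\Delta(k)=\sum_h k\delta_h\otimes k^h$ of Section \ref{subsubsec:crossprod} — which is manifestly exact.

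The main obstacle is not the geometry: the statement that the formal completion of the subgroup $G_{<0}$ is closed under multiplication is immediate. It is rather the derived-categorical bookkeeping on these infinite-type objects — verifying that $\wh{\wt\Gr}_G$ lies within the class of prestacks for which the $\IndCoh$-formalism of \cite{raskin2020homological,gaitsgory2014dg} produces a well-behaved $\Coh$, that the base-change and projection-formula identities used for the monoidality of $j_*$ hold in the required ind-pro generality, and that the $G(\CO)$-equivariance is carried along consistently (the action on $\wh{\wt\Gr}_G$ is the twisted one, responsible for the $\Delta(k)=\sum_h k\delta_h\otimes k^h$ term). These are routine given the cited machinery but occupy the bulk of a careful write-up.
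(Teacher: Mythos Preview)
Your argument is essentially the paper's: define the monoidal structure by restricting the convolution correspondence to the formal completion, deduce monoidality of $j_*$ from the evident morphism of convolution diagrams via base change, and obtain $t$-exactness from the fact that after the splitting $\wh{G(\CK)}_{G(\CO)}\cong \wh G_{<0}\times G(\CO)$ the multiplication map is the (ind-affine, classical) group law on $\wh G_{<0}$. One small slip: your projection $p([g_1,x])=(g_1G(\CO),g_1x)$ has its second component equal to $m$; the twisted exterior product is really constructed by pulling back along $G(\CK)\times\Gr_G\to\Gr_G\times\Gr_G$, $(g_1,x)\mapsto(g_1G(\CO),x)$, and then descending via the $G(\CO)$-equivariance of $\CF_2$, which is what your phrase ``twisted exterior product'' already encodes.
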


\begin{proof}
    Most of these statements are pretty straightforward. The monoidal structure comes from identifying $\wh{\Gr}_G$ with $\wh{G(\CK)}_{G(\CO)}/G(\CO)$, where $\wh{G(\CK)}_{G(\CO)}$ is the formal completion of $G(\CK)$ over $G(\CO)$. The fact that  $j_*$ is monoidal follows from the base change diagram:
    {\footnotesize\[
\btik
& (\wh{G(\CK)}_{G(\CO)}\times_{G(\CO)} \wh{G(\CK)}_{G(\CO)})/G(\CO)\arrow[dd, "j"] \arrow[dr]\arrow[dl]&\\
\wh{G(\CK)}_{G(\CO)}/G(\CO)\times \wh{G(\CK)}_{G(\CO)}/G(\CO) \arrow[dd, "j"]& & \wh{G(\CK)}_{G(\CO)}/G(\CO)\arrow[dd, "j"]\\
& (G(\CK)\times_{G(\CO)} G(\CK))/G(\CO) \arrow[dr]\arrow[dl]&\\
G(\CK)/G(\CO)\times G(\CK)/G(\CO) & & G(\CK)/G(\CO)
\etik
    \]}
To prove the statement about the monoidal structure being classical, we use that $\wh{G(\CK)}_{G(\CO)}$ has a splitting:
\be
\wh{G(\CK)}_{G(\CO)}\cong  \wh{G}_{<0}\times G(\CO),
\ee
and we can re-write the correspondence diagram as:
\be\label{eq:corresGr}
\btik
& (\wh{G}_{<0}\times G(\CO))\times_{G(\CO)}\wh{G}_{<0} \arrow[dr, "m"]\arrow[dl, "p"]&\\
\wh{G}_{<0}\times \wh{G}_{<0} & & \wh{G}_{<0}
\etik
\ee
Here the map $m$ is multiplication of $\wh{G}_{<0}$, which is classical and exact since $\wh{G}_{<0}$ is a formal completion of a classical ind-affine group scheme, and $p^*$ is clearly classical and exact. This completes the proof. 

\end{proof}

We have seen from Proposition \ref{Prop:abequiv} that the category $\Coh_{G(\CO)}(\wh{\Gr}_G)^{\heartsuit}$ is represented by the category of modules of $\CA_1(\fd, \rho)$. We show that this is a monoidal equivalence.

\begin{Prop}\label{Prop:E1equiv}
    The equivalence of Proposition \ref{Prop:abequiv} is an equivalence of monoidal categories. 
    
\end{Prop}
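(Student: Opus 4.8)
The plan is to verify that the abelian equivalence of Proposition~\ref{Prop:abequiv} intertwines the two monoidal structures by unwinding both sides to the same explicit formula. On the geometric side, the monoidal product on $\Coh_{G(\CO)}(\wh{\Gr}_G)^\heartsuit$ is computed via the correspondence \eqref{eq:corresGr}: pull back along $p$, then push forward along $m$. Using the splitting $\wh{G(\CK)}_{G(\CO)} \cong \wh{G}_{<0} \times G(\CO)$ and the fact that $m$ is the multiplication of the formal group $\wh{G}_{<0}$, the pushforward along $m$ is dual to the coproduct of $\C[\wh{G}_{<0}] = \varprojlim S(\fg^*(\CO/(t^k)))/I_k^n$ coming from the Baker--Campbell--Hausdorff formal group law, while the pullback along $p$ (combined with the twisted $G(\CO)$-action on $(\wh{G}_{<0}\times G(\CO))\times_{G(\CO)}\wh{G}_{<0}$) records exactly the twisted action that, in Section~\ref{subsec:quantHopf}, forced the modification of the coproduct on the $U(\fg(\CO))$-factor.

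The key steps, in order, would be: (i) identify the $\C[\![\hbar]\!]$-algebra $\CA_1(\fd,\rho)$ at $\hbar=1$ with the smash product $U(\fg(\CO))\# \overline{S}(\fg^*(\CO)) = U(\fg(\CO)) \# \C[\wh{G}_{<0}]$, so that a module is precisely a $G(\CO)$-equivariant $\C[\wh{G}_{<0}]$-module; (ii) recall from Section~\ref{sec:evaluation} that the coproduct $\Delta_{\rho,1}$ at $\hbar=1$ is well-defined and given on $\C[\wh{G}_{<0}] = U(\fg(r))^*$ by the dual of multiplication in $U(\fg(r))$ — which is the dual of the BCH group law on $\wh{G}_{<0}$ — and on $U(\fg(\CO))$ by the twist $\CE_r^{-1}(x\otimes 1 + 1\otimes x)\CE_r$ encoding the action of $G(\CO)$ on the fibre product; (iii) show that the convolution tensor product $\CF_1 \star \CF_2$ on the geometric side, evaluated through \eqref{eq:corresGr}, produces on global sections exactly the $\CA_1(\fd,\rho)$-module structure on $\CF_1 \otimes_{\Delta_{\rho,1}} \CF_2$ — the multiplication map $m$ contributing the $\C[\wh{G}_{<0}]$-comultiplication and $p^*$ together with the twisted equivariance contributing the $\phi$-twisted $\fg(\CO)$-coaction of Lemma~\ref{Lem:twistcoprod}.1; (iv) check compatibility of associators, which on the geometric side is the cocycle for composing the correspondences and on the algebraic side is coassociativity of $\Delta_{\rho,1}$, already established in Theorem~\ref{Prop:qHopf}; and finally (v) invoke Proposition~\ref{Prop:twistingF} (in its $\hbar=1$ evaluation from Section~\ref{sec:evaluation}) to conclude that the resulting monoidal category is independent of $\rho$, matching the splitting-independence of the geometric monoidal structure.

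The main obstacle I anticipate is step~(iii): carefully matching the pushforward along $m$ with the comultiplication of $\C[\wh{G}_{<0}]$ \emph{together with} the correct bookkeeping of the twisted $G(\CO)$-action on the middle term $(\wh{G}_{<0}\times G(\CO))\times_{G(\CO)}\wh{G}_{<0}$. Concretely, one must verify that the isomorphism $kh = (k\cdot h)k^h$ used to trivialise $G/K \cong H$ (in the notation of Section~\ref{subsec:quantHopf}) induces on functions precisely the element $\CE_r$, so that the geometric convolution coaction is $\CE_r^{-1}(\Delta_{\mathrm{BCH}}\otimes \mathrm{triv})\CE_r$ rather than the naive one. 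This is essentially a faithfully-flat descent computation identifying the global sections functor $\Gamma$ with the continuous dual $(-)^*$ used throughout Section~\ref{sec:quantakiff}, and tracking that push-pull through the formal completion commutes with these dualities — which is where the locally-almost-finite-type hypothesis and the results of \cite{gaitsgory2014dg, gaitsgory2017study, raskin2020homological} are used. Once this identification is in place, the remaining verifications (associativity, unitality, $\rho$-independence) are formal consequences of the Hopf-algebra statements already proved in Theorem~\ref{Prop:qHopf} and Proposition~\ref{Prop:twistingF}.
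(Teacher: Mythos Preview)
Your plan is correct and follows essentially the same approach as the paper: identify $m_*$ with the BCH-dual coproduct on $\C[\wh{G}_{<0}]$, identify the twisted $G(\CO)$-equivariance coming from $p^*$ with the $\CE_r$-conjugated coproduct on $U(\fg(\CO))$, and then match associators. Two small differences worth noting: the paper immediately reduces to $\rho=\gamma$ (your step~(v) is absorbed at the outset rather than the end), and for step~(iv) the paper does not merely invoke coassociativity of $\Delta_1$ but gives an explicit argument that the geometric associator---the base-change natural transformation $p^*(1\times m)_* \to \wt{m}_*\wt{p}^*$ arising from the triple convolution space $\wt{C}$---is itself trivialized once one rewrites everything via the splitting $\wh{\Gr}_G\cong\wh{G}_{<0}$, so that both sides carry the identity associator; your outline would benefit from making this point explicit rather than treating it as a formal consequence of the Hopf-algebra axioms.
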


\begin{proof}
    Due to the fact that various $\CA_1(\fd, \rho)$ are twisted equivalent to each other, we only need to prove this for $Y_1(\fd)$. Clearly in equation \eqref{eq:corresGr} the morphism $m$ is given by multiplication law of the formal group $\wh{G}_{<0}$, which is the linear dual of the multiplication of the universal enveloping algebra of $\fg_{<0}$, therefore the functor $m_*$ is identified with the coproduct $\Delta_1$ of \(Y_1(\fd)\) on $\overline{S}(\fg^*(\CO))$. We need to identify the pull-back $p^*$ with the coproduct of $\Delta_1$ restricted to $U(\fg(\CO))$. 

    Let $M, N$ be two objects in $\Coh_{G(\CO)}(\wh{\Gr}_G)^{\heartsuit}$. The pullback $p^*(M\boxtimes N)$ is the $G(\CO)\times G(\CO)$-equivariant sheaf $M\boxtimes \C[G(\CO)]\boxtimes N$ on $(\wh{G}_{<0}\times G(\CO))\times \wh{G}_{<0}$, where the second factor of $G(\CO)$ acts on both $\C[G(\CO)]$ and $N$. One identifies this with a $G(\CO)$-equivariant sheaf on $\wh{G}_{<0}\times \wh{G}_{<0}$ by taking invariants with respect to the second copy of $G(\CO)$.

    Before taking this invariants, let us note that the action of the first copy of $G(\CO)$ is already interesting, and is not simply given by the usual action on the tensor product of $M$ and $\C[G(\CO)]$. For this, if we let $M=\overline{S}(\fg^*(\CO))$, then this action is given by left multiplication by $G(\CO)$, which is given by:
    \be
g\cdot (g_1, g_2)=(g\rhd g_1, (g\lhd g_1)\cdot g_2).
    \ee
Therefore the action of $G(\CO)$ (or better to say the Lie algebra $\fg(\CO)$) on $\overline{S}(\fg^*(\CO))\otimes \C[G(\CO)]$ is given precisely by the formula $\Delta_1$. Since any equivariant coherent sheaf is a quotient of $\overline{S}(\fg^*(\CO))\otimes V$ for some $G(\CO)$-module $V$, we conclude that the left action of $G(\CO)$ on $M\boxtimes \C[G(\CO)]$ coincides with that given by the coproduct $\Delta_1$. 

Taking invariants on the right, one identify $(\C[G(\CO)]\otimes N)^{G(\CO)}$ with $N$ via the natural map
\be
N\to \C[G(\CO)]\otimes N,
\ee
which is just the matrix coefficients of $N$. Since the first copy of $G(\CO)$ acts non-trivially on $\C[G(\CO)]$, it acts non-trivially on $N$, and this action is precisely the action via the second factor of $\Delta_1$. We conclude that if we view $\lp M\boxtimes \C[G(\CO)]\boxtimes N\rp^{G(\CO)}$ as a $G(\CO)$-equivariant sheaf on $\wh{G}_{<0}\times \wh{G}_{<0}$, then the $G(\CO)$-action coincides with the one determined by the coproduct $\Delta_1$. 

To conclude the statement, we must show that the associativity isomorphisms are equal as well. This amounts to showing that the associativity on the coherent sheaf category is trivial upon identifying $\wh\Gr_G\cong \wh G_{<0}$. We only give an outline of the proof here and interested readers may fill in the details. Let $C=[G(\CO)\!\setminus\! \wh G(\CK)\times_{G(\CO)}\wh\Gr_G]$ and $X=[G(\CO)\!\setminus\!\wh\Gr_G]$. The monoidal structure on $\Coh(X)$ is defined by the base-change diagram:
\be
\btik
& C\arrow[dl, swap, "p"] \arrow[dr, "m"]&\\
X\times X & & X
\etik.
\ee
Consider the following base-change diagram, which computes the monoidal product $M*(N*P)$:
\be
\btik
 & X\times C \arrow[dl, swap, "1\times p"] \arrow[dr, "1\times m"]& & C \arrow[dl, swap, "p"] \arrow[dr, "m"]& & \\
X^{\times 3} & & X^{\times 2} & & X.
\etik
\ee
We add to it the base-change diagram:
\be
\btik
 & & \wt{C}\arrow[dl, swap, "\wt{p}"]\arrow[dr, "\wt{m}"] & \\
 & X\times C \arrow[dl, swap, "1\times p"] \arrow[dr, "1\times m"]& & C \arrow[dl, swap, "p"] \arrow[dr, "m"]& & \\
X^{\times 3} & & X^{\times 2} & & X
\etik.
\ee
Here, $\wt{C}=G(\CO)\!\setminus\! (\wh G(\CK)\times_{G(\CO)}\wh G(\CK)\times_{G(\CO)}\wh G(\CK))/G(\CO)$ and the maps $\wt{p}, \wt{m}$ are the evident maps. By base-change, we have a natural isomorphism:
\be\label{eq:natural}
p^* (1\times m)_*\longrightarrow \wt{m}_*\wt{p}^*.
\ee
This induces a natural isomorphism:
\be\label{eq:naturaltrans}
m_*p^* (1\times m)_*(1\times p)^*\longrightarrow m_*\wt{m}_*\wt{p}^*(1\times p)^*.
\ee
Note that the monoidal product $M*(N*P)$ is defined by the functor on the left of equation \eqref{eq:naturaltrans}. On the other hand, the following is also a base-change diagram
\be
\btik
 & & \wt{C}\arrow[dl, swap, "\wt{p}'"]\arrow[dr, "\wt{m}'"] & \\
 & C\times X \arrow[dl, swap, "p\times 1"] \arrow[dr, "m\times 1"]& & C \arrow[dl, swap, "p"] \arrow[dr, "m"]& & \\
X^{\times 3} & & X^{\times 2} & & X
\etik
\ee
which gives rise to another natural isomorphism:
\be\label{eq:naturaltrans2}
m_*p^* (m\times 1)_*(p\times 1)^*\longrightarrow m_*\wt{m}'_*(\wt{p}')^*(p\times 1)^*.
\ee
Note that the monoidal product $(M* N)*P$ is computed by the functor on the left of equation \eqref{eq:naturaltrans2}. Now we have that $m_*\wt{m}'_*(\wt{p}')^*(p\times 1)^*=m_*\wt{m}_*\wt{p}^*(1\times p)^*$ since $m\circ \wt{m}'=m\circ \wt{m}$ and $(p\times 1)\circ \wt{p}'=(1\times p)\circ \wt{p}$. The associativity is then a combination of the two natural isomorphisms in equation \eqref{eq:naturaltrans} and equation \eqref{eq:naturaltrans2}. 

To show that this associativity isomorphism is trivialized by identifying $\wh\Gr_G\cong \wh G_{<0}$, we only need to show that the natural transformation in equation \eqref{eq:natural} is trivialized. Rewriting this base-change diagram using the above isomorphism, we obtain the following:
\be
\btik
G(\CO)\!\setminus\!(\wh G_{<0})^{\times 3}\rar{\wt{m}}\dar{\wt{p}} & G(\CO)\!\setminus\!\wh G_{<0}^{\times 2}\dar{p}\\
G(\CO)\!\setminus\!\wh G_{<0}\times G(\CO)\!\setminus\!\wh G_{<0}^{\times 2}\rar{1\times m} & (G(\CO)\!\setminus\!\wh G_{<0})^{\times 2}
\etik
\ee
Up to $G(\CO)\times G(\CO)$-equivariance, it is simply a base-change of classical ind-affine varieties, and therefore the base-change isomorphism is trivial. This completes the proof.

\end{proof}

\subsubsection{Monoidal factorization structure}

The equivariant affine Grassmannian $[G(\CO)\!\setminus\!\Gr_G]$ also has a classical factorization structure \cite{latyntsev2023factorisation}. We will not go into detail about the definition of factorization algebras, but will give a quick heuristic review. A \textit{factorization space} $X$ (as in \cite[Definition 3.2.1]{kapranov2004vertex}) over a smooth complex curve $C$ is a collection of spaces $X_{C^I}$ over $C^I$ where $I$ is any finite set, such that:

\begin{enumerate}
    \item $X_\emptyset=X$, and $X_{C^I}$ are formally smooth over $C^I$. 

    \item For every $J\twoheadrightarrow I$, there are two isomorphisms:
    \be
\Delta^{J/I, *} X_{C^J} \cong X_{C^I},\qquad j^{J/I, *} \prod_i X_{C^{J_i}}\cong j^{J/I, *} X_{C^J}.
    \ee
    Here $\Delta$ is the embedding of diagonals, and $j$ is the embedding of complements of diagonals.

    \item The above isomorphisms are compatible under composition $K\twoheadrightarrow J\twoheadrightarrow I$. 
\end{enumerate}

This is a scheme/stack analogue of the definition of factorization algebras due to \cite{beilinson2004chiral}. Given such a factorization space, one can apply any linearization functor to obtain a factorization algebra, which is a sheaf-theoretic version of a vertex algebra. Moreover, when restricting this factorization algebra over a formal disk $\mathbb{D}$, one obtains a vertex algebra.  

One of the first examples of a factorization space is the affine Grassmannian $\Gr_G$. The space $\Gr_{G, C^I}$ is defined roughly by \cite{beilinson2004chiral} as:
\be\label{eq:factgr}
\Gr_{G, C^I}=\left\{(c^I, P, \gamma)\, \middle|\,\begin{aligned}
     &{}\hspace{2cm} c^I\in C^I, \\&  P \text{ is a principle } G \text{ bunlde over } X, \\&\gamma \text{ trivialization of } P \text{ away from } c^I
\end{aligned}\right\}\Bigg /\text{equivalences}.
\ee
This factorization space is related to the affine Kac-Moody algebra in the following way. Let $\omega_{\Gr_{G, C^I}}$ be the dualizing sheaf of $\Gr_{G, C^I}$, and let $L_{C^I}$ be the factorization line bundle corresponding to the  determinant line bundle of $\Gr_{G, C^I}$. Then $L_k:=\pi_*(\omega\otimes L^{\otimes k})$ is a factorization algebra over $C$, where $\pi: \Gr_{G, C^I}\to C^I$ is the natural projection. The following well-known result can be derived from the Borel-Weil-Bott theorem for $\Gr_G$, proven in \cite{kumar2012kac, mathieu1988formules}.

\begin{Thm}
    When $C=\mathbb{D}$ and $k\geq 0$, the factorization algebra $L_k$ over $C$ can be identified with the simple quotient of the affine vertex algebra $V_k(\fg)$. 
    
\end{Thm}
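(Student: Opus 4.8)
The plan is to identify both sides by localizing the factorization algebra at a single point and then invoking the Borel--Weil--Bott theorem for the affine Grassmannian. First I would restrict to the fibre: a factorization algebra over $C = \mathbb{D}$ is in particular a vertex algebra, whose underlying object is the $!$-fibre at the marked point $0 \in \mathbb{D}$. By the defining description \eqref{eq:factgr}, the fibre of $\Gr_{G,\mathbb{D}} \to \mathbb{D}$ over $0$ is canonically the affine Grassmannian $\Gr_G$ based at $0$, and the factorization line bundle $L_{\mathbb{D}}$ restricts there to a power of the determinant line bundle, which one normalizes so that the restriction is $\mathcal{L}$, the generator of $\mathrm{Pic}(\Gr_G)$ (component-wise, or for the simply connected cover, in the merely semisimple case). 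Hence the underlying space of the vertex algebra $L_k|_{\mathbb{D}}$ is $\Gamma(\Gr_G, \omega_{\Gr_G} \otimes \mathcal{L}^{\otimes k})$. Since $\omega_{\Gr_G}$ is the dualizing sheaf, the cleanest bookkeeping is to phrase the whole construction in terms of right $D$-modules and $!$-sheaves à la \cite{beilinson2004chiral}, so that the twist by $\omega_{\Gr_G}$ is built into the linearization functor and one is left to understand $\mathcal{L}^{\otimes k}$ alone on each Schubert stratum.

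Next I would compute this space. Writing $\Gr_G = \varinjlim_\lambda \ol{\Gr_G^{\le \lambda}}$ as the increasing union of its (finite-dimensional, projective, normal, Frobenius-split) Schubert varieties, the line bundle $\mathcal{L}^{\otimes k}$ is globally generated on each $\ol{\Gr_G^{\le\lambda}}$ for $k \ge 0$, and by the Borel--Weil--Bott theorem for affine Kac--Moody flag varieties \cite{kumar2012kac, mathieu1988formules} the higher cohomology vanishes while $\Gamma(\Gr_G, \mathcal{L}^{\otimes k})$ is the restricted dual of the integrable irreducible highest-weight module $L(k\Lambda_0)$ of $\wh{\fg}$ of level $k$. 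The restriction maps between Schubert varieties are surjective on sections (again by normality and Frobenius splitting), so the inverse limit computing $\Gamma(\Gr_G, -)$ produces the full integrable module and not a topological completion of it. Applying Grothendieck--Serre duality on each Schubert variety, the $\omega$-twist undoes the dualization and yields $L_k|_{\mathbb{D}} \cong L(k\Lambda_0)$ as a $\wh{\fg}$-module; in particular it vanishes (is one-dimensional, respectively trivial as a vertex algebra) exactly when forced by integrability, which is why the hypothesis $k \ge 0$ is needed.

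It then remains to match the vertex algebra structures. The module $L(k\Lambda_0)$ is by definition the simple quotient $L_k(\fg)$ of the vacuum module $V_k(\fg)$, so one must check only that the vertex operation coming from the factorization structure of $(\Gr_{G,C^I}, L_{C^I})$ is the standard one. This is the content of the Kac--Moody localization of Beilinson--Drinfeld: the factorization structure of the determinant line bundle is precisely the one whose associated central extension of $\fg(\CK)$ is the level-$k$ Kac--Moody extension, and the factorization product of $\pi_*(\omega \otimes L^{\otimes k})$ computes, on the strong generators, the current $\CY(I_{a,-1}, z) = \sum_{n} I_{a,n} z^{-n-1}$ with the level-$k$ operator product $I_a(z) I_b(w) \sim \tfrac{[I_a,I_b](w)}{z-w} + \tfrac{k\,\kappa_0(I_a,I_b)}{(z-w)^2}$. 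Since $L_k(\fg)$ is strongly generated by these currents and a vertex algebra is determined by its strong generators together with their operator products, this identifies $L_k|_{\mathbb{D}}$ with $L_k(\fg)$ as vertex algebras, and the factorization structure over a general curve $C$ matches the chiral structure of the Kac--Moody chiral algebra by the same localization statement.

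The main obstacle is the precise normalization in the middle step: one must pin down the determinant line bundle against the basic line bundle so that ``$k$-th power'' genuinely means ``level $k$'' --- a bookkeeping that involves the dual Coxeter number through the critical shift hidden in $\omega_{\Gr_G}$ --- and one must control the inverse limit $\Gamma(\Gr_G, -) = \varprojlim_\lambda \Gamma(\ol{\Gr_G^{\le\lambda}}, -)$ carefully, since a careless treatment would replace $L(k\Lambda_0)$ by a completion or an honest dual. Once these normalizations are fixed, the identification of vertex-algebra structures in the last step is essentially a restatement of the Beilinson--Drinfeld Kac--Moody localization theorem and should go through formally.
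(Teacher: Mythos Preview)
The paper does not give a proof of this theorem; it states it as a well-known result ``derived from the Borel--Weil--Bott theorem for $\Gr_G$, proven in \cite{kumar2012kac, mathieu1988formules}''. Your outline --- compute the fibre via affine Borel--Weil--Bott to obtain the integrable simple module $L(k\Lambda_0)$, then match the vertex operation via Beilinson--Drinfeld Kac--Moody localization --- is exactly the standard argument the paper is pointing to, and is correct as a sketch (with, as you note yourself, the normalization of the determinant line bundle against level and the $\omega$-shift as the places where care is required).
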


Let $j: \wh{\Gr}_{G, C^I}\to \Gr_{G, C^I}$ be the embedding of the formal completion at $[e]$, which is a factorization sub-scheme. Let $V_k:=\pi_*(\omega\otimes j^*(L^{\otimes k}))$, then it is in fact easier to show the following statement; see for instance \cite[Proposition 20.4.3]{frenkel2004vertex}.

\begin{Thm}
    When $C=\mathbb{D}$, the factorization algebra $V_k$ over $C$ can be identified with the universal Kac-Moody vertex algebra $V_k(\fg)$. 
\end{Thm}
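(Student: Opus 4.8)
The plan is to reproduce the argument behind \cite[Proposition 20.4.3]{frenkel2004vertex}, exploiting that---unlike the previous theorem for $L_k$ on $\Gr_G$, which relied on the Borel--Weil--Bott theorem of \cite{kumar2012kac, mathieu1988formules}---restricting to the formal completion $\wh{\Gr}_{G,[e]}$ reduces the computation of global sections to an elementary statement about a formal group. I would organize the proof as an identification of the underlying $\wh{\fg}$-module (including the determination of the level), followed by a compatibility of the factorization structures and hence of the vertex operations.

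For the underlying module: over the formal disk $C=\mathbb{D}$ the open subscheme $G_{<0}=G(t^{-1}\C[t^{-1}])\subseteq\Gr_G$ contains the base point $e$ and is stable under moving the marked point, so the formal completion $\wh{\Gr}_{G,\mathbb{D}}$ along the unit section is a trivial family of formal groups over $\mathbb{D}$ with fiber $\wh{G}_{<0,e}$. By Section \ref{subsec:equivariantgrassmannian}, $\wh{G}_{<0,e}$ is the formal group with Lie algebra $\fg_{<0}=t^{-1}\fg[t^{-1}]$ and $\Gamma(\wh{G}_{<0,e},\omega)=U(\fg_{<0})$ as Hopf algebras; hence for $k=0$ one obtains $\pi_*\omega\cong U(\fg_{<0})=V_0(\fg)$ on the nose, with $\fg(\CK)$ acting through the residual $\fg(\CO)$-action fixing $e$ together with the translation action of $\wh{G}_{<0,e}$---i.e.\ through the presentation $V_0(\fg)=\Ind_{\fg(\CO)}^{\fg(\CK)}(\C)$. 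To bring in the level, I would use that $\mathrm{Pic}(\Gr_G)\cong\Z$ is generated by the determinant line bundle $L$, which determines the level-$1$ central extension $\wh{\fg}$ of $\fg(\CK)$; since central extensions of $\fg(\CK)$ are classified up to scale by the level, twisting by $L^{\otimes k}$ scales the cocycle by $k$. Thus $\Gamma(\wh{\Gr}_{G,[e]},\,\omega\otimes j^*L^{\otimes k})$, with its $\wh{\fg}$-action, is the induced module $\Ind_{\fg(\CO)\oplus\C K}^{\wh{\fg}}(\C_k)=V_k(\fg)$ (with $K$ acting by $k$ and $\fg(\CO)$ by $0$), whose underlying vector space is again $U(\fg_{<0})$, matching $\pi_*(\omega\otimes j^*L^{\otimes k})$ over $\mathbb{D}$.

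It then remains to match the vertex algebra structures. The embedding $\wh{\Gr}_{G,\mathbb{D}^I}\hookrightarrow\Gr_{G,\mathbb{D}^I}$ along the unit section is a factorization subscheme, since the factorization isomorphisms of \eqref{eq:factgr} are isomorphisms in a neighborhood of the unit section and therefore restrict; applying $\pi_*$ to $\omega\otimes j^*L^{\otimes k}$ and using flat base change along $j$ together with the projection formula transports the factorization structure of $\Gr_{G,\mathbb{D}^I}$ to $V_k$. Restricting to $C=\mathbb{D}$ and unwinding the two-point factorization isomorphism, as in \cite[Chapter 5]{frenkel2004vertex}, produces the standard state--field correspondence $\CY(I_{a,-1},z)=\sum_{n\in\Z}I_{a,n}z^{-n-1}$ together with a translation operator arising from infinitesimal translations of the base $\mathbb{D}$. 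Since $V_k(\fg)$ is strongly generated by the $I_{a,-1}$, this pins down the entire vertex algebra structure and completes the identification with $V_k(\fg)$.

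The main obstacle is bookkeeping rather than geometry. One must check that forming the formal completion along the unit section commutes with the factorization isomorphisms and with $\pi_*(\omega\otimes-)$ without introducing a spurious line-bundle twist---in particular controlling the comparison of $j^*\omega_{\Gr}$ with $\omega_{\wh{\Gr}}$---and one must confirm that the twist by $L^{\otimes k}$ contributes precisely level $k$, which amounts to matching the first Chern class of $L$ with the Kac--Moody cocycle via the presentation of $\mathrm{Pic}(\Gr_G)$. With these two points granted, the vertex-algebra axioms for $V_k$ follow formally from the Beilinson--Drinfeld factorization formalism, exactly as in the untwisted case treated in \cite{frenkel2004vertex}.
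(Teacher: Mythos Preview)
The paper does not give its own proof of this theorem; it simply states the result and refers the reader to \cite[Proposition 20.4.3]{frenkel2004vertex}. Your proposal is precisely an outline of that argument, so your approach aligns with the paper's intended one and is correct in its broad strokes.
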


The factorization structure in equation \eqref{eq:factgr} is compatible with the action of $G(\CO)_{C^I}$, whose factorization structure is defined similarly. Therefore, one obtains a factorization structure on $[G(\CO)\!\setminus\!\Gr_G]$. This factorization structure is compatible with the monoidal structure in the sense that the correspondence diagram is a diagram of factorization spaces. In particular, the category $\Coh_{G(\CO)} (\Gr_G)$ is a monoidal-factorization category. This monoidal factorization structure was used in \cite{cautis2019cluster} to obtain a renormalized r-matrix, via a generalization of the Eckmann-Hilton argument. We will recall their argument here, but again only on a heuristic level. The monoidal-factorization structure of $\CC:=\Coh_{G(\CO)} (\Gr_G)$ means that there exists two ``multiplications":
\be
\otimes: \CC\boxtimes \CC\to \CC,\qquad \otimes_{z,w}:\CC_z\boxtimes \CC_w\to \CC_{z, w}, 
\ee
such that there exists a natural commutative diagram:
\be
\btik
\CC_z\boxtimes \CC_z\boxtimes \CC_w\boxtimes \CC_w\arrow[rr, "{\otimes^{1,2}\boxtimes\otimes^{3,4}}"]
\dar{\otimes_{z,w}^{1,3}\boxtimes \otimes_{z,w}^{2,4}} && \CC_z\boxtimes \CC_w\dar{\otimes_{z,w}}\\
\CC_{z,w}\boxtimes \CC_{z,w}\arrow[rr, "\otimes"] && \CC_{z, w}
\etik.
\ee
Moreover, this category has a unit object $\mathbbm{1}$ for both monoidal structures, which is the structure sheaf of the identity coset. In particular, for any object $M\in \CC_X$, the object $M_z\boxtimes \mathbbm{1}_w$ has a well-defined extension from $\CC_{C^2\setminus \Delta}$ to $\CC_{C^2}$, whose restriction to diagoanl is simply $M$. This object is denoted by $\eta_1(M, \mathbbm{1})$, and similarly $\eta_2(\mathbbm{1}, M)$. Now for two objects $M, N$, we have:
\be
\begin{aligned}
& (M\otimes N)_w\cong \Delta^*_{z\to w}\lp \eta_1(M_z, \mathbbm{1}_w)\otimes \eta_2(\mathbbm{1}_z,N_w)\rp\\ 
&(N\otimes M)_w\cong \Delta^*_{z\to w} \lp \eta_2(\mathbbm{1}_z, N_w)\otimes \eta_1(M_z,\mathbbm{1}_w)\rp
\end{aligned}
\ee
and moreover, using the compatibility of two multiplications, we have a canonical isomorphism:
\be
j^*_{z\ne w}\lp \eta_1(M_z, \mathbbm{1}_w)\otimes \eta_2(\mathbbm{1}_z,N_w)\rp\cong j^*_{z\ne w}\lp\eta_2(\mathbbm{1}_z, N_w)\otimes \eta_1(M_z,\mathbbm{1}_w) \rp.
\ee
Denote by $C_1(M,N):=\eta_1(M_z, \mathbbm{1}_w)\otimes \eta_2(\mathbbm{1}_z,N_w)$ and $C_2(M,N):=\eta_2(\mathbbm{1}_z, N_w)\otimes \eta_1(M_z,\mathbbm{1}_w)$. The two isomorphisms combine into the following diagram:
{\footnotesize\[
\btik
& j^*_{z\ne w}C_1(M,N)\cong j^*_{z\ne w}C_2(M,N)\arrow[dr]\arrow[dl] &\\
\left(j^*_{z\ne w}C_1(M,N)\right)/C_1(M,N)\dar{\cong} & & \left(j^*_{z\ne w}C_2(M,N)\right)/C_2(M,N)\dar{\cong}\\
(M\otimes N)_w[\pd^n \delta_{z-w}] & & (N\otimes M)_w[\pd^n \delta_{z-w}]
\etik.
\]}
Assuming the compactness of the objects involved, there exists $N$ such that 
\begin{equation}
    (z-w)^NC_1(M,N)\subseteq C_2(M,N),    
\end{equation}
and if we choose $N$ to be the smallest such $N$, then the above gives a morphism:
\be\label{eq:renormr}
\btik
((M\otimes N)_w+C_1(M,N))/C_1(M,N)\rar{(z-w)^N} & \left(\frac{1}{z-w}C_2(M,N)\right)/C_2(M,N)\rar{\cong}& (N\otimes M)_w
\etik.
\ee
This morphism is called the renormalized \(r\)-matrix in \cite{cautis2019cluster}. 

All these discussions work equally well with the formal completion $[G(\CO)\!\setminus\!\wh{\Gr}_G]$. At this point, it is tempting to compare the factorization structure as well as this renormalized \(r\)-matrix with those obtained from the algebraic considerations in the sections \ref{subsec:quantVHopf} and \ref{subsec:fullR}. However, two immediate problems rise. The first is that as far as we know, there is no notion of ``meromorphic product" of two objects in a factorization category. Indeed, in the definition of factorization categories, one is given a coherent product $\CC_z\boxtimes\CC_w\to \CC_{z,w}$, which is in most cases an equivalence. It is not clear to us how to obtain from this a functor $\CC_z\boxtimes \CC_w\to \CC_w\lpp (z-w)^{-1}\rpp$, which is what the meromorphic tensor product gives. Furthermore, the renormalized \(r\)-matrix discussed above only concerns the lowest loop degree part of the full \(r\)-matrix, and in principle one should expect a map from $\left(j^*_{z\ne w}C_1(M,N)\right)/C_1(M,N)$ to $\left(j^*_{z\ne w}C_2(M,N)\right)/C_2(M,N)$. We will not attempt to resolve these issues in this paper. However, we believe that the structures $Y_\hbar(\fd)$ possesses should have counterparts on $[\wh{G}(\CO)\!\setminus\!\wh{\Gr}_G]$. To give evidence of this, we prove the following Proposition \ref{Prop:dualVOA}. 

To formulate it, note that we can naturally identify $Y_1(\fd)$, or more specifically the subalgebra generated by $\fg[t]$ and $S(\fg^*(\CO))$, with the linear dual of $\C[\wh{G}(\CO)]\otimes V_0(\fg)$, which has the natural tensor product vertex algebra structure, and contains the vertex algebra $\CV$ from Remark \ref{Rem:dualVA}. Here, $\C[\wh{G}(\CO)]$ is the algebra of functions on the formal completion $\wh{G}(\CO)$ of $G(\CO)$ at the neutral element \(e\). We have seen from the aforementioned remark how this vertex algebra structure is what is used to define the meromorphic coproduct on $ Y^\circ_\hbar(\fd)$. 

\begin{Prop}\label{Prop:dualVOA}
    Let $\pi: \wh{\Gr}_G\to [\wh{G}(\CO)\!\setminus\!\wh{\Gr}_G]$ be the natural projection. Then one can identify
    \be
\Gamma(\wh{\Gr}_G, \pi^*\pi_*(\omega))\cong \C[\wh{G}(\CO)]\otimes V_0(\fg)
    \ee
    as vertex algebras. 
    
\end{Prop}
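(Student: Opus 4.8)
The plan is to identify the two sides as vector spaces first, then match the vertex operations on a set of strong generators. For the vector space identification, I would use the factorization/formal-completion machinery already set up: $\wh\Gr_G \cong \wh G_{<0,e}$ and $\wh{G(\CK)}_{G(\CO)} \cong \wh G_{<0}\times G(\CO)$, so that $\pi\colon \wh\Gr_G \to [\wh G(\CO)\setminus \wh\Gr_G]$ is, after these identifications, the projection $\wh G_{<0}\times \wh G(\CO) \to \wh G_{<0}$ corresponding to the $\wh G(\CO)$-action on $\wh\Gr_G$ by left translation (through the decomposition $kh = (k\cdot h)k^h$ of the introductory heuristic). Consequently $\pi_*(\omega)$ on the quotient is computed by $\wh G(\CO)$-invariants, and the pullback $\pi^*\pi_*(\omega)$ has global sections $\Gamma(\wh\Gr_G,\pi^*\pi_*\omega) \cong \C[\wh G(\CO)]\otimes \Gamma(\wh G_{<0,e},\omega)$. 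Since the excerpt already records $\Gamma(\wh G_{<0,e},\omega) = U(\fg_{<0})$ and $U(\fg_{<0}) \cong V_0(\fg)$ as vector spaces, with the latter carrying its intertwining operator $\CY$, the underlying vector space of the left side is $\C[\wh G(\CO)]\otimes V_0(\fg)$. The algebra $\C[\wh G(\CO)]$ is commutative, hence a commutative vertex algebra via $Y(A,z)B = e^{zT}(A)B$ as in Section~\ref{sec:quasi_classical_vertex}, and on $V_0(\fg)$ we have the Kac--Moody vertex structure; the claim is that the vertex operation transported from the geometry is precisely the tensor product of these two.

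The key step is then to pin down the state--field correspondence on the geometric side. Here I would invoke the factorization structure: $\wh\Gr_{G,C^I}$ is a factorization subscheme of $\Gr_{G,C^I}$, the action groupoid $\wh G(\CO)_{C^I}$ is likewise a factorization group, and $\pi^*\pi_*\omega$ is assembled from $\pi_*(\omega_{\wh\Gr_{G,C^I}})$ pulled back along the factorization projection. Restricting the resulting factorization algebra over $C$ to a formal disk $\mathbb D$ produces a vertex algebra, and this is by definition (or by a direct Cousin/residue computation as in \cite[Proposition~20.4.3]{frenkel2004vertex}) the thing we must identify. Since factorization is monoidal-multiplicative over $C^I$ and the splitting $\wh{G(\CK)}_{G(\CO)}\cong \wh G_{<0}\times G(\CO)$ is a splitting of factorization spaces, the factorization algebra $\pi^*\pi_*\omega$ factors as the (external, hence tensor-product) product of the factorization algebra of $\wh G(\CO)$ — which is commutative because $\wh G(\CO)$ lives over a point with no nontrivial clustering, giving the commutative vertex algebra $\C[\wh G(\CO)]$ — and the factorization algebra $V_k\big|_{k=0}$ of $\wh\Gr_G$, which by the cited theorem ("$V_k$ over $\mathbb D$ is $V_k(\fg)$", specialized at $k=0$, level $0\neq -h^\vee$) is exactly $V_0(\fg)$. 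Matching strong generators finishes it: on the $\wh\Gr_G$ factor the generators $I_{a,-1}$ map to $\CY(I_{a,-1},z) = \sum_n I_{a,n}z^{-n-1}$, and on the $\wh G(\CO)$ factor the generators are group-like/primary of conformal weight $0$, so their fields are $e^{zT}(\cdot)$, which is precisely the commutative structure; the OPE between the two families vanishes because $\wh G(\CO)$ acts through left translation which on the level of the Borel--Weil sections on $\wh\Gr_G$ commutes with the Kac--Moody currents in the relevant disk.

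The main obstacle I anticipate is the careful bookkeeping of the $\wh G(\CO)$-equivariance: one must check that taking $\wh G(\CO)$-invariants for $\pi_*$ and then $\pi^*$-ing back genuinely yields the tensor product $\C[\wh G(\CO)]\otimes U(\fg_{<0})$ with the commutative vertex structure on the first factor, and — more delicately — that the twisted left action $g\cdot(g_1) = (g\rhd g_1)$ coming from the $kh = (k\cdot h)k^h$ decomposition does not mix the two vertex structures, i.e.\ that the cross-OPE is regular. This is essentially a compatibility-of-factorization-structures statement; I would handle it by working over $C^2$, restricting to the disjoint locus $j^{2/1,*}$ where the factorization isomorphism $j^{2/1,*}(X_C\times X_C) \cong j^{2/1,*}X_{C^2}$ splits the sections multiplicatively, computing the product there, and then extending across the diagonal using that all the sheaves in sight are flat over $C^I$ (the monoidal structure is classical, as proven in the excerpt). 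A second, more routine, point is verifying the level-$0$ specialization of the determinant-twisted statement $V_k\big|_{\mathbb D}\cong V_k(\fg)$ is exactly the untwisted $\pi_*\omega$ computation, which follows since $L^{\otimes 0}$ is trivial.
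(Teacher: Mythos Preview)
Your approach can be made to work, but it is considerably more laborious than the paper's argument, and there is a muddle in your description of $\pi$ that should be cleaned up: $\pi$ has source $\wh\Gr_G\cong\wh G_{<0}$, not $\wh G_{<0}\times\wh G(\CO)$, so it is not literally ``the projection $\wh G_{<0}\times\wh G(\CO)\to\wh G_{<0}$''. What you are groping for is the action-groupoid presentation, and that is exactly what the paper uses---but in a single stroke rather than via your splitting-then-match-generators route.

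The paper's proof is a one-line base-change. Let $m\colon \wh G(\CO)\times\wh\Gr_G\to\wh\Gr_G$ be the left action and $\wt\pi\colon \wh G(\CO)\times\wh\Gr_G\to\wh\Gr_G$ the second projection. Then
\[
\begin{tikzcd}
\wh G(\CO)\times\wh\Gr_G \arrow[r,"\wt\pi"]\arrow[d,"m"] & \wh\Gr_G\arrow[d,"\pi"]\\
\wh\Gr_G\arrow[r,"\pi"] & {[\wh G(\CO)\!\setminus\!\wh\Gr_G]}
\end{tikzcd}
\]
is Cartesian, so $\pi^*\pi_*(\omega)\cong m_*\wt\pi^*(\omega)=m_*(\CO_{\wh G(\CO)}\boxtimes\omega)$, and taking global sections gives $\C[\wh G(\CO)]\otimes V_0(\fg)$ immediately. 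The crucial gain over your approach is that this square is a diagram of \emph{factorization spaces}: all four corners and all four maps carry compatible factorization structures, so the base-change isomorphism is automatically an isomorphism of factorization algebras, hence of vertex algebras upon restriction to $\mathbb D$. No splitting $\wh{G(\CK)}_{G(\CO)}\cong\wh G_{<0}\times G(\CO)$ is invoked, and there is no need to match OPEs on generators or to verify that cross-OPEs vanish.

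Your ``main obstacle'' paragraph correctly identifies the subtle point in your strategy---that the twisted $\wh G(\CO)$-action might mix the two tensor factors at the level of vertex operations---but in the paper's argument this issue simply does not arise: the tensor-product vertex structure is forced by the external box product $\CO_{\wh G(\CO)}\boxtimes\omega$ on the product space, and the map $m$ (not any chosen splitting) transports it. Your plan to work over $C^2$ and extend across the diagonal is not wrong, but it is doing by hand what base-change does functorially.
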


\begin{proof}
    Let $\wt{\pi}: \wh{G}(\CO)\times \wh{\Gr}_G\to \wh{\Gr}_G$ be the projection onto the second factor and 
    \be
        m\colon \wh{G}(\CO)\times \wh{\Gr}_G\to \wh{\Gr}_G
    \ee
    be action on the left. We claim that $\Gamma(\wh{\Gr}_G, \pi^*\pi_*(\omega))$ can be alternatively computed as $\Gamma(\wh{\Gr}_G, m_*(\CO_{\wh{G}(\CO)}\boxtimes \omega))$. This follows from the following base-change diagram:  
    \be
\btik
\wh{G}(\CO)\times \wh{\Gr}_G\rar{\wt{\pi}} \dar{m} & \wh{\Gr}_G\dar{\pi}\\
\wh{\Gr}_G\rar{\pi} & \wh{G}(\CO)\!\setminus\!\wh{\Gr}_G
\etik.
    \ee
This means that $\pi^*\pi_*(\omega)\cong m_*\wt{\pi}^*(\omega)=m_*(\CO_{\wh{G}(\CO)}\boxtimes \omega)$. Therefore we have an isomorphism of vertex algebras:
\be
\Gamma(\wh{\Gr}_G, \pi^*\pi_*(\omega))\cong \Gamma(\wh{G}(\CO)\times \wh{\Gr}_G, \CO_{\wh{G}(\CO)}\boxtimes \omega)=\C[\wh{G}(\CO)]\otimes V_0(\fg).
\ee
This completes the proof.

\end{proof}

This suggests that there should be a meromorphic tensor product on $\Coh([\wh{G}(\CO)\!\setminus\!\wh{\Gr}_G])$, which refines the factorization structure. Indeed, this category can be naturally identified with the category of comodules of $\C[\wh{G}(\CO)]\otimes V_0(\fg)$, and the vertex algebra structure induced from the factorization structure defines a functor between the categories of comodules. Again, we do not attempt to develop this here, but leave it as a conjecture. 

\begin{Conj}\label{Conj:merorfac}
    Under the equivalence of Proposition \ref{Prop:abequiv}, the meromorphic tensor product coming from $\Delta_z$ of \(Y_1(\fd)\) corresponds to a refinement of the factorization structure as above, admitting the limit $z\to w$. The renormalized \(r\)-matrix of equation \eqref{eq:renormr} corresponds the lowest $z^{-1}$ degree part of the full \(R\)-matrix $R(z)$.
\end{Conj}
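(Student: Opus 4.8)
The plan is to combine Proposition~\ref{Prop:dualVOA} with the dual description of $Y_\hbar(\fd)$ from Remark~\ref{Rem:dualVA}. Under Proposition~\ref{Prop:abequiv} and Proposition~\ref{Prop:E1equiv}, an object of $\Coh_{G(\CO)}(\wh{\Gr}_{G,e})^{\heartsuit}$ transported along $j_*$ is a finite-dimensional comodule over the vertex algebra $\CW \coloneqq \Gamma(\wh{\Gr}_G,\pi^*\pi_*\omega) \cong \C[\wh{G}(\CO)]\otimes V_0(\fg)$, and since $\C[\wh{G}(\CO)]$ is a completion of $S(t^{-1}\fg^*[t^{-1}])$, this $\CW$ contains the tensor product vertex algebra $\CV = V_0(\fg)\otimes S(t^{-1}\fg^*[t^{-1}])$ whose intertwining operator is dual to the meromorphic coproduct $\Delta_z$ on $Y^\circ_\hbar(\fd)$. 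The proof should then split into three tasks: (i) extract from the factorization structure of $\wh{\Gr}_G$ a meromorphic fusion on comodules of $\CW$, refining the coherent product $\CC_z\boxtimes\CC_w\to\CC_{z,w}$ and extending over the diagonal; (ii) identify this fusion with the predual of $\Delta_z$ (specialised at $\hbar=1$ via Section~\ref{sec:evaluation}); (iii) recognise the Eckmann--Hilton renormalized $r$-matrix of \eqref{eq:renormr} as the lowest non-trivial loop-degree term of $R(z)$.

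For (i) and (ii), the crucial input is Proposition~\ref{Prop:dualVOA}, which identifies the factorization OPE of $[\wh{G}(\CO)\setminus\wh{\Gr}_G]$ over a formal disk with the intertwining operator $\CY$ of $\CW$. One then uses the factorization $\CY = e^{r_{\textnormal{reg}}(z)}e^{r_{\textnormal{sing}}(z)}$ from Proposition~\ref{Prop:YR}, together with $e^{r_{\textnormal{reg}}(0)} = \CE_\gamma$ and $R_s(z) = e^{r_{\textnormal{sing}}(z)}$ of \eqref{eq:Rsmz}: the $e^{r_{\textnormal{reg}}}$-factor carries the translation $\tau_z$ built into $\eta_1(M_z,\mathbbm{1}_w)$, while $e^{r_{\textnormal{sing}}}$ carries the genuinely meromorphic content. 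Concretely, $\eta_1(M_z,\mathbbm{1}_w)\otimes\eta_2(\mathbbm{1}_z,N_w)$ should be computed as $M\otimes_{\Delta_z}N$ in the sense of Section~\ref{subsec:quantVHopf}, and the claim ``admitting the limit $z\to w$'' becomes the statement proven there that, on a product of objects in the heart, $\Delta_z$ is polynomial in $z$. The compatibility between this fusion and the coherent product $\CC_z\boxtimes\CC_w\to\CC_{z,w}$ should reduce, by base change along the diagrams appearing in the proof of Proposition~\ref{Prop:E1equiv}, to the weak coassociativity and weak cocommutativity of $\Delta_z$, i.e.\ to the locality and skew-symmetry of $\CY$, which are already established.

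For (iii), one unwinds the Eckmann--Hilton construction of \eqref{eq:renormr}. The comparison isomorphism $j^*_{z\neq w}C_1(M,N)\cong j^*_{z\neq w}C_2(M,N)$ is exactly the locality isomorphism for $\CY$, which on the algebraic side is realised by $R(z) = R_s^{21}(-z)R_s(z)^{-1}$ via the chain $M\otimes_{\Delta_z}N \cong \tau_z(N\otimes_{\Delta_{-z}}M)$ of Section~\ref{subsec:fullR}. The minimal $N$ with $(z-w)^N C_1(M,N)\subseteq C_2(M,N)$ should match the pole order of $R(z)$ along $z=w$ after evaluation on $M\otimes N$, and clearing it should extract precisely the lowest non-trivial loop-degree part of $R(z)$, which by \eqref{eq:rsingreg} is governed by the kernel $\hbar\,\tfrac{\sum_a I^a\otimes I_a}{t_2-z-t_1}$ to leading order. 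Matching the residue of this against the morphism \eqref{eq:renormr} is then a direct computation, to be compared with the explicit renormalized $r$-matrix of \cite{cautis2019cluster,cautis2023canonical}.

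The main obstacle is task (i). As already observed, there is currently no definition of a meromorphic tensor product on a factorization category; before anything else one must axiomatise, for the category of finite-dimensional comodules over a $\Z$-graded vertex algebra such as $\CW$, a functor $\CC_z\boxtimes\CC_w\to\CC_w\lpp(z-w)^{-1}\rpp$ that restricts off the diagonal to the coherent factorization product and prove it extends across $z=w$. This amounts to controlling the poles of the fusion $\eta_1(M_z,\mathbbm{1}_w)\otimes\eta_2(\mathbbm{1}_z,N_w)$ uniformly, which should follow from finiteness of the graded pieces of $V_0(\fg)$ together with smoothness of $M$ and $N$ --- the same mechanism used in Section~\ref{subsec:quantVHopf} to show $\Delta_z$ is polynomial on a tensor product of finite-dimensional modules --- and once this is in place the comparison with $\Delta_z$ is essentially forced by Proposition~\ref{Prop:dualVOA} and the identity $\CY = e^{r_{\textnormal{reg}}}e^{r_{\textnormal{sing}}}$. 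A secondary subtlety is that $\Delta_z$ does not preserve smoothness, so the identification must be stated objectwise on chosen tensor products rather than as an equivalence of meromorphic monoidal categories, and one should check that the limit $z\to w$, corrected by the twist $R_s$, recovers the monoidal product of Proposition~\ref{Prop:E1equiv}.
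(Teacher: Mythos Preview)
The statement you are addressing is a \emph{conjecture}, and the paper does not supply a proof for it. The authors explicitly say, just before stating it, that they ``do not attempt to develop this here, but leave it as a conjecture''; the only evidence they offer is Proposition~\ref{Prop:dualVOA} and the heuristic identification of the category with comodules over $\C[\wh{G}(\CO)]\otimes V_0(\fg)$. So there is no ``paper's own proof'' to compare against.

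Your proposal is not a proof either, and you essentially say so yourself: your task (i), constructing a meromorphic tensor product on a factorization category, is precisely the obstacle the paper flags (``as far as we know, there is no notion of `meromorphic product' of two objects in a factorization category''), and you correctly identify it as the main gap. Tasks (ii) and (iii) are plausible strategies but rest on (i); in particular, your claim that $\eta_1(M_z,\mathbbm{1}_w)\otimes\eta_2(\mathbbm{1}_z,N_w)$ ``should be computed as $M\otimes_{\Delta_z}N$'' is exactly the content of the conjecture, not an input to its proof. What you have written is a reasonable research outline that is fully aligned with the paper's own heuristics --- the use of Proposition~\ref{Prop:dualVOA}, the factorization $\CY=e^{r_{\textnormal{reg}}}e^{r_{\textnormal{sing}}}$, and the loop-grading argument for polynomiality on smooth modules are the right ingredients --- but it does not close the gap, and the paper does not claim to either.
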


\newpage

   \bibliographystyle{amsalpha}
   
   \bibliography{Yangian}

\end{document}